\documentclass[a4paper]{amsart}
\usepackage{amsmath,amsthm,amssymb,mathtools,latexsym,epic,bbm,comment}
\usepackage{graphicx,enumerate,stmaryrd,tabularx}
\usepackage[all,2cell]{xy}
\xyoption{2cell}

\font\sc=rsfs10
\newcommand{\cF}{\sc\mbox{F}\hspace{1.0pt}}

\usepackage{todonotes}

\newtheorem{theorem}{Theorem}
\newtheorem{lemma}[theorem]{Lemma}
\newtheorem{corollary}[theorem]{Corollary}
\newtheorem{proposition}[theorem]{Proposition}
\newtheorem{conjecture}[theorem]{Conjecture}
\newtheorem{definition}[theorem]{Definition}

\theoremstyle{definition}
\newtheorem{remark}[theorem]{Remark}
\newtheorem{example}[theorem]{Example}

\usepackage[all]{xy}
\usepackage[active]{srcltx}
\usepackage[parfill]{parskip}
\usepackage{enumerate}

\usepackage{hyperref}

\newcommand{\tto}{\twoheadrightarrow}

\newcommand{\tens}[1]{\mathbin{\mathop{\otimes}\displaylimits_{#1}}}

\newcommand{\quotient}[2]{\raisebox{.15em}{$#1$}\left/\raisebox{-.15em}{$#2$}\right.}

\newcommand{\form}[2]{\langle #1, #2 \rangle}

\usepackage{tikz}
\usetikzlibrary{fit,matrix,positioning,calc}
\usepackage{tkz-graph}

\setcounter{tocdepth}{1}
% Hides subsections in the table of contents

\newcommand{\C}{\mathbb{C}}
\newcommand{\D}{\mathbb{D}}
\newcommand{\Z}{\mathbb{Z}}
\newcommand{\LL}{\mathfrak{L}}
\newcommand{\T}{\mathfrak{T}}
\newcommand{\g}{\mathfrak{g}}
\newcommand{\h}{\mathfrak{h}}
\newcommand{\n}{\mathfrak{n}}
\newcommand{\bb}{\mathfrak{b}}
\newcommand{\rr}{\mathfrak{r}}
\newcommand{\s}{\mathfrak{s}}
\newcommand{\HH}{\mathfrak{H}}
\DeclareMathOperator{\Hom}{Hom}
\DeclareMathOperator{\End}{End}
\DeclareMathOperator{\Ext}{Ext}
\DeclareMathOperator{\Ker}{Ker}
\DeclareMathOperator{\Ann}{Ann}
\DeclareMathOperator{\Ind}{Ind}
\DeclareMathOperator{\Rad}{Rad}
\DeclareMathOperator{\Nrad}{Nrad}
\DeclareMathOperator{\ad}{ad}
\DeclareMathOperator{\Sym}{Sym}

\DeclareMathOperator{\EnAr}{\mathbf{EA}}
\DeclareMathOperator{\Com}{\mathbf{C}}

\DeclareMathOperator{\GK}{GK}
\DeclareMathOperator{\e}{e}

% https://tex.stackexchange.com/questions/2607/spacing-around-left-and-right
\let\originalleft\left
\let\originalright\right
\renewcommand{\left}{\mathopen{}\mathclose\bgroup\originalleft}
\renewcommand{\right}{\aftergroup\egroup\originalright}

\begin{document}
\title[Lie algebra modules which are locally finite over the semi-simple part]
{Lie algebra modules which are locally finite\\ over the semi-simple part}

\author{Volodymyr Mazorchuk and Rafael Mr{\dj}en}

\begin{abstract}
For a finite-dimensional Lie algebra $\mathfrak{L}$ over $\mathbb{C}$ with a fixed Levi decomposition $\mathfrak{L} = \mathfrak{g} \ltimes \mathfrak{r}$ where $\mathfrak{g}$ is semi-simple, we investigate $\mathfrak{L}$-modules which decompose, as $\mathfrak{g}$-modules, into a direct sum of simple finite-dimensional $\mathfrak{g}$-modules with finite multiplicities. We call such modules $\mathfrak{g}$-Harish-Chandra modules. We give a complete classification of simple $\mathfrak{g}$-Harish-Chandra modules for the Takiff Lie algebra associated to $\mathfrak{g} = \mathfrak{sl}_2$, and for the Schr\"{o}dinger Lie algebra, and obtain some partial results in other cases. An adapted version of Enright's and Arkhipov's completion functors plays a crucial role in our arguments. Moreover, we calculate the first extension groups of infinite-dimensional simple $\mathfrak{g}$-Harish-Chandra modules and their annihilators in the universal enveloping algebra, for the Takiff $\mathfrak{sl}_2$ and the Schr\"{o}dinger Lie algebra. In the general case, we give a sufficient condition for the existence of infinite-dimensional simple $\mathfrak{g}$-Harish-Chandra modules.
\end{abstract}

\maketitle

\renewcommand{\contentsname}{}
\tableofcontents

\section{Introduction and description of the results}\label{s1}

Construction and classification of modules are two fundamental problems in representation theory.
In most of the cases, the problem of classification of all modules is known to be very hard (i.e. wild),
so one is naturally forced to consider special classes of modules, for example, simple modules.
Classification of simple modules is also quite hard in most of the cases. For example, for complex 
semi-simple Lie algebras,
classification of all simple modules is only known, in some sense, for the algebra $\mathfrak{sl}_2$,
see \cite{Bl}. At the same time, numerous families of simple modules for semi-simple Lie algebras
are very well understood, for example, simple highest weight modules, see \cite{Ve,humphreys2008representations},
Whittaker modules, see \cite{kostant1978on,BM}, weight modules with finite-dimensional weight spaces,
see \cite{mathieu2000classification}, and Gelfand-Zeitlin modules, see \cite{DOF,EMV,We} 
and references therein.

There are two natural generalizations of semi-simple Lie algebras: semi-simple Lie superalgebras
and non-semi-simple Lie algebras. For semi-simple Lie superalgebras, a significant progress in 
classification of simple modules was recently made in \cite{CM,CCM}. Basically, for a large class
of Lie superalgebras, the results of \cite{CM,CCM} reduce the problem of classification of simple modules
to a similar problem for the even part of the superalgebra, which is a reductive Lie algebra.
In contrast to this situation, for non-semi-simple Lie algebras, very little is know.
Apart from the main result of \cite{Bl}, which, in addition to $\mathfrak{sl}_2$,
classifies simple modules over the Borel subalgebra of $\mathfrak{sl}_2$,
several special classes of simple modules were studied for various specific non-semi-simple Lie algebras,
see e.g. \cite{dubsky2014category,wilson2011highest,cai2014quasi,cai2017quasi,lau2018classification,bavula2017prime,bavula2018classification,mazorchuk2019category}
and references therein. We will now look at some of these and some other results in more detail.

It seems that the so called current Lie algebras are the ones which are most studied and best understood.
These are defined as tensor product of a reductive Lie algebra with a commutative unital associate algebra. 
For current Lie algebras, there is a full classification of simple weight modules with finite-dimensional 
weight spaces, see \cite{lau2018classification}. Also, the highest weight theory 
for the truncated polynomial version of these Lie algebras is developed in \cite{wilson2011highest}. 
Moreover, the center of the universal enveloping algebras of such Lie algebras whose semi-simple part is 
of type $A$ is described explicitly in \cite{molev1996casimir}.

A special family of truncated current Lie algebras is formed by the so called Takiff Lie algebras, studied originally in \cite{takiff1971rings}, which correspond to the case when one tensors a reductive Lie algebra with the associative algebra of dual numbers. The Takiff $\mathfrak{sl}_2$ is also known as the complexification of the Lie algebra of the Euclidean group $E(3)$, the Lie group of orientation-preserving isometries of the three-dimensional Euclidean space. It belongs to the family of conformal Galilei algebras, see e.g. \cite{lu2014simple}. Category $\mathcal{O}$ for Takiff $\mathfrak{sl}_2$ was recently studied in \cite{mazorchuk2019category} and simple weight modules were classified in \cite{bavula2017prime}.

The Schr\"{o}dinger Lie algebra, see Section \ref{section:schrodinger}, is also an important and intensively studied example of a non-reductive Lie algebra. Its category  $\mathcal{O}$ was studied in detail in \cite{dubsky2014category}, lowest weight modules were classified in \cite{dobrev1997lowest}, and simple weight modules were classified in \cite{dubsky2014classification, bavula2018classification}.

A slight modification of the Schr\"{o}dinger Lie algebra, called the centerless Schr\"{o}dinger  Lie algebra belongs to the family of conformal Galilei algebras, see Section \ref{section:conf_galilei}. As their names suggest, the Schr\"{o}dinger Lie algebra and conformal Galilei algebras are of great importance in theoretical physics and seem to have originated from there. For example, the Schr\"{o}dinger Lie algebra comes from the Schr\"{o}dinger Lie group, the group of symmetries of the free particle Schr\"{o}dinger equation, see \cite{dobrev1997lowest, perroud1977projective}. Conformal Galilei algebras are related to the non-relativistic version of the AdS/CFT correspondence, see \cite{bagchi2009galilean}.

Several papers studied a generalization of Whittaker modules (originally defined in \cite{kostant1978on} for semi-simple Lie algebras), in the setup of 
conformal Galilei algebras and the Schr\"{o}dinger Lie algebra,
see \cite{cai2014quasi,cai2016whittaker,lu2014simple, cai2017quasi}. 
Quasi-Whittaker modules are modules on which the radical of the Lie algebra
acts locally finitely.

In the present  paper we initiate the study of modules over (non-semi-simple) 
Lie algebras on which the action of the semi-simple part of the Lie algebra
is locally finite, that is, which are locally finite over the semi-simple part. 
This condition is, in a sense, the opposite to the condition defining 
quasi-Whittaker modules. 
The obvious examples of modules that are locally finite over the semi-simple part
are simple finite-dimensional modules
over the semi-simple part on which the radical of our Lie algebra acts trivially.
However, we observe that, for many Lie algebras, there exist simple infinite-dimensional modules that are locally finite over the semi-simple part.
This motivates the problem of classification of such modules, 
and we show that this problem can be completely answered 
for the Takiff Lie algebra of $\mathfrak{sl}_2$
and for the Schr\"{o}dinger Lie algebra. Moreover, the answer is both
non-trivial and interesting. To our best knowledge, such modules 
have not been studied before in the general case (however, for the Schr{\"o}dinger
Lie algebra and the Takiff $\mathfrak{sl}_2$, they belongs to a larger family of 
weight modules studied in detail in \cite{bavula2017prime,bavula2018classification}). 
Let us now describe the content of the paper in more detail.

If $\LL$ is any finite-dimensional Lie algebra and $\g \subseteq \LL$ its semi-simple Levi subalgebra, we study $\LL$-modules whose restriction to $\g$ decomposes into a direct sum of simple finite-dimensional $\g$-modules with finite multiplicities, and call them {\em $\g$-Harish-Chandra modules}. To justify the name, we note that there is an obvious analogy with the classical theory of $(\g,K)$-modules as in \cite{vogan1981repersentations}, coming from the setup of real reductive Lie groups. In the classical theory, any $(\g,K)$-module splits as a direct sum of finite-dimensional modules over the compact group $K$, and moreover, the multiplicities are finite if the corresponding group representation is irreducible and unitary (a result by Harish-Chandra). In our setup there is no such automatic splitting, so we ``pretend'' that $\g$ is compact, i.e., we consider only those $\LL$-modules that split as $\g$-modules into a direct sum of finite-dimensional $\g$-modules with finite multiplicities. Hopefully, this analogy could be used to transfer parts of the Langlands classification, or the theory of minimal $K$-types into our non-reductive algebraic setup.
There is another analogy of our setup with integrable modules over a Kac-Moody algebra, see \cite{kac1990infinite}.

In Section~\ref{s2}, we introduce the basic setup that we work in. 
In Section~\ref{section:Takiff}, we roughly describe ``universal'' $\g$-Harish-Chandra modules for Takiff Lie algebras. In particular, we show that such Lie algebras do indeed always have simple infinite-dimensional $\g$-Harish-Chandra modules, see Corollary~\ref{corollary:uniqueness_with_0}.

In Sections~\ref{section:takiff_sl2} and \ref{section:schrodinger} we prove our
most concrete results: Theorem~\ref{theorem:classification} provides a
complete classification of simple $\g$-Harish-Chandra modules for the 
Takiff $\mathfrak{sl}_2$, and Theorem~\ref{theorem:classification_sch}
gives such a classification for the Schr\"{o}dinger Lie algebra.
These two answers have both clear similarities and differences.
In both cases we crucially use the highest weight theory for
corresponding algebras and appropriate analogues of completions functors.
Also, in both cases, we can consider semi-simple
$\g$-Harish-Chandra modules as a monoidal representation of the
monoidal category of finite-dimensional $\mathfrak{sl}_2$-modules.
We found it surprizing that the combinatorial properties of the
corresponding monoidal representation in the Takiff $\mathfrak{sl}_2$ and the Schr\"{o}dinger
cases are rather different.

In case of the Takiff $\mathfrak{sl}_2$, we obtain a family of modules 
$V(n,\chi)$ which are naturally parameterized by $n \in \Z$ and 
$\chi \in \C\setminus\{0\}$. However, we show that this family has a redundancy
via non-trivial isomorphisms $V(n,\chi) \cong V(-n,-\chi)$. 
Roughly speaking, $|n|$ is the minimal $\g$-type, and $\chi^2$ is the 
``purely radical part'' of the central character. This classifies all 
simple infinite-dimensional $\g$-Harish-Chandra modules.

In case of  the Schr\"{o}dinger Lie algebra, we obtain a similar family 
of modules $V(n,\chi)$ parameterized by $n \in \Z_{\geq 0}$ and 
$\chi \in \C\setminus\{0\}$, However, in contrast to the Takiff case,
this family is irredundant.

The modules mentioned above are very explicitly described. In both cases,
we, moreover, show that all groups of first self extensions of these modules 
are one-dimensional, see Theorems~\ref{theorem:extensions} and \ref{theorem:extensions_sch}. 
Additionally, we prove that the annihilators of all the above modules in the universal enveloping algebra are centrally generated, see Theorems~\ref{theorem:annihilators} 
and Corollary \ref{corollary:annihilators_sch}. Classification results in
Sections~\ref{section:takiff_sl2} and \ref{section:schrodinger}
are deducible (with non-trivial effort) from more general results of \cite{bavula2017prime,bavula2018classification}, however, we provide a completely
different, less computational and more conceptual approach.

For comparison, it is easy too see that the centerless Schr\"{o}dinger Lie algebra does 
not admit simple infinite-dimensional $\g$-Harish-Chandra modules. 
Roughly speaking,  because its purely radical part of the center is 
trivial, see Remark~\ref{remark:centerless} for details.

We would like to point out that the methods we utilize for our classification 
go far beyond direct calculations. We use various functorial constructions, which
include, in particular, an appropriate adjustment of Enright's completion functor
(based on Arkhipov's twisting functor), 
\cite{enright1979on, deodhar1980on, konig2002enrights, andersen2003twisting, arkhipov2004algebraic, khomenko2005on}. Further development of both, highest weight 
theory and properties of various Lie theoretic functors as in \cite{MS}, for 
non-semi-simple Lie algebras, should provide an opportunity for generalization of
the results of this paper to, in the first step, other Takiff Lie algebras and,
further, general finite-dimensional Lie algebras. 

In the most general case of an arbitrary finite-dimensional complex Lie algebra $\LL$
and a non-trivial Levi subalgebra $\mathfrak{g}$, it is clear that simple $\g$-Harish-Chandra modules always exist. Namely, the finite-dimensional $\LL$-modules are, of course,
$\g$-Harish-Chandra modules. In Theorem~\ref{theorem:exist_inf_dim_HC} of
Section~\ref{section:existence_general} we give a general sufficient condition 
for existence of infinite-dimensional simple $\g$-Harish-Chandra modules. 
The sufficient condition, as we formulate it, requires that the nilradical of $\LL$ intersects the centralizer in $\LL$
of the Cartan subalgebra of $\mathfrak{g}$. In this case we manage to use highest weight theory for $\LL$,
combined with various versions of twisting functors, to construct 
infinite-dimensional simple $\g$-Harish-Chandra modules. 
We also  provide an example showing that our 
sufficient condition is not necessary, in general: 
the semi-direct product of $\mathfrak{sl}_2$ and its $4$-dimensional simple
module does not satisfy our sufficient condition and has trivial highest
weight theory in the sense that its simple highest weight modules coincide with simple 
highest weight $\mathfrak{sl}_2$-modules. However, using various combinatorial 
tricks from \cite{hahn2018from}, 
we show that this Lie algebra does admit simple infinite-dimensional 
$\g$-Harish-Chandra modules. This result can be found in Subsection~\ref{subscounter}. 

Finally, in Subsection~\ref{subsec7.3}, in particular Theorem \ref{thmconf4new}, we classify a class of 
$\mathfrak{sl}_2$-Harish-Chandra modules that are connected to highest weight modules,
for the semi-direct product of $\mathfrak{sl}_2$ with its simple $5$-dimensional
module. The corresponding category of semi-simple
$\g$-Harish-Chandra modules is, again, a monoidal representation of the
monoidal category of finite-dimensional $\mathfrak{sl}_2$-modules.
But the combinatorics of this monoidal representation 
is completely different from the ones which we get
in the Takiff and the Schr\"{o}dinger cases, see Remark \ref{remark:comb_struct_conf4}. In particular, contrary to the previous cases, in this case we obtain an example of two simple $\g$-Harish-Chandra modules with different sets of $\g$-types, but with the same minimal $\g$-type.

Comparison of the results of \cite{han2019higher}
with Lemmata \ref{lemma:structure_Q0} and \ref{lemma:structure_Q0_sch} suggests a possibility
of an interesting connection between $\g$-Harish-Chandra
modules and higher-spin algebras from \cite{pope1990W_infinity}.

\subsection*{Acknowledgments}
This research was partially supported by
the Swedish Research Council, G{\"o}ran Gustafsson Stiftelse and Vergstiftelsen. 
R. M. was also partially supported by the QuantiXLie Center of Excellence grant 
no. KK.01.1.1.01.0004 funded by the European Regional Development Fund.

\section{Notation and preliminaries}\label{s2}

We work over the complex numbers $\C$. For a Lie algebra $\mathfrak{a}$,
we denote by $U(\mathfrak{a})$ the universal enveloping algebra of $\mathfrak{a}$.

Fix a finite-dimensional Lie algebra $\LL$ over $\C$, and fix its Levi decomposition $\LL \cong \g \ltimes \rr$. This is a semi-direct product, where $\g$ is a maximal semi-simple Lie subalgebra, unique up to conjugation, and $\rr = \Rad \LL$ is the radical of $\LL$ , i.e., the unique maximal solvable ideal.

\begin{definition}
An $\LL$-module $V$ is called \emph{$\g$-Harish-Chandra} module, if
the restriction of $V$ to $\g$ decomposes as a direct sum of simple finite-dimensional $\g$-modules, and moreover, each isomorphism class of simple finite-dimensional $\g$-modules occurs  with a finite multiplicity in $V$.
\end{definition}

A simple $\g$-submodule of a $\g$-Harish-Chandra module $V$ is called a \emph{$\g$-type} of $V$. The sum of all $\g$-submodules of $V$ isomorphic to a given $\g$-type is called the \emph{$\g$-isotypic component} of $V$ determined by this $\g$-type.

Fix a Cartan subalgebra $\h \subseteq \g$. Every $\g$-Harish-Chandra module 
is a weight module with respect to $\h$. However, infinite-dimensional
$\g$-Harish-Chandra modules might have infinite-dimensional weight spaces.

\begin{remark}
Note that the notion of a $\g$-Harish-Chandra module is different from the 
notion of Harish-Chandra module from \cite{lau2018classification}. In the latter paper, 
Harish-Chandra modules are weight modules with finite-dimensional weight spaces. 
It would be natural to call the modules from \cite{lau2018classification}
$\h$-Harish-Chandra modules.
\end{remark}

Denote by $\Nrad(\LL)$ the \emph{nilradical} of $\LL$, by which we mean the intersection of kernels of all finite-dimensional simple modules of $\LL$. It is a nilpotent ideal, but not necessarily equal to the maximal nilpotent ideal. It is well known that $\Nrad(\LL) = [\LL,\LL] \cap \rr = [\LL,\rr]$, and $\LL$ is reductive if and only if $\Nrad(\LL) =0$. Moreover, $\Nrad(\LL)$ is the minimal ideal in $\LL$ for which the quotient $\quotient{\LL}{\Nrad(\LL)}$ is reductive. For proofs, see e.g. \cite[Chapter I, \S 5.3.]{bourbaki1989lie}.

\begin{example}
If $\LL=\g \ltimes \rr$ is a reductive Lie algebra, then $\rr$ is precisely the center of $\LL$. If $V$ is a simple $\g$-Harish-Chandra module for $\LL$, then, by Schur's lemma, $\rr$ acts by scalars on $V$. It follows that $V$ is just a simple finite-dimensional $\LL$-module. So, the notion of $\g$-Harish-Chandra modules in not very interesting for reductive Lie algebras.
\end{example}

Fix a positive part $\Delta^+(\g,\h)$ in the root system $\Delta(\g,\h)$, and a non-degenerate invariant symmetric bilinear form $\form{-}{-}$ on $\h^\ast$. We have the classical triangular decomposition $\g = \n_- \oplus \h \oplus \n_+$. Further, fix a weight $\delta \in \h^\ast$
such that $\form{\delta}{\alpha}>0$ for all $\alpha\in \Delta^+(\g,\h)$
and such that $\form{\delta}{\alpha}=0$, for an integral weight $\alpha$, implies $\alpha=0$. Since $\LL$ is a finite-dimensional $\g$-module with respect to the adjoint action, it decomposes as a direct sum of its weight spaces $\LL_\mu$, where $\mu$ varies over the set of integral weights in $\h^\ast$. Consider the following Lie subalgebras of $\LL$:
\begin{equation} \label{equation:general_triangular}
\tilde{\n}_- := \bigoplus_{\langle\mu ,\delta \rangle < 0} \LL_\mu, \quad \tilde{\h} := \bigoplus_{\langle\mu ,\delta \rangle = 0} \LL_\mu \quad \tilde{\n}_+ := \bigoplus_{\langle\mu ,\delta \rangle > 0} \LL_\mu.
\end{equation}
Note that this decomposition heavily depends on the choice of $\delta$
and not only on the choice of $\Delta^+(\g,\h)$. However, for example, for
truncated current Lie algebras (which include Takiff Lie algebras), the Schr\"{o}dinger Lie algebra and conformal Galilei algebras, the decomposition \eqref{equation:general_triangular} only depends on the choice 
of $\Delta^+(\g,\h)$.
From the construction, it is clear that $\tilde{\n}_\pm \cap \g = \n_\pm$, and $\tilde{\h} \cap \g = \h$. Moreover, from the condition prescribed on $\delta$, it follows that $\tilde{\h}$ is precisely the centralizer of $\h$ in $\LL$. The decomposition $\LL = \tilde{\n}_- \oplus \tilde{\h} \oplus \tilde{\n}_+$ does not satisfy, in general, all the axioms in \cite[Section~2]{wilson2011highest}, since we do not require existence of any analogue of Chevalley involution (and even the dimensions
of $\tilde{\n}_-$ and $\tilde{\n}_+$ might be different). However, it is good enough to define Verma modules with reasonable properties. 

For an element $\lambda \in \tilde{\h}^\ast$, denote the one-dimensional $\tilde{\bb}:=\tilde{\h} \oplus \tilde{\n}_+$-module where $\tilde{\h}$ acts as $\lambda$ and $\tilde{\n}_+$ acts trivially, by $\C_\lambda$. The \emph{Verma module} with highest weight $\lambda$ is defined as
\begin{equation} \label{equation:Verma_def}
\Delta(\lambda) := \Ind_{\tilde{\bb}}^\LL \C_\lambda = U(\LL) \tens{U(\tilde{\bb})} \C_\lambda \cong U(\tilde{\n}_-) \tens{\C} \C_\lambda.
\end{equation}
Let $\Delta^{\pm}(\LL,\h)$ denote the set of all $\mu$ such that $\LL_{\mu}\neq 0$
and $\pm\form{\mu}{\delta}>0$. We also set 
$\Gamma^{\pm}=\mathbb{Z}_{\geq 0}\Delta^{\pm}(\LL,\h)$.
Recall that the {\em support} of a weight module is the set of 
all weights for which the corresponding weight spaces are non-zero.
By the standard arguments using PBW theorem (cf. \cite{humphreys2008representations}), we have:
\begin{proposition} \label{proposition:Vermas_general}
The Verma module $\Delta(\lambda)$ is an $\h$-weight module, 
whose $\h$-support is $\lambda|_{\h}+\Gamma^{-}$. The $\lambda|_{\h}$-weight space is one-dimensional, and $\Delta(\lambda)$ is generated by this weight vector, so any non-trivial quotient of $\Delta(\lambda)$ also has one-dimensional $\lambda|_{\h}$-weight space. Moreover, $\Delta(\lambda)$ has a unique simple quotient, which we denote by $\mathbf{L}(\lambda)$.
\end{proposition}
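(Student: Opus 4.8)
The plan is to mimic, in the non-semisimple setting, the standard PBW-based argument that establishes the basic properties of Verma modules for semisimple Lie algebras, adapting it to the triangular decomposition $\LL = \tilde{\n}_- \oplus \tilde{\h} \oplus \tilde{\n}_+$ from \eqref{equation:general_triangular}. First I would invoke the last isomorphism in \eqref{equation:Verma_def}, namely $\Delta(\lambda) \cong U(\tilde{\n}_-) \tens{\C} \C_\lambda$ as $\tilde{\n}_-$-modules (and as vector spaces), which is a direct consequence of the PBW theorem applied to the decomposition $U(\LL) \cong U(\tilde{\n}_-) \tens{\C} U(\tilde{\bb})$. Choosing an ordered PBW basis of $U(\tilde{\n}_-)$ consisting of monomials in root vectors for roots in $\Delta^-(\LL,\h)$, each such monomial applied to the highest weight vector $v_\lambda := 1 \otimes 1$ has $\h$-weight $\lambda|_\h$ minus a sum of elements of $\Delta^+(\LL,\h)$; that is, a weight in $\lambda|_\h + \Gamma^-$. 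This simultaneously shows $\Delta(\lambda)$ is an $\h$-weight module and that its $\h$-support is contained in $\lambda|_\h + \Gamma^-$; surjectivity onto all of $\lambda|_\h + \Gamma^-$ follows since the empty monomial gives weight exactly $\lambda|_\h$ and appropriate products realize every element of $\Gamma^-$.

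Next I would pin down the $\lambda|_\h$-weight space. The PBW monomials of weight $\lambda|_\h$ are exactly those built from root vectors whose weights sum to zero; but every root in $\Delta^-(\LL,\h)$ pairs strictly negatively with $\delta$, so a nonempty such product has $\delta$-value strictly negative, hence cannot equal $\lambda|_\h$ (whose $\delta$-value is $\form{\lambda|_\h}{\delta}$, matched only by the empty monomial). Wait — one must be careful here: weights in $\Delta^-(\LL,\h)$ need not themselves be integral combinations of simple roots of $\g$, but by construction each has $\form{\mu}{\delta} < 0$, so any sum of them retains strictly negative $\delta$-pairing; the argument goes through using only the sign of $\form{-}{\delta}$. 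Thus the $\lambda|_\h$-weight space is spanned by $v_\lambda$ alone, i.e. one-dimensional. Since $\Delta(\lambda) = U(\LL) v_\lambda = U(\tilde{\n}_-) v_\lambda$, the module is generated by $v_\lambda$; and for any nonzero quotient $\Delta(\lambda)/N$, the image of $v_\lambda$ is nonzero (else $N = \Delta(\lambda)$), and the $\lambda|_\h$-weight space of the quotient is at most one-dimensional because it is a quotient of a one-dimensional space, while it is nonzero because it contains the image of $v_\lambda$ — hence exactly one-dimensional.

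Finally, for the existence and uniqueness of a simple quotient $\mathbf{L}(\lambda)$, I would argue that the sum of all proper submodules of $\Delta(\lambda)$ is itself proper. A submodule $N \subsetneq \Delta(\lambda)$ is a weight submodule (being a submodule of a weight module), and it is proper if and only if it misses the $\lambda|_\h$-weight space, i.e. $N_{\lambda|_\h} = 0$: indeed, any submodule containing a nonzero vector of weight $\lambda|_\h$ contains $v_\lambda$ (as that weight space is one-dimensional and spanned by $v_\lambda$), hence equals $\Delta(\lambda)$. Therefore the sum $N_{\max}$ of all proper submodules still has trivial $\lambda|_\h$-weight space, so it is proper, and it is the unique maximal submodule; the quotient $\mathbf{L}(\lambda) := \Delta(\lambda)/N_{\max}$ is then the unique simple quotient. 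The main obstacle — really the only subtle point — is making sure the $\delta$-pairing argument for the one-dimensionality of the top weight space is airtight given that $\tilde{\h}$ is larger than $\h$ and the roots of $\LL$ relative to $\h$ need not behave like roots of a semisimple algebra; but the defining property of $\delta$ (positivity on $\Delta^+(\g,\h)$ together with the chosen sign convention in \eqref{equation:general_triangular}) is exactly tailored to make this work, so the proof is genuinely "standard PBW arguments" as claimed in the paper.
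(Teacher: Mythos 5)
Your proof is correct and is precisely the ``standard arguments using PBW theorem'' that the paper invokes without spelling out: the identification $\Delta(\lambda)\cong U(\tilde{\n}_-)\tens{\C}\C_\lambda$, the $\delta$-pairing sign argument for the one-dimensionality of the top weight space, and the usual maximal-submodule argument for the unique simple quotient. Nothing to add; the subtle point you flag (that elements of $\Delta^-(\LL,\h)$ need not be roots of $\g$, so one argues only with the sign of $\form{-}{\delta}$) is handled correctly and is exactly what the paper's choice of $\delta$ is designed for.
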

For $\lambda \in \h^\ast$, we denote by $\Delta^\g(\lambda)$ the classical Verma 
module for $\g$ with highest weight $\lambda$ with respect to $\Delta^+(\g,\h)$, 
and by $L(\lambda)$ the unique simple quotient of $\Delta^\g(\lambda)$.

\section{$\g$-Harish-Chandra modules for Takiff Lie algebras}
\label{section:Takiff}

\subsection{Setup}

Fix a finite-dimensional semi-simple Lie algebra $\g$ over $\C$. Define the associated \emph{Takiff Lie algebra} $\T$ as \[ \T := \g \tens{} \D, \] where $\D = \C[x] / (x^2)$ is the algebra of dual numbers. The Lie bracket of $\T$ is defined in the following way:
\[ [v \otimes x^i, w \otimes x^j ] := [v,w] \otimes x^{i+j}. \]
We identify $\g$ with the subalgebra $\g \otimes 1 \subseteq \T$, and denote by $\bar{\g} = \g \otimes x \subseteq \T$. Then $\bar{\g}$ is a commutative ideal in $\T$, and $\T \cong \g \ltimes \bar{\g}$ (the semi-direct product given by the adjoint action of $\g$ on $\bar{\g}$). For $v \in \g$, we denote by $\bar{v} = v \otimes x \in \bar{\g}$.

Observe that the nilradical of $\T$ is $\Nrad(\T) = [\T,\bar{\g}]=\bar{\g}$. This means that $\bar{\g}$ must necessarily annihilate any simple finite-dimensional $\T$-module.

In the triangular decomposition (\ref{equation:general_triangular}) for $\T$, we have $\tilde{\h} = \h \oplus \bar{\h}$ and $\tilde{\n}_\pm = \n_\pm \oplus \bar{\n}_\pm$.  We want to note that this is also a triangular decomposition in the sense of \cite{wilson2011highest}. A simplicity criterion for Verma modules over $\T$ can be found in \cite[Theorem~7.1]{wilson2011highest}.

\subsection{Purely Takiff part of the center}\label{spureT}

The universal enveloping algebra $U(\T)$ is free as a module over its center $Z(\T)$, see \cite{geoffriau1994centre, geoffriau1995homomorphisme, futorny2005kostant}. In case $\g$ is of type $A$, algebraically independent generators of the center are given explicitly in \cite{molev1996casimir}.

\begin{proposition}
There is an isomorphism of algebras
\begin{equation} \label{equation:Takiff_center}
Z(\g) \cong Z(\T) \cap U(\bar{\g}) .
\end{equation}
\end{proposition}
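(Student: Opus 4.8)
The plan is to construct the isomorphism \eqref{equation:Takiff_center} by exploiting the algebra map $\varphi\colon\T\to\bar\g$ that is the identity on $\bar\g=\g\otimes x$ and sends $\g=\g\otimes 1$ to zero; note that $\varphi$ is a Lie algebra homomorphism precisely because $[\g,\bar\g]\subseteq\bar\g$ while $[\bar\g,\bar\g]=0$, so $\bar\g$ is an abelian Lie algebra and $\varphi$ is a $\g$-equivariant retraction. Passing to enveloping algebras we obtain an algebra map $U(\varphi)\colon U(\T)\tto U(\bar\g)=\Sym(\bar\g)$, which is moreover a morphism of $\g$-modules (for the adjoint action). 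Restricting $U(\varphi)$ to $U(\bar\g)\subseteq U(\T)$ gives the identity, so $U(\varphi)$ is split surjective. First I would analyze how $U(\varphi)$ interacts with the two centers. Since $\T=\g\ltimes\bar\g$ and $\bar\g$ is an ideal, $Z(\T)\subseteq U(\T)^{\g}$ (the $\g$-invariants for the adjoint action); in fact $z\in U(\T)$ is central iff it is $\T$-invariant iff it is both $\g$-invariant and $\bar\g$-invariant.

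Next, the key structural input is the PBW-type decomposition $U(\T)\cong U(\bar\g)\otimes_{\C}U(\g)$ as $\g$-modules (order the bar-generators last), which under $U(\varphi)$ corresponds to projecting $U(\bar\g)\otimes U(\g)$ onto $U(\bar\g)\otimes U(\g)_0=U(\bar\g)$, where $U(\g)_0=\C\cdot 1$ is the degree-zero part — but that is too crude; instead one uses that $U(\varphi)$ kills the augmentation ideal of $U(\g)$ inside $U(\T)$. I would then prove two things. \textbf{Injectivity of $U(\varphi)$ on $Z(\T)\cap U(\bar\g)$:} this is immediate, since $U(\varphi)$ is the identity on $U(\bar\g)$. \textbf{The image lands in $Z(\g)$ and the map is surjective onto $Z(\g)$:} here is the heart of the matter. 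On one hand, the $\g$-action on $\bar\g$ is isomorphic (as a $\g$-module) to the adjoint action on $\g$, so $\Sym(\bar\g)^{\g}\cong\Sym(\g)^{\g}\cong Z(\g)$ by the classical Harish–Chandra/Chevalley isomorphism (the last iso via the symmetrization map, valid since $\g$ is semisimple). So I need: $U(\varphi)(Z(\T)\cap U(\bar\g))=\Sym(\bar\g)^{\g}$. Since any $z\in Z(\T)\cap U(\bar\g)$ is $\g$-invariant and $U(\varphi)$ is $\g$-equivariant and identity on $U(\bar\g)$, we get $U(\varphi)(z)=z\in\Sym(\bar\g)^{\g}$, giving ``$\subseteq$''. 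For ``$\supseteq$'', given $p\in\Sym(\bar\g)^{\g}$ I must show $p$ is actually central in $U(\T)$, i.e. also commutes with $\g$ — but $[\g,p]=0$ is exactly $\g$-invariance of $p$, and $p$ commutes with $\bar\g$ because $\Sym(\bar\g)$ is commutative. Hence $p\in Z(\T)\cap U(\bar\g)$, and $U(\varphi)(p)=p$.

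This shows $U(\varphi)$ restricts to a bijection $Z(\T)\cap U(\bar\g)\xrightarrow{\ \sim\ }\Sym(\bar\g)^{\g}$, which is clearly an algebra isomorphism (restriction of an algebra map that happens to be the identity here); composing with the Chevalley isomorphism $\Sym(\bar\g)^{\g}\cong\Sym(\g)^{\g}\cong Z(\g)$ yields \eqref{equation:Takiff_center}. \textbf{The main obstacle} is verifying that $\g$-invariance in $\Sym(\bar\g)$ really does force centrality in the \emph{noncommutative} algebra $U(\T)$ — one must check there is no ``PBW reordering'' obstruction, i.e. that an element of $\Sym(\bar\g)$ sitting inside $U(\T)$ that is $\mathrm{ad}\,\g$-invariant genuinely satisfies $[\g,p]=0$ as an identity in $U(\T)$. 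This is fine because the adjoint action of $\g$ on $U(\T)$ is by derivations and preserves the subalgebra $U(\bar\g)$, so the bracket $[g,p]$ for $g\in\g$, $p\in U(\bar\g)$ computed in $U(\T)$ agrees with the action of $g$ on $p\in\Sym(\bar\g)$, which vanishes by hypothesis. One should also double-check the identification $\bar\g\cong\g$ of $\g$-modules is compatible with the bracket structure so that the classical Chevalley isomorphism applies verbatim; this is immediate from the definition $[v,\bar w]=\overline{[v,w]}$.
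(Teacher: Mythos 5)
Your core identification is the same as the paper's and is essentially correct: an element of $U(\bar{\g})$ automatically commutes with $\bar{\g}$ (commutativity), so it lies in $Z(\T)$ if and only if it is invariant under the adjoint action of $\g$; hence $Z(\T)\cap U(\bar{\g})=U(\bar{\g})^{\g}=\Sym(\bar{\g})^{\g}$, which is then matched with $Z(\g)=U(\g)^{\g}$ using the $\g$-module identification $\bar{\g}\cong\g$. The paper's one-line proof does exactly this, phrased as taking $\g$-invariants of the $\g$-module isomorphism $U(\g)\cong U(\bar{\g})$; your detour through $\Sym(\g)^{\g}$ and the Chevalley/Harish-Chandra isomorphism is only a cosmetic variation.

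However, one stated step is false: the map $\varphi\colon\T\to\bar{\g}$ that is the identity on $\bar{\g}$ and kills $\g$ is \emph{not} a Lie algebra homomorphism. For $v\in\g$ and $\bar{w}\in\bar{\g}$ one has $\varphi([v,\bar{w}])=\overline{[v,w]}$, which is generally non-zero, while $[\varphi(v),\varphi(\bar{w})]=[0,\bar{w}]=0$; equivalently, since $[\T,\T]=\T$ (as $[\g,\g]=\g$ and $[\g,\bar{\g}]=\bar{\g}$), every Lie algebra map from $\T$ to an abelian Lie algebra is zero, so no such retraction exists and $U(\varphi)$ is not defined. Fortunately this map is pure scaffolding: your two-sided argument that $Z(\T)\cap U(\bar{\g})=\Sym(\bar{\g})^{\g}$ uses only the inclusion $U(\bar{\g})\subseteq U(\T)$ and the fact that $\ad(\g)$ acts by derivations preserving $U(\bar{\g})$, so the proof survives once $\varphi$ is deleted (if a projection is wanted, one can use the $\g$-equivariant \emph{linear} projection along the decomposition $U(\T)=U(\bar{\g})\oplus U(\T)\g$, which is not an algebra map). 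A smaller caveat: the symmetrization $\Sym(\g)\to U(\g)$ is only a $\g$-module isomorphism, and its restriction to invariants is in general not an algebra homomorphism; the algebra isomorphism $\Sym(\g)^{\g}\cong Z(\g)$ should be justified by the Harish-Chandra and Chevalley restriction isomorphisms (which you also name), which is the same level of precision at which the paper's own proof operates.
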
 
\begin{proof}
This is clear since $U(\g) \cong U(\bar{\g})$ as $\g$-modules with respect to the adjoint action. By taking $\g$-invariants, we get (\ref{equation:Takiff_center}).
\end{proof}

It is easy to see that the isomorphism can be obtained by putting bars on all Lie algebra elements that appear in an expression in a fixed PBW-basis of elements from $Z(\g)$. Hence we denote the right-hand side of (\ref{equation:Takiff_center}) by $\overline{Z(\g)}$. This will be referred to as the \emph{purely Takiff part of the center} $Z(\T)$. The full center $Z(\T)$ is in general bigger than $\overline{Z(\g)}$, see \cite{molev1996casimir}.

\subsection{Universal modules}

Assume that the simple $\g$-module $L(\lambda)$ is finite-dimensional,
that is $\lambda \in \h^\ast$ is integral, dominant and regular. Define
\begin{equation*}
Q(\lambda) := \Ind_{\g}^{\T} L(\lambda) = U(\T) \tens{U(\g)} L(\lambda) \cong U(\bar{\g}) \tens{\C} L(\lambda).
\end{equation*}

Note that we have (see e.g. \cite[Proposition 6.5.]{knapp1988lie})
\begin{equation} \label{equation:tensor}
Q(\lambda) \cong Q(0) \tens{} L(\lambda),
\end{equation}
where we consider $L(\lambda)$ as a $\T$-module with the trivial $\bar{\g}$-action. 

\begin{proposition} \label{proposition:end_Q0}
We have the following isomorphism of algebras:
\begin{equation*}
\End(Q(0)) \cong \overline{Z(\g)}.
\end{equation*}
\end{proposition}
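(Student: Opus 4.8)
The plan is to exploit the tensor-identity $Q(0) \cong U(\bar{\g}) \tens{\C} \C_0$ together with the fact that $\bar{\g}$ is a commutative ideal, so that $U(\bar{\g})$ is a polynomial algebra $\Sym(\bar{\g})$ on which $\g$ acts by the (co)adjoint action. First I would observe that any $\T$-endomorphism $\varphi$ of $Q(0)$ is in particular a $\g$-endomorphism, hence is determined by where it sends the generating subspace $1 \otimes \C_0$: since this is the unique (up to scalar) trivial $\g$-submodule sitting in $\g$-degree zero of $U(\bar{\g})$ and $Q(0)$ is generated over $U(\bar{\g})$ by it, $\varphi(1) \in U(\bar{\g})^{\g} = \overline{Z(\g)}$ by the preceding proposition (here I use that $U(\bar\g)^\g$, the $\g$-invariants for the adjoint action, is exactly $\overline{Z(\g)}$, which follows from $Z(\g) = U(\g)^\g$ and the $\g$-module isomorphism $U(\g) \cong U(\bar\g)$). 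Conversely, multiplication by any element $\bar z \in \overline{Z(\g)}$ is a well-defined $\T$-endomorphism: it commutes with $\g$ because $\bar z$ is $\g$-invariant, and it commutes with $\bar{\g}$ because $U(\bar\g)$ is commutative.

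This gives mutually inverse algebra maps $\End(Q(0)) \to \overline{Z(\g)}$, $\varphi \mapsto \varphi(1\otimes 1)$, and $\overline{Z(\g)} \to \End(Q(0))$, $\bar z \mapsto (\,\cdot\, \bar z)$. Checking they are inverse to each other and that both are algebra homomorphisms is routine: the first map is multiplicative because if $\varphi(1) = \bar z_1$ and $\psi(1) = \bar z_2$ then $(\varphi\psi)(1) = \varphi(\bar z_2) = \bar z_2 \varphi(1) = \bar z_1 \bar z_2$, using again that $\varphi$ is $U(\bar\g)$-linear (it commutes with the $\bar\g$-action, hence with multiplication by $U(\bar\g)$). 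So I would present the argument as: (i) $Q(0)$ is free of rank one over $U(\bar\g)$ generated by $v_0 := 1\otimes 1$; (ii) a $\T$-endomorphism is the same as a $U(\bar\g)$-linear endomorphism that also commutes with $\g$, i.e. multiplication by a $\g$-invariant element of $U(\bar\g)$; (iii) conclude via $U(\bar\g)^\g = \overline{Z(\g)}$.

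The one genuinely non-formal point — the step I expect to be the main obstacle — is showing that a $\T$-module endomorphism of $Q(0)$ must be $U(\bar\g)$-linear and that its value on $v_0$ lands in the $\g$-invariants. For the latter: $\varphi$ being $\g$-equivariant sends the trivial $\g$-submodule $\C v_0$ into the trivial-$\g$-isotypic component of $Q(0)$, which by \eqref{equation:tensor} (or directly) is $U(\bar\g)^\g \otimes \C_0$, so $\varphi(v_0) = \bar z \cdot v_0$ for some $\bar z \in \overline{Z(\g)}$; then $U(\bar\g)$-linearity forces $\varphi(u \cdot v_0) = u\bar z \cdot v_0$ for all $u \in U(\bar\g)$, determining $\varphi$ entirely. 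One must be slightly careful that $\varphi(v_0)$ could a priori have components in positive $\g$-degree that are nonetheless $\g$-trivial, but since $\bar\g$ acts locally finitely and $\g$-decomposition is compatible with the $\Z_{\geq 0}$-grading by $\bar\g$-degree, no new trivial $\g$-summands appear beyond those in $\overline{Z(\g)}$; I would state this grading argument explicitly. Everything else is bookkeeping with the PBW theorem and the adjoint-action identification $U(\g) \cong U(\bar\g)$.
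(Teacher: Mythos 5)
Your argument is correct and follows essentially the same route as the paper: an endomorphism is determined by the image of the generator $1\otimes 1$, that image must lie in the trivial $\g$-isotypic component $U(\bar{\g})^{\g}=\overline{Z(\g)}$, and conversely multiplication by any element of $\overline{Z(\g)}$ commutes with both the adjoint $\g$-action and the $\bar{\g}$-action. The extra worry about higher $\bar{\g}$-degree trivial components is unnecessary, since the trivial isotypic component is by definition exactly $U(\bar{\g})^{\g}$, but it does no harm.
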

\begin{proof}
The module $Q(0)$ is generated by $1 \otimes 1$ by construction, so any 
endomorphism of $Q(0)$ is uniquely determined by the image of $1 \otimes 1$. 
Denote this image by $u \otimes 1$, for some $u \in U(\bar{\g})$. 
The element $u\otimes 1$ generates the trivial $\g$-submodule (since $1\otimes 1$ does), 
so $u$ must commute with $\g$. Of course, $u$ commutes with $\bar{\g}$. Hence 
$u \in Z(\T) \cap U(\bar{\g}) = \overline{Z(\g)}$.

Conversely, any $u \in \overline{Z(\g)}$, being central, defines an endomorphism 
of $Q(0)$. This endomorphism maps $1 \otimes 1$ to $u \otimes 1$.
The claim follows.
\end{proof}

For an algebra homomorphism $\chi \colon \overline{Z(\g)} \to \C$, 
consider the corresponding \emph{universal module} 
\begin{equation*}
Q(\lambda,\chi) := \quotient{Q(\lambda)}{\mathbf{m}_\chi Q(\lambda)},
\end{equation*}
where $\mathbf{m}_\chi$ is the maximal ideal in $\overline{Z(\g)}$ corresponding to $\chi$. 
On $Q(\lambda,\chi)$, the purely Takiff part of the center acts via the scalars
prescribed by $\chi$. Observe that from \eqref{equation:tensor}
and the right-exactness of tensor product we have
\begin{equation} \label{equation:tensor2}
Q(\lambda,\chi) \cong Q(0,\chi) \tens{} L(\lambda).
\end{equation}

For finite-dimensional simple $\g$-modules $L(\mu)$, $L(\nu)$ and $L(\lambda)$, denote by $l_{\nu,\lambda}^\mu$ the Littlewood-Richardson coefficient, i.e., the multiplicity of $L(\mu)$ in $L(\nu) \otimes L(\lambda)$.

\begin{proposition} \label{proposition:multiplicity}
\begin{enumerate}[(a)]
\item Let $\lambda$, $\chi$ be as before. The module $Q(\lambda,\chi)$ is a $\g$-Harish-Chandra module, and the multiplicities are given as follows:
\begin{equation} \label{equation:multiplicity}
[Q(\lambda,\chi) \colon L(\mu)] = \sum_{\nu} \dim L(\nu)_0  \cdot l_{\nu,\lambda}^\mu < \infty.
\end{equation}

\item \label{item:universal_property}
Let $V$ be any simple $\T$-module that has some finite-dimensional $L(\lambda)$ as a simple $\g$-submodule. Then $V$ is a quotient of $Q(\lambda,\chi)$ for a unique $\chi$. In particular, $V$ is a $\g$-Harish-Chandra module, and (\ref{equation:multiplicity}) gives an upper bound for the multiplicities of its $\g$-types.

\end{enumerate}
\end{proposition}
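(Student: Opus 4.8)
The plan is to prove both parts simultaneously, deriving (a) as a combinatorial consequence of the structure of $Q(\lambda,\chi)$ and (b) as the ``universal property'' of these modules. First I would establish (b), which is essentially a Frobenius reciprocity argument. Given a simple $\T$-module $V$ with a finite-dimensional $\g$-submodule $L(\lambda)$, pick a nonzero highest weight vector $v \in L(\lambda) \subseteq V$ of weight $\lambda$; the inclusion $L(\lambda) \hookrightarrow V|_\g$ induces, by $\Ind_\g^\T \dashv \operatorname{Res}^\T_\g$, a nonzero $\T$-homomorphism $Q(\lambda) = \Ind_\g^\T L(\lambda) \to V$, which is surjective since $V$ is simple. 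The module $Q(\lambda) \cong U(\bar\g) \tens{\C} L(\lambda)$ has $\End_\T(Q(\lambda)) \supseteq \overline{Z(\g)}$ acting, and since $V$ is simple, Schur's lemma (or rather: $\overline{Z(\g)}$ is commutative, finitely generated, acting on the simple module $V$, hence through a scalar homomorphism — here one must note $V$ is a weight module with the $\lambda|_\h$-weight space finite-dimensional, so the Dixmier-type argument or simply that $\overline{Z(\g)}$ commutes with the $\T$-action) forces $\overline{Z(\g)}$ to act on $V$ through a unique character $\chi$. Hence $\mathbf{m}_\chi$ annihilates $V$, so the surjection factors through $Q(\lambda)/\mathbf{m}_\chi Q(\lambda) = Q(\lambda,\chi) \tto V$. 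Uniqueness of $\chi$ is clear since $\chi$ is read off from the action on $V$. The $\g$-Harish-Chandra property of $V$ and the upper bound on multiplicities then follow from part (a) applied to $Q(\lambda,\chi)$, because a quotient can only have smaller multiplicities.

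For part (a), the key point is to compute the $\g$-module structure of $Q(\lambda,\chi)$. Using \eqref{equation:tensor2}, $Q(\lambda,\chi) \cong Q(0,\chi) \tens{\C} L(\lambda)$ as $\g$-modules, so by the definition of Littlewood-Richardson coefficients it suffices to show $Q(0,\chi) \cong \bigoplus_\nu L(\nu)^{\oplus \dim L(\nu)_0}$ as a $\g$-module, with all multiplicities finite. Now $Q(0,\chi) = Q(0)/\mathbf{m}_\chi Q(0)$ and $Q(0) \cong U(\bar\g) \tens{\C} \C = U(\bar\g)$ as a $\g$-module under the adjoint action, which is isomorphic to $U(\g)$ with the adjoint action. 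By the Kostant-type theorem (the PBW/harmonic decomposition, i.e. $U(\g) \cong Z(\g) \tens{\C} \mathcal{H}$ as $\g$-modules where $\mathcal{H}$ is the space of harmonics, and $[\mathcal{H}:L(\nu)] = \dim L(\nu)_0$), we get $U(\bar\g) \cong \overline{Z(\g)} \tens{\C} \bar{\mathcal{H}}$ as $\g$-modules; quotienting by $\mathbf{m}_\chi$ kills the $\overline{Z(\g)}$-factor down to $\C$, leaving $Q(0,\chi) \cong \bar{\mathcal{H}}$ as a $\g$-module, which has $[\bar{\mathcal{H}}:L(\nu)] = \dim L(\nu)_0 < \infty$. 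Tensoring with the finite-dimensional $L(\lambda)$ preserves finiteness of multiplicities and, via the Littlewood-Richardson rule, yields \eqref{equation:multiplicity}.

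The main obstacle, I expect, is the appeal to the Kostant separation-of-variables theorem in the precise form needed — namely that $U(\g) \cong Z(\g) \tens{\C} \mathcal{H}$ as adjoint $\g$-modules with $\mathcal{H}$ carrying each $L(\nu)$ with multiplicity exactly $\dim L(\nu)_0$ (the ``generalized exponents'' statement); this is classical (Kostant) but needs to be invoked carefully, and one must check it transports correctly along the bar isomorphism $U(\g) \cong U(\bar\g)$ and is compatible with the $\overline{Z(\g)}$-module structure so that the quotient by $\mathbf{m}_\chi$ behaves as claimed. The freeness of $U(\T)$ over $Z(\T)$ cited earlier, together with Proposition~\ref{proposition:end_Q0}, gives the ring-theoretic scaffolding, but the cleanest route is to stay on the level of $\g$-module decompositions where only Kostant's theorem for $\g$ itself is required. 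A secondary subtlety in (b) is justifying that $\overline{Z(\g)}$ acts by a scalar on the simple module $V$: since $\overline{Z(\g)}$ need not be all of $Z(\T)$ and $V$ need not have finite-dimensional weight spaces a priori, one should argue that $V$, being a quotient of $Q(\lambda)$, has its $\lambda|_\h$-weight space one-dimensional (Proposition~\ref{proposition:Vermas_general}-style reasoning, or directly from $Q(\lambda) \cong U(\bar\g)\tens{} L(\lambda)$ and that $\bar\g$ consists of non-positive... rather, that $\bar{\n}_+$ raises weights), on which the commuting algebra $\overline{Z(\g)}$ must act by scalars, and that this weight space generates $V$.
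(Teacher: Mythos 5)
Your proposal is correct and follows essentially the same route as the paper: part (a) is Kostant's theorem applied to $Q(0,\chi)\cong U(\bar{\g})/\mathbf{m}_\chi U(\bar{\g})$ (giving multiplicities $\dim L(\nu)_0$), followed by tensoring with $L(\lambda)$ via \eqref{equation:tensor2} and the Littlewood--Richardson rule, while part (b) is adjunction plus Schur's lemma. One minor caveat: your fallback remark in (b) that the $\lambda|_{\h}$-weight space of $Q(\lambda)$ (or of $V$) is one-dimensional is not true, since elements such as $\bar{h}$ and $\bar{f}\bar{e}$ have $\h$-weight zero and make that weight space infinite-dimensional; this is harmless, because your primary argument --- that $\overline{Z(\g)}\subseteq Z(\T)$ acts by $\T$-endomorphisms of the simple module $V$ and hence by scalars (the Dixmier version of Schur's lemma, as $U(\T)$ has countable dimension) --- is exactly what the paper's phrase ``Schur's Lemma by adjunction'' amounts to.
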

\begin{proof}
\begin{enumerate}[(a)]

\item Suppose first that $\lambda=0$. Then, as a $\g$-module, $Q(0)$ is isomorphic to $U(\g)$ with respect to the adjoint action. Taking the $\chi$-component of $Q(0)$ corresponds to factoring $U(\g)$ by the ideal generated by the corresponding central character of $Z(\g)$. From Kostant's theorem (see \cite[Section~3.1]{jantzen11983einhullende}), it follows that 
$Q(0)$ decomposes as direct sum of finite-dimensional $\g$-submodules, and that $[Q(0,\chi) \colon L(\mu)] = \dim L(\mu)_0$.
The general statement now follows from (\ref{equation:tensor2}).

Note that the value in (\ref{equation:multiplicity}) is finite, since, for fixed
$\mu$ and $\lambda$, the value $l^{\mu}_{\nu,\lambda}$ is non-zero only for finitely 
many $\nu$.

\item
This follows from Schur's Lemma by adjunction. \qedhere
\end{enumerate} 
\end{proof}

\begin{corollary} \label{corollary:uniqueness_with_0}
Given $\chi$, there exists a unique simple $\T$-module $V$ which contains $L(0)$ as a $\g$-submodule and has the Takiff part of the central character equal to $\chi$. Moreover, $V$ is a $\g$-Harish-Chandra module.

Furthermore, if $\chi$ does not correspond to the trivial $\T$-module, then $V$ is infinite-dimensional.
\end{corollary}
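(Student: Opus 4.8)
The plan is to deduce Corollary~\ref{corollary:uniqueness_with_0} from Proposition~\ref{proposition:multiplicity} together with the universal property in part~\eqref{item:universal_property}. First I would fix a character $\chi\colon\overline{Z(\g)}\to\C$ and observe, using the multiplicity formula \eqref{equation:multiplicity} with $\lambda=0$, that $[Q(0,\chi)\colon L(0)]=\dim L(0)_0=1$; that is, the trivial $\g$-type occurs in $Q(0,\chi)$ with multiplicity exactly one. This is the crucial numerical input: it forces $Q(0,\chi)$ to have a unique maximal submodule among those not containing the generating copy of $L(0)$.

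Next I would establish uniqueness. Suppose $V$ is a simple $\T$-module containing $L(0)$ as a $\g$-submodule on which $\overline{Z(\g)}$ acts through $\chi$. By Proposition~\ref{proposition:multiplicity}\eqref{item:universal_property}, $V$ is a quotient of $Q(0,\chi')$ for a unique $\chi'$; but since $\overline{Z(\g)}$ acts on $V$ via $\chi$ and $Q(0,\chi')$ has $\overline{Z(\g)}$ acting via $\chi'$, we get $\chi'=\chi$, so $V$ is a quotient of $Q(0,\chi)$. Because the generating vector $1\otimes 1$ of $Q(0,\chi)$ spans its unique copy of the trivial $\g$-type (multiplicity one), any proper submodule of $Q(0,\chi)$ must avoid $1\otimes 1$, hence the sum of all proper submodules is still proper. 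Thus $Q(0,\chi)$ has a unique maximal submodule and therefore a unique simple quotient; every simple $V$ as above coincides with this quotient. Existence is then immediate: this unique simple quotient of $Q(0,\chi)$ is itself such a module $V$, and it is a $\g$-Harish-Chandra module since $Q(0,\chi)$ is one by Proposition~\ref{proposition:multiplicity}(a) and the property of being $\g$-Harish-Chandra passes to subquotients (the multiplicities can only drop).

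For the last assertion, suppose $V$ is finite-dimensional. A finite-dimensional $\T$-module is annihilated by $\Nrad(\T)=\bar{\g}$, and more generally by the ideal of $U(\bar{\g})$ generated by the augmentation ideal of $\overline{Z(\g)}$ only in the trivial case; concretely, on a finite-dimensional module the commutative ideal $\bar{\g}$ acts by commuting nilpotent operators, so $\overline{Z(\g)}\subseteq U(\bar\g)$ acts through the augmentation character $\chi_0$ (the one giving the trivial module). Hence if $\chi\neq\chi_0$ then $V$ cannot be finite-dimensional, so $V$ is infinite-dimensional. I would phrase ``$\chi$ does not correspond to the trivial $\T$-module'' precisely as $\chi\neq\chi_0$.

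The main obstacle is the uniqueness argument, specifically pinning down that the multiplicity-one occurrence of $L(0)$ in $Q(0,\chi)$ really does make $1\otimes 1$ lie outside every proper submodule, so that a unique maximal submodule exists; once that is in place the rest is formal. A secondary point needing care is the finite-dimensionality claim: one must argue cleanly that $\bar\g$ acting by commuting nilpotent operators on a finite-dimensional module forces $\overline{Z(\g)}$ to act by the augmentation character, rather than merely that $\bar\g$ acts nilpotently.
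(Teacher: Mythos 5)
Your proposal is correct and follows essentially the same route as the paper: the multiplicity-one occurrence of $L(0)$ in $Q(0,\chi)$, together with the fact that this copy of $L(0)$ generates, gives a unique maximal submodule and hence a unique simple quotient, which by the universal property of Proposition~\ref{proposition:multiplicity}(\ref{item:universal_property}) must coincide with any simple module as in the statement and is automatically $\g$-Harish-Chandra. The only cosmetic difference is the final step: the paper observes that $\Nrad(\T)=\bar{\g}$ annihilates a simple finite-dimensional module, forcing $V\cong L(0)$ and hence the trivial character, while you deduce $\chi=\chi_0$ from the nilpotent action of $\bar{\g}$ (which indeed needs the simplicity, or a composition-series argument, to justify); both versions are valid.
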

\begin{proof}
By Proposition \ref{proposition:multiplicity}, the module $Q(0,\chi)$ has a unique occurrence of $L(0)$, and is generated by it. Therefore, the sum all its submodules not containing $L(0)$ as a composition factor is the unique maximal submodule; denote it by $N$. It follows that $V := \quotient{Q(0,\chi)}{N}$ is the unique simple quotient of $Q(0,\chi)$.

Suppose now that $V$ is finite-dimensional. The nilradical $\bar{\g}$ must act trivially on it. Because of simplicity, we must have $V=L(0)$, which is a contradiction.
\end{proof}

\begin{conjecture} \label{conjecture:Takiff_simple}
For a ``generic'' $\chi$, the module $Q(0,\chi)$ is simple.
\end{conjecture}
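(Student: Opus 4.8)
The plan is to show that for $\chi$ outside a proper closed subvariety of $\operatorname{Specm}\overline{Z(\g)}$, the universal module $Q(0,\chi)$ has no proper nonzero submodule. First I would reduce the question to a statement about the associated graded object. Filter $U(\bar\g)\cong\Sym(\g)$ (via $\bar v\mapsto v$) by degree; since $\bar\g$ is abelian this filtration is already the one on which $Z(\T)\cap U(\bar\g)=\overline{Z(\g)}=\Sym(\g)^{\g}$ acts. Thus $Q(0,\chi)$ is, as a $\g$-module, a deformation of the coordinate ring of the scheme-theoretic fibre $\pi^{-1}(\chi)$ of the adjoint quotient $\pi\colon \g^*\cong\g\to\g/\!/\g=\operatorname{Spec}\Sym(\g)^{\g}$. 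The key classical input is Kostant's theorem: for $\chi$ in the open dense set of \emph{regular} values, this fibre is a single regular coadjoint orbit, smooth and irreducible, and $\C[\pi^{-1}(\chi)]$ is a free module over $\C[\g/\!/\g]$ whose fibrewise dimension-count matches $[Q(0,\chi)\colon L(\mu)]=\dim L(\mu)_0$ from Proposition~\ref{proposition:multiplicity}. So the $\g$-module structure is already pinned down; what must be controlled is the full $\T=\g\ltimes\bar\g$-action.

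Next I would make the $\bar\g$-action explicit. On $Q(0,\chi)\cong \Sym(\g)/\mathbf{m}_\chi\Sym(\g)$ the element $\bar v$ acts by multiplication by $v$, while $v\in\g$ acts by the adjoint derivation. A proper nonzero $\T$-submodule $M$ would be a $\g$-stable, $\Sym(\g)^\g$-torsion-free (indeed $\chi$-typical) submodule of $\C[\pi^{-1}(\chi)]$ that is also stable under multiplication by all of $\g\subseteq\Sym(\g)$, i.e.\ an ideal of the ring $\C[\pi^{-1}(\chi)]$ which is $\g$-stable. For regular $\chi$ the fibre $\pi^{-1}(\chi)$ is a single orbit, so the only $\g$-stable (equivalently $G$-stable, as $G$ is connected) ideals of its coordinate ring are $0$ and the whole ring. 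Hence $M=0$ or $M=Q(0,\chi)$, and $Q(0,\chi)$ is simple. One then takes ``generic'' to mean ``$\chi$ a regular value of $\pi$'', a Zariski-open dense condition.

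The main obstacle I anticipate is precisely the last geometric claim at the level of the \emph{deformed} rather than graded object: a priori a $\T$-submodule of $Q(0,\chi)$ need not be homogeneous for the PBW filtration, so it is not literally an ideal in $\C[\pi^{-1}(\chi)]$ but only in its Rees/filtered version. I would handle this by passing to $\operatorname{gr}$: a nonzero $\T$-submodule $M$ gives a nonzero $\operatorname{gr}\T$-submodule $\operatorname{gr}M$ of $\operatorname{gr}Q(0,\chi)=\C[\pi^{-1}(\chi)]$, and $\operatorname{gr}M$ is a $\g$-stable ideal there (the action of $\bar v$ has degree $+1$ and survives to multiplication by $v$), forcing $\operatorname{gr}M=\C[\pi^{-1}(\chi)]$, hence $M=Q(0,\chi)$; a proper submodule would similarly have $\operatorname{gr}$ a proper ideal, which is impossible for a single orbit. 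A secondary technical point is checking that the scheme-theoretic fibre (not just the reduced orbit) is what appears and that it is reduced for regular $\chi$ — this is again part of Kostant's results on the adjoint quotient and the normality/smoothness of regular fibres, so I would simply cite \cite{jantzen11983einhullende} together with Kostant's original work. I would not expect the homological or functorial machinery of the rest of the paper to be needed here; this is essentially a statement about $\Sym(\g)$ and the adjoint action.
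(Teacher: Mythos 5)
Your core argument (your second paragraph) is correct, and it addresses something the paper does \emph{not} prove: the statement is left as a conjecture, and only its Takiff $\mathfrak{sl}_2$ instance is established in the paper (Theorem \ref{theorem:Q_simple}), by an explicit computation producing an element of $U(\T)$ that lowers the minimal $\g$-type (similarly for the Schr\"odinger case). Your route is genuinely different and uniform in $\g$: since $\bar{\g}$ is abelian, $U(\bar{\g})=\Sym(\bar{\g})$ on the nose, so $Q(0,\chi)$ \emph{is} the coordinate ring of the scheme-theoretic fibre $\pi^{-1}(\chi)$ of the adjoint quotient (reduced, irreducible and normal by Kostant), with $\bar{\g}$ acting by multiplication by the degree-one generators and $\g$ by the adjoint derivations. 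Hence any $\T$-submodule is an ideal stable under $\g$, equivalently under the adjoint group $G$ (the action is rational and $G$ is connected); if $\chi$ lies off the discriminant, i.e.\ the fibre consists of regular semisimple elements, the fibre is a single closed orbit, so by homogeneity and the Nullstellensatz the only $G$-stable ideals are $0$ and everything, giving simplicity. This is a precise and Zariski-dense reading of ``generic'', and the same reasoning shows it is essentially sharp: if the fibre is not a single orbit, the vanishing ideal of the complement of the regular orbit is a proper nonzero submodule. (Note that ``regular value of $\pi$'' is the wrong phrase: every fibre contains regular elements; what you need is that the fibre consists of \emph{regular semisimple} elements. For $\mathfrak{sl}_2$ this condition is exactly $\chi\neq 0$, matching Theorem \ref{theorem:Q_simple}; for higher rank the paper's explicit method and your geometric one are incomparable in style, yours being far less computational but silent at non-generic nonzero $\chi$.)

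Your third paragraph, however, should be deleted: the obstacle it raises does not exist, and the proposed fix is incorrect. There is no PBW or Rees subtlety, because $U(\bar{\g})$ is already the polynomial ring; a subspace stable under multiplication by the degree-one generators is stable under multiplication by all monomials in them, hence is an honest ideal — ideals need not be homogeneous, so no passage to the associated graded is required. Moreover, the identification $\mathrm{gr}\,Q(0,\chi)\cong\C[\pi^{-1}(\chi)]$ that your fix relies on is false for the degree filtration: the symbols of the relations $p_i-\chi(p_i)$ are the $p_i$ themselves, and by freeness of $\C[\g]$ over $\C[\g]^{\g}$ one gets $\mathrm{gr}\,Q(0,\chi)\cong\C[\mathcal{N}]$, the coordinate ring of the nilpotent cone, which has many $G$-stable ideals (those of orbit closures); so the argument ``$\mathrm{gr}$ of a proper submodule would be a proper $G$-stable ideal, impossible'' breaks down at the graded level. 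Fortunately, nothing in your direct argument uses this, so the proof stands once that paragraph is removed and ``generic'' is stated as above.
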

We will prove this conjecture for the Takiff $\mathfrak{sl}_2$ case in Section \ref{section:takiff_sl2}. We will also prove it for the Schr\"{o}dinger Lie algebra in Section \ref{section:schrodinger} (but, strictly speaking, it is not a special instance of the above conjecture). This is the starting point in our classification of $\g$-Harish-Chandra modules  for these Lie algebras.

\section{$\mathfrak{sl}_2$-Harish-Chandra modules for the Takiff $\mathfrak{sl}_2$}
\label{section:takiff_sl2}

\subsection{Setup}

For this section, we fix the Takiff Lie algebra associated to $\g := \mathfrak{sl}_2$:
\[ \T = \mathfrak{sl}_2 \tens{} \D = \mathfrak{sl}_2 \ltimes \mathfrak{\overline{sl}}_2. \]
We use the usual notation $f,h,e$ for the standard basis elements of $\mathfrak{sl}_2$, and $\bar{f}, \bar{h}, \bar{e}$ for their counterparts in the ideal $\mathfrak{\overline{sl}}_2$.

Our classification of simple $\g$-Harish-Chandra modules for the Takiff 
$\mathfrak{sl}_2$ should be, of course, deducible from the classification of 
all simple weight modules  given in \cite{bavula2017prime}. However, our
approach is completely different and, unlike the approach of \cite{bavula2017prime},
has clear potential for generalization to other Lie algebras. Also,
our description of simple $\g$-Harish-Chandra modules is much more explicit, and
it provides a connection to highest weight theory for $\T$ and utilizes the
use of analogues of projective functors for $\T$.

The center $Z(\T)$ is a polynomial algebra generated by two algebraically independent elements (see \cite{molev1996casimir}):
\begin{align} \label{equation:Z_T}
& C = \bar{h}h + 2\bar{f}e + 2\bar{e}f, \\
\nonumber & \bar{C} = \bar{h}^2 + 4\bar{f} \bar{e}.
\end{align}
The purely Takiff part of the center is, of course, $\overline{Z(\g)} = \C[\bar{C}]$. So, a homomorphism $\chi \colon \overline{Z(\g)} \to \C$ is uniquely determined by the value $\chi(\bar{C})$, which can be an arbitrary complex number. In the remainder, we write
$\chi$ for $\chi(\bar{C})$, for the sake of brevity.

\subsection{Universal modules}

We can describe $Q(0,\chi)$ very explicitly.

\begin{lemma} \label{lemma:structure_Q0}
\begin{enumerate}[(a)]
\item As $\T$-modules, we have $Q(0) \cong U(\bar{\g})$ and $Q(0,\chi) \cong \quotient{U(\bar{\g})}{(\bar{C}-\chi)}$, where $\g$ acts by the adjoint action, 
and $\bar{\g}$ by the left multiplication.

\item \label{item:basis} The set $\big\{\bar{f}^i \bar{h}^\epsilon \bar{e}^j \colon i,j \geq 0, \ \epsilon \in \{0,1\}\big\}$ is a basis for $Q(0,\chi)$.

\item As a $\g$-module, $Q(0,\chi) \cong \bigoplus_{k\geq 0} L(2k)$. Moreover, $\bar{e}^k$ is the highest weight vector in $L(2k)$.

\item $C$ acts as zero on $Q(0)$ and on every $Q(0,\chi)$.
\end{enumerate}
\end{lemma}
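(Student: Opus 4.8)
The plan is to prove the four parts of Lemma~\ref{lemma:structure_Q0} more or less in the order stated, since each relies on the previous one.

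\textbf{Part (a).} By definition $Q(0) = U(\T) \tens{U(\g)} L(0) \cong U(\bar{\g}) \tens{\C} \C$ as a vector space, via the PBW theorem applied to $\T = \g \ltimes \bar{\g}$. The $\bar{\g}$-action is by left multiplication because $\bar{\g}$ is an ideal and $\bar{\g}\cdot L(0) = 0$; the $\g$-action is the adjoint action on $U(\bar{\g})$ since for $v\in\g$, $w\in\bar{\g}$ one has $v\cdot(w\otimes 1) = [v,w]\otimes 1 + w\otimes(v\cdot 1) = [v,w]\otimes 1$, and $U(\bar\g)$ is a commutative algebra (as $\bar\g$ is abelian), so it is a polynomial algebra $\C[\bar f,\bar h,\bar e]$ on which $\g$ acts by derivations extending the adjoint action. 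Quotienting by $\mathbf{m}_\chi U(\bar\g) = (\bar C - \chi)$ gives $Q(0,\chi)$ by definition, using Proposition~\ref{proposition:end_Q0} and the fact that $\overline{Z(\g)} = \C[\bar C]$ from the setup of this section, so $\mathbf{m}_\chi = (\bar C - \chi)$.

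\textbf{Part (b).} Since $U(\bar\g) = \C[\bar f,\bar h,\bar e]$, a monomial basis is $\{\bar f^i \bar h^k \bar e^j\}$, and $\bar C - \chi = \bar h^2 + 4\bar f\bar e - \chi$ is monic of degree $2$ in $\bar h$, so dividing by it lets us reduce any power $\bar h^k$ with $k\ge 2$, leaving the reduced monomials $\bar f^i\bar h^\epsilon\bar e^j$ with $\epsilon\in\{0,1\}$. I would argue this is a basis by a standard filtration/Gröbner argument: the leading term of $\bar C-\chi$ (with respect to any monomial order making $\bar h^2$ leading, e.g. lex with $\bar h > \bar f,\bar e$) is $\bar h^2$, so the reduced monomials form a basis of the quotient.

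\textbf{Part (c).} Here I would combine part (b) with the $\g$-module structure. As a $\g$-module, $U(\bar\g) = \Sym(\bar\g) = \Sym(\g)$ (adjoint action), and for $\g = \mathfrak{sl}_2$ one has $\Sym^k(\g) \cong L(2k)\oplus L(2k-4)\oplus\cdots$; quotienting by $(\bar C-\chi)$, where $\bar C$ spans the copy of $L(0)$ in $\Sym^2(\g)$, kills exactly one trivial summand in each even symmetric power, so that the quotient becomes $\bigoplus_{k\ge 0}L(2k)$ — which also matches the multiplicity count $[Q(0,\chi):L(\mu)] = \dim L(\mu)_0$ from Proposition~\ref{proposition:multiplicity}(a) (equal to $1$ for $\mu = 2k$ and $0$ otherwise). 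Alternatively, and more cleanly given part (b): the weight of $\bar f^i\bar h^\epsilon\bar e^j$ under $\h$ is $(2j - 2i)\alpha/2$ in suitable normalization, the highest weight vectors are annihilated by $e\in\n_+$ (acting by the adjoint derivation $e\cdot\bar f = \bar h$, $e\cdot\bar h = -2\bar e$, $e\cdot\bar e = 0$), and a direct check shows $\bar e^k$ is a highest weight vector of weight $2k$ generating a copy of $L(2k)$; counting dimensions in each weight space against the basis from (b) confirms there are no other $\g$-types.

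\textbf{Part (d).} Finally, to see $C = \bar h h + 2\bar f e + 2\bar e f$ acts as zero on $Q(0)$: it suffices to check $C\cdot(1\otimes 1) = 0$, since $C$ is central and $1\otimes 1$ generates $Q(0)$. But $h,e,f\in\g$ annihilate $1\otimes 1 = 1\in L(0)$, hence each term $\bar h h$, $\bar f e$, $\bar e f$ kills $1\otimes 1$ (reading the action right-to-left: the rightmost factor from $\g$ acts first on $L(0)$ and gives zero). Therefore $C$ acts as zero on $Q(0)$, and consequently on the quotient $Q(0,\chi)$.

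I expect the only genuinely non-routine point to be part (b) — justifying carefully that the reduced monomials $\bar f^i\bar h^\epsilon\bar e^j$ really form a basis of the quotient and not merely a spanning set — but this is a standard fact about quotients of polynomial rings by a single element with invertible leading coefficient in one variable, so even this is light. Everything else is bookkeeping with the adjoint $\mathfrak{sl}_2$-action and the PBW theorem.
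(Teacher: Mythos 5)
Your proposal is correct and follows essentially the same route as the paper: (a) is immediate from PBW, (b) is the reduction modulo the relation $\bar h^2 = -4\bar f\bar e + \chi$ (your remark that the reduced monomials are genuinely independent, $U(\bar\g)/(\bar C-\chi)$ being free over $\C[\bar f,\bar e]$ with basis $1,\bar h$, is exactly the point), (c) rests on the Kostant-type multiplicity count of Proposition~\ref{proposition:multiplicity}(a) together with the observation that $\bar e^k$ has weight $2k$ and is killed by $e$, and (d) is the direct check on the generating vector $1\otimes 1$. The only caveat is your ``alternative'' argument for (c): comparing dimensions of individual $\h$-weight spaces cannot by itself pin down the $\g$-multiplicities, since every weight space of $Q(0,\chi)$ is infinite-dimensional (one would need to refine by the degree filtration, noting that the top symbol of $\bar C-\chi$ is $\bar C$), but this does not affect correctness because your primary argument for (c) already suffices.
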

\begin{proof}
The first claim is clear. The second one follows from the PBW basis in $U(\g)$ and the relation $\bar{h}^2 = -4\bar{f} \bar{e} + \chi$ in the quotient.

The decomposition in the third claim is given by Kostant's theorem, see 
\cite[Section~3.1]{jantzen11983einhullende}.  
Since $\bar{e}^k$ is of weight $2k$ and annihilated by $e$, 
it must be highest weight vector of a $\g$-submodule isomorphic to $L(2k)$, which, we know, occurs uniquely in $Q(0,\chi)$.

The last claim follows from the definitions by a direct calculation.
\end{proof}

The action of $\T$ on $U(\bar{\g})$ and its quotients will be denoted by $\circ$, in order not to confuse it with the multiplication $\cdot$ in the enveloping algebra. These coincide for $\bar{\g}$ but not for $\g$, where the action is adjoint. Note that $U(\bar{\g})$ is not closed under the left multiplication with the whole $\T$.

\begin{theorem} \label{theorem:Q_simple}
The module $Q(0,\chi)$ is simple if and only if $\chi \neq 0$.

The module $Q(0,0)$ has infinite length, and a $\T$-filtration whose composition factors are $L(0), L(2), L(4) \ldots$ with the trivial action of $\bar{\g}$. 
\end{theorem}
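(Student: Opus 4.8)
The plan is to analyze the action of $\mathfrak{T}$ on $Q(0,\chi) \cong \raisebox{.15em}{$U(\bar{\g})$}\left/\raisebox{-.15em}{$(\bar{C}-\chi)$}\right.$ explicitly in the basis $\{\bar{f}^i \bar{h}^\epsilon \bar{e}^j\}$ from Lemma~\ref{lemma:structure_Q0}\eqref{item:basis}, using the $\g$-decomposition $Q(0,\chi) \cong \bigoplus_{k\geq 0} L(2k)$ and the fact that $\bar{e}^k$ is a highest weight vector of the copy of $L(2k)$. The key observation is that a nonzero $\mathfrak{T}$-submodule $N$, being a $\g$-submodule, is a direct sum of some of the $L(2k)$'s; and since each multiplicity is one, $N$ is determined by the set $S = \{k : L(2k) \subseteq N\}$. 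Moreover $N$ contains a highest weight vector $\bar{e}^k$ for some minimal $k \in S$. So the whole question reduces to understanding, for the highest weight vector $\bar{e}^k$, which $L(2m)$'s are generated by it under the $\mathfrak{T}$-action — equivalently, computing the ``raising'' and ``lowering'' maps between consecutive $\g$-isotypic components induced by multiplication with $\bar{e}, \bar{h}, \bar{f}$.

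First I would pin down these transition maps. Applying $\bar{e}$ to $\bar{e}^k$ gives $\bar{e}^{k+1}$, the highest weight vector of $L(2k+2)$, so going \emph{up} in $\g$-type is always possible and free: from any $L(2k)$ we reach all $L(2m)$ with $m \geq k$. The subtle direction is going \emph{down}: I need to compute the component of $\bar{f} \circ v$ (and $\bar{h}\circ v$, $\bar{f}\circ v$ suitably) lying in $L(2k-2)$ when $v \in L(2k)$. Concretely, I would compute $\bar{f} \cdot \bar{e}^k$ in $U(\bar{\g})$, reduce modulo $(\bar{C}-\chi)$ using $\bar{h}^2 = -4\bar{f}\bar{e} + \chi$, and extract the lowest-weight-relevant piece; more efficiently, I would identify the $L(2k-2)$-highest-weight vector inside the $\g$-submodule $\bar{\g} \circ L(2k)$ and express it as a scalar (depending on $k$ and $\chi$) times $\bar{e}^{k-1}$. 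I expect this scalar to be of the form $c_k \chi + (\text{lower order in }\chi)$, or more likely a nonzero multiple of $\chi$ for all $k \geq 1$ — this is exactly the computation already implicit in Lemma~\ref{lemma:structure_Q0}(d) (that $C$ acts as zero) combined with the structure of $\bar{C}$.

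The argument then splits. If $\chi \neq 0$: the down-map from $L(2k)$ to $L(2k-2)$ is a nonzero scalar, hence an isomorphism of $\g$-modules (both are $L(2k)$... rather, the restriction lands isomorphically since multiplicities are one), so starting from \emph{any} $\bar{e}^k$ we can descend all the way to $L(0)$ and then ascend to everything; therefore $N = Q(0,\chi)$, proving simplicity. Conversely, if $\chi = 0$: the down-maps all vanish, so $\bigoplus_{k \geq m} L(2k)$ is a $\mathfrak{T}$-submodule for every $m$, giving the decreasing filtration $Q(0,0) \supsetneq \bigoplus_{k\geq 1}L(2k) \supsetneq \bigoplus_{k \geq 2}L(2k) \supsetneq \cdots$ with successive quotients $L(0), L(2), L(4), \dots$; and on each quotient $\bar{\g}$ acts by zero because the image of $\bar{\g} \circ L(2k)$ lies in $\bigoplus_{j \geq k+1} L(2j)$ (up-maps) together with the vanishing down-maps, so $\bar{\g}$ shifts degree strictly up and is zero on the associated graded. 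This also shows $Q(0,0)$ is not simple and has infinite length. The main obstacle is the explicit ``down-map'' computation: verifying cleanly that the $L(2k-2)$-component of $\bar{\g}\circ \bar{e}^k$ is a \emph{nonzero} multiple of $\chi$ (and $0$ when $\chi = 0$) for all $k \geq 1$. I would handle this either by a direct $\mathfrak{sl}_2$ weight-vector computation for small $k$ and then an inductive/representation-theoretic argument, or by invoking the Clebsch–Gordan decomposition $L(2)\otimes L(2k) \cong L(2k+2)\oplus L(2k)\oplus L(2k-2)$ to see that $\bar{\g}$ (which transforms as $L(2)$) maps $L(2k)$ into those three components, and then computing the single relevant structure constant against the known relation $\bar{h}^2 + 4\bar{f}\bar{e} = \chi$.
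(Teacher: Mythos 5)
Your overall strategy coincides with the paper's proof: use the multiplicity-free decomposition $Q(0,\chi)\cong\bigoplus_{k\geq 0}L(2k)$ with highest weight vectors $\bar{e}^k$, note that any $\T$-submodule is a direct sum of a subset of these isotypic components, observe that $\bar{e}$ always moves up, and reduce everything to whether one can move down from $L(2k)$ to $L(2k-2)$. The computation you defer as ``the main obstacle'' is exactly the paper's two-line calculation: since $\bar{h}\bar{e}^{k-1}=-\tfrac{1}{k}\,f\circ\bar{e}^k\in L(2k)$, both $\bar{f}\circ\bar{e}^k=\bar{f}\bar{e}^k$ and $\bar{h}\circ(\bar{h}\bar{e}^{k-1})=\bar{h}^2\bar{e}^{k-1}$ lie in $\bar{\g}\circ L(2k)$, and substituting $\bar{h}^2=\chi-4\bar{f}\bar{e}$ gives $(4k\bar{f}-\bar{h}f)\circ\bar{e}^k=4k\bar{f}\bar{e}^k+k\bar{h}^2\bar{e}^{k-1}=k\chi\,\bar{e}^{k-1}$, so for $\chi\neq 0$ the vector $\bar{e}^{k-1}$ itself lies in $\bar{\g}\circ L(2k)$, which is precisely what your ``down-map is a nonzero multiple of $\chi$'' requires. (Your aside that this is implicit in Lemma~\ref{lemma:structure_Q0}(d) is not right: the vanishing of $C$ gives nothing here; the relevant input is the relation $\bar{h}^2=\chi-4\bar{f}\bar{e}$, which you also name.) So the $\chi\neq 0$ half of your plan is sound and is the paper's argument phrased via projections rather than via an explicit element of $U(\T)$.

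The genuine gap is in the $\chi=0$ half. For the filtration $Q_m=\bigoplus_{k\geq m}L(2k)$ to have trivial $\bar{\g}$-action on its subquotients you need the vanishing not only of the down-components but also of the ``stay'' components: Clebsch--Gordan gives $L(2)\otimes L(2k)\cong L(2k+2)\oplus L(2k)\oplus L(2k-2)$, so a priori $\bar{\g}\circ L(2k)$ may have a nonzero component in $L(2k)$ itself, and your sketch simply asserts that the image lies in $\bigoplus_{j\geq k+1}L(2j)$ without justifying the absence of an $L(2k)$-component. This is true but needs an argument. The paper obtains it by first identifying $L(2k)$, for $\chi=0$, with the span of the degree-$k$ monomials $\bar{f}^i\bar{h}^\epsilon\bar{e}^j$ with $i+\epsilon+j=k$ (checked by computing the $f$-action), after which $\bar{\g}\circ L(2k)\subseteq L(2k+2)$ is immediate from $\bar{h}^2=-4\bar{f}\bar{e}$. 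Alternatively, the automorphism of $\T$ fixing $\g$ and sending $\bar{v}\mapsto-\bar{v}$ preserves the ideal $(\bar{C}-\chi)$, hence induces an operator on $Q(0,\chi)$ acting by $(-1)^k$ on $L(2k)$, and comparing signs forces the $L(2k)\to L(2k)$ component of the $\bar{\g}$-action to vanish for every $\chi$. With this step supplied, your plan goes through and essentially reproduces the paper's proof.
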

\begin{proof}
Assume $\chi \neq 0$, and let $V \subseteq Q(0,\chi)$ be non-zero submodule. Take $k$ to be the smallest non-negative integer such that $L(2k) \subseteq V$. If $k=0$, then $V = Q(0,\chi)$ since $L(0)$ generates $Q(0,\chi)$, and we are done. So, let us assume now $k \geq 1$. We have $\bar{e}^k \in V$, so if we find an element from $U(\T)$ that maps $\bar{e}^k \in V$ to $\bar{e}^{k-1}$, we will get a contradiction. That element can be taken as $\frac{1}{k\chi}(4k \bar{f} - \bar{h}f)$, namely:
\begin{align*}
(4k \bar{f} - \bar{h}f) \circ \bar{e}^k &= 4k\bar{f}\bar{e}^k - \bar{h}[f,\bar{e}^k] \\
&= 4k\bar{f}\bar{e}^k + k \bar{h}^2\bar{e}^{k-1} \\
&= 4k\bar{f}\bar{e}^k + k (-4\bar{f} \bar{e} + \chi)\bar{e}^{k-1} \\
&= k\chi \bar{e}^{k-1}.
\end{align*}
We conclude that $Q(0,\chi)$ is simple.

For the converse, assume $\chi = 0$. We will show that for any $k \geq 0$, the subspace $Q_k := \oplus_{t \geq k} L(2t)$ is a submodule. From this, the theorem will follow.

Let us first prove that $L(2k)$ is equal to the span of $\big\{\bar{f}^i \bar{h}^\epsilon \bar{e}^j \colon \epsilon \in \{0,1\},  i+\epsilon + j = k \big\}$. This set contains $\bar{e}^k$, so it is enough to see that it is stable under $f$. We calculate the two cases whether $\epsilon$ is $0$ or $1$ separately:
\begin{align*}
f \circ \bar{f}^i \bar{e}^j &= \bar{f}^i [f,\bar{e}^j] = - j \bar{f}^i \bar{h} \bar{e}^{j-1}, \\[.5em]
f \circ \bar{f}^i \bar{h} \bar{e}^j &= \bar{f}^i [f,\bar{h}]\bar{e}^j + \bar{f}^i \bar{h} [f,\bar{e}^j] \\
&= 2\bar{f}^{i+1} \bar{e}^j -j \bar{f}^i \bar{h}^2 \bar{e}^{j-1} \\
&= 2\bar{f}^{i+1} \bar{e}^j +4j \bar{f}^{i+1} \bar{e}^{j} \\
&= (4j+2)\bar{f}^{i+1} \bar{e}^j.
\end{align*}

From this description of $L(2k)$, one easily checks that $\bar{f}, \bar{h},\bar{e}$ map $L(2k)$ to $L(2k+2)$. From this, it follows that $Q_k$ is a submodule.
\end{proof}

\begin{remark}[A sketch of an alternative proof of simplicity of $Q(0,\chi)$ for $\chi\neq 0$]  \label{remark:simpllicity_Q0}
Suppose $V$ is a $\T$-submodule of $Q(0,\chi)$ containing $L(2k)$, with $k >0$ minimal. By applying $\bar{e}$, we see that, as a $\g$-module, $V \cong L(2k) \oplus L(2k+2) \oplus \ldots$. This implies that the quotient
\[ \quotient{Q(0,\chi)}{V} \cong L(0) \oplus L(2) \oplus \ldots \oplus L(2k-2) \]
is simple as a $\T$-module and is finite-dimensional. Since $\bar{C}$ consists of elements from the nilradical of $\T$, it must act as zero on this quotient. This is a contradiction with $\chi \neq 0$.
\end{remark}

To classify simple $\g$-Harish-Chandra modules, by (\ref{equation:tensor2}) and Proposition \ref{proposition:multiplicity}(\ref{item:universal_property}) we should find all simple quotients of all tensor products of $Q(0,\chi)$ with finite-dimensional $\g$-modules. It is not easy to do this directly, so we establish a connection with Verma modules, and perform calculations there.

\subsection{Verma modules}

Verma modules for the Takiff $\mathfrak{sl}_2$ are studied in detail in \cite{mazorchuk2019category}. Recall (\ref{equation:Verma_def}) and Proposition \ref{proposition:Vermas_general}. Also recall that $\tilde{\h} = \h \oplus \bar{\h}$ and $\tilde{\n}_\pm = \n_\pm \oplus \bar{\n}_\pm$. For a weight $\lambda \in \tilde{\h}^\ast = \h^\ast \oplus \bar{\h}^\ast$, we denote $\lambda_1 := \lambda(h)$ and $\lambda_2 := \lambda(\bar{h})$.
\begin{proposition}[{\cite[Proposition 1]{mazorchuk2019category} or \cite[Theorem 7.1]{wilson2011highest}}]
The Verma module $\Delta(\lambda)$ is simple if and only if $\lambda_2 \neq 0$.
\end{proposition}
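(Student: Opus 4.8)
The plan is to analyze the action directly on the PBW basis $\{\bar{f}^a\,f^b\,\bar{h}^c\cdot v_\lambda\}$ of $\Delta(\lambda) \cong U(\tilde{\n}_-)\otimes\C_\lambda$, where $v_\lambda$ is the highest weight vector and I have used that $\tilde{\n}_- = \n_- \oplus \bar{\n}_- = \C f \oplus \C\bar f \oplus \C\bar h$ as a vector space (note $\bar h \in \tilde{\h}$, so actually $\tilde{\n}_- = \C f \oplus \C\bar f$; the weight string is governed by $f$ and $\bar f$, both of $\h$-weight $-2$). First I would settle the easy direction: if $\lambda_2 = \lambda(\bar h) = 0$, then since $\bar{\g}$ annihilates the trivial module and $\bar h$ acts by $0$ on $v_\lambda$, one checks that the submodule generated by a suitable singular vector is proper — more concretely, when $\lambda_2=0$ the module $\Delta(\lambda)$ surjects onto the classical Verma module $\Delta^{\g}(\lambda_1)$ (inflate along $\T \tto \T/\bar{\g} \cong \g$), which is a proper nonzero quotient, so $\Delta(\lambda)$ is not simple. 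This handles $\Longleftarrow$ of the contrapositive.

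For the main direction ($\lambda_2 \neq 0 \Rightarrow$ simple), I would show that any nonzero submodule $V$ contains $v_\lambda$. The key computational tool is the same trick as in the proof of Theorem~\ref{theorem:Q_simple}: the element $\bar h$ acts invertibly (by the scalar $\lambda_2$) on the top weight space, and more generally one can produce explicit raising operators built from $\bar f e - (\text{something})\bar h$ analogues that climb back up the weight string whenever $\lambda_2 \neq 0$. Concretely, I would take a nonzero $w \in V$ of maximal $\h$-weight $\lambda_1 - 2m$, write it in the PBW basis, and argue by induction on $m$: apply $e$ and $\bar e$ to $w$ and use that $[e,\bar f\,]=\bar h$, $[e,f]=h$, together with the fact that $\bar h$ still acts by the nonzero scalar $\lambda_2$ on weight vectors of weight $\lambda_1$ (since $\bar h$ is central in $\tilde{\h}$ and $\bar{\g}$ is abelian, $\bar h$ acts "diagonally" on the whole string by $\lambda_2$ — this is the crucial place where $\lambda_2\neq0$ enters). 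This should force $m=0$, i.e.\ $w$ is a multiple of $v_\lambda$, whence $V=\Delta(\lambda)$.

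The main obstacle is bookkeeping the two commuting "lowering" directions $f$ and $\bar f$ simultaneously: the weight space of weight $\lambda_1 - 2m$ is $(m{+}1)$-dimensional, spanned by $\bar f^{\,j} f^{\,m-j} v_\lambda$ for $0 \le j \le m$, and I must show that the combined action of $\{e, \bar e, h, \bar h\}$ on this space has no nonzero vector killed by everything raising. The clean way to organize this is to exploit the $\bar{\g}$-action: since $\bar{\g}$ is an abelian ideal and $\bar h$ acts by $\lambda_2 \neq 0$, the operator $\bar e$ restricted to each weight space is (up to the scalar $\lambda_2$ and lower-order terms) a shift operator whose iterates eventually hit $v_\lambda$ — mirroring exactly the argument $(4k\bar f - \bar h f)\circ \bar e^k = k\chi\,\bar e^{k-1}$ from Theorem~\ref{theorem:Q_simple}, but now with $\chi$ replaced by $\lambda_2^2$ (or $\lambda_2$) and run in the opposite direction on the dual-numbers side. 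Once the correct "inverse" operators are pinned down, simplicity follows; alternatively, and perhaps most cheaply for a paper that cites it, one simply invokes \cite[Theorem 7.1]{wilson2011highest} or \cite[Proposition 1]{mazorchuk2019category} directly, as the proposition statement already does.
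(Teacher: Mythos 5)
The direction $\lambda_2=0\Rightarrow$ not simple is fine: your surjection onto the inflation of $\Delta^{\g}(\lambda_1)$ along $\T\tto\T/\bar{\g}\cong\g$ works (equivalently, $\bar f v_\lambda$ is a singular vector when $\lambda_2=0$, since $e\cdot\bar f v_\lambda=\bar h v_\lambda=\lambda_2 v_\lambda$). Note also that the paper itself offers no argument for this proposition at all --- it simply quotes \cite[Proposition 1]{mazorchuk2019category} and \cite[Theorem 7.1]{wilson2011highest} --- so your closing fallback ``just cite'' coincides with what the paper does, and there is no internal proof to compare your sketch against.

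The genuine gap is in the direction $\lambda_2\neq0\Rightarrow$ simple. Your reduction is sound (a nonzero submodule is an $\h$-weight module, a vector $w$ of maximal weight $\lambda_1-2m$ in it is killed by $e$ and $\bar e$, and the submodule it generates is proper), but the two facts you lean on to finish are not correct as stated, and the decisive computation is explicitly deferred (``once the correct inverse operators are pinned down''). First, $\bar h$ does \emph{not} act diagonally by $\lambda_2$: since $[\bar h,f]=-2\bar f$, one has $\bar h\cdot f^{i}\bar f^{j}v_\lambda=\lambda_2 f^{i}\bar f^{j}v_\lambda-2i\,f^{i-1}\bar f^{j+1}v_\lambda$ (this is exactly \eqref{equation:barh_Verma} in the paper); only a triangularity statement survives, and by itself it proves nothing about singular vectors. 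Second, $\bar e$ is not an injective ``shift'' on a weight space --- it kills $\bar f^{m}v_\lambda$ --- so ``iterating $\bar e$ eventually hits $v_\lambda$'' fails, and in any case reaching $v_\lambda$ is not the point; one must rule out a joint kernel of $e$ and $\bar e$ in weight $\lambda_1-2m$, $m\geq1$. That step is short but has to be done: from $\bar e\cdot f^{i}\bar f^{j}v_\lambda=i\lambda_2 f^{i-1}\bar f^{j}v_\lambda-i(i-1)f^{i-2}\bar f^{j+1}v_\lambda$ one sees that for $\lambda_2\neq0$ the kernel of $\bar e$ on the weight space $\lambda_1-2m$ is exactly $\C\,\bar f^{m}v_\lambda$, and by \eqref{equation:e_Verma} $e\cdot\bar f^{m}v_\lambda=m\lambda_2\bar f^{m-1}v_\lambda\neq0$, so no singular vector exists below the top; alternatively one can exhibit an explicit element of $U(\T)$ climbing back up, in the style of the proofs of Theorems \ref{theorem:Q_simple} and \ref{theorem:EA_Verma}. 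Without one of these computations (and with the false diagonal-action claim removed), your argument for the main implication does not go through.
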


The generators of the center $C$ and $\bar{C}$ act on the Verma module $\Delta(\lambda)$ as the scalars $\lambda_2(\lambda_1+2)$ and $\lambda_2^2$ respectively, see (\ref{equation:Z_T}). Therefore, with our convection, $\chi = \lambda_2^2$.

\begin{lemma} \label{lemma:Verma_central_ch}
Non-isomorphic Verma modules $\Delta(\lambda)$ and $\Delta(\lambda')$ have the same central character if and only if either $\lambda'_2=\lambda_2=0$, or $\lambda'_2=-\lambda_2 \neq 0$ and $\lambda'_1 = -\lambda_1-4$.
\end{lemma}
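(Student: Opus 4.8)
The plan is to compare central characters by evaluating the two known central elements $C$ and $\bar C$ on the respective Verma modules and matching the resulting scalars. From the paragraph preceding the statement, on $\Delta(\lambda)$ the element $\bar C$ acts by $\lambda_2^2$ and $C$ acts by $\lambda_2(\lambda_1+2)$; similarly $\bar C$ acts on $\Delta(\lambda')$ by $(\lambda'_2)^2$ and $C$ acts by $\lambda'_2(\lambda'_1+2)$. Since $Z(\T)=\C[C,\bar C]$ (stated in \eqref{equation:Z_T} and the surrounding discussion), two Verma modules have the same central character if and only if both pairs of scalars agree, i.e.
\begin{equation*}
(\lambda'_2)^2=\lambda_2^2 \quad\text{and}\quad \lambda'_2(\lambda'_1+2)=\lambda_2(\lambda_1+2).
\end{equation*}
So the whole statement reduces to solving this system of two polynomial equations.

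The first equation forces $\lambda'_2=\pm\lambda_2$, and I would split into the two cases. If $\lambda'_2=\lambda_2$: when $\lambda_2\neq 0$ the second equation gives $\lambda'_1+2=\lambda_1+2$, hence $\lambda'_1=\lambda_1$ and $\lambda=\lambda'$, which is excluded since we assume the Vermas are non-isomorphic; when $\lambda_2=0$ we get $\lambda'_2=\lambda_2=0$ and the second equation is automatically $0=0$, so any such pair works — this is the first alternative in the statement. If $\lambda'_2=-\lambda_2$ with $\lambda_2\neq 0$: the second equation becomes $-\lambda_2(\lambda'_1+2)=\lambda_2(\lambda_1+2)$, and cancelling $\lambda_2\neq 0$ yields $\lambda'_1+2=-(\lambda_1+2)$, i.e. $\lambda'_1=-\lambda_1-4$ — the second alternative. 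The remaining subcase $\lambda'_2=-\lambda_2$ with $\lambda_2=0$ collapses into the first alternative. Conversely, in each of the two listed alternatives one checks directly that both central scalars coincide, so the central characters agree.

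The one point requiring a word of care is the direction "same central character $\Rightarrow$ the scalars of $C$ and $\bar C$ coincide": this uses that $C$ and $\bar C$ generate $Z(\T)$, which is exactly the cited fact from \cite{molev1996casimir} recorded in \eqref{equation:Z_T}. I would also note explicitly that in the case $\lambda_2=\lambda'_2=0$ the two Verma modules genuinely can be non-isomorphic (different $\lambda_1$), so the first alternative is not vacuous. Beyond that, the argument is an elementary case analysis, so I do not anticipate a real obstacle; the only mild subtlety is bookkeeping the overlap between the two sign cases when $\lambda_2=0$ so that the final "if and only if" is stated without redundancy, matching the formulation in the lemma.
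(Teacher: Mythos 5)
Your proposal is correct and follows the same route as the paper: the paper's proof likewise reduces the statement to the system $\lambda_2(\lambda_1+2)=\lambda'_2(\lambda'_1+2)$, $\lambda_2^2=(\lambda'_2)^2$ coming from the action of the central generators $C$ and $\bar C$, and then solves it. Your additional care about the case split and about why equality of central characters is equivalent to equality of the two scalars (since $Z(\T)=\C[C,\bar C]$) only fills in details the paper leaves to the reader.
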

\begin{proof}
From the explicit description of generators of the center, we get a system of equations
\[ \begin{cases} \lambda_2(\lambda_1+2)=\lambda_2'(\lambda_1'+2) \\ \lambda_2^2 = (\lambda_2')^2 \end{cases}. \]
which is easily solved.
\end{proof}

Denote by $\Delta^\g(\mu) = U(\g) \tens{U(\bb)} \C_\mu$ the classical Verma module for $\g$ with highest weight $\mu \in \C$, and by $P^\g(\mu)$ its indecomposable projective cover in the category $\mathcal{O}$ for $\g$. Recall that, if $\mu \in \Z_{\geq 0}$, $P^\g(-\mu-2)$ is the unique non-trivial extension of $\Delta^\g(-\mu-2)$ by $\Delta^\g(\mu)$, and that there are no extensions between other $\Delta^\g$'s (inside category $\mathcal{O}$).

\begin{lemma} \label{lemma:structure_Verma}
As a $\g$-module, $\Delta(\lambda)$ has a filtration with subquotients isomorphic to $\Delta^\g(\lambda_1 - 2k)$, $k = 0, 1, 2, \ldots$.

If $\lambda_2 = 0$ or $\lambda_1 \not\in \Z_{\geq 0}$, then, as a $\g$-module, we have
\[ \Delta(\lambda) \cong \bigoplus_{k \geq 0} \Delta^\g(\lambda_1 - 2k). \]
Otherwise (i.e. if $\lambda_2 \neq 0$ and $\lambda_1 \in \Z_{\geq 0}$), we have, 
as $\g$-modules,
\[ \Delta(\lambda) \cong \begin{cases} \displaystyle \bigoplus_{k =1}^{\frac{\lambda_1}{2}+1} P^\g(-2k) \oplus \bigoplus_{k \geq 2} \Delta^\g(-\lambda_1 - 2k) &\colon \lambda_1 \text{ even}, \\
\displaystyle \Delta^\g(-1)\oplus \bigoplus_{k =1}^{\frac{\lambda_1+1}{2}} P^\g(-2k-1) \oplus \bigoplus_{k \geq 2} \Delta^\g(-\lambda_1 - 2k) &\colon \lambda_1 \text{ odd}.  \end{cases}
\]
\end{lemma}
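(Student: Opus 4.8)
The plan is to analyze $\Delta(\lambda)$ as a $\g$-module directly from the PBW description $\Delta(\lambda) \cong U(\tilde{\n}_-) \tens{\C} \C_\lambda = U(\bar{\n}_-) \otimes U(\n_-) \otimes \C_\lambda$, where $\bar{\n}_- = \C\bar{f}$ is the $x$-component and $\n_- = \C f$. Since $\bar{\g}$ is a commutative ideal acting on $\Delta(\lambda)$ and $\bar{\n}_+ = \C\bar{e}$, $\bar{\h} = \C\bar{h}$ act on the highest weight line by $0$ and $\lambda_2$ respectively, the key structural observation is that $U(\bar{\n}_-) = \C[\bar{f}]$ acts freely and the $\g$-module structure on $\Delta(\lambda)$ is obtained by letting $\g$ act on $\bar{f}^k \otimes (\text{h.w. line})$ via the adjoint action combined with left multiplication. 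Concretely, I would filter $\Delta(\lambda)$ by the $\g$-submodules $F_m$ generated (over $U(\g)$) by $\{\bar{f}^k v_\lambda : k \geq m\}$, observe $\bar{f}^m v_\lambda$ has $\h$-weight $\lambda_1 - 2m$ and is killed by $\n_+ \cap \g = \C e$ modulo $F_{m+1}$ (because $[e,\bar{f}] = \bar{h}$ lands in lower-$\bar{f}$-degree — wait, one must check carefully: $e \cdot \bar{f}^m v_\lambda$ involves $\bar{h}$, hence stays in the same $\bar{f}$-degree, so one filters instead by $\bar{f}$-degree and gets that the associated graded piece in degree $m$ is a $\g$-module generated by a highest weight vector of weight $\lambda_1 - 2m$, i.e. a quotient of $\Delta^\g(\lambda_1 - 2m)$, and by PBW dimension count it is exactly $\Delta^\g(\lambda_1 - 2m)$). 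This gives the first claim — the $\Delta^\g$-flag with subquotients $\Delta^\g(\lambda_1 - 2k)$, $k \geq 0$.

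For the splitting statements, the strategy is to combine this flag with the central-character / $\Ext$-vanishing facts recalled just before the lemma. When $\lambda_2 = 0$ the action of $\bar{\g}$ on $\Delta(0\text{-part})$ is nilpotent in a way that decouples the degrees; more precisely, when $\lambda_2 = 0$ or $\lambda_1 \notin \Z_{\geq 0}$, no two of the weights $\lambda_1 - 2k$ are linked in the sense of the BGG category $\mathcal{O}$ for $\g$ (linkage would require $\lambda_1 - 2k$ and $\lambda_1 - 2l$ to be $W \cdot$-conjugate, i.e. $\lambda_1 - 2k = -(\lambda_1 - 2l) - 2$, forcing $\lambda_1 \in \Z_{\geq 0}$; and when $\lambda_2 = 0$ one argues separately that $\bar{f}$ acting on $\Delta(\lambda)$ is a $\g$-module map shifting degree which, together with $\bar{e}, \bar{h}$ acting as $0$ on appropriate pieces, exhibits the direct sum splitting explicitly by producing $\g$-module projections). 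Since $\Ext^1_{\mathcal O}(\Delta^\g(\mu), \Delta^\g(\nu)) = 0$ whenever $\mu, \nu$ are not linked with $\nu = -\mu - 2$, $\nu$ dominant, the $\g$-flag splits into the asserted direct sum $\bigoplus_k \Delta^\g(\lambda_1 - 2k)$.

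In the remaining case $\lambda_2 \neq 0$, $\lambda_1 \in \Z_{\geq 0}$, the weights $\lambda_1 - 2k$ that are linked are precisely the pairs $\{\lambda_1 - 2k, -\lambda_1 - 2 + 2k\}$; enumerating which $\Delta^\g(\lambda_1 - 2k)$ and $\Delta^\g(-\lambda_1 - 2 - 2j)$ appear in the flag and pairing them up shows that the linked pairs assemble, for $k$ in the stated range, into $P^\g(-2k)$ or $P^\g(-2k-1)$ (the unique non-split self-extension of the $\mathcal O$-block), while the unpaired tail $\Delta^\g(-\lambda_1 - 2k)$ for large $k$, and the exceptional singleton $\Delta^\g(-1)$ in the odd case, split off. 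To see that the extensions are genuinely non-split (so that $P^\g$ and not $\Delta^\g \oplus \Delta^\g$ occurs), I would use that $\Delta(\lambda)$ is simple for $\lambda_2 \neq 0$ (by the cited Proposition), hence indecomposable as a $\T$-module, and track a specific vector: $f^{\lambda_1+1} v_\lambda$ is a singular vector for $\g$ of weight $-\lambda_1 - 2$, but $\bar{e} \cdot v_\lambda$ or an explicit element of $U(\bar{\g})U(\g)$ maps the top $\Delta^\g(\lambda_1)$ onto the copy of $\Delta^\g(-\lambda_1-2)$, witnessing the non-split extension inside the $\g$-socle picture. The main obstacle I anticipate is exactly this bookkeeping: getting the index ranges of the summations right and verifying the non-splitness uniformly in the even/odd parity of $\lambda_1$, since the "half-integer shift" in the odd case (responsible for the stray $\Delta^\g(-1)$) makes the pairing of flag subquotients slightly asymmetric and easy to miscount by one.
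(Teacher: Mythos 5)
The first claim and the two ``split'' cases of your proposal follow the same route as the paper: filter by the $\bar{f}$-degree, identify the graded pieces with $\Delta^\g(\lambda_1-2k)$, note that for $\lambda_2=0$ the layers are honest $\g$-submodules, and for $\lambda_1\notin\Z_{\geq 0}$ invoke the vanishing of $\Ext^1$ between the relevant $\g$-Vermas. Your mid-sentence worry about $[e,\bar f]=\bar h$ resolves harmlessly: since $\bar\g$ is abelian and $\bar h v_\lambda=\lambda_2 v_\lambda$, one gets $e\cdot \bar f^m v_\lambda = m\lambda_2\,\bar f^{m-1}v_\lambda$, which lowers the $\bar f$-degree, so the associated graded pieces are indeed highest weight modules and, by a weight count, exactly $\Delta^\g(\lambda_1-2m)$.

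The genuine gap is in the remaining case $\lambda_2\neq 0$, $\lambda_1\in\Z_{\geq 0}$. You identify the linked pairs correctly, but your justification of non-splitness does not work. Simplicity (hence indecomposability) of $\Delta(\lambda)$ as a $\T$-module says nothing about how its restriction to $\g$ decomposes --- the restriction does decompose, and the question is only whether each linked pair of $\g$-Verma subquotients glues into $P^\g$. Likewise, exhibiting elements of $U(\bar\g)U(\g)$ (or $\bar f$, $\bar e$) that carry one layer onto another is a statement about the $\T$-action, which the $\g$-module structure cannot see: already for $\lambda_2=0$ and $\lambda_1\in\Z_{\geq 0}$ the element $\bar f$ maps each layer $\g$-equivariantly onto the next, yet the restriction splits completely, so such maps cannot ``witness'' a non-split $\g$-extension. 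The vector $f^{\lambda_1+1}v_\lambda$ you propose to track is just the singular vector generating the socle of the bottom flag piece $U(\g)v_\lambda$; it is present in both the split and the non-split scenario and decides nothing (also note $\bar e\cdot v_\lambda=0$). What is actually needed, and for every $\mu\in\{0,1,\dots,\lambda_1\}$ of the same parity as $\lambda_1$ (not only the outermost pair), is to show that no $\g$-singular vector of weight $-\mu-2$ hits the highest weight vector of the flag subquotient $\Delta^\g(-\mu-2)$, equivalently that the space of singular vectors of that weight is only one-dimensional. This requires a concrete computation with the formula $e\cdot f^i\bar f^j v_\lambda = i(\lambda_1-i-2j+1)f^{i-1}\bar f^j v_\lambda + j\lambda_2 f^i\bar f^{j-1}v_\lambda$: the paper writes down the matrix of $e$ from the weight space of weight $-\mu-2$ (spanned by $f^i\bar f^{t-i}v_\lambda$ with $t=\frac{\lambda_1+\mu}{2}+1$) to the weight space of weight $-\mu$, observes it is bidiagonal with all entries non-zero except a single super-diagonal one, and concludes that every vector killed by $e$ has zero coefficient at $\bar f^{t}v_\lambda$, so the extension cannot split. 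Your proposal omits precisely this step, which is the mathematical heart of the third case; the index bookkeeping you flag as the main obstacle is routine once this is established.
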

\begin{proof}
Denote by $v_\lambda$ a basis element of $\C_\lambda$. Then $\Delta(\lambda)$ has a basis of weight vectors $\{ f^i \bar{f}^j v_\lambda \colon i,j \geq 0\}$. A direct computation (with help of \cite[Lemma~21.2]{humphreys1978introduction} and its Takiff analogue, alternatively use \cite[Lemma~2.1]{cai2016whittaker}) shows that
\begin{align}
\label{equation:e_Verma} e \cdot f^i \bar{f}^j v_\lambda &= [e , f^i] \bar{f}^j v_\lambda +  f^i [e,\bar{f}^j ] v_\lambda \\
\nonumber &= i(\lambda_1 - i -2j +1) f^{i-1} \bar{f}^j v_\lambda + j \lambda_2 f^i \bar{f}^{j-1} v_\lambda.
\end{align}
This implies that the required filtration is given by the degree of $\bar{f}$. The subquotients are given by the span of $\{ f^i \bar{f}^k v_\lambda \colon i \geq 0\}$, which is clearly isomorphic to $\Delta^\g(\lambda_1 - 2k)$.

If $\lambda_2 =0$, it is clear that the span of $\{ f^i \bar{f}^k v_\lambda \colon i \geq 0\}$, $k$ fixed, is a $\g$-submodule. If $\lambda_1 \not\in \Z_{\geq 0}$, then there are no possible non-trivial extensions between $\Delta^\g(\lambda_1 - 2k)$, $k \geq 0$, hence $\Delta(\lambda)$ splits into as a direct sum of these.

Suppose now $\lambda_2 \neq 0$ and $\lambda_1 \in \Z_{\geq 0}$. Fix $\mu \in \{0,1,\ldots,\lambda_1\}$ of the same parity as $\lambda_1$. It is enough to show that $\Delta^\g(-\mu - 2)$ is not a $\g$-submodule of $\Delta(\lambda)$. Suppose it is. Its highest weight vector $v_{-\mu-2}$ must be a non-trivial linear combination of $f^i \bar{f}^j v_\lambda$ with $i+ j = \frac{\lambda_1+\mu}{2}+1 =:t$, with a non-zero coefficient by $\bar{f}^{t} v_\lambda$.

From (\ref{equation:e_Verma}) it follows that the matrix of $e$ in bases $f^{i} \bar{f}^{t-i} v_\lambda$, $i=0,\ldots,t$, and $f^{i} \bar{f}^{t-1-i} v_\lambda$, $i=0,\ldots,t-1$, has the form
\[ \begin{pmatrix}
\ast & \ast & 0    & \ldots & 0 & 0 & 0      \\
0 & \ast & \ast    & \ldots & 0 & 0 & 0      \\
0 & 0 & \ast       & \ddots & \vdots & \vdots & \vdots \\
\vdots & \vdots & \vdots & \ddots &  \ast & 0 & 0 \\
0 & 0 & 0 & \ldots &  \ast & \ast & 0 \\
0 & 0 & 0 & \ldots &  0 & \ast & \ast
\end{pmatrix}, \]
with all $\ast$ non-zero, except the one on the position $(\mu+1,\mu+2)$, where we have zero (because the bracket in (\ref{equation:e_Verma}) is zero for $f^{\mu+1} \bar{f}^{\left(\frac{\lambda_1-\mu}{2} \right)} v_\lambda$). From this, it follows that $e$ cannot annihilate $v_{-\mu-2}$, a contradiction.
\end{proof}

\begin{lemma} \label{lemma:verma_otimes_fin}
For $\lambda_2 \neq 0$ and $\mu \in \Z_{\geq 0}$ there is an isomorphism of $\T$-modules 
\[ \Delta(\lambda) \tens{} L(\mu) \cong \Delta(\lambda_1+\mu,\lambda_2) \oplus \Delta(\lambda_1+\mu-2,\lambda_2) \oplus \ldots \oplus \Delta(\lambda_1-\mu,\lambda_2). \]
\end{lemma}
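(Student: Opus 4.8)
The plan is to realize both sides as $\g$-modules first, observe they agree, and then upgrade the comparison to a $\T$-module statement using the center and the universal property of Verma modules. First I would tensor the decomposition from Lemma~\ref{lemma:structure_Verma} (valid since $\lambda_2\neq 0$, but note the cases $\lambda_1\in\Z_{\geq 0}$ and $\lambda_1\notin\Z_{\geq 0}$ both occur here) by $L(\mu)$ and use the Clebsch--Gordan rule $\Delta^\g(\nu)\tens{}L(\mu)\cong\bigoplus_{j=0}^{\mu}\Delta^\g(\nu-\mu+2j)$ — which holds for \emph{all} $\nu\in\C$ because $\Delta^\g(\nu)$ has a $\Delta^\g$-flag as an $\h$-free module — to conclude that, as $\g$-modules, $\Delta(\lambda)\tens{}L(\mu)$ has a $\Delta^\g$-flag with exactly the same multiset of subquotients as the proposed right-hand side. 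This already shows the $\h$-supports and $\g$-characters match.

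Next I would produce the actual $\T$-module map. Since $\lambda_2\neq 0$, the highest weight $\lambda$ of $\Delta(\lambda)$ gives, after tensoring, a weight vector $v_\lambda\otimes w_\mu$ (with $w_\mu$ the highest weight vector of $L(\mu)$) of weight $(\lambda_1+\mu,\lambda_2)$ that is annihilated by $\tilde{\n}_+=\n_+\oplus\bar{\n}_+$: indeed $e$ and $\bar e$ kill $v_\lambda$, and they send $w_\mu$ into lower weight spaces of $L(\mu)$, whose tensor with $v_\lambda$ one controls — more precisely I would argue by weight considerations that the only weight $(\lambda_1+\mu,\lambda_2)$ vector is $v_\lambda\otimes w_\mu$ up to scalar, and a $\tilde{\n}_+$-singular vector of that weight must be it. By the universal property this yields a nonzero homomorphism $\Delta(\lambda_1+\mu,\lambda_2)\to\Delta(\lambda)\tens{}L(\mu)$, and since the target Verma is simple (Proposition, $\lambda_2\neq 0$) this map is injective. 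Iterating: inside $\Delta(\lambda)\tens{}L(\mu)$ I would locate singular vectors of weights $(\lambda_1+\mu-2i,\lambda_2)$ for $i=0,\dots,\mu$ — these exist by comparing $\h$-characters and noting each such weight space, after quotienting by the submodules already found, still contains a $\tilde{\n}_+$-killed vector — giving injections $\Delta(\lambda_1+\mu-2i,\lambda_2)\hookrightarrow\Delta(\lambda)\tens{}L(\mu)$ whose images have zero pairwise intersection (distinct simple submodules). Their sum is then a direct sum, and by the $\g$-character count from the first paragraph it exhausts $\Delta(\lambda)\tens{}L(\mu)$.

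The main obstacle I anticipate is the bookkeeping in the second step: verifying that all $\mu+1$ singular vectors genuinely exist and are linearly independent, rather than just matching characters. A clean way around a brute-force computation is to use central characters: by Lemma~\ref{lemma:Verma_central_ch}, the Verma modules $\Delta(\lambda_1+\mu-2i,\lambda_2)$ for $i=0,\dots,\mu$ have \emph{pairwise distinct} central characters (their $C$-eigenvalues $\lambda_2(\lambda_1+\mu-2i+2)$ are distinct since $\lambda_2\neq 0$, while $\bar C$ acts by the common scalar $\lambda_2^2$). Hence $\Delta(\lambda)\tens{}L(\mu)$ splits as a direct sum over these central characters, and on each summand the $\g$-character computation of the first paragraph pins down that summand to be exactly one $\Delta(\lambda_1+\mu-2i,\lambda_2)$ (it has a $\Delta^\g$-flag with the right subquotients, one-dimensional top weight space by Proposition~\ref{proposition:Vermas_general}, and a singular vector of the correct weight, forcing a surjection from that Verma which must be an isomorphism by the flag length). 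This reduces the whole problem to the already-established $\g$-module character identity plus elementary central-character separation, avoiding any explicit manipulation of Clebsch--Gordan coefficients inside $U(\T)$.
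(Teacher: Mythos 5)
Your overall strategy (Clebsch--Gordan pattern of highest weights plus separation by central characters, using that $C$ acts on $\Delta(\nu,\lambda_2)$ by $\lambda_2(\nu+2)$ and $\bar C$ by $\lambda_2^2$) is the same as the paper's, and that second half is correct. The genuine gap is that you never establish the one fact the paper's proof actually rests on: that $\Delta(\lambda)\tens{}L(\mu)$, \emph{as a $\T$-module}, has a finite filtration whose subquotients are exactly the Verma modules $\Delta(\lambda_1+\mu-2i,\lambda_2)$, $i=0,\dots,\mu$. This is the standard tensor-identity argument (as in Humphreys, \S 6.3): $\Delta(\lambda)\tens{}L(\mu)\cong \Ind_{\tilde{\bb}}^{\T}\bigl(\C_\lambda\otimes L(\mu)|_{\tilde{\bb}}\bigr)$, and filtering $\C_\lambda\otimes L(\mu)$ by $\tilde{\bb}$-submodules along the weights of $L(\mu)$ gives the flag, with second highest-weight coordinate automatically equal to $\lambda_2$ because $\bar{\h}$ kills $L(\mu)$. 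Your substitutes for this step do not work as written. The sentence ``Hence $\Delta(\lambda)\tens{}L(\mu)$ splits as a direct sum over these central characters'' presupposes that $Z(\T)$ acts locally finitely on the tensor product (so that it decomposes into central-character components at all); this is not automatic and is normally deduced from precisely the filtration you are trying to avoid. Knowing that the $\mu+1$ Verma modules on the right-hand side have distinct central characters says nothing, by itself, about how the left-hand side decomposes.

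The iterative singular-vector argument has the same problem one level down: a $\tilde{\n}_+$-killed vector found ``after quotienting by the submodules already found'' gives a map into the quotient, not an embedding into $\Delta(\lambda)\tens{}L(\mu)$; lifting it to the module requires splitting an extension, i.e.\ the very statement being proved (or, again, the central-character vanishing of $\mathrm{Ext}^1$ applied to a filtration you have not constructed). There is also a smaller but real identification issue: a singular vector of $h$-weight $\lambda_1+\mu-2i$ need not have $\bar h$-eigenvalue $\lambda_2$; since $\bar C=\bar h^2+4\bar f\bar e$ acts by $\lambda_2^2$, the eigenvalue is only pinned down to $\pm\lambda_2$, and by Lemma~\ref{lemma:Verma_central_ch} the modules $\Delta(\nu,\lambda_2)$ and $\Delta(-\nu-4,-\lambda_2)$ even share a central character, so neither the $\g$-character identity of your first paragraph nor central characters alone identify the blocks as $\Delta(\,\cdot\,,\lambda_2)$ rather than $\Delta(\,\cdot\,,-\lambda_2)$. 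All of these issues disappear once the $\T$-Verma flag is in place (the paper's first sentence); your central-character separation then splits it, which is exactly the paper's second sentence.
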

\begin{proof}
In the same way as for the semi-simple case, see e.g. \cite[6.3.]{humphreys2008representations} one sees that the left-hand side has a filtration with subquotients equal to the summands on the right-hand side. But these subquotients have different central characters, which follows from Lemma \ref{lemma:Verma_central_ch}, so they split.
\end{proof}

\subsection{Enright-Arkhipov completion}

Here we show that $\g$-Harish-Chandra modules naturally occur in a certain completion (or localization) of Verma modules. We consider a combination of two of such constructions, originally given by Enright in \cite{enright1979on}, and Arkhipov in \cite{arkhipov2004algebraic}. See also \cite{deodhar1980on, andersen2003twisting, konig2002enrights, khomenko2005on}.
To ease the notation a little bit, we will write $U$ instead of $U(\T)$ for the rest of
this section.

Fix an $\ad$-nilpotent element $x \in \T$ (for example $f$, $e$, or $\bar{e}$, which we will use), and denote by $U_{(x)}$ the localization of the algebra $U$ by the multiplicative set generated by $x$. This localization satisfies the Ore conditions by \cite[Lemma 4.2.]{mathieu2000classification}, but this is also visible from the proof of Lemma \ref{lemma:ann_loc}. Since $U$ has no zero-divisors, the canonical map $U \to U_{(x)}$ is injective. Hence we may consider  the $U$-$U$-bimodule
\[ S_x := \quotient{U_{(x)}}{U}. \]

\begin{lemma} \label{lemma:basis_localization}
\begin{enumerate}[(a)] 
\item Suppose $\{x, x_1, \ldots, x_{5}\}$ is a basis for $\T$. The set of all monomials $\{ x^{k} x_1^{k_1} \ldots x_5^{k_5} \colon k \in \Z, \ k_1,\ldots, k_5 \in \Z_{\geq 0} \}$ is a basis for $U_{(x)}$.

\item The analogous set, but with $k\in\Z_{<0}$, is a basis for the quotient $S_x$.
\end{enumerate}

\end{lemma}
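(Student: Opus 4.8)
The plan is to deduce both parts from the Poincar\'e--Birkhoff--Witt theorem for $U = U(\T)$, combined with the standard fact that localization at an Ore set preserves linear independence of a compatible basis. First I would fix, once and for all, the ordered basis $\{x, x_1, \dots, x_5\}$ of $\T$ and recall from PBW that $\{x^k x_1^{k_1} \cdots x_5^{k_5} : k, k_1, \dots, k_5 \in \Z_{\geq 0}\}$ is a $\C$-basis of $U$. Since $x$ is $\ad$-nilpotent, for each $x_i$ the iterated brackets $(\ad x)^m(x_i)$ vanish for $m \gg 0$, so in $U_{(x)}$ one has identities of the form $x_i \cdot x^{-1} = x^{-1} \cdot x_i - x^{-1}(\ad x)(x_i) x^{-1} = \cdots$, a \emph{finite} sum expressing $x_i x^{-1}$ as a (left) $U$-combination of monomials $x^{-j} x_i$ and lower terms. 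The upshot is that one may always move negative powers of $x$ to the left past the remaining generators, at the cost of finitely many correction terms of the same or higher order in $x^{-1}$; this is precisely the content of the Ore condition here, and it shows the proposed monomials at least \emph{span} $U_{(x)}$.

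For the spanning part of (a) I would argue as follows: every element of $U_{(x)}$ is, by definition of the Ore localization, of the form $x^{-N} u$ for some $N \geq 0$ and $u \in U$; writing $u$ in the PBW basis and then commuting the leading $x^{-N}$ to the right through each PBW monomial using the finite $\ad$-nilpotency identities above, one lands in the span of $\{x^k x_1^{k_1} \cdots x_5^{k_5} : k \in \Z, k_i \in \Z_{\geq 0}\}$. For linear independence, I would use the natural filtration (or grading) argument: $U_{(x)} = \bigcup_{N \geq 0} x^{-N} U$, and multiplication by $x^N$ is a bijection $x^{-N}U \to U$; since $x$ is not a zero-divisor in $U$ (as $U$ has no zero-divisors, $\T$ being a Lie algebra over a field), the map $U \to U_{(x)}$ is injective and each $x^{-N}U$ is a free left $U$-submodule. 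Concretely, a putative nontrivial relation among the monomials $x^k x_1^{k_1} \cdots x_5^{k_5}$ can be cleared of denominators by multiplying on the left by a sufficiently high power $x^M$; after again commuting the $x^M$ through (finite corrections), this becomes a nontrivial relation among genuine PBW monomials of $U$, contradicting PBW. Hence the set is a basis of $U_{(x)}$, proving (a).

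Part (b) is then immediate: $S_x = U_{(x)}/U$, and under the basis of (a) the submodule $U$ is exactly the span of the monomials with $k \geq 0$. Since these monomials are part of a $\C$-basis of $U_{(x)}$, the images of the remaining monomials — those with $k \in \Z_{<0}$ — form a basis of the quotient $S_x$. I would phrase this as: the basis of (a) partitions as $\{k \geq 0\} \sqcup \{k < 0\}$, the first part spans $U$ and is contained in it, so passing to $U_{(x)}/U$ kills exactly the first part and leaves the images of the second part as a basis.

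The main obstacle is the bookkeeping in the commutation step: making precise that moving $x^{-1}$ (and hence $x^{-N}$, $x^M$) past a PBW monomial terminates and produces only finitely many terms, each again expressible in the claimed monomial form, without the degree in $x^{-1}$ running away. This is handled by induction on the total degree in $x_1, \dots, x_5$, using $\ad$-nilpotency of $x$ to bound the order of corrections at each step; it is entirely routine but is the one place where a careless argument could go wrong. Everything else is a direct consequence of PBW together with the fact that localizing a domain at a non-zero-divisor is faithful. I should also note that the argument does not need the specific Lie algebra $\T$ or the number $5$; it works verbatim for any enveloping algebra of a finite-dimensional Lie algebra localized at an $\ad$-nilpotent element, which is why the statement is phrased for an arbitrary such $x$.
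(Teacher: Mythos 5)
Your proof is correct and takes essentially the same route as the paper: spanning follows from PBW together with the Ore presentation $x^{-N}u$ of elements of $U_{(x)}$, linear independence is obtained by clearing denominators via left multiplication by a large power of $x$ and invoking PBW in $U$, and (b) is then immediate since the monomials with $k\geq 0$ form a basis of the canonical copy of $U$. The only (harmless) superfluity is the talk of ``commuting $x^{\pm N}$ through'': with the PBW ordering that puts $x$ first, left multiplication by $x^{M}$ sends the claimed monomials directly to distinct PBW monomials, so no commutation or correction terms are needed at that step.
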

\begin{proof}
The set in the first claim is a generating set for $U_{(x)}$, which follows from PBW and the properties of Ore localization. But this set is also linearly independent, since for its any finite subset, the multiplication from the left by $x^m$ for some large $m$ produces a linearly independent set in $U \leq U_{(x)}$. This proves the first claim and
the second claim follows from it.
\end{proof}

Denote by $j \colon M \to U_{(x)} \tens{U} M$ the canonical map. By using the right exactness of the tensor product, we can identify
\begin{equation} \label{equation:canonical_copy_localization}
S_x \tens{U} M \cong \quotient{\displaystyle\left(U_{(x)} \tens{U} M \right)}{j(M)}.
\end{equation}
Moreover, if $M$ is a $\T$-module on which $x$ acts injectively, then the canonical map $j$ is injective. In particular, this is true if $M$ is a Verma module $\Delta(\lambda)$ and $x=f$. 

\begin{lemma}[\cite{deodhar1980on, andersen2003twisting}] \label{lemma:localization_tensor_L}
Fix $x \in \{f,e,\bar{f},\bar{e}\}$, let $M$ be $\T$-module, and $L$ a finite-dimensional $\T$-module. Then there is a natural isomorphism of $\T$-modules
\[ S_x \tens{U} \left(M \otimes L \right) \cong \left(S_x \tens{U} M\right) \tens{} L . \]
\end{lemma}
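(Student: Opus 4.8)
The plan is to establish the natural isomorphism $S_x \tens{U}(M\otimes L)\cong (S_x\tens{U}M)\tens{}L$ by first upgrading it to the localized enveloping algebra level, and then passing to the quotient $S_x = U_{(x)}/U$. The key observation is the standard ``tensor identity'': for the localization $U_{(x)}$, there is a natural isomorphism of $\T$-modules $U_{(x)}\tens{U}(M\otimes L)\cong (U_{(x)}\tens{U}M)\tens{}L$. Indeed, one has the general fact (valid for any Hopf-algebra–style situation, cf. \cite[Proposition 6.5.]{knapp1988lie} used earlier for $Q(\lambda)\cong Q(0)\tens{}L(\lambda)$) that $\Ind_{\tilde{\bb}}^{\LL}(-)\otimes L\cong \Ind_{\tilde{\bb}}^{\LL}((-)\otimes L)$ because $L$ restricts and the comultiplication lets one ``twist'' the tensor factor through the induction. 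The same mechanism applies to the localization functor $U_{(x)}\tens{U}(-)$ in place of $\Ind$: the map $u\otimes(m\otimes \ell)\mapsto \sum (u_{(1)}\otimes m)\otimes u_{(2)}\ell$, using a Sweedler-type coproduct on the localized algebra, is the desired isomorphism, and one must check it is well defined, bijective, $\T$-linear and natural in $M$.

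Concretely, I would argue as follows. First I would recall that for a finite-dimensional $\T$-module $L$, the functor $-\otimes L$ on $U$-modules is exact and commutes with filtered colimits, and that $U_{(x)}\tens{U}-$ is also exact (localization) and commutes with colimits. Next I would write down the natural transformation $\Phi_M \colon U_{(x)}\tens{U}(M\otimes L)\to (U_{(x)}\tens{U}M)\tens{}L$ explicitly on generators: since $U_{(x)}$ is spanned by monomials $x^{k}x_1^{k_1}\cdots x_5^{k_5}$ with $k\in\Z$ (Lemma \ref{lemma:basis_localization}), it suffices to describe the action of $x^{-1}$ and of $U$, and to note that $x$ acts locally nilpotently on $L$ (being $\ad$-nilpotent and $L$ finite-dimensional), so the formal series expansion of $x^{-1}$ acting through the coproduct terminates. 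This makes $\Phi_M$ a genuine, finite formula. Then I would check $\Phi_M$ is an isomorphism: one can either exhibit the inverse by the analogous formula with $L$ replaced by $L^\ast$, or observe that $\Phi_M$ is natural in $M$, is clearly an isomorphism when $M=U$ (both sides become $U_{(x)}\tens{}L$), and both functors are exact and continuous, hence $\Phi$ is a natural isomorphism of functors by resolving an arbitrary $M$ by free modules (take a presentation $U^{(I)}\to U^{(J)}\to M\to 0$ and apply the five lemma).

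Finally, I would pass to the quotient. Applying $-\otimes L$ to the short exact sequence $0\to U\to U_{(x)}\to S_x\to 0$ of $U$-$U$-bimodules and then $(-)\tens{U}(M\otimes L)$, together with the corresponding sequence obtained by applying $(-\tens{U}M)\tens{}L$, gives a commutative ladder of short exact sequences (right-exactness of $\otimes$ plus injectivity, as used around \eqref{equation:canonical_copy_localization}). The middle vertical map is $\Phi_M$, just shown to be an isomorphism, and the left vertical map is the analogous isomorphism for the functor $U\tens{U}(-)\cong \mathrm{id}$, i.e. the classical tensor identity $M\otimes L\cong M\otimes L$, which is the identity. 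By the five lemma (or simply by passing to cokernels), the induced map $S_x\tens{U}(M\otimes L)\to (S_x\tens{U}M)\tens{}L$ is an isomorphism, and it is natural in $M$ since $\Phi$ is.

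The main obstacle I anticipate is purely bookkeeping: writing the comultiplication-type formula on the \emph{localized} algebra $U_{(x)}$ and verifying that $x^{-1}$ really does act by a terminating (polynomial) expression on $M\otimes L$ via this formula — i.e. that the putative inverse map lands where it should and that everything is well defined modulo the Ore relations. Once the explicit finite formula for $\Phi_M$ on free modules is in hand, exactness of both functors and the five lemma make the rest automatic; no genuinely hard computation is needed, which is why the statement is recorded as a lemma with an external citation to \cite{deodhar1980on, andersen2003twisting}.
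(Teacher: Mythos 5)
Your proposal follows essentially the same route as the paper: the displayed ``Sweedler-type'' map you describe is exactly the explicit isomorphism $x^{-n}\otimes(m\otimes v)\mapsto \sum_{k\geq 0}(-1)^k\binom{n+k-1}{k}(x^{-n-k}\otimes m)\otimes x^k v$ of Deodhar and Andersen--Stroppel used in the paper (terminating because $x$ acts nilpotently on $L$), followed by the same passage to $S_x$ by identifying both sides as cokernels of the canonical maps from $M\otimes L$. The only deviation is your five-lemma/free-presentation shortcut for bijectivity; note that the case $M=U$ is not ``clear'' without essentially redoing the explicit inverse (the paper's inverse uses powers of $x$ chosen so that $x^r$ kills $L$, not a formula involving $L^{\ast}$), so in practice one still writes down the inverse as in the cited sources.
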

\begin{proof}
There is an isomorphism $U_{(x)} \otimes_{U} (M \otimes L) \to (U_{(x)} \otimes_{U} M) \otimes L$ given by
\[ x^{-n} \otimes (m \otimes v) \mapsto \sum_{k\geq 0} (-1)^k {n+k-1 \choose k} (x^{-n-k} \otimes m) \otimes x^k v, \]
with the inverse given by $(x^{-n} \otimes m) \otimes v \mapsto x^{-ar} \otimes \sum_{k \geq 0} {ar \choose k}( x^{ar-n-k} m \otimes x^kv)$, where $r,a \in \Z_{>0}$ are chosen so that $x^r$ annihilates $L$ and $(r-1)a \geq n$. This is proved in \cite[Theorem 3.1]{deodhar1980on} and \cite[Theorem 3.2.]{andersen2003twisting} for the semi-simple case, but the proof is analogous in general. In proving that these maps compose to the identity, the following combinatorial formula is helpful: $\sum_{k=0}^n (-1)^k{a \choose n-k} {b+k \choose k} = {a-b-1 \choose n}$.

One can check that these isomorphisms preserve the canonical images of $M \otimes L$ in both sides, see (\ref{equation:canonical_copy_localization}), so they induce the required isomorphisms on the quotients.
\end{proof}

For a $\T$-module $M$, we write $\prescript{x}{}\!M$ for the set of all elements $m \in M$ for which the action of $x$ is locally finite, in the sense that $\dim \C[x]m<\infty$. Note that this is a variant of the Zuckerman functor.
\begin{lemma}
For a $\T$-module $M$, $\prescript{x}{}\!M$ is a $\T$-submodule. Moreover, the assignment $M \mapsto \prescript{x}{}\!M$ is a left-exact functor in the obvious way.
\end{lemma}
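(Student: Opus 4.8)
The plan is to verify that $\prescript{x}{}\!M$ is a submodule closed under the $\T$-action, and then that the assignment $M \mapsto \prescript{x}{}\!M$ is functorial and left-exact. For the submodule claim, the first thing I would note is that $\prescript{x}{}\!M$ is a linear subspace: if $m_1, m_2 \in \prescript{x}{}\!M$, then $\C[x]m_1 + \C[x]m_2$ is finite-dimensional and contains $\C[x](m_1+m_2)$, and scalar multiples are trivial. The key point is stability under the action of an arbitrary $y \in \T$. Here I would use that $x$ is $\ad$-nilpotent on $\T$, so there is some $N$ with $(\ad x)^N y = 0$. Then, for $m \in \prescript{x}{}\!M$, one expands $x^k \cdot (y \cdot m)$ using the identity $x^k y = \sum_{i=0}^{k} \binom{k}{i} \big((\ad x)^i y\big) x^{k-i}$ in $U(\T)$, which is a finite sum involving only $(\ad x)^0 y, \dots, (\ad x)^{N-1} y$. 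Hence $\C[x](y\cdot m)$ is contained in $\sum_{i=0}^{N-1} \big((\ad x)^i y\big) \cdot \C[x]m$, a finite-dimensional space since each $\C[x]m$ is. Therefore $y \cdot m \in \prescript{x}{}\!M$, proving $\prescript{x}{}\!M$ is a $\T$-submodule.

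For functoriality, given a $\T$-module homomorphism $\phi \colon M \to M'$, I would observe that $\phi$ maps $\prescript{x}{}\!M$ into $\prescript{x}{}\!M'$: if $\dim \C[x]m < \infty$, then $\C[x]\phi(m) = \phi(\C[x]m)$ is the image of a finite-dimensional space, hence finite-dimensional. So we set $\prescript{x}{}\!\phi := \phi|_{\prescript{x}{}\!M}$, and compatibility with composition and identities is immediate. This gives a functor from $\T$-modules to $\T$-modules.

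For left-exactness, suppose $0 \to M' \xrightarrow{\alpha} M \xrightarrow{\beta} M''$ is exact. Applying the functor gives $0 \to \prescript{x}{}\!M' \to \prescript{x}{}\!M \to \prescript{x}{}\!M''$, and I must check exactness at $\prescript{x}{}\!M'$ and at $\prescript{x}{}\!M$. Injectivity of $\prescript{x}{}\!\alpha$ is clear as it is a restriction of the injective $\alpha$. For exactness at $\prescript{x}{}\!M$: if $m \in \prescript{x}{}\!M$ with $\beta(m) = 0$, then $m = \alpha(m')$ for a unique $m' \in M'$; since $\alpha$ is injective and $\C[x]$-equivariant, $\alpha(\C[x]m') = \C[x]m$ is finite-dimensional, forcing $\C[x]m'$ to be finite-dimensional, i.e. $m' \in \prescript{x}{}\!M'$. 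Thus $\ker(\prescript{x}{}\!\beta) = \operatorname{im}(\prescript{x}{}\!\alpha)$, and left-exactness follows. The only genuinely non-routine step is the submodule claim, and its crux is recognizing that $\ad$-nilpotency of $x$ on $\T$ (which holds for $x \in \{f, e, \bar e\}$, the cases of interest) is exactly what makes $x^k \cdot (y \cdot m)$ expand into a controlled finite combination; without that hypothesis the statement would fail, so it is important that $x$ was fixed to be $\ad$-nilpotent at the start of the subsection.
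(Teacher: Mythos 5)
Your argument is correct and is essentially the paper's: the submodule claim rests on exactly the commutation identity $x^k y=\sum_i\binom{k}{i}\bigl((\ad x)^i y\bigr)x^{k-i}$ together with $\ad$-nilpotency of $x$, which is precisely the formula (from Humphreys, Lemma 21.4) that the paper cites, and the functoriality and left-exactness checks you spell out are the routine verifications the paper leaves implicit. No gaps.
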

\begin{proof}
Since $x$ is assumed to be $\ad$-nilpotent, the claim follows from the formula in \cite[Lemma 21.4.]{humphreys1978introduction}.
\end{proof}

\begin{definition} \label{definition:EA}
For a $\T$-module $M$, define
\begin{equation}
\EnAr(M) := \prescript{e}{}{\left( S_f \tens{U} M \right)}.
\end{equation}
This is a functor on the category of $\T$-modules in the obvious way, which we call \emph{Enright-Arkhipov's completion functor}.
\end{definition}

%\begin{remark} \label{remark:DM}
%Closely related is \emph{Deodhar-Mathieu's} (version of Enright's) \emph{completion functor}, \cite{deodhar1980on, mathieu2000classification}, given by $\DeMa(M) := \left( U_f \tens{U} M \right)^e$. If $M$ is a Verma module for $\g$, then using (\ref{equation:canonical_copy_localization}) and the left exactness of $(-)^e$ we have an inclusion $\quotient{\DeMa(M)}{M} \hookrightarrow \EnAr(M)$.
%\end{remark}

\begin{proposition}
\label{proposition:EA_tensor_L}
The functor $\EnAr$ commutes with tensoring with a finite-dimensional $\T$-module. More precisely, let $M$ be a $\T$-module, and $L$ a finite-dimensional $\T$-module. Then there is a natural isomorphism of $\T$-modules
\[ \EnAr(M \otimes L) \cong \EnAr(M) \otimes L. \]
\end{proposition}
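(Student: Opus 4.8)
The plan is to combine the two pieces we already have: Lemma~\ref{lemma:localization_tensor_L}, which handles the localization half of $\EnAr$, and a compatibility statement between the Zuckerman-type functor $\prescript{e}{}(-)$ and tensoring with a finite-dimensional module. So first I would fix a $\T$-module $M$ and a finite-dimensional $\T$-module $L$, and invoke Lemma~\ref{lemma:localization_tensor_L} with $x=f$ to get a natural isomorphism of $\T$-modules
\[
S_f \tens{U} (M \otimes L) \;\cong\; \bigl(S_f \tens{U} M\bigr) \tens{} L .
\]
After this, it remains to see that applying $\prescript{e}{}(-)$ to both sides gives isomorphic modules, i.e. that for any $\T$-module $N$ one has a natural isomorphism $\prescript{e}{}(N \otimes L) \cong (\prescript{e}{}N) \otimes L$, and then specialize to $N = S_f \tens{U} M$.

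The key step, then, is this last claim about $\prescript{e}{}(-)$. I would argue as follows. Since $L$ is finite-dimensional, $e$ acts locally finitely (indeed nilpotently, up to the semisimple part of its action) on $L$. For $N \otimes L$, an element $n \otimes v$ (with $v$ a weight vector, or more robustly, running $v$ over a basis of $L$) lies in $\prescript{e}{}(N\otimes L)$ iff it is killed by a power of the $\C[e]$-action; using that $e^r$ annihilates $L$ for $r$ large and expanding $e^k(n\otimes v) = \sum_j \binom{k}{j} (e^{k-j}n)\otimes (e^j v)$ via the coproduct, one checks that $\C[e](n\otimes v)$ is finite-dimensional precisely when $\C[e]n$ is finite-dimensional. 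More cleanly: $\prescript{e}{}(N \otimes L) = \bigl(\prescript{e}{}N\bigr) \otimes L$ as subspaces of $N \otimes L$, because $N \otimes L \cong \bigl(\prescript{e}{}N \oplus N'\bigr)\otimes L$ is unnecessary — instead, note that the $\C[e]$-module $N\otimes L$ decomposes as $N \otimes L = \bigcup_r \ker(e^r|_{N\otimes L}) \oplus (\text{rest})$ and the locally-$e$-finite part of a tensor product with a locally-$e$-finite module is the tensor product of the locally-$e$-finite parts; this is the standard fact that the locally finite vectors form a tensor ideal-like subfunctor. I would state and prove this as a short lemma (it is the ``Takiff analogue'' of the corresponding fact used for Zuckerman functors), and then the naturality is immediate since both the localization isomorphism and the inclusion of the locally finite part are natural.

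The main obstacle I expect is purely bookkeeping: making sure the isomorphism of Lemma~\ref{lemma:localization_tensor_L} genuinely restricts to an isomorphism on the $e$-locally-finite parts, i.e. that it matches $\prescript{e}{}\bigl(S_f\tens{U}(M\otimes L)\bigr)$ with $\bigl(\prescript{e}{}(S_f\tens{U}M)\bigr)\otimes L$ and not just with $\prescript{e}{}\bigl((S_f\tens{U}M)\otimes L\bigr)$. But this is exactly what the tensor-compatibility lemma for $\prescript{e}{}(-)$ gives, applied to $N = S_f\tens{U}M$. So the logical skeleton is: (1) quote Lemma~\ref{lemma:localization_tensor_L}; (2) prove $\prescript{e}{}(N\otimes L)\cong(\prescript{e}{}N)\otimes L$ naturally in $N$; (3) chain the two isomorphisms, using that $\EnAr(-) = \prescript{e}{}(S_f\tens{U}(-))$ by definition. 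I would write step~(2) out as a separate displayed lemma to keep the proof of the proposition to a couple of lines, and I would be careful in step~(2) to phrase everything over weight vectors of $L$ (or to use that $e$ is $\ad$-nilpotent, so $e$ acts nilpotently on $L$ after projecting away the $\h$-weight, and the binomial expansion of $e^k$ on $N\otimes L$ terminates) so that the argument does not secretly use semisimplicity of $\T$.
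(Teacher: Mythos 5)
Your skeleton coincides with the paper's proof: quote Lemma~\ref{lemma:localization_tensor_L} for the localization half, show that the locally-$e$-finite-part functor commutes with $-\otimes L$, and chain the two isomorphisms. The paper reduces the second step to $\g$-modules with $L=L(\mu)$ simple (harmless, since $\prescript{e}{}{(N\otimes L)}$ and $(\prescript{e}{}{N})\otimes L$ are both $\T$-submodules of $N\otimes L$, so the asserted equality of subspaces can be checked after restriction) and cites Deodhar's Corollary~3.2 for it, writing out the binomial-expansion proof only for completeness.

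The one place your sketch does not yet close is the hard inclusion $\prescript{e}{}{(N\otimes L)}\subseteq(\prescript{e}{}{N})\otimes L$. Your computation handles a pure tensor $n\otimes v$, where projecting $e^k(n\otimes v)$ onto the top basis vector indeed recovers local finiteness of $n$; but a general element is $m=\sum_i m_i\otimes e^iv$, and local $e$-finiteness of $m$ does not pass termwise to the summands. What makes the argument work is the triangularity of the $e$-action with respect to the filtration of $L$ by $\operatorname{span}\{v,ev,\dots,e^iv\}$, with $v$ the lowest weight vector: expanding $e^n\cdot m$ componentwise gives $e^nm_i$ plus correction terms involving only $m_j$ with $j<i$, and induction on $i$ then yields $m_i\in\prescript{e}{}{N}$ --- this is exactly the paper's (and Deodhar's) proof, and it is the ``bookkeeping'' you flagged. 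Your fallback ``more cleanly'' paragraph should be discarded: the locally-$e$-finite part is a submodule but in general not a direct summand, so there is no decomposition of $N\otimes L$ into $\bigcup_r\ker(e^r)$ plus a complement, and asserting that the locally finite part of a tensor product with a finite-dimensional module equals the tensor product of the locally finite parts is precisely the statement to be proved (citing Deodhar's Corollary~3.2 for it, as the paper does, is of course legitimate). A final small point: the functor is defined by $\dim\C[e]m<\infty$, not by being killed by a power of $e$; the projection-and-induction argument works with local finiteness directly, so do not identify the two notions.
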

\begin{proof}
Because of Lemma \ref{lemma:localization_tensor_L}, it is enough to show that $\prescript{e}{}(M \otimes L) = (\prescript{e}{}\!M) \otimes L$ for $\g$-modules $M$ and $L$ with $L=L(\mu)$ simple finite-dimensional. This is proved in \cite[Corollary 3.2]{deodhar1980on}, but we also give a proof for the sake of completeness.

The inclusion $(\prescript{e}{}\!M) \otimes L \subseteq \prescript{e}{}(M \otimes L)$ is trivial. For the converse, denote by $v$ the lowest weight vector of $L$. Then $v, e v, \ldots, e^{\mu} v$ is a basis for $L$. Take a general element $m = \sum_{i=0}^\mu m_i \otimes e^i v \in \prescript{e}{}(M \otimes L)$, and observe that for $n > \mu$ we have
\begin{align*}
e^n \cdot m &= \sum_{i=0}^\mu \sum_{j=0}^{\mu-i} {n \choose j}e^{n-j} m_i \otimes e^{i+j} v \\
&= \sum_{i=0}^\mu \left( e^n m_i + \sum_{j=0}^{i-1} {n \choose i-j}e^{n+j-i} m_j \right) \otimes e^{i} v .
\end{align*}
For a fixed $i$, the vectors inside the big brackets must span a finite-dimensional space when $n$ varies. From this, by an induction on $i$ follows that $e^n m_i$ span a finite-dimensional space, hence $m \in (\prescript{e}{}\!M) \otimes L$.
\end{proof}

\begin{example} \label{example:EA_Delta_g}
Let us consider $\Delta^\g(\mu)$, with $\mu \in \C$. From Lemma \ref{lemma:basis_localization} it follows that the set $\{ f^{-k} v_\mu \colon k>0\}$ is a basis for $S_f \tens{U} M$ (and from an argument for linear independence very similar to the one in the proof of Lemma \ref{lemma:basis_localization}). One can easily prove by induction the following commutation relations (similar to \cite[3.5]{mazorchuk2010lectures}):
\begin{align} \label{equation:commutators_he_f_inv}
&[h,f^{-k}] = 2k f^{-k}, \\
\nonumber & [e,f^{-k}] = -k f^{-k-1}(h+k+1).
\end{align}
From this, it is not hard to see that
\[ \EnAr(\Delta^\g(\mu)) \cong \begin{cases} L(-\mu-2) &\colon \mu \in \Z \text{ and } \mu \leq -2, \\ 0 &\colon \text{otherwise}. \end{cases} \]
Similarly, one sees that $\EnAr(P^\g(\mu))=0$ for $\mu \in \Z$ and $\mu \leq -2$. (Or using the fact that big projective modules can be obtained by tensoring dominant Verma modules with finite-dimensional modules, together with Proposition \ref{proposition:EA_tensor_L}).
\end{example}

Recall that we use notation $\lambda=(\lambda_1,\lambda_2) \in \tilde{\h}^\ast$, with $\lambda_1 = \lambda(h)$ and $\lambda_2 = \lambda(\bar{h})$.

\begin{theorem} \label{theorem:EA_Verma}
Take $\lambda$ with $\lambda_1 \in \Z$ and $\lambda_2 \neq 0$. Then $\EnAr(\Delta(\lambda))$ is a simple $\g$-Harish-Chandra module. As a $\g$-module, it decomposes as follows:
\begin{equation} \label{equation:g_types_EA}
\EnAr(\Delta(\lambda)) \cong \bigoplus_{k \geq 0} L(|\lambda_1 + 2| +2k).
\end{equation}
\end{theorem}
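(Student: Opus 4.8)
The plan is to combine the structural description of $\Delta(\lambda)$ as a $\g$-module from Lemma~\ref{lemma:structure_Verma} with the computation of $\EnAr$ on classical Verma and projective modules from Example~\ref{example:EA_Delta_g}. Since $\EnAr = \prescript{e}{}(S_f \tens{U} -)$ and the functor $M \mapsto S_f \tens{U} M$ as well as $\prescript{e}{}(-)$ are computed by looking only at the $\g$-module structure (the localization $S_f$ and the $e$-local-finiteness condition involve only $\g \subseteq \T$), the $\g$-module structure of $\EnAr(\Delta(\lambda))$ is determined by applying the classical Enright-type functor summand-by-summand to the $\g$-module decomposition of $\Delta(\lambda)$ given in Lemma~\ref{lemma:structure_Verma}. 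First I would treat the generic case $\lambda_1 \notin \Z_{\geq 0}$: then $\Delta(\lambda) \cong \bigoplus_{k \geq 0} \Delta^\g(\lambda_1 - 2k)$ as $\g$-modules, and by Example~\ref{example:EA_Delta_g} the summand $\Delta^\g(\lambda_1 - 2k)$ contributes $L(-(\lambda_1 - 2k) - 2) = L(-\lambda_1 - 2 + 2k)$ precisely when $\lambda_1 - 2k \leq -2$, i.e. when $k \geq (\lambda_1+2)/2$, and zero otherwise; reindexing gives $\bigoplus_{k \geq 0} L(-\lambda_1 - 2 + 2k) = \bigoplus_{k \geq 0} L(|\lambda_1+2| + 2k)$ since $\lambda_1 < 0$ forces $-\lambda_1 - 2 = |\lambda_1 + 2|$ once $\lambda_1 \leq -2$ (the small cases $\lambda_1 = -1$ are handled the same way, noting $|\lambda_1+2|=1$).

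Next I would handle the case $\lambda_2 \neq 0$ and $\lambda_1 \in \Z_{\geq 0}$ using the second displayed formula in Lemma~\ref{lemma:structure_Verma}. Here $\Delta(\lambda)$ decomposes (as a $\g$-module) into a sum of projective modules $P^\g(-2k)$ or $P^\g(-2k-1)$ (which satisfy $\EnAr = 0$ by Example~\ref{example:EA_Delta_g}), possibly one extra copy of $\Delta^\g(-1)$ (for which $\EnAr(\Delta^\g(-1)) = 0$ since $-1 > -2$), plus a tail $\bigoplus_{k \geq 2} \Delta^\g(-\lambda_1 - 2k)$. Applying $\EnAr$ kills the projective part and the $\Delta^\g(-1)$, and the tail contributes $\bigoplus_{k \geq 2} L(\lambda_1 + 2k - 2)$; reindexing and observing $|\lambda_1 + 2| = \lambda_1 + 2$ for $\lambda_1 \geq 0$, this is again $\bigoplus_{k \geq 0} L(|\lambda_1+2| + 2k)$. (A uniform alternative, which I would at least remark on, is to use Lemma~\ref{lemma:verma_otimes_fin} together with Proposition~\ref{proposition:EA_tensor_L}: tensoring a ``generic-enough'' $\Delta(\lambda')$ by a finite-dimensional $L(\mu)$ and applying $\EnAr$ reduces the $\lambda_1 \in \Z_{\geq 0}$ case to the generic case already treated.) This establishes \eqref{equation:g_types_EA} and in particular shows $\EnAr(\Delta(\lambda))$ is a genuine $\g$-Harish-Chandra module: each finite-dimensional $\g$-type appears with multiplicity at most one, so certainly finite.

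The main obstacle is simplicity of $\EnAr(\Delta(\lambda))$ as a $\T$-module, which the $\g$-module picture alone does not give. Here I would argue as follows: $\Delta(\lambda)$ is simple as a $\T$-module (by the simplicity criterion, since $\lambda_2 \neq 0$); the localization functor $S_f \tens{U} -$ together with the passage to $\prescript{e}{}(-)$ is, on the relevant category, an instance of the Enright completion, which is known to preserve simplicity of modules on which $f$ acts injectively and which are ``$e$-finite after localization'' — concretely, one checks that any nonzero $\T$-submodule $V$ of $\EnAr(\Delta(\lambda))$ must contain the minimal $\g$-type $L(|\lambda_1+2|)$ (since each $\g$-type occurs with multiplicity one and applying raising operators $\bar e, e$ can only move up the tail, an argument parallel to Remark~\ref{remark:simpllicity_Q0}), and then that this minimal $\g$-type generates everything under the $\T$-action (using the explicit commutation relations \eqref{equation:commutators_he_f_inv} and the $\bar{\mathfrak{sl}}_2$-action to climb back up, exactly as in the proof of Theorem~\ref{theorem:Q_simple}). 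I expect the bookkeeping in this last step — identifying which element of $U(\T)$ sends the $L(|\lambda_1+2|+2k)$-highest-weight vector down to the $L(|\lambda_1+2|+2k-2)$-one inside the localized module — to be the genuinely delicate part; everything else is reindexing of the formulas already in hand.
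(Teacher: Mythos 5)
Your computation of the $\g$-module structure of $\EnAr(\Delta(\lambda))$ is exactly the paper's argument: Lemma~\ref{lemma:structure_Verma}, Example~\ref{example:EA_Delta_g}, and the fact that $\EnAr$ commutes with restriction to $\g$, applied summand by summand; your reindexing in both the antidominant and the dominant cases is correct.

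The gap is in the simplicity argument, which is where the real content of the theorem lies. The appeal to a general principle that Enright-type completion ``is known to preserve simplicity'' is not available here --- there is no such citable statement for $\T$, and the paper proves simplicity by hand. More importantly, your concrete plan mixes two independent routes and assigns the hard step to the wrong place. Route one (the paper's Remark~\ref{remark:simpllicity_EA}): show that any nonzero submodule $V$ contains the minimal $\g$-type via the nilradical/central-character argument --- if the minimal $\g$-type of $V$ were $L(\mu)$ with $\mu>|\lambda_1+2|$, then, since the lowest-weight vectors of the $\g$-types are exactly the vectors $f^{-1}\bar{f}^{\,t}v_\lambda$ and $\bar{f}$ maps each to the next, $V$ would contain the whole tail above $L(\mu)$, and the resulting nonzero finite-dimensional quotient would have the central element $\bar{C}\in U(\Nrad(\T))$ acting both nilpotently and by $\lambda_2^2\neq 0$. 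Once that is in place, generation from the minimal type is again just $\bar{f}$-climbing plus multiplicity one; no element of $U(\T)$ moving a $\g$-type \emph{down} is needed anywhere. Route two (the paper's actual proof): skip the central-character argument and exhibit, for each $\mu>|\lambda_1+2|$, an element of $U(\T)$ sending $f^{-1}\bar{f}^{\,t}v_\lambda$ to a nonzero multiple of $f^{-1}\bar{f}^{\,t-1}v_\lambda$. This is exactly the step you defer, and it is not ``exactly as in Theorem~\ref{theorem:Q_simple}'': it requires new commutation relations for $\bar{h},\bar{e}$ with $f^{-k}$, and the observation that a suitable linear combination of $e$ and $\bar{h}e-2\mu\bar{e}$ does the job, the non-degeneracy being a determinant proportional to $\lambda_2^2(\mu+\lambda_1+2)(\mu-\lambda_1-2)$, nonzero precisely because $\mu>|\lambda_1+2|$ (it degenerates at the minimal type, as it must). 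Note also that the operators that move up the tail are not $e,\bar{e}$ ($e$ preserves $\g$-isotypic components); it is $\bar{f}$ acting on the vectors $f^{-1}\bar{f}^{\,t}v_\lambda$. Either route can be completed, but as written your proposal carries out neither: the identification of the lowest-weight vectors, the $\bar{f}$-climbing, and (on route two) the explicit down-operator are precisely the missing verifications.
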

\begin{proof}
Lemma \ref{lemma:structure_Verma}, the fact that the functor $\EnAr$ commutes with the forgetful functor from $\T$-modules to $\g$-modules, and Example \ref{example:EA_Delta_g} imply (\ref{equation:g_types_EA}).

From Lemma \ref{lemma:basis_localization} we have a basis for $S_f \tens{U} \Delta(\lambda)$ consisting of $f^{-i} \bar{f}^j v_\lambda$, for $i \geq 1$ and $j \geq 0$. Since the lowest weight vector of a $\g$-type $L(\mu)$ ($\mu$ of the same parity as $\lambda_1$) inside $\EnAr(\Delta(\lambda))$ must be annihilated by $f$, it must be (up to scalar) equal to $f^{-1} \bar{f}^t v_\lambda$, where $t = \frac{\mu+\lambda_1}{2}+1$.

Now we will prove that $\EnAr(\Delta(\lambda))$ is simple. Let $V$ be its non-zero submodule, and suppose it contains $L(\mu)$ for some $\mu$ from (\ref{equation:g_types_EA}). By applying $\bar{f}$ on $f^{-1} \bar{f}^t v_\lambda$, we get that $L(\mu+2k) \subseteq V$ for all $k \geq 0$.

To prove that $V=\EnAr(\Delta(\lambda))$, it is enough to assume $\mu > |\lambda_1+2|$ and to find an element in $U(\T)$ that maps $f^{-1} \bar{f}^t v_\lambda$ to $f^{-1} \bar{f}^{t-1} v_\lambda$.

In addition to (\ref{equation:commutators_he_f_inv}), we will use the following commutation relations, whose proofs are analogous to the ones for (\ref{equation:commutators_he_f_inv}):
\begin{align}
&[\bar{h},f^{-k}] = 2k f^{-k-1} \bar{f}, \\
\nonumber &[\bar{e},f^{-k}] = - kf^{-k-1} \bar{h} - k(k+1)f^{-k-2} \bar{f}.
\end{align}
From this, we have:
\begin{align*}
 e \cdot f^{-1} \bar{f}^t v_\lambda &= [e,f^{-1}] \bar{f}^t v_\lambda + f^{-1} [e,\bar{f}^t] v_\lambda \\
&=  - f^{-2} (h+2) \bar{f}^t v_\lambda + t \lambda_2 f^{-1} \bar{f}^{t-1} v_\lambda \\
&= \mu f^{-2} \bar{f}^t v_\lambda + t \lambda_2 f^{-1} \bar{f}^{t-1} v_\lambda, \\[.5em]
\bar{h}e \cdot f^{-1} \bar{f}^t v_\lambda &=  \mu \bar{h} f^{-2} \bar{f}^t v_\lambda + t \lambda_2 \bar{h} f^{-1} \bar{f}^{t-1} v_\lambda \\
&= \mu [\bar{h}, f^{-2}] \bar{f}^t v_\lambda + \mu \lambda_2 f^{-2} \bar{f}^t v_\lambda  + t\lambda_2 [\bar{h},f^{-1}] \bar{f}^{t-1} v_\lambda + t\lambda_2^2 f^{-1} \bar{f}^{t-1} v_\lambda \\
 &= 4\mu f^{-3} \bar{f}^{t+1} v_\lambda + (\lambda_1 + 2\mu + 2)\lambda_2 f^{-2} \bar{f}^{t} v_\lambda + t\lambda_2^2 f^{-1} \bar{f}^{t-1} v_\lambda, \\[.5em]
2\mu \bar{e} \cdot f^{-1} \bar{f}^t v_\lambda &= 2\mu[\bar{e}, f^{-1}] \bar{f}^t v_\lambda \\
 &=   -4\mu f^{-3} \bar{f}^{t+1} v_\lambda - 2\mu\lambda_2 f^{-2} \bar{f}^t v_\lambda.
\end{align*}
From this it follows that
\[ (\bar{h}e - 2\mu \bar{e}) \cdot f^{-1} \bar{f}^t v_\lambda = (\lambda_1 + 2)\lambda_2 f^{-2} \bar{f}^{t} v_\lambda + t\lambda_2^2 f^{-1} \bar{f}^{t-1} v_\lambda. \]
Now we claim that a non-trivial linear combination of $e$ and $(\bar{h}e - 2\mu \bar{e})$ will map $f^{-1} \bar{f}^t v_\lambda$ to $f^{-1} \bar{f}^{t-1} v_\lambda$. This is true, because the determinant
\[ \begin{vmatrix} \mu & (\lambda_1+2)\lambda_2 \\ t\lambda_2 & t\lambda_2^2 \end{vmatrix} = \lambda_2^2(\mu+\lambda_1+2)(\mu-\lambda_1-2) \neq 0. \]
This finishes the proof of simplicity.
\end{proof}

\begin{remark}[A sketch of an alternative proof of simplicity of $\EnAr(\Delta(\lambda))$ for $\lambda_1 \in \Z$ and $\lambda_2\neq 0$] \label{remark:simpllicity_EA}
Suppose $V$ is a submodule of $\EnAr(\Delta(\lambda))$ having $L(\mu)$, $\mu >|\lambda_1+2|$ minimal. By applying $\bar{f}$, we see that as a $\g$-module, $V \cong L(\mu) \oplus L(\mu+2) \oplus \ldots$. This implies that the quotient ${\EnAr(\Delta(\lambda))}/{V} \cong L(|\lambda_1+2|) \oplus L(|\lambda_1+2|+2) \oplus \ldots \oplus L(\mu-2)$
is simple as a $\T$-module and finite-dimensional. Since $\bar{C}$ consists of elements from the nilradical of $\T$, it must act as zero on this quotient. But $\bar{C}$ is central, so it still acts as $\lambda^2_2$ on the localization, a contradiction.
\end{remark}

\subsection{Classification}

In this subsection, we use the relation with highest weight theory established above to classify all simple $\g$-Harish-Chandra modules for $\T$. It will be more convenient 
to shift the notation for the first parameter in our modules by $-2$.

\begin{definition}
For $n \in \Z$ and $\lambda_2 \neq 0$, denote
\begin{equation*}
V(n,\lambda_2) := \EnAr(\Delta(n-2, \lambda_2)).
\end{equation*} 
\end{definition}

\begin{corollary} \label{corollary:g_types}
The module $V(n,\lambda_2)$ is a simple $\g$-Harish-Chandra module, it
has $\g$-types $L(|n|), L(|n|+2), L(|n|+4) \ldots$ and each of these
occurs with multiplicity one.

If $V(n,\lambda_2) \cong V(n',\lambda_2')$, then $(n',\lambda_2') = (n,\lambda_2)$ or $(-n,-\lambda_2)$.
\end{corollary}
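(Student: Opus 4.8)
The plan is to prove Corollary~\ref{corollary:g_types} in two parts, using the structural results already assembled for $\EnAr$ and Verma modules. The first assertion---that $V(n,\lambda_2)$ is a simple $\g$-Harish-Chandra module with $\g$-types $L(|n|), L(|n|+2), \ldots$ each of multiplicity one---is an immediate translation of Theorem~\ref{theorem:EA_Verma}. Indeed, $V(n,\lambda_2) = \EnAr(\Delta(n-2,\lambda_2))$, and since $n-2 \in \Z$ and $\lambda_2 \neq 0$, Theorem~\ref{theorem:EA_Verma} applies with $\lambda_1 = n-2$, giving simplicity and the decomposition $\bigoplus_{k\geq 0} L(|\lambda_1+2|+2k) = \bigoplus_{k\geq 0} L(|n|+2k)$; the multiplicity-one claim is visible from that direct sum. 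So the first paragraph of the proof is essentially a citation of the preceding theorem with the shifted parameter substituted in.

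The substantive part is the isomorphism statement. First I would note that the set of $\g$-types is an isomorphism invariant, so if $V(n,\lambda_2) \cong V(n',\lambda_2')$, then by the decomposition just recalled we must have $\{|n|, |n|+2,\ldots\} = \{|n'|, |n'|+2,\ldots\}$, forcing $|n| = |n'|$, i.e.\ $n' = \pm n$. It remains to pin down $\lambda_2$. For this I would use that the purely Takiff part of the center acts by a scalar: on $\Delta(n-2,\lambda_2)$ the element $\bar{C}$ acts by $\lambda_2^2$ (as recorded just after the simplicity proposition for Verma modules, with the convention $\chi = \lambda_2^2$), and since $\bar{C}$ is central it continues to act by $\lambda_2^2$ on the localization $S_f \tens{U} \Delta(n-2,\lambda_2)$ and hence on the submodule $\EnAr(\Delta(n-2,\lambda_2)) = V(n,\lambda_2)$. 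Therefore an isomorphism $V(n,\lambda_2)\cong V(n',\lambda_2')$ forces $\lambda_2^2 = (\lambda_2')^2$, i.e.\ $\lambda_2' = \pm\lambda_2$. Combining, $(n',\lambda_2')$ lies in $\{(n,\lambda_2),(n,-\lambda_2),(-n,\lambda_2),(-n,-\lambda_2)\}$.

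To cut this down to the two claimed possibilities $(n,\lambda_2)$ and $(-n,-\lambda_2)$, I would invoke the full central character, not just its purely Takiff part. The other generator $C$ acts on $\Delta(\lambda)$ by $\lambda_2(\lambda_1+2)$, so on $V(n,\lambda_2)$ it acts by $\lambda_2\cdot n$. An isomorphism then forces $\lambda_2 n = \lambda_2' n'$. Checking the four candidates: $(n,\lambda_2)$ gives $\lambda_2 n$ (consistent); $(-n,-\lambda_2)$ gives $(-\lambda_2)(-n) = \lambda_2 n$ (consistent); but $(n,-\lambda_2)$ gives $-\lambda_2 n$ and $(-n,\lambda_2)$ gives $-\lambda_2 n$, both of which differ from $\lambda_2 n$ unless $\lambda_2 n = 0$---and $\lambda_2 \neq 0$ by hypothesis. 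So when $n\neq 0$ only $(n,\lambda_2)$ and $(-n,-\lambda_2)$ survive, which is exactly the claim. The remaining edge case $n = 0$ needs a separate remark: there the candidates collapse to $(0,\lambda_2)$ and $(0,-\lambda_2)$, and the statement as written allows both (since $-n = n = 0$), so nothing further is needed---the conclusion ``$(n',\lambda_2') = (n,\lambda_2)$ or $(-n,-\lambda_2)$'' holds verbatim. I expect the main (mild) obstacle to be making sure the central scalars are correctly transported through the localization and the Zuckerman-type subfunctor, but since $C$ and $\bar{C}$ are genuinely central in $U(\T)$ this is routine; the computation of how $C$ separates the sign ambiguity is the one genuinely load-bearing step.
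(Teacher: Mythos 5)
Your proposal is correct and takes essentially the same route as the paper: the first part is just Theorem~\ref{theorem:EA_Verma} with $\lambda_1=n-2$, and the second part rests on the fact that $\EnAr$ preserves central characters, so $C$ and $\bar{C}$ act on $V(n,\lambda_2)$ by $n\lambda_2$ and $\lambda_2^2$, which forces $(n',\lambda_2')=(n,\lambda_2)$ or $(-n,-\lambda_2)$. The preliminary comparison of $\g$-types to get $|n|=|n'|$ is harmless but redundant, since the two central scalars alone already yield the conclusion (if $\lambda_2'=\lambda_2$ then $n'=n$; if $\lambda_2'=-\lambda_2$ then $n'=-n$).
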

\begin{proof}
The first statement follows from Theorem \ref{theorem:EA_Verma}.

It is clear that the functor $\EnAr$ preserves central character. So, the generators of the center $C$ and $\bar{C}$ act as the scalars $n\lambda_2$ and $\lambda_2^2$, respectively. From this the second statement follows.
\end{proof}

We will see later in this subsection that the modules $V(n,\lambda_2)$ exhaust all infinite-dimensional simple $\g$-Harish-Chandra modules.

On Figure \ref{figure:V_1} to \ref{figure:V2} we present several $V(n,\lambda_2)$'s, and how they are constructed. The gray area on the left hand-side is the Verma module, decomposed into rows according to Lemma \ref{lemma:structure_Verma}, and furthermore, into weight spaces. The remaining bullets represent $S_f$ tensored with the Verma module. The arrows represent non-zero action of $e$, and the light-gray area on the right hand-side contains vectors not having a finite $e$-orbit. The remaining (not shaded) part is our $V(n,\lambda_2)$, with its $\g$-types clearly visible.
\begin{figure}[h]
%
%\begin{tikzpicture}[scale=.7] % $EA(\Delta(-4,\lambda_2)) \cong V(-2,\lambda_2)$
%\def\epsilon{.2}
%\newcommand{\Verma}[6]{
%	\draw [fill=gray!30] (#1-\epsilon,#2+\epsilon) --(#3+2*\epsilon,#4+\epsilon) -- (#5-\epsilon,#6-2*\epsilon);
%	}
%\newcommand{\arrow}[4]{
%	\draw [->] (#1-\epsilon*#1+\epsilon*#3,#2-\epsilon*#2+\epsilon*#4) -- (\epsilon*#1+#3-\epsilon*#3,\epsilon*#2+#4-\epsilon*#4);
%	}
%\Verma{-3}{0}{-2}{0}{-3}{-1}
%\path [fill=gray!15] (3+\epsilon,0+\epsilon) --(2-2*\epsilon,0+\epsilon) -- (3+\epsilon,-1-2*\epsilon);
%\foreach \x in {-3,-2,-1,0,1,2,3}{
%	\foreach \y in {0,-1}{ 
%    	\node[draw,circle,inner sep=1pt,fill] at (\x,\y) {};
%  		}
%	}
%\foreach \y in {0,-1}{ 
%	\node[] at (-4,\y) {$\ldots$};
%	\node[] at (4,\y) {$\ldots$};
%	}
%\foreach \x in {-6,-4,-2,0,2,4,6}{
%	\node[above,yshift=5] at (\x/2,0) {{\scriptsize $\x$}};
%	}
%\arrow{-4}{0}{-3}{0}\arrow{-3}{0}{-2}{0};\arrow{-1}{0}{0}{0};\arrow{0}{0}{1}{0};\arrow{2}{0}{3}{0};\arrow{3}{0}{4}{0};

%\arrow{-4}{-1}{-3}{-1}\arrow{-2}{-1}{-1}{-1};\arrow{-1}{-1}{0}{-1};\arrow{0}{-1}{1}{-1};\arrow{1}{-1}{2}{-1};\arrow{3}{-1}{4}{-1};
%\end{tikzpicture}
%\caption{$V(-2,\lambda_2) = \EnAr(\Delta(-4,\lambda_2))$}
%\label{figure:V_2}
%
%\bigskip\bigskip
%
\begin{tikzpicture}[scale=.7] % $EA(\Delta(-3,\lambda_2)) \cong V(-1,\lambda_2)$
\def\epsilon{.2}

\newcommand{\Verma}[6]{
	\draw [fill=gray!30] (#1-\epsilon,#2+\epsilon) --(#3+2*\epsilon,#4+\epsilon) -- (#5-\epsilon,#6-2*\epsilon);
	}

\newcommand{\arrow}[4]{
	\draw [->] (#1-\epsilon*#1+\epsilon*#3,#2-\epsilon*#2+\epsilon*#4) -- (\epsilon*#1+#3-\epsilon*#3,\epsilon*#2+#4-\epsilon*#4);
	}

\Verma{-4}{0}{-2}{0}{-4}{-2}

\path [fill=gray!15] (3+\epsilon,0+\epsilon) --(1-2*\epsilon,0+\epsilon) -- (3+\epsilon,-2-2*\epsilon);

\foreach \x in {-4,-3,-2,-1,0,1,2,3}{
	\foreach \y in {0,-1,-2}{ 
    	\node[draw,circle,inner sep=1pt,fill] at (\x,\y) {};
  		}
	}

\foreach \y in {0,-1,-2}{ 
	\node[] at (-5,\y) {$\ldots$};
	\node[] at (4,\y) {$\ldots$};
	}

\node[] at (-.5,-2.75) {$\ldots$};

\foreach \x in {-7,-5,-3,-1,1,3,5,7}{
	\node[above,yshift=5] at (\x/2-1/2,0) {{\scriptsize $\x$}};
	}

\arrow{-5}{0}{-4}{0}\arrow{-4}{0}{-3}{0}\arrow{-3}{0}{-2}{0};\arrow{-1}{0}{0}{0};\arrow{1}{0}{2}{0};\arrow{2}{0}{3}{0};\arrow{3}{0}{4}{0};

\arrow{-5}{-1}{-4}{-1}\arrow{-4}{-1}{-3}{-1}\arrow{-2}{-1}{-1}{-1};\arrow{-1}{-1}{0}{-1};\arrow{0}{-1}{1}{-1};\arrow{2}{-1}{3}{-1};\arrow{3}{-1}{4}{-1};

\arrow{-5}{-2}{-4}{-2}\arrow{-3}{-2}{-2}{-2}\arrow{-2}{-2}{-1}{-2};\arrow{-1}{-2}{0}{-2};\arrow{0}{-2}{1}{-2};\arrow{1}{-2}{2}{-2};\arrow{3}{-2}{4}{-2};
\end{tikzpicture}
\caption{$V(-1,\lambda_2) = \EnAr(\Delta(-3,\lambda_2))$}
\label{figure:V_1}
\bigskip\bigskip
\begin{tikzpicture}[scale=.7] % $EA(\Delta(-2,\lambda_2)) \cong V(0,\lambda_2)$
\def\epsilon{.2}

\newcommand{\Verma}[6]{
	\draw [fill=gray!30] (#1-\epsilon,#2+\epsilon) --(#3+2*\epsilon,#4+\epsilon) -- (#5-\epsilon,#6-2*\epsilon);
	}

\newcommand{\arrow}[4]{
	\draw [->] (#1-\epsilon*#1+\epsilon*#3,#2-\epsilon*#2+\epsilon*#4) -- (\epsilon*#1+#3-\epsilon*#3,\epsilon*#2+#4-\epsilon*#4);
	}

\Verma{-3}{0}{-1}{0}{-3}{-2}

\path [fill=gray!15] (3+\epsilon,0+\epsilon) --(1-2*\epsilon,0+\epsilon) -- (3+\epsilon,-2-2*\epsilon);

\foreach \x in {-3,-2,-1,0,1,2,3}{
	\foreach \y in {0,-1,-2}{ 
    	\node[draw,circle,inner sep=1pt,fill] at (\x,\y) {};
  		}
	}

\foreach \y in {0,-1,-2}{ 
	\node[] at (-4,\y) {$\ldots$};
	\node[] at (4,\y) {$\ldots$};
	}

\node[] at (0,-2.75) {$\ldots$};

\foreach \x in {-6,-4,-2,0,2,4,6}{
	\node[above,yshift=5] at (\x/2,0) {{\scriptsize $\x$}};
	}

\arrow{-4}{0}{-3}{0};\arrow{-3}{0}{-2}{0};\arrow{-2}{0}{-1}{0};\arrow{1}{0}{2}{0};\arrow{2}{0}{3}{0};\arrow{3}{0}{4}{0};

\arrow{-4}{-1}{-3}{-1};\arrow{-3}{-1}{-2}{-1};\arrow{-1}{-1}{0}{-1};\arrow{0}{-1}{1}{-1};\arrow{2}{-1}{3}{-1};\arrow{3}{-1}{4}{-1};

\arrow{-4}{-2}{-3}{-2};\arrow{-2}{-2}{-1}{-2};\arrow{-1}{-2}{0}{-2};\arrow{0}{-2}{1}{-2};\arrow{1}{-2}{2}{-2};\arrow{3}{-2}{4}{-2};
\end{tikzpicture}
\caption{$V(0,\lambda_2) = \EnAr(\Delta(-2,\lambda_2))$}
\label{figure:V0}
\bigskip\bigskip
\begin{tikzpicture}[scale=.7] % $EA(\Delta(-1,\lambda_2)) \cong V(1,\lambda_2)$
\def\epsilon{.2}

\newcommand{\Verma}[6]{
	\draw [fill=gray!30] (#1-\epsilon,#2+\epsilon) --(#3+2*\epsilon,#4+\epsilon) -- (#5-\epsilon,#6-2*\epsilon);
	}

\newcommand{\arrow}[4]{
	\draw [->] (#1-\epsilon*#1+\epsilon*#3,#2-\epsilon*#2+\epsilon*#4) -- (\epsilon*#1+#3-\epsilon*#3,\epsilon*#2+#4-\epsilon*#4);
	}

\Verma{-3}{0}{-1}{0}{-3}{-2}

\path [fill=gray!15] (2+\epsilon,0+\epsilon) --(0-2*\epsilon,0+\epsilon) -- (2+\epsilon,-2-2*\epsilon);

\foreach \x in {-3,-2,-1,0,1,2}{
	\foreach \y in {0,-1,-2}{ 
    	\node[draw,circle,inner sep=1pt,fill] at (\x,\y) {};
  		}
	}

\foreach \y in {0,-1,-2}{ 
	\node[] at (-4,\y) {$\ldots$};
	\node[] at (3,\y) {$\ldots$};
	}

\node[] at (-.5,-2.75) {$\ldots$};

\foreach \x in {-5,-3,-1,1,3,5}{
	\node[above,yshift=5] at (\x/2-1/2,0) {{\scriptsize $\x$}};
	}

\arrow{-4}{0}{-3}{0};\arrow{-3}{0}{-2}{0};\arrow{-2}{0}{-1}{0};\arrow{0}{0}{1}{0};\arrow{1}{0}{2}{0};\arrow{2}{0}{3}{0};

\arrow{-4}{-1}{-3}{-1};\arrow{-3}{-1}{-2}{-1};\arrow{-1}{-1}{0}{-1};\arrow{1}{-1}{2}{-1};\arrow{2}{-1}{3}{-1};

\arrow{-4}{-2}{-3}{-2};\arrow{-2}{-2}{-1}{-2};\arrow{-1}{-2}{0}{-2};\arrow{0}{-2}{1}{-2};\arrow{2}{-2}{3}{-2};
\end{tikzpicture}
\caption{$V(1,\lambda_2) = \EnAr(\Delta(-1,\lambda_2))$}
\label{figure:V1}
\bigskip\bigskip
\begin{tikzpicture}[scale=.7] % $EA(\Delta(0,\lambda_2)) \cong V(2,\lambda_2)$
\def\epsilon{.2}

\newcommand{\Verma}[6]{
	\draw [fill=gray!30] (#1-\epsilon,#2+\epsilon) --(#3+2*\epsilon,#4+\epsilon) -- (#5-\epsilon,#6-2*\epsilon);
	}

\newcommand{\arrow}[4]{
	\draw [->] (#1-\epsilon*#1+\epsilon*#3,#2-\epsilon*#2+\epsilon*#4) -- (\epsilon*#1+#3-\epsilon*#3,\epsilon*#2+#4-\epsilon*#4);
	}

\Verma{-3}{0}{0}{0}{-3}{-3}

\path [fill=gray!15] (3+\epsilon,0+\epsilon) -- (1-\epsilon,0+\epsilon) -- (0-3*\epsilon,-1-\epsilon) -- (1-\epsilon,-1-\epsilon) -- (3+\epsilon,-3-3*\epsilon);

\foreach \x in {-3,-2,-1,0,1,2,3}{
	\foreach \y in {0,-1,-2,-3}{ 
    	\node[draw,circle,inner sep=1pt,fill] at (\x,\y) {};
  		}
	}

\foreach \y in {0,-1,-2,-3}{ 
	\node[] at (-4,\y) {$\ldots$};
	\node[] at (4,\y) {$\ldots$};
	}

\node[] at (0,-3.75) {$\ldots$};

\foreach \x in {-6,-4,-2,0,2,4,6}{
	\node[above,yshift=5] at (\x/2,0) {{\scriptsize $\x$}};
	}

\arrow{-4}{0}{-3}{0};\arrow{-3}{0}{-2}{0};\arrow{-2}{0}{-1}{0};\arrow{1}{0}{2}{0};\arrow{2}{0}{3}{0};\arrow{3}{0}{4}{0};

\arrow{-4}{-1}{-3}{-1};\arrow{-3}{-1}{-2}{-1};\arrow{-2}{-1}{-1}{-1};\arrow{1}{-1}{2}{-1};\arrow{2}{-1}{3}{-1};\arrow{3}{-1}{4}{-1};

\arrow{-4}{-2}{-3}{-2};\arrow{-3}{-2}{-2}{-2};\arrow{-1}{-2}{0}{-2};\arrow{0}{-2}{1}{-2};\arrow{2}{-2}{3}{-2};\arrow{3}{-2}{4}{-2};

\arrow{-4}{-3}{-3}{-3};\arrow{-2}{-3}{-1}{-3};\arrow{-1}{-3}{0}{-3};\arrow{0}{-3}{1}{-3};\arrow{1}{-3}{2}{-3};\arrow{3}{-3}{4}{-3};

\arrow{-1}{-1}{0}{0};\arrow{0}{-1}{1}{0};\arrow{1}{-1}{2}{0};\arrow{2}{-1}{3}{0};\arrow{3}{-1}{4}{0};
\end{tikzpicture}
\caption{$V(2,\lambda_2) = \EnAr(\Delta(0,\lambda_2))$}
\label{figure:V2}
\end{figure}

From Corollary \ref{corollary:uniqueness_with_0},  Theorem \ref{theorem:Q_simple} and Corollary \ref{corollary:g_types}
we have the following consequence:
\begin{corollary} \label{corollary:Q0_V0}
For $\lambda_2 \neq 0$ we have $V(0,\lambda_2) \cong V(0,-\lambda_2) \cong Q(0,\lambda_2^2)$.
\end{corollary}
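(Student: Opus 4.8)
The plan is to invoke the uniqueness statement of Corollary~\ref{corollary:uniqueness_with_0}, which asserts that for each algebra homomorphism $\chi\colon\overline{Z(\g)}\to\C$ there is exactly one simple $\T$-module containing $L(0)$ as a $\g$-submodule on which the purely Takiff part of the center (here $\bar C$) acts by the scalar $\chi(\bar C)$. So it suffices to check that $V(0,\lambda_2)$, $V(0,-\lambda_2)$ and $Q(0,\lambda_2^2)$ are all simple $\T$-modules, all contain $L(0)$ as a $\g$-submodule, and all have $\bar C$ acting by the scalar $\lambda_2^2$; then they must coincide.

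First I would treat the $Q$-side. Since $\lambda_2\neq 0$, we have $\lambda_2^2\neq 0$, so Theorem~\ref{theorem:Q_simple} gives that $Q(0,\lambda_2^2)$ is simple. By Lemma~\ref{lemma:structure_Q0}(c) its $\g$-decomposition is $\bigoplus_{k\geq 0}L(2k)$, so in particular it contains a copy of $L(0)$ (spanned by $1\otimes 1$) as a $\g$-submodule; and by the very construction of $Q(0,\chi)$ the element $\bar C$ acts on it by $\chi(\bar C)=\lambda_2^2$ (one may also note, via Lemma~\ref{lemma:structure_Q0}(d), that $C$ acts by $0$, which will match the other side).

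Next I would treat the two Enright--Arkhipov modules. By definition $V(0,\pm\lambda_2)=\EnAr(\Delta(-2,\pm\lambda_2))$, and Corollary~\ref{corollary:g_types} (resting on Theorem~\ref{theorem:EA_Verma}) tells us these are simple $\g$-Harish-Chandra modules whose $\g$-types are $L(|0|)=L(0), L(2), L(4),\dots$, each with multiplicity one; in particular each contains $L(0)$ as a $\g$-submodule. Since $\EnAr$ is defined through localization and the functor $\prescript{e}{}(-)$, it preserves the central character, so $\bar C$ acts on $V(0,\pm\lambda_2)$ by the same scalar as on $\Delta(-2,\pm\lambda_2)$, namely $(\pm\lambda_2)^2=\lambda_2^2$ (and $C$ acts by $(\pm\lambda_2)(\lambda_1+2)=0$). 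Thus all three modules satisfy the hypotheses of the uniqueness claim in Corollary~\ref{corollary:uniqueness_with_0} for the homomorphism $\chi$ determined by $\chi(\bar C)=\lambda_2^2$, and hence they are pairwise isomorphic, which is exactly the assertion.

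There is no genuine obstacle here: the corollary is a direct bookkeeping consequence of the simplicity results already proved (Theorem~\ref{theorem:Q_simple} and Theorem~\ref{theorem:EA_Verma}) together with the uniqueness half of Corollary~\ref{corollary:uniqueness_with_0}. The only point requiring a little care is to confirm that the central-character scalars genuinely agree for both $\bar C$ and $C$, so that one is applying the uniqueness statement to the same simple object from three different constructions; the computation of these scalars is immediate from $n=0$ on the $\EnAr$ side and from Lemma~\ref{lemma:structure_Q0} on the $Q$ side.
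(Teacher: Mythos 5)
Your argument is correct and is essentially the paper's own proof: the paper also deduces the corollary by combining the uniqueness statement of Corollary~\ref{corollary:uniqueness_with_0} with the simplicity results of Theorem~\ref{theorem:Q_simple} and Corollary~\ref{corollary:g_types} (the latter resting on Theorem~\ref{theorem:EA_Verma} and the fact that $\EnAr$ preserves central characters). Your extra check that the scalars for $C$ and $\bar C$ match on all three modules is exactly the bookkeeping implicit in the paper's one-line deduction.
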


From Lemma \ref{lemma:verma_otimes_fin}, Proposition \ref{proposition:EA_tensor_L}, and the definition of $V(n,\lambda_2)$, we have:
\begin{proposition} \label{proposition:V_otimes_L}
For $n \in \Z$, $\lambda_2 \neq 0$ and $\mu \in \Z_{\geq 0}$, we have the following isomorphism of $\T$-modules:
\[ V(n,\lambda_2) \tens{} L(\mu) \cong V(n+\mu,\lambda_2) \oplus V(n+\mu-2,\lambda_2) \oplus \ldots \oplus V(n-\mu,\lambda_2). \]
\end{proposition}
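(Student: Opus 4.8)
The plan is to directly combine the ingredients assembled in the previous subsections: the tensor decomposition for Verma modules over $\T$ (Lemma~\ref{lemma:verma_otimes_fin}), the fact that the Enright--Arkhipov functor commutes with tensoring by finite-dimensional modules (Proposition~\ref{proposition:EA_tensor_L}), and the definition $V(n,\lambda_2) = \EnAr(\Delta(n-2,\lambda_2))$. First I would write $V(n,\lambda_2) \tens{} L(\mu) = \EnAr(\Delta(n-2,\lambda_2)) \tens{} L(\mu)$, and since $L(\mu)$ is a finite-dimensional $\T$-module, apply Proposition~\ref{proposition:EA_tensor_L} to rewrite this as $\EnAr\big(\Delta(n-2,\lambda_2) \tens{} L(\mu)\big)$.

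Next I would apply Lemma~\ref{lemma:verma_otimes_fin} with $\lambda_1 = n-2$ and the given $\lambda_2 \neq 0$ (note $\mu \in \Z_{\geq 0}$, as required): this gives
\[
\Delta(n-2,\lambda_2) \tens{} L(\mu) \cong \bigoplus_{j=0}^{\mu} \Delta(n-2+\mu-2j,\ \lambda_2).
\]
Since $\EnAr$ is a functor, it takes this finite direct sum to the direct sum of the images (additivity of $\EnAr$ is immediate from its construction as a composite of a tensor-localization and the left-exact submodule functor $\prescript{e}{}(-)$, both of which visibly preserve finite direct sums). Hence
\[
\EnAr\Big(\bigoplus_{j=0}^{\mu}\Delta(n-2+\mu-2j,\lambda_2)\Big) \cong \bigoplus_{j=0}^{\mu} \EnAr\big(\Delta(n-2+\mu-2j,\lambda_2)\big) = \bigoplus_{j=0}^{\mu} V(n+\mu-2j,\lambda_2),
\]
using the definition of $V(-,-)$ in the last step (the parameter shift by $-2$ is exactly absorbed: $\Delta(n-2+\mu-2j,\lambda_2) = \Delta\big((n+\mu-2j)-2,\lambda_2\big)$). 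Reading off $j = 0, 1, \dots, \mu$ yields the summands $V(n+\mu,\lambda_2), V(n+\mu-2,\lambda_2), \dots, V(n-\mu,\lambda_2)$, which is the claimed isomorphism. All naturality statements are inherited from the naturality already proved in Lemma~\ref{lemma:verma_otimes_fin} and Proposition~\ref{proposition:EA_tensor_L}.

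There is essentially no obstacle here: the statement is a formal consequence of the two cited results plus additivity of $\EnAr$. The only point requiring a word of care is that one should make sure each $V(n+\mu-2j,\lambda_2)$ makes sense, i.e.\ that the second parameter is still nonzero — but this is automatic since it is the same $\lambda_2 \neq 0$ throughout. One might also remark, although it is not strictly needed for the statement, that by Theorem~\ref{theorem:EA_Verma} each summand is in fact a simple $\g$-Harish-Chandra module, so the right-hand side is a semisimple $\T$-module; this is consistent with the monoidal-representation picture described in the introduction and with the $\g$-type count obtained by combining Corollary~\ref{corollary:g_types} with the Clebsch--Gordan rule.
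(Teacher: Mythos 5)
Your proof is correct and follows exactly the route the paper takes: Proposition~\ref{proposition:V_otimes_L} is stated there as an immediate consequence of Lemma~\ref{lemma:verma_otimes_fin}, Proposition~\ref{proposition:EA_tensor_L}, and the definition of $V(n,\lambda_2)$, which is precisely your chain of isomorphisms. Your additional remarks on additivity of $\EnAr$ and the parameter shift just make explicit what the paper leaves implicit.
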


Now we can completely describe the universal modules:

\begin{proposition} \label{proposition:Q_n}
For $n \in \Z_{\geq 0}$ and $\chi \neq 0$, choose any square root $\lambda_2$ of $\chi$. Then
\[ Q(n,\chi) \cong V(n,\lambda_2) \oplus V(n-2,\lambda_2) \oplus \ldots \oplus V(-n,\lambda_2). \]
Moreover, $V(n,\lambda_2) \cong V(-n,-\lambda_2)$.
\end{proposition}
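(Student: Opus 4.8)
The plan is to prove the isomorphism $Q(n,\chi) \cong \bigoplus_{k=0}^{n} V(n-2k,\lambda_2)$ by comparing both sides as $\T$-modules through the tensor-with-$L(n)$ construction, and then to deduce $V(n,\lambda_2)\cong V(-n,-\lambda_2)$ as a byproduct. First I would invoke equation~\eqref{equation:tensor2}, which gives $Q(n,\chi)\cong Q(0,\chi)\tens{}L(n)$, together with Corollary~\ref{corollary:Q0_V0}, which identifies $Q(0,\chi)\cong V(0,\lambda_2)$ for any chosen square root $\lambda_2$ of $\chi$. Combining these, $Q(n,\chi)\cong V(0,\lambda_2)\tens{}L(n)$. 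Now I would apply Proposition~\ref{proposition:V_otimes_L} with the roles of the parameters set so that we tensor $V(0,\lambda_2)$ with $L(n)$: this immediately yields
\[
V(0,\lambda_2)\tens{}L(n)\cong V(n,\lambda_2)\oplus V(n-2,\lambda_2)\oplus\cdots\oplus V(-n,\lambda_2),
\]
which is exactly the claimed decomposition of $Q(n,\chi)$.

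For the second assertion, $V(n,\lambda_2)\cong V(-n,-\lambda_2)$, I would argue as follows. Each $V(m,\lambda_2)$ appearing in the decomposition above is a simple $\g$-Harish-Chandra module (Corollary~\ref{corollary:g_types}), whose minimal $\g$-type is $L(|m|)$ and on which the central elements $C,\bar C$ act by $m\lambda_2$ and $\lambda_2^2$. The decomposition of $Q(n,\chi)$ must be symmetric under $m\mapsto -m$ in the sense that the list of summands $\{V(m,\lambda_2):m=n,n-2,\dots,-n\}$ is, a priori, just the Jordan--Hölder content of $Q(n,\chi)$, and since all summands are simple this content is unambiguous. Applying Proposition~\ref{proposition:V_otimes_L} once more but now decomposing $V(-n,\lambda_2)\tens{}L(0)$ trivially, or more directly: the module $Q(n,\chi)$ is symmetric, so the multiset of isomorphism classes $\{V(m,\lambda_2)\}$ equals $\{V(-m,\lambda_2)\}$. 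However this alone does not pin down the individual identification. The cleaner route is to use Corollary~\ref{corollary:g_types} directly: it states that if $V(n,\lambda_2)\cong V(n',\lambda_2')$ then $(n',\lambda_2')=(n,\lambda_2)$ or $(-n,-\lambda_2)$, so the isomorphism class of $V(n,\lambda_2)$ depends only on $\{(n,\lambda_2),(-n,-\lambda_2)\}$; to establish that the second option genuinely occurs I would compare $V(n,\lambda_2)$ and $V(-n,-\lambda_2)$: both have $\g$-types $L(|n|),L(|n|+2),\dots$ each with multiplicity one, and both have $C$ acting by $n\lambda_2=(-n)(-\lambda_2)$ and $\bar C$ acting by $\lambda_2^2=(-\lambda_2)^2$. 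By Proposition~\ref{proposition:multiplicity}\eqref{item:universal_property}, a simple $\T$-module containing $L(|n|)$ as a $\g$-submodule with the prescribed purely-Takiff central character $\chi$ is a quotient of $Q(|n|,\chi)$; since $V(n,\lambda_2)$ and $V(-n,-\lambda_2)$ are both simple quotients of $Q(|n|,\chi)$ realizing the same minimal $\g$-type with multiplicity one, and the summand of $Q(|n|,\chi)$ with minimal $\g$-type $L(|n|)$ occurring with multiplicity one is exactly $V(|n|,\lambda_2)$ (resp.\ $V(|n|,-\lambda_2)$), uniqueness forces $V(n,\lambda_2)\cong V(|n|,\pm\lambda_2)\cong V(-n,-\lambda_2)$.

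I expect the main obstacle to be the second claim rather than the first: the decomposition of $Q(n,\chi)$ is essentially a one-line consequence of \eqref{equation:tensor2}, Corollary~\ref{corollary:Q0_V0}, and Proposition~\ref{proposition:V_otimes_L}, but getting the identification $V(n,\lambda_2)\cong V(-n,-\lambda_2)$ to be genuinely \emph{proved} (rather than merely consistent with Corollary~\ref{corollary:g_types}) requires care. The subtle point is that Corollary~\ref{corollary:g_types} only says the isomorphism class does not distinguish $(n,\lambda_2)$ from $(-n,-\lambda_2)$ as \emph{possible} labels; one still must exhibit the isomorphism. The safe way to do this is to note that $V(n,\lambda_2)$ and $V(-n,-\lambda_2)$ have identical $\g$-type multiplicities and identical central characters, then invoke simplicity plus the universal property of $Q(|n|,\chi)$ from Proposition~\ref{proposition:multiplicity}\eqref{item:universal_property} to conclude that both are \emph{the} unique simple quotient of $Q(|n|,\chi)$ with minimal $\g$-type $L(|n|)$, hence isomorphic. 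Alternatively, and perhaps most transparently, one can extract it from the decomposition itself: writing $n\geq 0$ and comparing the decomposition of $Q(n,\chi)$ obtained from $\lambda_2$ versus the one obtained from $-\lambda_2$, the extreme summands $V(n,\lambda_2)$ and $V(-n,-\lambda_2)$ occupy the same slot (the unique simple constituent whose minimal $\g$-type is $L(n)$ and on which $\bar C$ acts by $\chi$ and $C$ acts by $n\lambda_2$), and since $Q(n,\chi)$ does not depend on the choice of square root, these must coincide.
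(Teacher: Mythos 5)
Your proof is correct and essentially the paper's: the decomposition follows, exactly as in the paper, from \eqref{equation:tensor2}, Corollary~\ref{corollary:Q0_V0} and Proposition~\ref{proposition:V_otimes_L}, and the identification $V(n,\lambda_2)\cong V(-n,-\lambda_2)$ is obtained by your final argument, which is the paper's: compare the two decompositions of $Q(n,\chi)$ coming from the square roots $\pm\lambda_2$ and match summands via their pairwise distinct central characters. One small caution about your intermediate attempt: the claim that $V(|n|,\lambda_2)$ is \emph{the} summand of $Q(|n|,\chi)$ with minimal $\g$-type $L(|n|)$ is not quite right, since $V(-n,\lambda_2)$ is another such summand; the two are separated only by the eigenvalue of $C$ ($n\lambda_2$ versus $-n\lambda_2$), which is precisely what your last paragraph correctly uses.
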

\begin{proof}
The first claim follows from Proposition \ref{proposition:V_otimes_L}, Corollary \ref{corollary:Q0_V0} and (\ref{equation:tensor2}). The second claim follows from the first one by comparing both choices $\pm \lambda_2$ and central characters of the summands.
\end{proof}

\begin{theorem} \label{theorem:classification}
Let $V$ be a simple $\g$-Harish-Chandra module for $\T$. Denote by $\chi = \chi(\bar{C})$ the purely Takiff part of the central character, and suppose $L(n)$, $n \in \Z_{\geq 0}$, is the minimal $\g$-type of $V$.
\begin{itemize}
\item If $\chi \neq 0$, then $V \cong V(n,\lambda_2)$, for a square root $\lambda_2$ of $\chi$.
\item If $\chi = 0$, then $V \cong L(n)$ with the trivial $\bar{\g}$-action.
\end{itemize}
In other words, $V(n,\lambda_2)$, $n \in \Z$, $\lambda_2 \in \C\setminus\{0\}$, together with the finite-dimensional simple $\g$-modules constitute a complete list of simple $\g$-Harish-Chandra modules for $\T$. The only isomorphisms between different members of the list are $V(n,\lambda_2) \cong V(-n,-\lambda_2)$. 
\end{theorem}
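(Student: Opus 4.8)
The plan is to combine the universal-module machinery of Section~\ref{section:Takiff} with the explicit construction via $\EnAr$. The $\chi=0$ case is essentially already done: by Proposition~\ref{proposition:multiplicity}(\ref{item:universal_property}), $V$ is a simple quotient of some $Q(n,\chi)$, and when $\chi=0$ the element $\bar C$ lies in $\Nrad(\T)=\bar\g$, so it acts nilpotently; being central it must act as zero, hence $\chi=0$ really is the purely Takiff central character, and since $\bar\g=\Nrad(\T)$ annihilates every simple finite-dimensional module while a simple module with $\bar\g$ acting locally finitely and centrally by zero forces $\bar\g V=0$ (as $\bar\g V$ would be a proper submodule), $V$ descends to a simple $\g$-module, i.e.\ $V\cong L(n)$. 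So the real content is the case $\chi\neq 0$.

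For $\chi\neq 0$: let $L(n)$ with $n\in\Z_{\geq 0}$ be the minimal $\g$-type of $V$. By Proposition~\ref{proposition:multiplicity}(\ref{item:universal_property}), $V$ is a simple quotient of $Q(n,\chi)$. Pick a square root $\lambda_2$ of $\chi$. By Proposition~\ref{proposition:Q_n} we have the decomposition $Q(n,\chi)\cong V(n,\lambda_2)\oplus V(n-2,\lambda_2)\oplus\cdots\oplus V(-n,\lambda_2)$, where each summand is simple by Corollary~\ref{corollary:g_types}. Hence $V$, as a simple quotient, must be isomorphic to one of these summands. I then need to rule out all summands except $V(n,\lambda_2)$ (equivalently $V(-n,-\lambda_2)$). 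This is a $\g$-type argument: by Corollary~\ref{corollary:g_types}, $V(m,\lambda_2)$ has minimal $\g$-type $L(|m|)$, and for $m$ running over $\{n,n-2,\ldots,-n\}$ the value $|m|$ is strictly less than $n$ for every $m\neq\pm n$; since $V$ was assumed to have $L(n)$ as its minimal $\g$-type, $V$ cannot be any of those summands, leaving only $V(n,\lambda_2)\cong V(-n,-\lambda_2)$.

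It remains to assemble the "complete list / only isomorphisms" statement. The list is exhaustive: every finite-dimensional simple $\g$-Harish-Chandra module is a simple finite-dimensional $\T$-module (the $\chi=0$ case above shows $\bar\g$ acts trivially, so these are exactly the $L(n)$), and every infinite-dimensional one falls under $\chi\neq 0$ (if $\chi=0$ then $V$ is finite-dimensional as just argued) and hence equals some $V(n,\lambda_2)$. For the isomorphism classes: finite-dimensional modules are distinguished from the $V(n,\lambda_2)$ by dimension, and distinct $L(n)$ are non-isomorphic as $\g$-modules; among the $V(n,\lambda_2)$, Corollary~\ref{corollary:g_types} gives that $V(n,\lambda_2)\cong V(n',\lambda_2')$ forces $(n',\lambda_2')\in\{(n,\lambda_2),(-n,-\lambda_2)\}$, and Proposition~\ref{proposition:Q_n} confirms $V(n,\lambda_2)\cong V(-n,-\lambda_2)$ genuinely holds, so this is the only coincidence.

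I expect the main (already-discharged) obstacle to be the verification that a simple quotient of the semisimple-as-$\g$-module $Q(n,\chi)$ must be one of its summands rather than some "diagonal" quotient — but this is immediate once one knows (Proposition~\ref{proposition:Q_n}, resting on Theorem~\ref{theorem:Q_simple} and Proposition~\ref{proposition:V_otimes_L}) that $Q(n,\chi)$ is a \emph{direct sum of simple} modules, hence semisimple, so its only simple quotients are its summands. The genuinely delicate inputs are all upstream: the simplicity of $\EnAr(\Delta(\lambda))$ (Theorem~\ref{theorem:EA_Verma}) and the tensor-product decomposition (Proposition~\ref{proposition:V_otimes_L}) combined with distinct central characters. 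Given those, the proof of Theorem~\ref{theorem:classification} is a short bookkeeping argument about minimal $\g$-types and central characters.
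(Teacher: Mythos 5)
Your $\chi\neq 0$ argument is essentially the paper's proof: realize $V$ as a simple quotient of $Q(n,\chi)$ via Proposition~\ref{proposition:multiplicity}, decompose $Q(n,\chi)$ into simple summands by Proposition~\ref{proposition:Q_n}, and compare minimal $\g$-types using Corollary~\ref{corollary:g_types}. One small slip there: the two summands surviving the $\g$-type test are $V(n,\lambda_2)$ and $V(-n,\lambda_2)\cong V(n,-\lambda_2)$, which for $n\neq 0$ are \emph{not} isomorphic to each other (the central element $C$ acts by $n\lambda_2$ on the first and by $-n\lambda_2$ on the second); your closing phrase ``leaving only $V(n,\lambda_2)\cong V(-n,-\lambda_2)$'' conflates $V(-n,\lambda_2)$ with $V(-n,-\lambda_2)$. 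This is harmless for the statement, since the theorem only asserts $V\cong V(n,\lambda_2)$ for \emph{some} square root $\lambda_2$ of $\chi$, but it matters if you want the irredundancy claim stated correctly.

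The genuine gap is in the $\chi=0$ case. You assert that $\bar{\g}$ acts locally finitely on $V$ and that $\bar{\g}V$ is a proper submodule; neither is justified, and the first is exactly the kind of statement that fails for $\g$-Harish-Chandra modules in general: on $V(n,\lambda_2)$ with $\lambda_2\neq 0$, the element $\bar{e}$ acts injectively, hence not locally finitely, even though $\g$ acts locally finitely. Local finiteness of the radical is not part of the definition, so it would have to be deduced from $\chi=0$, and that deduction is precisely the content of this case; likewise ``$\bar{\g}V$ would be a proper submodule'' begs the question, since for $\chi\neq0$ one has $\bar{\g}V=V$. (Also $\bar{C}$ lies in $U(\bar{\g})$, not in $\bar{\g}=\Nrad(\T)$, and ``so it acts nilpotently'' has no justification.) The paper closes this case with the second part of Theorem~\ref{theorem:Q_simple}: $Q(0,0)$ has the decreasing filtration by the submodules $Q_k=\bigoplus_{t\geq k}L(2t)$, with finite-dimensional subquotients on which $\bar{\g}$ acts trivially; tensoring with $L(n)$ gives such a filtration of $Q(n,0)\cong Q(0,0)\otimes L(n)$ whose terms have $\g$-types bounded below by roughly $2k-n$ and intersect in zero, so any maximal submodule of $Q(n,0)$ contains some $Q_k\otimes L(n)$, whence every simple quotient is finite-dimensional with trivial $\bar{\g}$-action, i.e.\ $V\cong L(n)$. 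You need this filtration argument (or some substitute establishing that $\chi=0$ forces $\bar{\g}$ to act trivially on a simple quotient); as written, your $\chi=0$ case does not go through.
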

\begin{proof}
By Proposition \ref{proposition:multiplicity}(\ref{item:universal_property}), $V$ is a quotient of $Q(n,\chi)$.

If $\chi\neq 0$, from Proposition \ref{proposition:Q_n} and Corollary \ref{corollary:g_types} we see that the only possible choices with the correct minimal $\g$-type are $V(n,\lambda_2)$ or $V(n,-\lambda_2)$.

If $\chi = 0$, by the second part of Theorem \ref{theorem:Q_simple}, we see that the only possible simple quotients of $Q(n,0) \cong Q(0,0) \otimes L(n)$ are just finite-dimensional simple $\g$-modules with the trivial $\bar{\g}$-action.
\end{proof}

\subsection{Extensions} \label{subsection:extensions}

Here we calculate the first extension groups of simple $\g$-Harish-Chandra modules, restricting to the infinite-dimensional cases, i.e., a non-trivial central character. Since in that case non-isomorphic $\g$-Harish-Chandra modules have different central characters, there are no non-trivial extensions between them. So it only makes sense to calculate the self-extensions. 

\begin{theorem} \label{theorem:extensions}
For an infinite-dimensional simple $\g$-Harish-Chandra module $V$, we have \[ \Ext^1(V,V) \cong \C. \]
\end{theorem}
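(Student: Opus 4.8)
The plan is to bound $\dim_{\C}\Ext^1_\T(V,V)$ above and below by $1$. Throughout I use that $\End_\T(V)=\C$ — an endomorphism preserves the $\g$-isotypic components, hence acts as a scalar on the multiplicity-one minimal $\g$-type, which generates the simple module $V$ — and the classification of Theorem~\ref{theorem:classification}, by which I may assume $V=V(n,\lambda_2)$ with $n\in\Z_{\geq 0}$ and $\lambda_2\neq 0$; set $\chi:=\lambda_2^2$, so $\bar C$ acts on $V$ by $\chi$. Since any $\T$-module extension of $V$ by $V$ is again a $\g$-Harish-Chandra module ($\g$ being semisimple, the underlying $\g$-module sequence splits), it suffices to work inside the category of $\g$-Harish-Chandra modules.

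For the lower bound I would exhibit a non-split self-extension by hand. Take $E:=V\oplus V$, let $\g$ act diagonally, and let $\bar x\in\bar\g$ act by $(v_1,v_2)\mapsto(\bar x v_1+\bar x v_2,\ \bar x v_2)$. One checks directly (using only that $\bar\g$ is abelian) that this is a $\T$-module structure and that $0\to V\to E\to V\to 0$ is exact; on $E$ the central element $\bar C=\bar h^2+4\bar f\bar e$ then acts as $\left(\begin{smallmatrix}\chi & 2\chi\\ 0 & \chi\end{smallmatrix}\right)\otimes\operatorname{id}_V$, which is not semisimple because $\chi\neq 0$, so $E$ does not split. Hence $\Ext^1_\T(V,V)\neq 0$.

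For the upper bound, the key is that a single scalar determines the extension class. Given $0\to V\xrightarrow{\iota}E\xrightarrow{\pi}V\to 0$, the central element $\bar C-\chi$ annihilates $\iota(V)$ and maps $E$ into $\iota(V)$, so it acts on $E$ as $v_0(E)\cdot\iota\pi$ for a unique $v_0(E)\in\End_\T(V)=\C$; a routine computation with pullbacks, pushouts and Baer sums shows $[E]\mapsto v_0(E)$ is a linear map $\Ext^1_\T(V,V)\to\C$. I claim it is injective, which gives $\dim_\C\Ext^1_\T(V,V)\leq 1$. Indeed, if $v_0(E)=0$ then $(\bar C-\chi)E=0$, so the whole sequence lies in the full subcategory $\mathcal C$ of $\g$-Harish-Chandra modules killed by $\bar C-\chi$. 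In $\mathcal C$ the universal module $Q(n,\chi)=Q(n)/(\bar C-\chi)Q(n)$ is projective — by Frobenius reciprocity $\Hom_\T(Q(n,\chi),-)$ is, on $\mathcal C$, the exact functor sending $M$ to the multiplicity space of $L(n)$ — and it is semisimple, since by Proposition~\ref{proposition:Q_n} and Corollary~\ref{corollary:g_types} one has $Q(n,\chi)\cong\bigoplus_{j=0}^{n}V(n-2j,\lambda_2)$ with all summands simple. Hence its direct summand $V=V(n,\lambda_2)$ is projective in $\mathcal C$, so $\Ext^1_{\mathcal C}(V,V)=0$ and the extension splits, i.e.\ $[E]=0$.

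Together these give $\Ext^1_\T(V,V)\cong\C$. The step I expect to require the most care is the injectivity argument: a priori the central character moves in a two-dimensional $\operatorname{Spec}Z(\T)$ and one could fear two independent self-extensions (one for the $C$-direction, one for the $\bar C$-direction), but resetting $\bar C$ to its value $\chi$ already forces semisimplicity, through the fact that $Q(n,\chi)$ is not merely projective but \emph{semisimple} in $\mathcal C$. One has to be sure the projectivity and semisimplicity are genuinely available in the exact category being used; the case $n=0$, where $V\cong Q(0,\chi)=U(\bar\g)/(\bar C-\chi)$ is completely explicit (and simple), is a useful guide and consistency check.
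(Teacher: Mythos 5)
Your proof is correct, but it follows a genuinely different route from the paper. The paper first treats $V=Q(0,\chi)$: using the universal property of $Q(0)$ it shows that the middle term of any non-split self-extension is forced to be $U(\bar{\g})/\big((\bar{C}-\chi)^2\big)$, and then transports the answer to all $V(n,\lambda_2)$ by translation functors (tensoring with $L(n)$ and projecting to the appropriate central character), with the five lemma showing the two induced maps between $\Ext^1$-groups are mutually inverse. You instead argue uniformly in $n$: your lower bound is an explicit ``Jordan block in the $\bar{C}$-direction'' extension $E=V\oplus V$ twisted by the cocycle $\bar{x}\mapsto\bar{x}$ (the computation that this is a $\T$-module and that $\bar{C}$ acts non-semisimply when $\chi\neq 0$ checks out), and your upper bound detects the extension class by the scalar through which the central element $\bar{C}-\chi$ acts, then shows that classes with scalar zero split because $V(n,\lambda_2)$ is a direct summand of $Q(n,\chi)$, which is projective relative to short exact sequences of $\g$-Harish-Chandra modules annihilated by $\bar{C}-\chi$ (Frobenius reciprocity plus $\g$-semisimplicity of such sequences), with semisimplicity of $Q(n,\chi)$ supplied by Proposition~\ref{proposition:Q_n}. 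What each approach buys: the paper's argument yields the explicit structure of the unique non-split extension and illustrates the translation-functor technique it uses throughout, while yours avoids translation functors and the five lemma entirely, works directly for every minimal $\g$-type, and is more portable — it only needs the analogue of Proposition~\ref{proposition:Q_n} and adjunction, so it would transfer essentially verbatim to the Schr\"{o}dinger case (Theorem~\ref{theorem:extensions_sch}) and to other generalized Takiff situations. Two small points you flag as routine do need the checks you indicate: linearity of $[E]\mapsto v_0(E)$ under Baer sums, and the fact that any $\T$-module self-extension of $V$ is automatically a $\g$-Harish-Chandra module (local finiteness of the extension over $\g$ plus Weyl's theorem); both are standard and your sketches are adequate.
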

\begin{proof}
Assume first that $V= Q(0,\chi)$ for $\chi \neq 0$, and suppose we have a non-split short exact sequence $0 \to V \stackrel{i}{\hookrightarrow} M \stackrel{p}{\tto} V \to 0$. Denote by $1 \in V$ the generator from $L(0)$, set $w=i(1) \in M$ and find $v \in M$ such that $p(v)=1$. Since the sequence must split in the category of $\g$-modules, $v$ generates the trivial $\g$-submodule in $M$. By the universal property, there is a $\T$-homomorphism $f \colon Q(0) \to M$, and the triangle below commutes:
\[ \xymatrix{ & & Q(0) \ar[d]_{f} \ar[rd] & &\\
0 \ar[r] & V \ar@{^{(}->}[r] & M \ar@{->>}[r] & V \ar[r] & 0.} \]

The map $f$ must be surjective, since otherwise its image would define a splitting of the short exact sequence. So, there is an element in $Q(0)$ that maps to $w$ via $f$; by Lemma \ref{lemma:structure_Q0} and Proposition \ref{proposition:end_Q0} such an element is necessarily of the form $p(\bar{C})$ for some polynomial $p$. Since the triangle above commutes, we must have $p(\chi)=0$. Since $[M \colon L(0)] =2$, we can take $p(\bar{C})=\bar{C}-\chi$. From this one can see that $\Ker f$ is generated by $(\bar{C} - \chi)^2$, i.e., $M \cong \quotient{U(\bar{\g})}{(\bar{C}-\chi)^2}$. This uniquely determines $M$. Conversely, one sees directly that such $M$ defines a non-split self-extension of $Q(0,\chi)$.

The general statement is obtained from this by translation functors, i.e.,  tensoring extensions of $Q(0,\chi)$ by $L(n)$ and then taking the component with the correct central character (see Proposition \ref{proposition:V_otimes_L}). This functor defines a homomorphism of abelian groups $\Ext^1(V(0,\lambda_2),V(0,\lambda_2)) \to \Ext^1(V(n,\lambda_2),V(n,\lambda_2))$. In the same way we get a homomorphism in the other direction.

The fact that these homomorphisms compose to the identities on the $\Ext^1$ groups is an easy application of the $5$-lemma.
\end{proof}

\subsection{Annihilators}

We will prove here that the infinite-dimensional simple $\g$-Harish-Chandra modules have the same annihilators in $U=U(\T)$ as the corresponding Verma modules. We start by showing that, in the cases we are interested in, the localization does not decrease the annihilator. Then we construct a certain ``inverse'' of the functor $\EnAr$, which will produce Verma modules out of $\g$-Harish-Chandra modules. This will be given by the localization by $\bar{e}$, i.e. tensoring with $S_{\bar{e}}$ over $U$. 

\begin{lemma} \label{lemma:ann_loc}
Let $x$ be an $\ad$-nilpotent element in $\T$, and $M$ a $\T$-module on which $x$ acts injectively. Then in $U$ we have
\[ \Ann(M) = \Ann \left(U_{(x)} \tens{U} M \right) \subseteq \Ann \left(S_x \tens{U} M \right). \]
\end{lemma}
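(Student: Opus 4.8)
The plan is to prove the two statements in Lemma~\ref{lemma:ann_loc} separately: the equality $\Ann(M) = \Ann(U_{(x)}\tens{U} M)$, and then the inclusion into $\Ann(S_x \tens{U} M)$.

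\textbf{Step 1: The inclusion $\Ann(U_{(x)}\tens{U} M) \subseteq \Ann(M)$.} Since $x$ acts injectively on $M$, the canonical map $j\colon M \to U_{(x)}\tens{U} M$, $m \mapsto 1\otimes m$, is injective (as noted in the excerpt after \eqref{equation:canonical_copy_localization}). If $u \in U$ annihilates $U_{(x)}\tens{U} M$, then in particular $u\cdot(1\otimes m) = 1\otimes(um) = j(um) = 0$ for all $m$, and injectivity of $j$ gives $um=0$. Hence $u \in \Ann(M)$.

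\textbf{Step 2: The inclusion $\Ann(M) \subseteq \Ann(U_{(x)}\tens{U} M)$.} This is the step I expect to be the heart of the matter. Take $u \in \Ann(M)$. A general element of $U_{(x)}\tens{U} M$ is a sum of terms $x^{-k}\otimes m$ with $k \geq 0$ and $m \in M$ (using the basis description in Lemma~\ref{lemma:basis_localization}, or just the Ore property). It suffices to show $u\cdot(x^{-k}\otimes m) = 0$. The obstacle is that $u$ and $x^{-k}$ do not commute, so one cannot simply move $u$ past $x^{-k}$. The key tool is that $x$ is $\ad$-nilpotent on $U$: there is an integer $N$ (depending on $u$, essentially the nilpotency degree of $\ad x$ on the finite-dimensional $\ad$-invariant subspace generated by $u$) such that $(\ad x)^N(u) = 0$. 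In the localization $U_{(x)}$ this yields a finite expansion
\[
u\, x^{-k} = x^{-k-N'} \sum_{i} c_i\, u_i
\]
for suitable $u_i \in U$ and scalars/binomial coefficients $c_i$ — concretely, one conjugates: $x^{k} u x^{-k} = \sum_{j\geq 0}\binom{?}{j} x^{-j}(\ad x)^j(u)\cdot(\text{sign})$, a \emph{finite} sum precisely because $(\ad x)^j(u)=0$ for $j$ large; equivalently one writes $u x^{-k} = \big(\sum_{j} (\pm)\binom{k+j-1}{j} x^{-k-j}(\ad x)^j(u)\big)$ — and each $(\ad x)^j(u)$ lies in the two-sided ideal $\Ann(M)$ because $\Ann(M)$ is a two-sided ideal stable under $\ad x$ (indeed $[\ad x](v) = xv - vx$ preserves $\Ann M$ since $x M \subseteq M$). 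Therefore $u x^{-k} \otimes m = x^{-k-N'}\otimes \big(\sum_i c_i u_i m\big)$ with each $u_i \in \Ann(M)$, so $u_i m = 0$ and the whole thing vanishes. This gives $u \in \Ann(U_{(x)}\tens{U} M)$. I would carry out this computation using the commutation relations of the shape already recorded in \eqref{equation:commutators_he_f_inv} (and their generalizations), which is exactly the "visible from the proof" remark the authors allude to when they say the Ore condition is also visible here.

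\textbf{Step 3: The inclusion $\Ann(U_{(x)}\tens{U} M) \subseteq \Ann(S_x\tens{U} M)$.} This is immediate: by \eqref{equation:canonical_copy_localization}, $S_x\tens{U} M$ is a quotient $U$-module of $U_{(x)}\tens{U} M$, and an element of $U$ annihilating a module annihilates every quotient of it. Combining Steps 1--3 finishes the proof. The only genuinely nontrivial point is Step~2, and the crucial input there is the $\ad$-nilpotency of $x$ together with the fact that $\Ann(M)$ is a two-sided ideal closed under $\ad x$, which together force the a priori infinite "commutator expansion" of $u x^{-k}$ in $U_{(x)}$ to terminate and to land inside $\Ann(M)\otimes M$.
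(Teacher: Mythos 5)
Your proof is correct and follows essentially the same route as the paper: in both arguments the crux is that local $\ad$-nilpotence of $x$ lets one commute $u \in \Ann(M)$ past negative powers of $x$ with all correction terms remaining in the two-sided ideal $\Ann(M)$, while the other inclusions follow from injectivity of $j$ and from $S_x \tens{U} M$ being a quotient of $U_{(x)} \tens{U} M$. The only (cosmetic) difference is that you write the terminating expansion $u x^{-k}=\sum_j c_j\, x^{-k-j}(\ad x)^j(u)$ directly, whereas the paper multiplies $u x^{-n} \otimes m$ on the left by a suitable power of $x$ to land in $\Ann(M)\otimes m$ and then cancels that power using injectivity of the action of $x$ on the localized module.
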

\begin{proof} The only non-obvious thing to prove is if $u \in \Ann(M)$, then $u x^{-n} \otimes m =0$, for all $n\geq 1$ and $m\in M$. 

By assumption, for any $u \in U$ there exists $k_1 >0$ such that
\[ 0 = \left(\ad(x)\right)^{k_1}(u) = \sum_{i=0}^{k_1} (-1)^{i}{k_1 \choose i} x^i u x^{k_1-i}, \]
so $x^{k_1} u = u_1 x$ for $u_1 := \sum_{i=0}^{k_1-1} (-1)^{k_1+i}{k_1 \choose i} x^i u x^{k_1-i-1}$. If $u \in \Ann(M)$, then so is $u_1$  too, since $\Ann(M)$ is a two-sided ideal. We can inductively apply the same procedure on $u_1$ to get $k_2$ such that $x^{k_2} u_1 = u_2 x$, etc. Repeating this $n$ times, we get $x^{k_{n}} u_{n-1} = u_n$ for some $u_n \in \Ann(M)$. From the construction it follows that
\[ x^{k_n+\ldots+k_1} \cdot ux^{-n} \otimes m = u_n \otimes m = 1 \otimes u_n m =0. \]
Since $x$ acts injectively on $M$, the same is true for $U_{(x)} \tens{U} M$, so we conclude that $ux^{-n} \otimes m =0$.
\end{proof}

\begin{proposition} \label{proposition:V_localized}
Suppose $n \in \Z$ and $\lambda_2 \neq 0$.
\begin{enumerate}[(a)] 
\item The element $\bar{e}$ acts injectively on $V(n,\lambda_2)$.
\item \label{item:S_bar_e_otimes_V}
The module $S_{\bar{e}} \tens{U} V(n,\lambda_2)$ is isomorphic to the direct sum of Verma modules $\Delta(n-2,\lambda_2) \oplus \Delta(-n-2,-\lambda_2)$.
\end{enumerate}
\end{proposition}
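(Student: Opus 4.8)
The plan is to deduce both parts from the single explicit case $n=0$, where $V(0,\lambda_2)$ is the module $Q(0,\lambda_2^2)$ of Lemma~\ref{lemma:structure_Q0} and Corollary~\ref{corollary:Q0_V0}, and then to propagate the answer by tensoring with finite-dimensional $\g$-modules. For part (a), recall from Proposition~\ref{proposition:V_otimes_L} that $V(n,\lambda_2)$ is a direct summand of $V(0,\lambda_2)\tens{}L(|n|)$; since $\bar\g=\Nrad(\T)$ annihilates the finite-dimensional module $L(|n|)$, the element $\bar e$ acts on this tensor product as $\bar e\tens{}1$, which is injective iff $\bar e$ is injective on $V(0,\lambda_2)$. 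So it suffices to check that $\bar e$ acts injectively on $V(0,\lambda_2)\cong U(\bar\g)/(\bar C-\lambda_2^2)$, and this is immediate: in the basis $\{\bar f^i\bar h^\epsilon\bar e^j\}$ of Lemma~\ref{lemma:structure_Q0}(b), left multiplication by $\bar e$ raises the exponent $j$ by one, hence sends this basis to a linearly independent set. (Alternatively, $\Ker(\bar e)$ lies in the $\bar e$-torsion submodule of the simple module $V(n,\lambda_2)$, a submodule because $\bar e$ is $\ad$-nilpotent; if it were everything, $\bar e$, and then $\bar h=-[f,\bar e]$ and $\bar f=\tfrac12[f,\bar h]$, would act locally nilpotently, forcing $\bar C=\bar h^2+4\bar f\bar e$ to act locally nilpotently, contradicting $\bar C=\lambda_2^2\neq0$.)

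For part (b), I would first reduce to $n=0$. Assuming $S_{\bar e}\tens{U}V(0,\lambda_2)\cong\Delta(-2,\lambda_2)\oplus\Delta(-2,-\lambda_2)$, fix $m\in\Z$ and choose $N\geq|m|$ with $N\equiv m\pmod 2$. By additivity and Proposition~\ref{proposition:V_otimes_L}, $\bigoplus_{j=0}^{N}S_{\bar e}\tens{U}V(N-2j,\lambda_2)\cong S_{\bar e}\tens{U}\big(V(0,\lambda_2)\tens{}L(N)\big)$, which by Lemma~\ref{lemma:localization_tensor_L} and the $n=0$ case equals $\big(\Delta(-2,\lambda_2)\oplus\Delta(-2,-\lambda_2)\big)\tens{}L(N)$; by Lemma~\ref{lemma:verma_otimes_fin} this splits into the Verma modules $\Delta(N-2-2j,\pm\lambda_2)$, $j=0,\dots,N$. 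The functor $S_{\bar e}\tens{U}-$ preserves central characters, and the summands $V(N-2j,\lambda_2)$ have pairwise distinct central characters; comparing the two decompositions block by block and using Lemma~\ref{lemma:Verma_central_ch} to identify the two Verma modules sharing the central character of $V(m,\lambda_2)$ as exactly $\Delta(m-2,\lambda_2)$ and $\Delta(-m-2,-\lambda_2)$ gives $S_{\bar e}\tens{U}V(m,\lambda_2)\cong\Delta(m-2,\lambda_2)\oplus\Delta(-m-2,-\lambda_2)$. This step is pure bookkeeping.

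It remains to treat $n=0$. Put $\chi=\lambda_2^2$ and identify $V(0,\lambda_2)$ with the commutative ring $R:=U(\bar\g)/(\bar C-\chi)=\C[\bar f,\bar h,\bar e]/(\bar h^2+4\bar f\bar e-\chi)$, on which $\bar\g$ acts by multiplication and $\g$ by the adjoint derivations. Since $\bar e$ becomes invertible, eliminating $\bar f=(\chi-\bar h^2)(4\bar e)^{-1}$ gives $R[\bar e^{-1}]\cong\C[\bar h,\bar e^{\pm1}]$; and as $\bar e$ acts injectively on $V(0,\lambda_2)$ by part (a), $U_{(\bar e)}\tens{U}V(0,\lambda_2)=R[\bar e^{-1}]$ and hence $S_{\bar e}\tens{U}V(0,\lambda_2)=R[\bar e^{-1}]/R$. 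One checks that the image of $R$ inside $\C[\bar h,\bar e^{\pm1}]$ is $\bigoplus_{m\geq0}\C[\bar h]\bar e^{m}\oplus\bigoplus_{m\geq1}\big((\chi-\bar h^2)^m\big)\bar e^{-m}$ (ideal in $\C[\bar h]$), so that as a vector space $S_{\bar e}\tens{U}V(0,\lambda_2)\cong\bigoplus_{m\geq1}\big(\C[\bar h]/(\chi-\bar h^2)^m\big)\bar e^{-m}$, whose $h$-weight $-2m$ component has dimension $2m$. In $\bar e$-degree $-1$ the classes $(\bar h+\lambda_2)\bar e^{-1}$ and $(\bar h-\lambda_2)\bar e^{-1}$ are nonzero, annihilated by $\bar e$ and by $e$ (which acts as $-2\bar e\partial_{\bar h}$, landing in $\bar e$-degree $0$, i.e.\ $0$ in the quotient), and are $\bar h$-eigenvectors with eigenvalues $\pm\lambda_2$; thus they are highest weight vectors of weights $(-2,\lambda_2)$ and $(-2,-\lambda_2)$. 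They induce a map $\Delta(-2,\lambda_2)\oplus\Delta(-2,-\lambda_2)\to S_{\bar e}\tens{U}V(0,\lambda_2)$ which is injective on each summand (both Verma modules are simple since $\lambda_2\neq0$), whose image is a direct sum because the two generated submodules have disjoint generalized $\bar h$-eigenvalue sets $\{\lambda_2\}$, $\{-\lambda_2\}$, and which is surjective by the dimension count $2m=m+m$ in $h$-weight $-2m$.

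The main obstacle is this $n=0$ computation, and within it the precise description of the image of $R$ in $\C[\bar h,\bar e^{\pm1}]$: the subtle point is that in $\bar e$-degree $-m$ this image is the ideal $(\chi-\bar h^2)^m\subseteq\C[\bar h]$ (which has a nonzero constant term), not the span of $\bar h^{\geq 2m}$, and it is this shape that produces the two genuine $\bar h$-eigenvalues $\pm\lambda_2$ and hence the two highest weight vectors. Once this is in place, part (a) and the reduction in (b) are essentially formal. An alternative would be to localize by $\bar e$ directly inside $S_f\tens{U}\Delta(n-2,\lambda_2)$ using the basis $\{f^{-i}\bar f^j v_\lambda\}$ from the proof of Theorem~\ref{theorem:EA_Verma}; this is uniform in $n$ but computationally heavier than passing to the commutative model at $n=0$.
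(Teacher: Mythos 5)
Your proposal is correct and takes essentially the same route as the paper: part (a) from the explicit basis of $Q(0,\lambda_2^2)$ together with direct-summand considerations, and part (b) by first settling $n=0$ and then tensoring with a finite-dimensional module and splitting the two resulting decompositions according to central characters. Your $n=0$ computation in the commutative model $\C[\bar{h},\bar{e}^{\pm 1}]$ produces, up to scalar, exactly the paper's highest weight vectors $w_\pm=\bar{e}^{-1}\pm\tfrac{1}{\lambda_2}\bar{e}^{-1}\bar{h}$ and finishes with the same weight-space dimension count, so the difference is only a change of coordinates in the localization.
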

\begin{proof}
The first claim follows from Lemma \ref{lemma:structure_Q0}(\ref{item:basis}) for $V(0,\lambda_2) \cong Q(0,\lambda_2^2)$, and from Proposition \ref{proposition:Q_n} for general $V(n,\lambda_2)$.

The second claim we also prove first for $n=0$, and again translate the result to the other cases. From Lemma \ref{lemma:structure_Q0}(\ref{item:basis}), we get a basis for $W:=S_{\bar{e}} \tens{U} Q(0,\lambda_2^2)$ consisting of
\[ (\bar{e})^{-k} \bar{f}^l \bar{h}^\epsilon, \text{ for } k>0, \ l \geq 0, \ \epsilon \in \{0,1\},  \]
where $\g$ acts by the adjoint action, and $\bar{\g}$ by the (commutative) multiplication. We denote this action of $\T$ by $\circ$.

Consider the following two elements in $W$:
\[ w_\pm := (\bar{e})^{-1} \pm \frac{1}{\lambda_2}(\bar{e})^{-1}\bar{h}.\]
It is an easy calculation to see that $e \circ w_{\pm} = \bar{e} \circ w_{\pm} = 0$, $h \circ w_{\pm} = -2 w_\pm$, and $\bar{h} \circ w_\pm = \pm \lambda_2 w_{\pm}$, from which it follows by the universal property of Verma modules that each $w_\pm$ generates a copy of $\Delta(-2,\pm\lambda_2)$ in $W$. Because of their simplicity, these submodules can only intersect trivially. By comparing the dimensions of weight spaces, we conclude that $W$ cannot have any other composition factor, i.e.,
\begin{equation} \label{equation:W}
W \cong \Delta(-2,\lambda_2) \oplus \Delta(-2,-\lambda_2).
\end{equation}

In general, we calculate $S_{\bar{e}} \tens{U} Q(n,\lambda_2^2)$ in two ways and compare the results:
\begingroup\addtolength{\jot}{.3em}
\begin{align}
\nonumber S_{\bar{e}} {}\tens{U}{} & Q(n,\lambda_2^2) \cong S_{\bar{e}} \tens{U} \left( Q(0,\lambda_2^2) \tens{}  L(n) \right) && \text{by (\ref{equation:tensor2})}, \\
\nonumber\cong & \left( S_{\bar{e}} \tens{U}  Q(0,\lambda_2^2) \right) \tens{}  L(n)  && \text{by Lemma \ref{lemma:localization_tensor_L}}, \\
\nonumber\cong & \, \big( \Delta(-2,\lambda_2) \oplus \Delta(-2,-\lambda_2) \big) \tens{}  L(n)  && \text{by (\ref{equation:W})}, \\
\label{equation:locV_direct_sum1} \cong & \bigoplus_{k=0}^n \Delta(n-2-2k,\lambda_2) \oplus \bigoplus_{k=0}^n \Delta(n-2-2k,-\lambda_2) && \text{by Lemma \ref{lemma:verma_otimes_fin}}.
\end{align}
\endgroup

On the other hand, by Proposition \ref{proposition:Q_n} we have%
\begin{equation} \label{equation:locV_direct_sum2}
S_{\bar{e}} \tens{U} Q(n,\lambda_2^2) \cong \bigoplus_{k=0}^n S_{\bar{e}} \tens{U} V(n-2k,\lambda_2).
\end{equation}

By comparing the central characters (which are preserved under the localization) of the direct summands in (\ref{equation:locV_direct_sum1}) and (\ref{equation:locV_direct_sum2}), the claim (\ref{item:S_bar_e_otimes_V}) follows.
\end{proof}

From Lemma \ref{lemma:ann_loc}, Proposition \ref{proposition:V_localized} and the definition of $V(n,\lambda_2)$ we have:

\begin{corollary} \label{corollary:annihilators}
Suppose $n \in \Z$ and $\lambda_2 \neq 0$. Then 
\[ \Ann(V(n,\lambda_2)) = \Ann(\Delta(n-2,\lambda_2)) = \Ann(\Delta(-n-2,-\lambda_2)). \]
\end{corollary}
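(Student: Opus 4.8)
The plan is to combine the three ingredients that immediately precede the statement. First, recall that $V(n,\lambda_2) := \EnAr(\Delta(n-2,\lambda_2)) = \prescript{e}{}(S_f \tens{U} \Delta(n-2,\lambda_2))$. Since $\Ann(\Delta(n-2,\lambda_2))$ is a two-sided ideal and $f$ acts injectively on $\Delta(n-2,\lambda_2)$ (it is a weight module free over $U(\tilde\n_-)\ni f$), Lemma~\ref{lemma:ann_loc} applied with $x=f$ gives
\[ \Ann(\Delta(n-2,\lambda_2)) \subseteq \Ann\!\left(S_f \tens{U} \Delta(n-2,\lambda_2)\right) \subseteq \Ann\!\left(\EnAr(\Delta(n-2,\lambda_2))\right) = \Ann(V(n,\lambda_2)), \]
where the second inclusion holds because $V(n,\lambda_2)$ is a $\T$-submodule of $S_f\tens{U}\Delta(n-2,\lambda_2)$, and any element killing a module kills its submodules. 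The same argument with $\lambda_2$ replaced by $-\lambda_2$ and $n$ by $-n$ gives $\Ann(\Delta(-n-2,-\lambda_2)) \subseteq \Ann(V(-n,-\lambda_2)) = \Ann(V(n,\lambda_2))$, using the isomorphism $V(n,\lambda_2)\cong V(-n,-\lambda_2)$ from Proposition~\ref{proposition:Q_n}.

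For the reverse inclusions I would use the ``inverse'' localization supplied by Proposition~\ref{proposition:V_localized}. By part~(a) of that proposition, $\bar e$ acts injectively on $V(n,\lambda_2)$, so Lemma~\ref{lemma:ann_loc} with $x=\bar e$ yields
\[ \Ann(V(n,\lambda_2)) = \Ann\!\left(U_{(\bar e)}\tens{U} V(n,\lambda_2)\right) \subseteq \Ann\!\left(S_{\bar e}\tens{U} V(n,\lambda_2)\right). \]
Now invoke part~(\ref{item:S_bar_e_otimes_V}) of Proposition~\ref{proposition:V_localized}: $S_{\bar e}\tens{U}V(n,\lambda_2)\cong \Delta(n-2,\lambda_2)\oplus\Delta(-n-2,-\lambda_2)$. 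The annihilator of a direct sum is the intersection of the annihilators, so
\[ \Ann(V(n,\lambda_2)) \subseteq \Ann(\Delta(n-2,\lambda_2))\cap\Ann(\Delta(-n-2,-\lambda_2)) \subseteq \Ann(\Delta(n-2,\lambda_2)). \]
Chaining this with the first inclusion of the previous paragraph gives $\Ann(V(n,\lambda_2)) = \Ann(\Delta(n-2,\lambda_2))$, and symmetrically $\Ann(V(n,\lambda_2)) = \Ann(\Delta(-n-2,-\lambda_2))$, which is exactly the claim.

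Since all the heavy lifting is already done in Lemma~\ref{lemma:ann_loc} and Proposition~\ref{proposition:V_localized}, this is essentially a bookkeeping argument and there is no serious obstacle. The only point requiring a moment's care is confirming the hypotheses of Lemma~\ref{lemma:ann_loc} in each application — namely that the relevant $\ad$-nilpotent element ($f$ for the Verma side, $\bar e$ for the $\g$-Harish-Chandra side) does act injectively on the module in question — but both are recorded: injectivity of $f$ on $\Delta(\lambda)$ is noted in the discussion after equation~(\ref{equation:canonical_copy_localization}), and injectivity of $\bar e$ on $V(n,\lambda_2)$ is Proposition~\ref{proposition:V_localized}(a). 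One should also make explicit that $\EnAr(\Delta(n-2,\lambda_2))$ sits inside $S_f\tens{U}\Delta(n-2,\lambda_2)$ as a submodule, which is immediate from the definition of $\EnAr$ as $\prescript{e}{}(-)$ applied to that module, together with the earlier lemma that $\prescript{x}{}M$ is always a $\T$-submodule of $M$.
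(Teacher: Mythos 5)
Your proof is correct and is exactly the argument the paper intends: the paper derives this corollary directly from Lemma~\ref{lemma:ann_loc}, Proposition~\ref{proposition:V_localized} and the definition of $V(n,\lambda_2)$, and your write-up simply spells out those same two inclusions (localizing at $f$ for one direction, at $\bar e$ for the other) together with the isomorphism $V(n,\lambda_2)\cong V(-n,-\lambda_2)$ to handle the second Verma module.
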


We want to prove that these annihilators are centrally generated. It is easier to do this for Verma modules. This has already been proved in \cite[Proposition 6.1]{bavula2017prime}. We present a different and a more direct proof, and along the way reveal some structure of the quotients of $U$ by the centrally generated ideals.

For this, we need to express elements of $U$ modulo a maximal ideal in the center in a convenient way. This we describe in the next two lemmas. We denote by $U_0 := U(\T)_0$, the zero-weight space of $\h$ in $U$.

\begin{lemma} \label{lemma:U0_generators}
The subalgebra $U_0$ of $U$ is generated by $S=\{h, \bar{h}, fe, \bar{f}e, f\bar{e}, \bar{f}\bar{e}\}$.
\end{lemma}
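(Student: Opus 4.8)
The plan is to use the PBW theorem for $U(\T)$ together with the concrete structure of the Takiff $\mathfrak{sl}_2$ to show that every monomial of weight $0$ can be rewritten in terms of the six elements in $S$. First I would fix the PBW-ordered basis of $U=U(\T)$ given by monomials $f^a \bar f^b h^c \bar h^d e^p \bar e^q$ with $a,b,c,d,p,q \in \Z_{\geq 0}$. The $\h$-weight of such a monomial is $-2a-2b+2p+2q$ (since $\bar f, \bar e$ have the same $\h$-weights as $f,e$), so the monomial lies in $U_0$ precisely when $a+b=p+q$. Hence $U_0$ is spanned by those PBW monomials with $a+b=p+q$, and it suffices to show each such monomial lies in the subalgebra $\langle S\rangle$ generated by $S=\{h,\bar h, fe, \bar f e, f\bar e, \bar f\bar e\}$.

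The key step is an induction on the common value $N:=a+b=p+q$. For $N=0$ the monomial is $h^c\bar h^d$, which is visibly in $\langle S\rangle$. For the inductive step, take a monomial $m=f^a\bar f^b h^c\bar h^d e^p\bar e^q$ with $N\geq 1$. Since $a+b=p+q=N\geq 1$, at least one of $a,b$ is positive and at least one of $p,q$ is positive, so $m$ has one of the four "leading/trailing" shapes: it begins with $f$ or with $\bar f$, and ends with $e$ or with $\bar e$. In the first case, $m = f\cdot (f^{a-1}\bar f^b h^c\bar h^d e^{p-1}\bar e^q)\cdot e$ if it also ends in $e$; I would move the distinguished leading $f$ and trailing $e$ together by passing the intervening Cartan factors $h^c\bar h^d$ past $e$ (using $[h,e]=2e$, $[\bar h,e]=2\bar e$, $[h,\bar e]=2\bar e$, $[\bar h,\bar e]=0$, each of which only changes $e^p\bar e^q$ into a $\Z$-linear combination of monomials $e^{p'}\bar e^{q'}$ with $p'+q'=p+q$ and, crucially, still of weight $2$ after absorbing one $e$ or $\bar e$). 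Concretely $m$ equals $(fe)$ times a monomial of the form $f^{a-1}\bar f^{b}(\text{polynomial in }h,\bar h)\,e^{p-1}\bar e^{q}$ plus monomials where one $e$ or $\bar e$ from the right block has been "used up" against a Cartan element, all of which have strictly smaller $N$ and are in $U_0$, hence in $\langle S\rangle$ by induction; the remaining main term has $N-1$ and is handled by induction after noting it is again a weight-$0$ PBW-type element. The other three shapes are identical with $fe$ replaced by $\bar fe$, $f\bar e$ or $\bar f\bar e$ as appropriate. Since $S\subseteq U_0$ and $\langle S\rangle$ is a subalgebra, products of the form $(fe)\cdot(\text{lower})$ stay in $\langle S\rangle$, closing the induction.

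A cleaner way to organize this, which I would actually write, is to introduce a partial order (or just the single statistic $N$) on weight-$0$ PBW monomials and argue that commuting a single $f$ (or $\bar f$) to the far left past all of $h^c\bar h^d$ and then "cancelling" it against a chosen $e$ or $\bar e$ at the far right produces, modulo the span of monomials of strictly smaller $N$, exactly one of $(fe), (\bar fe), (f\bar e), (\bar f\bar e)$ times a weight-$0$ monomial of smaller $N$; the correction terms of smaller $N$ are in $U_0$ and handled by induction, and so is the quotient monomial. One must also check the base case and that the rewriting never leaves $U_0$, which is automatic since each commutator relation preserves $\h$-weight.

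The main obstacle is purely bookkeeping: making sure that when the Cartan factors $h^c\bar h^d$ are pushed past $e^p\bar e^q$, every term that arises is genuinely of lower complexity (lower $N$) and still of weight $0$, so that the induction is well-founded; the relation $[\bar h, e]=2\bar e$ in particular means a factor $e$ can be traded for $\bar e$, which changes $(p,q)$ but not $p+q$, so the statistic $N=p+q$ (rather than $p$ alone) is the right one. Everything else is routine PBW manipulation. (The corresponding statement for $\mathfrak{sl}_2$ itself — that $U(\mathfrak{sl}_2)_0$ is generated by $h$ and $fe$ — is classical, and the Takiff case is the evident "doubled" version of that argument.)
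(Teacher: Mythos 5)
Your overall strategy is the same as the paper's: bring one lowering generator ($f$ or $\bar f$) and one raising generator ($e$ or $\bar e$) together so that their product is one of the four quadratic elements of $S$, and absorb all commutator correction terms by induction. The paper does this with arbitrary words in the basis of $\T$ and inducts on the word length; you do it with PBW monomials $f^a\bar f^b h^c\bar h^d e^p\bar e^q$ and induct on $N=a+b=p+q$. The identification of $U_0$ with the span of weight-zero PBW monomials and the analysis of the ``main'' term $(fe)\cdot(\text{monomial with }N-1)$ are fine.

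However, the well-foundedness claim as you state it is wrong, and this is a genuine gap in the write-up. When you pass the distinguished $e$ (or $\bar e$) through the Cartan block you use $[h,e]=2e$, $[\bar h,e]=2\bar e$, $[h,\bar e]=2\bar e$; the resulting correction terms, e.g. from $\bar h\,e=e\,\bar h+2\bar e$, keep \emph{all} the lowering letters and \emph{all} the raising letters and only consume one Cartan letter. Such a term has exactly the same value of $N$ as the monomial you started with (only its total degree drops by one), so your assertion that every correction term has strictly smaller $N$ fails whenever $c+d>0$, and induction on $N$ alone does not terminate. The terms where the moving $e$ meets an $f$ or $\bar f$ (producing $-h$ or $-\bar h$) do lower $N$, but those are not the only corrections. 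The repair is easy and standard: induct on the total degree (word length) of the monomial, as the paper does --- every correction term is strictly shorter and still of weight zero --- or, if you insist on your statistic, use the lexicographic pair $(N,\,c+d)$, since the problematic corrections preserve $N$ but strictly decrease the number of Cartan letters. With that change of induction parameter your argument goes through and is essentially the paper's proof recast in a PBW-ordered setting.
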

\begin{proof}
We need to prove that any product $x = x_1 x_2 \ldots x_k$, where each $x_i$ belongs to the standard basis of $\T$, with the property that the number of $i$'s for which $x_i \in \{f,\bar{f}\}$ is equal to the number of $j$'s for which $x_j \in \{e,\bar{e}\}$, can be generated by elements in $S$. We prove this by induction on $k$. If $x_1 x_2 \ldots x_k$ consists only of $h$ and $\bar{h}$, we are done. If not, chose some $x_i \in \{f,\bar{f}\}$ and $x_j \in \{e,\bar{e}\}$, and assume without loss of generality $i<j$. We commute them to the right-most place:
\[ x = (\underbrace{x_1 \ldots \hat{x_i} \ldots \hat{x_j} \ldots x_k}_{x'})(\underbrace{x_i x_j}_{\in S}) + \sum_t y_t,\]
where the factors with hat are omitted. It is clear from the commutation relations that $x'$ and all $y_t$ are products of the basis elements with the same property, but shorter. We are done by induction.
\end{proof}

\begin{lemma} \label{lemma:U0quotient_structure}
Fix an algebra homomorphism $\chi \colon Z(\T) \to \C$. For any $u \in \quotient{U_0}{U_0 \cdot \Ker \chi}$ there exists $n \in \Z_{\geq 0}$ such that $(\bar{f}\bar{e})^n \cdot u$ is equal to a linear combination of monomials of the form $h^k \left(f \bar{e}\right)^l \left(\bar{h}\right)^m$ for $k,l,m \in \Z_{\geq 0}$,  modulo $U_0 \cdot \Ker \chi$.
\end{lemma}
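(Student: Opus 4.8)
The goal is to show that modulo the ideal $U_0 \cdot \Ker\chi$, every element of $U_0$ can be written, after multiplication by a suitable power of $\bar f\bar e$, as a linear combination of ordered monomials $h^k (f\bar e)^l \bar h^m$. The strategy is to start from the generating set $S=\{h,\bar h, fe, \bar f e, f\bar e, \bar f\bar e\}$ provided by Lemma~\ref{lemma:U0_generators}, and systematically rewrite an arbitrary word in these generators into the desired normal form, using (i) the commutation relations in $\T$ to reorder factors at the cost of lower-order correction terms, and (ii) the two central relations coming from $\chi$, namely that $C=\bar h h+2\bar f e+2\bar e f$ and $\bar C=\bar h^2+4\bar f\bar e$ act as the scalars $\chi(C)$ and $\chi(\bar C)$, to eliminate the ``unwanted'' generators $\bar f e$, $fe$, $\bar f\bar e$ in favour of $h$, $\bar h$, $f\bar e$. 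The role of the power of $\bar f\bar e$ is precisely to clear denominators that appear when one solves for, say, $\bar e f$ or $fe$ from these quadratic central relations: one can express the unwanted products in terms of the wanted ones only after multiplying through by $\bar f\bar e$ (or $\bar e$, etc.), since the relations are not linear.

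**Key steps, in order.** First I would fix a filtration / degree function on $U_0$ — most naturally by the total length of a word in the basis of $\T$, or a PBW-type filtration — so that all ``correction terms'' arising from commutators are of strictly smaller degree, enabling an induction on that degree. Second, using the central relation $\bar C-\chi(\bar C)\equiv 0$, i.e. $\bar f\bar e \equiv \tfrac14(\chi(\bar C)-\bar h^2)$, I would observe that any occurrence of $\bar f\bar e$ can already be traded for a polynomial in $\bar h$; so the only genuinely problematic generators are $fe$ and $\bar f e$. Third, using $C-\chi(C)\equiv 0$, I would solve for the combination $2\bar f e + 2\bar e f$ in terms of $h,\bar h$ and the scalar $\chi(C)$; combined with the commutator $[\,\bar f, e\,]=-\bar h$ (so $\bar e f$ and $f\bar e$, and $\bar f e$ and $e\bar f$, differ by elements of $\bar h$), this lets one express $\bar f e$ modulo $\Ker\chi$ in terms of $f\bar e$, $h$, $\bar h$ — possibly only after multiplying by $\bar e$ or $\bar f$, which is where a power of $\bar f\bar e$ enters. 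Fourth, the generator $fe$: here one multiplies by $\bar f\bar e$ and uses $(\bar f\bar e)(fe) = (\bar f e)(f\bar e) + (\text{commutator corrections})$, reducing $fe$ (times $\bar f\bar e$) to a product of two already-handled generators $\bar f e$ and $f\bar e$; iterating and collecting powers of $\bar f\bar e$ as needed finishes the reduction of every generator. Finally, once every word is expressed in terms of $h,\bar h,f\bar e$ alone (times some $(\bar f\bar e)^n$, hence times a polynomial in $\bar h$ by Step two), a routine reordering using $[h,f\bar e]=-2f\bar e$, $[\bar h,f\bar e]=0$ (note $f\bar e$ is central-looking in $\bar h$ since $[\bar h,f]=-2f$ but there's no $\bar{\bar h}$; I should double-check $[\bar h, f\bar e]$ carefully, it equals $-2\,f\bar e$'s $\bar{}$-shift which vanishes as $\bar{\bar e}=0$) and $[h,\bar h]=0$ brings everything into the ordered monomials $h^k(f\bar e)^l\bar h^m$, completing the induction.

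**Main obstacle.** The genuine difficulty is bookkeeping the powers of $\bar f\bar e$: each time we invoke a quadratic central relation to remove an $fe$ or $\bar f e$ we may need to multiply by another factor of $\bar f\bar e$, and we must be sure this process terminates, i.e. that a single uniform power $n$ (depending on $u$) suffices rather than an ever-growing one. This should follow because the reduction is driven by the word-length filtration: each rewriting step strictly decreases the complexity of the ``non-normal'' part while increasing the $\bar f\bar e$-power by a bounded amount, so an induction on (length, number of bad generators) lexicographically closes. A secondary, purely computational nuisance is getting all the commutator correction terms right in $\T$ — but these are all lower order and, by the inductive hypothesis, already known to admit the desired form after multiplication by a power of $\bar f\bar e$, so they cause no conceptual trouble. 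The only thing to watch is that the corrections genuinely land back in $U_0$, which they do since commutators of weight-zero products are weight-zero.
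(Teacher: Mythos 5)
Your proposal follows essentially the same route as the paper's proof: reduce to the generators $h,\bar h, fe, f\bar e$ of the quotient via the two central relations (with $\bar f\bar e$ traded for a polynomial in $\bar h$, and $\bar f e$ eliminated \emph{linearly} from the relation for $C$ — no extra multiplication is needed there), then clear the remaining bad generator $fe$ by multiplying by $\bar f\bar e$ and using the identity expressing $\bar f\bar e\cdot fe$ through $f\bar e$, $\bar h$, $h$ modulo $U_0\cdot\Ker\chi$, with an induction on word length and the number of occurrences of $fe$ that terminates because the correction terms never reintroduce $fe$. One small slip to fix: $[h,f\bar e]=0$ (not $-2f\bar e$, since $h$ commutes with all of $U_0$) and $[\bar h,f\bar e]=-2\bar f\bar e\neq 0$, but the latter is a polynomial in $\bar h$ modulo $\Ker\chi$, which is precisely the relation the paper records for this commutator, so your final reordering step still closes.
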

\begin{proof}
In the quotient above, by using (\ref{equation:Z_T}) we can express $\bar{f}\bar{e}$ as a linear combination of $\bar{h}^2$ and $1$, and also $\bar{f}e$ as a linear combination of $\bar{h}h$, $\bar{h}$, $f\bar{e}$ and $1$. Using this with Lemma \ref{lemma:structure_Q0}, we see that the generators in the quotient are just $h$, $\bar{h}$, $fe$ and $f\bar{e}$.

First let us assume that $u=x_1 x_2 \ldots x_r$, a product of these four generators in any order. Since $h$ commutes with everything here, we can ignore it. Denote by $a=fe$, $b=f\bar{e}$, $c=\bar{h}$, $\chi_1 = \chi(h)$ and $\chi_2=\chi(\bar{h})$.  One can check that we have the following relations in the quotient:
\begin{align}
\label{equation:b_a} [b,a] &= ac -hb, \\
\label{equation:c_a} [c,a] &= 4b + hc + 2c - \chi_1, \\
\label{equation:c_b} [c,b] &= \frac{1}{2}c^2  -\frac{1}{2}\chi_2, \\
\label{equation:barf_bare_a} \bar{f}\bar{e} \cdot a &= -b^2 - hc^2 - \frac{3}{2}c^2 - hbc - 2bc + \frac{\chi_1}{2}b + \frac{\chi_1}{2}c + \frac{\chi_2}{2}.
\end{align}

Suppose that $x_1 \neq a$, but some $x_i =a$, and assume $i$ is minimal. Using the relations (\ref{equation:b_a}) and (\ref{equation:c_a}), we commute $x_{i-1}x_i = x_i x_{i-1} + [x_{i-1},x_i]$. This way $u$ becomes a sum of several monomials, each of which has either one $a$ less, or have their most-left $a$ one place closer to the most left position. It follows that we can move this $a$ to the most left part in a finite number of steps, i.e., we can express $u$ as a finite sum $u =\sum a y_t + \sum w_t$, where $y_t$ is a finite product of $a$'s, $b$'s and $c$'s, but has at least one $a$ less than the original expression of $u$ had, and $w_t$ is a finite product of $b$'s and $c$'s.

From (\ref{equation:barf_bare_a}) and the relation $4\bar{f}\bar{e} = \chi_2 - c^2$, it follows that $\bar{f}\bar{e} \cdot u$ is a finite sum $\sum z_t$, where each $z_t$ is a product of $a$'s, $b$'s and $c$'s, but has at least one $a$ less than the original expression of $u$ had. By induction, for some $k$ we get that $(\bar{f}\bar{e})^k \cdot u$ is a finite sum of products of $b$'s and $c$'s.

Now observe that any product of $b$'s and $c$'s can be expressed as a linear combination of standard monomials $b^i c^j$, using the relation (\ref{equation:c_b}) and a very similar reasoning as before. The point is that $a$ does not appear in $[c,b]$ in (\ref{equation:c_b}), so we will not end up in an infinite loop.

Finally, note that the argument is essentially the same if we started from $u$ equal to a linear combination of products of the generators, instead of just one monomial.
\end{proof}

\begin{theorem} \label{theorem:annihilators}
Suppose $n \in \Z$ and $\lambda_2 \neq 0$. The annihilators in Corollary \ref{corollary:annihilators} are centrally generated. More precisely, they are equal to the ideal $U \cdot \Ker \chi$, where \[ \chi \colon Z(\T)=\C[C,\bar{C}] \to \C \] is a homomorphism of algebras defined on the generators by $C \mapsto n\lambda_2$ and $\bar{C} \mapsto \lambda_2^2$.
\end{theorem}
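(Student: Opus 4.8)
The plan is to prove that $\Ann(\Delta(n-2,\lambda_2))$ equals the centrally generated ideal $J:=U\cdot\Ker\chi$ by establishing the two inclusions, and then to invoke Corollary~\ref{corollary:annihilators} to transfer the conclusion to $V(n,\lambda_2)$ and to $\Delta(-n-2,-\lambda_2)$. One inclusion is immediate: by \eqref{equation:Z_T} and the computation of the central character of a Verma module recorded just before Lemma~\ref{lemma:Verma_central_ch}, the central generators $C$ and $\bar{C}$ act on $\Delta(n-2,\lambda_2)$ by the scalars $n\lambda_2$ and $\lambda_2^2$, so $J\subseteq\Ann(\Delta(n-2,\lambda_2))$. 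It therefore remains to prove $\Ann(\Delta(n-2,\lambda_2))\subseteq J$, that is, that the algebra $\bar U:=U/J$ acts faithfully on $\Delta:=\Delta(n-2,\lambda_2)$.

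Since $\Delta$ is an $\h$-weight module by Proposition~\ref{proposition:Vermas_general}, its annihilator is $\h$-graded, so it is enough to treat a homogeneous $u\in\Ann(\Delta)$. I would first reduce to the case that $u$ has $h$-weight $0$: if $u$ has positive (respectively negative) weight, then $f^Nu$ (respectively $e^Nu$) is a weight-$0$ element of the left ideal $\Ann(\Delta)$ for a suitable $N$, and, using the basis $\{f^i\bar{f}^jv_\lambda\}$ of $\Delta$ together with \eqref{equation:e_Verma} and $\lambda_2\neq0$, one checks that left multiplication by $f^N$ (respectively $e^N$) is injective modulo $J$, so no information is lost. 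Thus the problem reduces to showing that $\bar U_0:=U_0/(U_0\cdot\Ker\chi)$ acts faithfully on $\Delta$.

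At this point Lemmata~\ref{lemma:U0_generators} and~\ref{lemma:U0quotient_structure} come in. Given $u\in\Ann(\Delta)\cap U_0$, Lemma~\ref{lemma:U0quotient_structure} produces $N$ with $(\bar{f}\bar{e})^Nu$ congruent modulo $J$ to a linear combination $\sum_{k,l,m}c_{klm}\,h^k(f\bar{e})^l\bar{h}^m$; as $\Ann(\Delta)$ is a left ideal, this combination again annihilates $\Delta$. I would then compute the action of $h$, $f\bar{e}$ and $\bar{h}$ on the weight vectors $f^i\bar{f}^jv_\lambda$: here $h$ acts by the weight scalar, whereas $f\bar{e}$ and $\bar{h}$ act triangularly with respect to the filtration by $\bar{f}$-degree, with ``diagonal parts'' $i\lambda_2$ and $\lambda_2$, respectively. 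Evaluating on weight vectors of growing weight and successively comparing leading components --- all of which is governed by $\lambda_2\neq0$ --- one concludes $(\bar{f}\bar{e})^Nu\in J$, and then, using that $\bar{f}\bar{e}$ is a non-zero-divisor in $\bar U$, that $u\in J$.

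The ingredient underpinning the last two paragraphs, and the main obstacle, is a sufficiently explicit description of $\bar U=U/(U\cdot\Ker\chi)$ itself: one should produce a PBW-type $\C$-basis of $\bar U$ by using $C$ and $\bar{C}$ to eliminate two of the generators $f,\bar{f},h,\bar{h},e,\bar{e}$ (for instance expressing $\bar{f}\bar{e}$ and one of $\bar{f}e,\bar{e}f$ through the rest modulo $J$). This normal form is precisely the ``structure of the quotients of $U$ by centrally generated ideals'' announced before the statement; from it one reads off both the injectivity assertions used in the weight reduction and the fact that $\bar{f}\bar{e}$ (equivalently $\lambda_2^2-\bar{h}^2$) is a non-zero-divisor in $\bar U$. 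The remaining computations --- the triangular action of $f\bar{e}$ and $\bar{h}$ on $\Delta$ --- are routine but need some care, since $\bar{h}$ does not act semisimply on $\Delta$.
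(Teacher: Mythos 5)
Your skeleton (reduce to the Verma module via Corollary~\ref{corollary:annihilators}, pass to the zero ad-weight part, invoke Lemmata~\ref{lemma:U0_generators} and \ref{lemma:U0quotient_structure}, then exploit the triangular action on the weight spaces of $\Delta=\Delta(n-2,\lambda_2)$) is the paper's, but two of your steps do not work as justified. First, your reduction to weight zero multiplies a homogeneous $u\in\Ann(\Delta)$ by $f^N$ or $e^N$ and asserts that left multiplication by these elements is injective modulo $J=U\cdot\Ker\chi$, ``checked'' on the basis $\{f^i\bar{f}^jv_\lambda\}$ of $\Delta$. The module can only detect injectivity modulo $\Ann(\Delta)$ --- which is precisely the ideal you are trying to identify with $J$ --- so this justification is circular; moreover, for $e$ even the module-level statement fails, since $e$ does not act injectively on $\Delta$. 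Proving that $f$ or $e$ is a non-zero-divisor in $U/J$ is a statement about the quotient algebra of roughly the same difficulty as the theorem itself. The paper avoids all of this: $\Ann(\Delta)$ is stable under the adjoint action of $\g$, all ad-weights occurring in $U$ are even, so $\Ann(\Delta)$ is generated by $\Ann(\Delta)\cap U_0$; since $U\cdot\Ker\chi$ is also ad-stable, it suffices to prove $U_0\cap\Ann(\Delta)\subseteq U_0\cdot\Ker\chi$. You should replace your reduction by this argument.

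Second, the endgame is where the real work lies, and ``successively comparing leading components'' skips the key difficulty. If $u=\sum\alpha_{klm}h^k(f\bar{e})^l\bar{h}^m$ annihilates $\Delta$, comparing diagonal entries only yields $p(n-2-2q,i\lambda_2,\lambda_2)=0$ for all admissible $(q,i)$, hence (by Zariski density, using $\lambda_2\neq 0$) that $(z-\lambda_2)$ divides the associated polynomial $p$; this does \emph{not} place $u$ in $J$ --- for instance $\bar{h}-\lambda_2$ has identically vanishing diagonal but lies neither in $J$ nor in $\Ann(\Delta)$. One must exploit the nilpotent part of $\bar{h}-\lambda_2$: the paper factors $p=\tilde{p}\cdot(z-\lambda_2)^r$ with $\tilde{p}(x,y,\lambda_2)$ not vanishing identically on Zariski-dense sets, chooses $(q,i)$ in a suitable cone with $\tilde{p}(n-2-2q,i\lambda_2,\lambda_2)\neq 0$, and evaluates on $w=f^{i+r}\bar{f}^{q-i-r}v_\lambda$, using that $(\bar{h}-\lambda_2)^rw$ is a non-zero multiple of $f^i\bar{f}^{q-i}v_\lambda$; this specific idea is absent from your sketch, although you correctly sense that the non-semisimple action of $\bar{h}$ is the danger. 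Finally, your plan leans on a PBW-type basis of $\bar{U}=U/J$ and on $\bar{f}\bar{e}$ being a non-zero-divisor there, neither of which you construct and neither of which is routine; the paper gets by with the much weaker normal form of Lemma~\ref{lemma:U0quotient_structure} (valid only after multiplying by $(\bar{f}\bar{e})^N$ and only modulo $U_0\cdot\Ker\chi$) because it argues directly with the action on $\Delta$ rather than through the structure of $\bar{U}$.
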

\begin{proof}
We prove this for the annihilator of the Verma module $\Delta := \Delta(n-2,\lambda_2)$. This is known from \cite[Proposition 6.1]{bavula2017prime}, but we present here a different and a more direct proof.

The inclusion $U \cdot \Ker \chi \subseteq \Ann(\Delta)$ is trivial. For the converse, recall that $\Ann(\Delta)$ is stable under the adjoint action, so it is generated by its $U_0$ part. So, it is enough to prove 
\[ U_0 \cap \Ann(\Delta) \subseteq U_0 \cdot \Ker \chi. \] 
To prove this, for any non-zero element $u \in \quotient{U_0}{U_0 \cdot \Ker \chi}$ we want to find an element from $\Delta$ which is not annihilated by $u$. Because of Lemma \ref{lemma:U0quotient_structure}, we can assume without loss of generality that
\[ u = \sum_{k,l,m \geq 0}  \alpha_{klm} h^k \left(f \bar{e}\right)^l \left(\bar{h}\right)^m, \]
with $\alpha_{klm} \in \C$ and only finitely many non-zero. Define a polynomial (with commutative variables) by the same scalars: $p(x,y,z)= \sum_{k,l,m \geq 0} \alpha_{klm} x^k y^l z^m \in \C[x,y,z]$.

Denote by $v$ the highest weight vector in $\Delta$, by $\Delta_{q}$ the weight space in $\Delta$ of weight $n-2-2q$, $q \geq 0$, and recall that it has basis $f^i \bar{f}^{q-i} v$ for $i=0,1,\ldots,q$. Similarly to (\ref{equation:e_Verma}), one can prove the following formulas for the action on $\Delta$:
\begin{align}
\nonumber h \cdot f^i \bar{f}^{q-i} v &= (n-2-2q) f^i \bar{f}^{q-i} v, \\
\nonumber f\bar{e} \cdot f^i \bar{f}^{q-i} v &= i \lambda_2 f^i \bar{f}^{q-i} v -i(i-1) f^{i-1} \bar{f}^{q-i+1} v, \\
\label{equation:barh_Verma} \bar{h} \cdot f^i \bar{f}^{q-i} v &= \lambda_2 f^i \bar{f}^{q-i} v -2i f^{i-1} \bar{f}^{q-i+1} v.
%
%fe \cdot f^i \bar{f}^{q-i} v &= (q-i)\lambda_2 f^{i+1}\bar{f}%^{q-i-1} v + i(n-1-2q-i) f^i \bar{f}^{q-i} v.
\end{align}

It follows that in this basis of $\Delta_q$, the operator representing $u$ is upper-triangular, with the diagonal entries $p(n-2-2q,i \lambda_2,\lambda_2)$, $i=0,\ldots,q$. We would like to find a basis element $f^i \bar{f}^{q-i} v$, for which $p(n-2-2q,i \lambda_2,\lambda_2) \neq 0$. However, a problem arises if $p(x,y,z)$ is divisible by $(z-\lambda_2)$.

We claim that we can decompose
\begin{equation} \label{equation:poly_division}
p(x,y,z) = \tilde{p}(x,y,z) \cdot (z-\lambda_2)^r
\end{equation}
for some $r \geq 0$, such that $\tilde{p}(n-2-2q,i \lambda_2,\lambda_2)$ is not identically zero for $(q,i) \in D$, where  $D \subseteq \C^2$ is any Zariski dense subset.

To prove this claim, write $p(x,y,z)= \sum_{j=0}^m p_j(x,y)(z-\lambda_2)^j$. Suppose that this is zero when evaluated on $D \times \{\lambda_2\}$ for a Zariski dense set $D \subseteq \C^2$. It follows that $p_0(x,y)=0$ (on $\C^2$), so $p(x,y,z)= p^{(1)}(x,y,z)(z-\lambda_2)$, for a polynomial $p^{(1)}(x,y,z)$ of a strictly smaller total degree. If necessary, we continue to apply the same argument inductively on $p^{(1)}(x,y,z)$, etc., until we reach (\ref{equation:poly_division}) with $\tilde{p}(x,y,\lambda_2)$ non-zero on some point in $(x,y) \in D$. The number $r$ is independent of $D$, since the set $\{(x,y) \colon \tilde{p}(x,y,\lambda_2) \neq 0\}$ is non-empty and Zariski open, hence intersects any Zariski dense set in $\C^2$.

The claim is now proved, because the map $(q,i) \mapsto (n-2-2q,i \lambda_2)$ is an algebraic isomorphism $\C^2 \to \C^2$. Here it is crucial that $\lambda_2 \neq 0$.

Write $\tilde{p}(x,y,z)= \sum_{k,l,m \geq 0} \tilde{\alpha}_{klm} x^k y^l z^m$, and define $\tilde{u}= \sum_{k,l,m \geq 0}  \tilde{\alpha}_{klm} h^k \left(f \bar{e}\right)^l \left(\bar{h}\right)^m$. Then it is also true that
\[ u = \tilde{u} \cdot (\bar{h}-\lambda_2)^r, \]
since the monomials in $u$ and $\tilde{u}$ have $\bar{h}$ on the most-right position, so no commuting of the variables is necessary.

There exists a pair $(q,i)$ from the cone $\{(q,i) \in \Z\times \Z \colon q\geq r, \ 0 \leq i \leq q-r \}$ (which is Zariski dense in $\C^2$), such that $\tilde{p}(n-2-2q,i \lambda_2,\lambda_2) \neq 0$. Put $w:=f^{i+r} \bar{f}^{q-i-r}v \in \Delta_q$. It follows from (\ref{equation:barh_Verma}) that $(\bar{h}-\lambda_2)^r \cdot w = c \cdot f^{i} \bar{f}^{q-i}$, for some constant $c \neq 0$. From this we have that 
\begin{align*}
u \cdot w &= c \cdot \tilde{u} \cdot f^{i} \bar{f}^{q-i}v \\
&= c \cdot \tilde{p}(n-2-2q,i \lambda_2,\lambda_2) \cdot f^{i} \bar{f}^{q-i}v + \sum_{j=0}^{i-1} c_j f^{j} \bar{f}^{q-j}v \neq 0.  \qedhere
\end{align*}
\end{proof}

\subsection{The action of finite-dimensional $\mathfrak{sl}_2$-modules}

Denote by $\cF$ the monoidal category of finite-dimensional $\mathfrak{sl}_2$-modules.
For a fixed non-zero $\chi \in \C$, denote by $\mathcal{H}_{\chi}$ the category of 
semi-simple $\mathfrak{g}$-Harish-Chandra $\T$-modules on which the action of the
purely Takiff part of the center is given by $\chi$.

\begin{proposition}\label{propact1}
For each non-zero $\chi$, the category $\mathcal{H}_{\chi}$ is a simple module
category over $\cF$.
\end{proposition}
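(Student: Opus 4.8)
The plan is as follows. Recall that the objects of $\mathcal{H}_\chi$ are, by definition, direct sums of simple $\T$-modules, each of which is a $\g$-Harish-Chandra module on which $\bar{C}$ acts by $\chi$; being a full subcategory of $\T\text{-mod}$ closed under submodules, quotients and direct sums, $\mathcal{H}_\chi$ is abelian, and since every object is a sum of simple ones it is semisimple. Fix once and for all a square root $\lambda_2$ of $\chi$ (it exists since $\chi\in\C\setminus\{0\}$). The first step is to identify the simple objects of $\mathcal{H}_\chi$: by Theorem~\ref{theorem:classification} together with Corollary~\ref{corollary:Q0_V0}, they are exactly the modules $W_m:=V(m,\lambda_2)$, $m\in\Z$ (note $V(n,-\lambda_2)\cong V(-n,\lambda_2)$, so the other choice of square root contributes nothing new), and, since the only isomorphisms among the $V(n,\lambda_2)$ are $V(n,\lambda_2)\cong V(-n,-\lambda_2)$ while $\lambda_2\neq-\lambda_2$, the $W_m$ are pairwise non-isomorphic. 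Thus $\mathcal{H}_\chi$ is a non-zero semisimple category (it contains $W_0\cong Q(0,\chi)$) whose simple objects are indexed by $\Z$.

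Next I would check that $\cF$ genuinely acts on $\mathcal{H}_\chi$, the action being $M\mapsto M\otimes L$ where a finite-dimensional $\mathfrak{sl}_2$-module $L$ is viewed as a $\T$-module with the trivial $\bar{\g}$-action. Since $\bar{C}\in U(\bar{\g})$ acts only on the first tensorand and $\bar{\g}$ annihilates $L$, the operator $\bar{C}$ still acts by $\chi$ on $M\otimes L$. Semisimplicity of $M\otimes L$ and finiteness of its $\g$-type multiplicities follow, after writing $M$ as a direct sum of $W_m$'s and using that $\otimes_\C$ commutes with direct sums, from Proposition~\ref{proposition:V_otimes_L}, which gives $W_m\otimes L(\mu)\cong\bigoplus_{k=0}^{\mu}W_{m+\mu-2k}$; finiteness of multiplicities uses that a fixed simple $\g$-module occurs in $L(\mu')\otimes L$ for only finitely many $\mu'$. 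The associativity and unit coherence data are inherited from the tensor product on $\T\text{-mod}$ via the monoidal functor $\cF\to\T\text{-mod}$, $L\mapsto(L,\text{trivial }\bar{\g})$, which is monoidal because $\bar{\g}$ acts trivially on a tensor product of two such modules.

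Finally I would prove simplicity of this module category, i.e.\ that the only $\cF$-stable Serre subcategories of $\mathcal{H}_\chi$ are $0$ and $\mathcal{H}_\chi$. Let $\mathcal{N}\neq 0$ be such a subcategory; since $\mathcal{H}_\chi$ is semisimple, $\mathcal{N}$ contains some $W_m$. Taking $\mu=1$ in Proposition~\ref{proposition:V_otimes_L} gives $W_m\otimes L(1)\cong W_{m+1}\oplus W_{m-1}$, so $\cF$-stability and closure under direct summands force $W_{m\pm1}\in\mathcal{N}$; by induction $W_{m'}\in\mathcal{N}$ for every $m'\in\Z$, whence $\mathcal{N}$ contains all simple objects and therefore $\mathcal{N}=\mathcal{H}_\chi$.

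The serious mathematical input is entirely contained in the already-proven Proposition~\ref{proposition:V_otimes_L}; the only points requiring real care are the precise identification of the simple objects of $\mathcal{H}_\chi$ (so that the transitivity step is phrased correctly modulo the redundancy $V(n,\lambda_2)\cong V(-n,-\lambda_2)$) and the routine but necessary verification that $-\otimes L$ does not lead outside $\mathcal{H}_\chi$. I do not expect any genuine obstacle beyond this bookkeeping.
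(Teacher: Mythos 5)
Your proposal is correct and follows essentially the same route as the paper: the module-category structure comes from Proposition~\ref{proposition:V_otimes_L}, and simplicity is obtained by showing that, starting from any simple object, tensoring with finite-dimensional $\mathfrak{sl}_2$-modules reaches every other simple object (the paper cites Proposition~\ref{proposition:V_otimes_L} together with Theorem~\ref{theorem:classification}; you make the same transitivity argument explicit by stepping with $L(1)$). The extra bookkeeping you include (identification of the simples $V(m,\lambda_2)$, $m\in\Z$, and the check that $-\otimes L$ preserves $\mathcal{H}_\chi$) is accurate but does not change the argument.
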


\begin{proof}
The fact that $\mathcal{H}_{\chi}$ is a module
category over $\cF$ follows directly from Proposition~\ref{proposition:V_otimes_L}.
Since $\mathcal{H}_{\chi}$ is semi-simple by definition, to show that it is a simple
module category over $\cF$ it is enough to show that, staring from any simple
object of $\mathcal{H}_{\chi}$ and tensoring it with finite-dimensional 
$\mathfrak{sl}_2$-modules, we can obtain any other simple object of
$\mathcal{H}_{\chi}$ as a direct summand, up to isomorphism. This claim follows by combining
Proposition~\ref{proposition:V_otimes_L} with Theorem~\ref{theorem:classification}.
\end{proof}

We note that, by Proposition~\ref{proposition:V_otimes_L}, the combinatorics
of the $\cF$-module category $\mathcal{H}_{\chi}$ does not depend on $\chi$.

\section{$\mathfrak{sl}_2$-Harish-Chandra modules for the Schr\"{o}dinger Lie algebra}
\label{section:schrodinger}

\subsection{Setup}

The \emph{Schr\"{o}dinger Lie algebra} $\s$ can be defined by basis $\{e,h,f,p,q,z\}$ and the following relations: in addition to the usual $\g :=\mathfrak{sl}_2$ relations on $e,h,f$, we also have
\begin{align*}
&[e,p] = 0, &  &[h,p] = p, &     &[f,p] = q, &&  \\
&[e,q] = p, &  &[h,q] = -q, &     &[f,q] = 0, && [p,q]=z.
\end{align*}
and $z$ is declared to commute with all $\s$. It is clear that $\s = \g \ltimes \HH$, where $\HH$ is the ideal spanned by $p,q,z$, and is isomorphic to the 3-dimensional Heisenberg Lie algebra. As a $\g$-module, $\HH$ is isomorphic to $L(1) \oplus L(0)$.

The nilradical of $\s$ is $\Nrad(\s)=[\s,\HH]=\HH$. Recall that this means that $\HH$ must necessarily annihilate any simple finite-dimensional $\s$-module.

There is also the \emph{centerless Schr\"{o}dinger Lie algebra} $\bar{\s} := \quotient{\s}{\C z}$, which is isomorphic to the semi-direct product $\g \ltimes L(1)$.

The disclaimer from the previous section related to \cite{bavula2017prime}
applies to the present section with respect to \cite{bavula2018classification}.

The algebra $U(\s)$ is free as a module over its center $Z(\s)$, and $Z(\s)$ is generated by two algebraically independent generators (see e.g. \cite{dubsky2014category}):
\begin{equation}
\label{equation:center_sch}
C := (h^2+h+4fe)z - 2(fp^2 -eq^2-hpq), \text{ and } z.
\end{equation}
It is also clear that $Z(\s) \cap U(\HH) = \C[z]$, which we will refer to as the \emph{purely Schr\"{o}dinger part of the center}. For a module with central character, the scalar by which $z$ acts is usually called the \emph{central charge} of the module.

The theory we develop here for the Schr\"{o}dinger Lie algebra is very similar to the Takiff $\mathfrak{sl}_2$ case. So we will omit most of the details, as they are usually analogous, but easier. One reason for this is that the purely Schr\"{o}dinger part of the center is generated by a degree $1$ element, and for the Takiff $\mathfrak{sl}_2$ we had a degree $2$ element. However, a small complication now is that the radical of $\s$ is not abelian anymore.

\subsection{Universal modules}

As before, the universal modules are induced from $\g$, i.e., for $n \in \Z_{\geq 0}$ set $Q(n) := \Ind_{\g}^{\s} L(n) = U(\s) \tens{U(\g)} L(n) \cong U(\HH) \tens{\C} L(n)$. Recall that $Q(n) \cong Q(0) \tens{} L(n)$, where we consider $L(n)$ as an $\s$-module with the trivial $\HH$-action. 

\begin{proposition} \label{proposition:EndQ0_sch}
We have the following isomorphisms of algebras:
\begin{equation*}
\End(Q(0)) \cong U(\HH)^\g = \C[z],
\end{equation*}
where $U(\HH)^\g$ denotes the invariants of the adjoint action of $\g$ on $U(\HH)$.
\end{proposition}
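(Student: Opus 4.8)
The plan is to mimic the proof of Proposition~\ref{proposition:end_Q0} almost verbatim, the only new ingredient being the computation of the $\g$-invariants $U(\HH)^\g$ inside the (now non-commutative) algebra $U(\HH)$. For the first isomorphism: the module $Q(0)$ is cyclic, generated by $1\otimes 1$, so any $\s$-endomorphism $\varphi$ is determined by $\varphi(1\otimes 1)=u\otimes 1$ with $u\in U(\HH)$; since $1\otimes 1$ spans the trivial $\g$-submodule (it is a highest weight vector of weight $0$ annihilated by $e$ and $f$), the element $u\otimes 1$ must do the same, so $u$ is $\g$-invariant for the adjoint action, i.e.\ $u\in U(\HH)^\g$. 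Conversely, given $u\in U(\HH)^\g$, right multiplication by $u$ on $U(\HH)\cong Q(0)$ commutes with the left $\HH$-action automatically and with the adjoint $\g$-action precisely because $u$ is invariant; this produces a well-defined $\s$-endomorphism sending $1\otimes 1\mapsto u\otimes 1$. These two assignments are mutually inverse algebra homomorphisms (note that, as in the Takiff case, composition of endomorphisms corresponds to multiplication in $U(\HH)^\g$, and one checks the order works out since $Q(0)\cong U(\HH)$ as a right $U(\HH)^\g$-module).

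It then remains to identify $U(\HH)^\g=\C[z]$. I would argue as follows. First, $z$ is central in $U(\HH)$ and $\g$-invariant (it spans the trivial summand $L(0)$ in $\HH\cong L(1)\oplus L(0)$ and is central in $U(\s)$), so $\C[z]\subseteq U(\HH)^\g$. For the reverse inclusion, use the PBW filtration on $U(\HH)$: the associated graded algebra is the symmetric algebra $\Sym(\HH)\cong\C[p,q,z]$ as a $\g$-module, and taking $\g$-invariants is exact on finite-dimensional $\g$-modules, so it suffices to show $\Sym(\HH)^\g=\C[z]$ (then lift invariants through the filtration degree by degree: given $u\in U(\HH)^\g$ of filtration degree $d$, its symbol lies in $\Sym^{\le d}(\HH)^\g=\C[z]^{\le d}$, subtract the corresponding polynomial in $z$, and induct on $d$). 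Finally, $\Sym(\HH)^\g=\C[p,q,z]^{\g}$: writing $V=L(1)=\C p\oplus\C q$, we have $\C[p,q,z]=\C[z]\otimes\Sym(V)$ with $\g$ acting only on $\Sym(V)$, and $\Sym(V)^{\mathfrak{sl}_2}=\C$ because $\Sym^k(V)\cong L(k)$ is a nontrivial simple $\mathfrak{sl}_2$-module for every $k\ge 1$ (this is the classical fact that the ring of $SL_2$-invariants of the standard binary form has no invariants in positive degree — equivalently $0\notin\{1,2,3,\dots\}$ as highest weights). Hence $\Sym(\HH)^\g=\C[z]$, and lifting gives $U(\HH)^\g=\C[z]$.

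The main obstacle — and it is a mild one — is the non-commutativity of $U(\HH)$: unlike the Takiff case, where $\bar\g$ was abelian so $U(\bar\g)$ was a polynomial ring on the nose, here one genuinely needs the PBW-filtration argument to reduce the invariant-theory computation to the commutative symmetric algebra. One should also be slightly careful that taking $\g$-invariants commutes with passing to the associated graded, which is fine because each filtration piece $U(\HH)^{\le d}$ is a finite-dimensional $\g$-module and $\g$ is semisimple, so all the relevant short exact sequences of $\g$-modules split. Everything else is a direct transcription of the argument already given for $\End(Q(0))\cong\overline{Z(\g)}$ in Proposition~\ref{proposition:end_Q0}.
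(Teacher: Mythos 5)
Your proof is correct, and its first half (identifying $\End(Q(0))\cong U(\HH)^\g$ via the cyclic generator $1\otimes 1$, with the usual caveat that one a priori gets the opposite algebra, harmless here since $\C[z]$ is commutative) is exactly the paper's argument, which likewise just refers back to Proposition~\ref{proposition:end_Q0}. Where you genuinely diverge is the computation $U(\HH)^\g=\C[z]$: the paper does this by a direct calculation, writing down the explicit commutation relations $[h,p^mq^n]=(m-n)p^mq^n$, $[e,p^mq^n]=np^{m+1}q^{n-1}-\tfrac{n(n-1)}{2}p^mq^{n-2}z$, $[f,p^mq^n]=mp^{m-1}q^{n+1}-\tfrac{m(m-1)}{2}p^{m-2}q^nz$, and reading off that an invariant PBW expression cannot involve positive powers of $p$ or $q$; these formulas then do double duty, being reused later (e.g.\ in the proof of Theorem~\ref{theorem:Q0_sch}). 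You instead pass to the PBW filtration, use semisimplicity of $\g$ (exactness of invariants on the finite-dimensional filtration pieces) to commute invariants with the associated graded, reduce to the classical fact $\Sym(L(1))^{\mathfrak{sl}_2}=\C$, and lift degree by degree; this is sound, more conceptual, and generalizes immediately to any $\LL=\g\ltimes\rr$ (it reduces $U(\rr)^\g$ to $\Sym(\rr)^\g$, which is in effect the observation behind Remark~\ref{remark:centerless} for the centerless Schr\"odinger algebra), at the mild cost of not producing the explicit relations the paper wants for later use.
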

\begin{proof}
The isomorphism $\End(Q(0))^\text{op} \cong U(\HH)^\g$ follows from the same argument as in the proof of Proposition \ref{proposition:end_Q0}. The inclusion $U(\HH)^\g \supseteq \C[z]$ is obvious. The converse follows easily from the following commutation relations, which can be proved e.g. by induction:
\begin{align}
\label{equation:g_pq} [h,p^mq^n] &= (m-n) p^mq^n, \\
\nonumber [e,p^mq^n] &= n p^{m+1} q^{n-1} - \frac{n(n-1)}{2} p^{m} q^{n-2}z, \\
\nonumber [f,p^mq^n] &= m p^{m-1} q^{n+1} - \frac{m(m-1)}{2} p^{m-2} q^{n}z. \qedhere
\end{align}
\end{proof}

Fix $\chi \in \C$, and denote by $\mathbf{m}_\chi$ the maximal ideal $(z-\chi) \subseteq \C[z]$. As before, we define the \emph{universal module} as $Q(n,\chi) := \quotient{Q(n)}{\mathbf{m}_\chi Q(n)}$. It clearly has central charge $\chi$. As before, we have $Q(n,\chi) \cong Q(0,\chi) \otimes L(n)$.

\begin{lemma} \label{lemma:structure_Q0_sch}
\begin{enumerate}[(a)]
\item As $\s$-modules, $Q(0) \cong U(\HH)$ and $Q(0,\chi) \cong \quotient{U(\HH)}{(z-\chi)}$, where $\g$ acts by the adjoint action, and $\HH$ by the left multiplication. The set $\big\{p^i q^j \colon i,j \geq 0\big\}$ is a basis for $Q(0,\chi)$.

\item As a $\g$-module, $Q(0,\chi) \cong \bigoplus_{k\geq 0} L(k)$. Moreover, $p^k$ is the highest weight vector in $L(k)$.

\item $C$ acts as zero on $Q(0)$ and every $Q(0,\chi)$.
\end{enumerate}
\end{lemma}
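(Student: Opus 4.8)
The plan is to mirror, almost verbatim, the argument already used for the Takiff $\mathfrak{sl}_2$ case in Lemma~\ref{lemma:structure_Q0}, replacing $U(\bar{\g})$ by $U(\HH)$ and keeping track of the one genuine difference, namely that $\HH$ is not abelian. First I would establish part~(a): the isomorphism $Q(0) \cong U(\HH)$ of $\s$-modules is immediate from $Q(0) = U(\s) \tens{U(\g)} L(0) \cong U(\HH) \tens{\C} \C$, with $\g$ acting by the adjoint action and $\HH$ by left multiplication. Quotienting by $\mathbf{m}_\chi$ gives $Q(0,\chi) \cong U(\HH)/(z-\chi)$. For the claimed basis $\{p^iq^j : i,j\geq 0\}$ of $Q(0,\chi)$, I would invoke the PBW theorem for the Heisenberg algebra $\HH$ (with ordered basis $p,q,z$): a PBW basis of $U(\HH)$ is $\{p^iq^jz^k\}$, and in the quotient by $(z-\chi)$ the variable $z$ is eliminated, leaving exactly $\{p^iq^j\}$.

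Next, for part~(b), I would apply the same structural principle as in the Takiff case. Here the relevant fact is that, as a $\g$-module under the adjoint action, $U(\HH)/(z-\chi)$ decomposes as $\bigoplus_{k\geq 0} L(k)$. One clean way: the associated graded of $U(\HH)$ (with respect to the PBW filtration) is the polynomial algebra $\C[p,q,z]$, and modulo $z$ we get $\C[p,q] = \Sym(L(1)) \cong \bigoplus_{k\geq 0} L(k)$ as $\g$-modules, each $L(k)$ appearing exactly once. Since the filtration is $\g$-stable and $\g$ is semisimple (so the filtration splits as $\g$-modules), the same decomposition holds for $U(\HH)/(z-\chi)$ itself; one has to check that setting $z=\chi$ rather than $z=0$ does not change the $\g$-module structure, which is clear because $z$ is $\g$-invariant, so the ideals $(z)$ and $(z-\chi)$ have isomorphic $\g$-module quotients. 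Finally, $p^k$ has $\h$-weight $k$ (by the first relation in~\eqref{equation:g_pq}) and is annihilated by $e$ (by the second relation in~\eqref{equation:g_pq}, since the terms $p^{k+1}q^{-1}$ and $p^kq^{-2}z$ do not occur), so $p^k$ is a highest weight vector; as $L(k)$ occurs with multiplicity one, $p^k$ generates the copy of $L(k)$.

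For part~(c), I would argue exactly as in Lemma~\ref{lemma:structure_Q0}(d): $C$ acts as zero on $Q(0)$ by a direct calculation. Concretely, $C = (h^2+h+4fe)z - 2(fp^2 - eq^2 - hpq)$ as in~\eqref{equation:center_sch}; apply $C$ to the generator $1 \otimes 1 \in Q(0)$, on which $h$, $e$, $f$ all act as zero (it generates the trivial $\g$-submodule), so the first summand kills $1\otimes 1$, and for the second summand one checks directly using~\eqref{equation:g_pq} (or rather the action of $\g$ on $U(\HH)$) that $(fp^2 - eq^2 - hpq)\circ 1 = 0$ — more precisely one computes $f\circ p^2$, $e\circ q^2$, $h\circ pq$ and sees they cancel. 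Since $C$ is central, acting as zero on a generator forces it to act as zero on all of $Q(0)$, and hence on every quotient $Q(0,\chi)$.

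The main obstacle, such as it is, is the non-commutativity of $\HH$: one must be slightly careful that the PBW basis and the associated-graded argument go through, and that the $\g$-action on $U(\HH)$ (adjoint) is correctly identified with the natural $\g$-action on $\Sym(\HH)$ after passing to the associated graded. This is routine, and the relations~\eqref{equation:g_pq} provide exactly the bookkeeping needed; everything else is a direct transcription of the Takiff argument.
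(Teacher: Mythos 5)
Your proposal is correct, and for parts (a) and (c) it coincides with the paper's (very terse) argument: (a) is exactly the PBW observation, and (c) is the direct check of $C$ on the generator $1\otimes 1$, which suffices because $C$ is central and $1\otimes 1$ generates $Q(0)$ (your cancellation $[f,p^2]=2pq-z=[e,q^2]$, $[h,pq]=0$ is the computation the paper leaves implicit). For part (b) your route is packaged differently. The paper works directly inside $Q(0,\chi)$: it observes that $p^k$ is a highest weight vector generating a copy of $L(k)$, that the adjoint action of $\g$ preserves $Q^n:=\operatorname{span}\{p^iq^j \colon i+j\leq n\}$ (the $z$-terms in \eqref{equation:g_pq} become the scalar $\chi$ and strictly lower the degree), and then counts dimensions, $\dim Q^n=\binom{n+2}{2}=\sum_{k=0}^n(k+1)$, to get $Q^n\cong\bigoplus_{k=0}^n L(k)$ and concludes by taking the union over $n$. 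You instead pass to the associated graded of the PBW filtration and use semisimplicity of $\g$ to transfer the decomposition back to the filtered module; this is a perfectly valid variant and costs nothing, but one sentence of yours is misleading as stated: that the ideals $(z)$ and $(z-\chi)$ ``have isomorphic $\g$-module quotients because $z$ is $\g$-invariant'' is not by itself an argument --- the quotient algebras are genuinely different ($\C[p,q]$ versus the Weyl algebra for $\chi\neq 0$), and the isomorphism of their $\g$-module structures is essentially the content being proved. What makes it true is precisely the filtration mechanism you set up: the symbol of $z-\chi$ is $z$ (equivalently, the image of $\{p^iq^jz^k\colon i+j+k\le n\}$ in the quotient is spanned by $\{p^iq^j\colon i+j\le n\}$), so the associated graded of $U(\HH)/(z-\chi)$ is $\C[p,q]\cong\Sym(L(1))\cong\bigoplus_{k\geq 0}L(k)$ for every $\chi$; this is the same counting that the paper does inside $Q^n$. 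With that step made explicit, your proof and the paper's are equivalent.
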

\begin{proof}
The first claim is clear. We use it to prove the others.

For the second claim, note that $p^k$ generates a $\g$-submodule isomorphic to $L(k)$. Since the action of $\g$ preserves $Q^n :=\operatorname{span}\{p^i q^j \colon i+j \leq n\}$, by counting dimensions we see that $Q^n \cong \oplus_{k=0}^n L(k)$. The claim now follows by taking colimits.

The last claim can be checked directly (enough on the generator of $Q(0)$).
\end{proof}

From the previous lemma, the Clebsch–Gordan coefficients for $\mathfrak{sl}_2$, and the adjunction, the following is not hard to deduce:

\begin{proposition} \label{proposition:Q_sch}
\begin{enumerate}[(a)]
\item $Q(n,\chi)$ is a $\g$-Harish-Chandra module, and for $k \geq 0$:
\begin{equation} \label{equation:multiplicity_shr}
[Q(n,\chi) \colon L(k)] = \min\{k+1,n+1\}.
\end{equation}

\item 
Let $V$ be any simple $\s$-module that has some $L(n)$ as a simple $\g$-submodule. Then $V$ is a quotient of $Q(n,\chi)$ for a unique $\chi$. In particular, $V$ is a $\g$-Harish-Chandra module, and (\ref{equation:multiplicity_shr}) gives an upper bound for the multiplicities of its $\g$-types.

\item \label{item:uniqueQ0}
For a fixed $\chi$, there exists a unique simple $\s$-module which contains $L(0)$ as a $\g$-submodule and has central charge $\chi$. Moreover, it is a $\g$-Harish-Chandra module.
\end{enumerate}
\end{proposition}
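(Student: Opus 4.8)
The plan is to imitate the arguments from the Takiff case (Proposition~\ref{proposition:multiplicity} and Corollary~\ref{corollary:uniqueness_with_0}), using the structural description in Lemma~\ref{lemma:structure_Q0_sch} as input; throughout, the polynomial algebra $\C[z]$ plays the role that $\overline{Z(\g)}=\C[\bar{C}]$ played there. For part~(a), since $Q(n,\chi)\cong Q(0,\chi)\otimes L(n)$ and $Q(0,\chi)\cong\bigoplus_{j\geq 0}L(j)$ as a $\g$-module by Lemma~\ref{lemma:structure_Q0_sch}, we have
\[
[Q(n,\chi)\colon L(k)]=\sum_{j\geq 0}\bigl[L(j)\otimes L(n)\colon L(k)\bigr].
\]
The Clebsch--Gordan rule gives $L(j)\otimes L(n)\cong L(|j-n|)\oplus L(|j-n|+2)\oplus\dots\oplus L(j+n)$, so $L(k)$ occurs with multiplicity one precisely for those $j$ with $|k-n|\leq j\leq k+n$ and $j\equiv k+n\pmod 2$, of which there are $\min\{k,n\}+1$ since $(k+n)-|k-n|=2\min\{k,n\}$. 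This establishes~\eqref{equation:multiplicity_shr}; finiteness of the multiplicities shows $Q(n,\chi)$ is a $\g$-Harish-Chandra module.

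For part~(b), if the simple $\s$-module $V$ contains $L(n)$ as a $\g$-submodule, then $\Hom_\g(L(n),V)\neq 0$, so by adjunction $\Hom_\s(Q(n),V)\neq 0$, and any nonzero such map is surjective since $V$ is simple. As $V$ is a quotient of the countable-dimensional module $Q(n)$, Schur's lemma forces the central element $z$ to act on $V$ by a scalar $\chi$; hence the surjection kills $\mathbf{m}_\chi Q(n)$ and factors through $Q(n,\chi)$. The scalar $\chi$ is unique because $z$ acts as $\chi$ on $Q(n,\chi)$, hence on $V$. Being a quotient of $Q(n,\chi)$, the module $V$ is $\g$-Harish-Chandra with $\g$-type multiplicities bounded by those in~\eqref{equation:multiplicity_shr}.

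For part~(c), part~(a) gives $[Q(0,\chi)\colon L(0)]=1$, and $Q(0,\chi)$ is generated by this copy of $L(0)$ (inherited from the generator $1\otimes 1$ of $Q(0)$). Therefore the sum $N$ of all submodules of $Q(0,\chi)$ not containing $L(0)$ as a composition factor is its unique maximal submodule, and $V:=Q(0,\chi)/N$ is its unique simple quotient; $V$ contains $L(0)$ and, as a quotient of $Q(0,\chi)$, is a $\g$-Harish-Chandra module. Conversely, by part~(b), every simple $\s$-module containing $L(0)$ with central charge $\chi$ is a quotient of $Q(0,\chi)$, hence isomorphic to $V$. Since this whole argument is a step-by-step transcription of the Takiff case, I expect no genuine obstacle; the only point that needs care is the Clebsch--Gordan bookkeeping in~(a), where the parity condition must be tracked to arrive at $\min\{k+1,n+1\}$ rather than a coarser count.
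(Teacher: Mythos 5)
Your proposal is correct and follows essentially the same route the paper intends: the paper deduces the proposition in one line from Lemma~\ref{lemma:structure_Q0_sch}, the Clebsch--Gordan rule, and adjunction (mirroring Proposition~\ref{proposition:multiplicity} and Corollary~\ref{corollary:uniqueness_with_0} in the Takiff case), and your write-up simply supplies those details, with the multiplicity count $\min\{k,n\}+1$ and the Schur-lemma/adjunction step carried out correctly.
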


\begin{theorem} \label{theorem:Q0_sch}
The module $Q(0,\chi)$ is simple if and only if $\chi \neq 0$.

The module $Q(0,0)$ has infinite length, and an $\s$-filtration whose composition factors are $L(0), L(1), L(2) \ldots$ with the trivial action of $\HH$. 
\end{theorem}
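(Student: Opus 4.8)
The plan is to imitate closely the proof of Theorem~\ref{theorem:Q_simple}, exploiting the explicit model $Q(0,\chi) \cong \quotient{U(\HH)}{(z-\chi)}$ from Lemma~\ref{lemma:structure_Q0_sch}, with $\g$ acting adjointly and $\HH$ acting by left multiplication (denote this action by $\circ$). Recall that, as a $\g$-module, $Q(0,\chi) \cong \bigoplus_{k \geq 0} L(k)$, each $L(k)$ occurring once, with highest weight vector $p^k$.

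First I would treat the case $\chi \neq 0$. Let $V \subseteq Q(0,\chi)$ be a non-zero submodule, and let $k$ be minimal with $L(k) \subseteq V$; then $p^k \in V$. If $k = 0$ we are done since $L(0) = \C\cdot 1$ generates $Q(0,\chi)$. For $k \geq 1$, I would exhibit an element of $U(\s)$ sending $p^k$ to a non-zero multiple of $p^{k-1}$, contradicting minimality of $k$. Using the commutation relations \eqref{equation:g_pq} one computes, for instance, $f \circ p^k = [f,p^k] = k\, p^{k-1} q - \tfrac{k(k-1)}{2} p^{k-2} z$, and then applies a further element (some combination involving $q$, or $e$, or $z$) to isolate a term proportional to $p^{k-1}$ with coefficient a non-zero polynomial in $\chi$; the point is that $z$ acts as the nonzero scalar $\chi$, so after multiplying by $z^{-1}=\chi^{-1}$ the obstruction terms become usable. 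This is the analogue of the element $\tfrac{1}{k\chi}(4k\bar f - \bar h f)$ from Theorem~\ref{theorem:Q_simple}. I expect the cleanest route is: apply $e$ to $f\circ p^k$ to kill the extra $q$ and recover $p^{k-1}$, using $[e,q]=p$ and $[e,p]=0$; the routine bookkeeping will produce $k\chi\, p^{k-1}$ up to lower-order terms that can be absorbed, or simply a scalar multiple of $p^{k-1}$ outright. Alternatively, one may run the slicker argument of Remark~\ref{remark:simpllicity_Q0}: any proper submodule $V$ would, by applying $p$ repeatedly, satisfy $V \cong L(k)\oplus L(k+1)\oplus\cdots$ as a $\g$-module, so the quotient $Q(0,\chi)/V \cong L(0)\oplus\cdots\oplus L(k-1)$ is a finite-dimensional simple $\s$-module; but $\HH = \Nrad(\s)$ must act trivially on it, whereas $z$ acts by $\chi \neq 0$ and $z \in \HH$, a contradiction. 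I would likely use this second argument as the main one, since it is shorter and parallels the Takiff case verbatim.

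For the converse, assume $\chi = 0$, so $Q(0,0) \cong \quotient{U(\HH)}{(z)}$, which is just the polynomial algebra $\C[p,q]$ with $\g$ acting adjointly (and $z$ acting as $0$, $p,q$ by multiplication). I would show that $Q_k := \bigoplus_{t \geq k} L(t) = \operatorname{span}\{p^i q^j : i+j \geq k\}$ is an $\s$-submodule for every $k \geq 0$. It is clearly $\g$-stable (the $\g$-action preserves total degree), and multiplication by $p$ or $q$ raises total degree by one, hence maps $Q_k$ into $Q_{k+1} \subseteq Q_k$; since $z$ acts as zero and $\s$ is generated by $\g$, $p$, $q$, $z$, this shows $Q_k$ is an $\s$-submodule. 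Then $Q_0 \supsetneq Q_1 \supsetneq Q_2 \supsetneq \cdots$ is an infinite descending filtration with successive quotients $Q_k/Q_{k+1} \cong L(k)$ carrying the trivial $\HH$-action (as $p,q$ act by raising degree, hence by zero on the quotient), giving the stated infinite-length filtration. The main obstacle, as in the Takiff case, is purely computational: verifying in the $\chi \neq 0$ case that some explicit element of $U(\s)$ lowers the $\g$-type, i.e.\ checking the relations \eqref{equation:g_pq} give a non-vanishing coefficient; but if one instead uses the Remark~\ref{remark:simpllicity_Q0}-style argument, there is essentially no obstacle at all, only the observation that $z \in \Nrad(\s)$.
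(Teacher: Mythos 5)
Your main line of argument is correct and essentially coincides with the paper's. For $\chi\neq 0$ the paper's displayed proof exhibits the explicit element $(pf-nq)\circ p^{n}=\tfrac{n(n+1)}{2}\chi\, p^{n-1}$, but it also records, exactly as you do, the nilradical alternative; your version of that argument is fine (and note that simplicity of the quotient $Q(0,\chi)/V$ is not really needed: it suffices that this quotient is a non-zero finite-dimensional module, since $z\in\HH=\Nrad(\s)$ then acts nilpotently on it while also acting as the scalar $\chi$, a contradiction). Your treatment of $\chi=0$ by the total-degree filtration is precisely the paper's ``the rest is obvious'' step spelled out, and it correctly yields infinite length with subquotients $L(0),L(1),L(2),\dots$ carrying the trivial $\HH$-action.

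One concrete correction to your secondary, computational route: applying $e$ to $f\circ p^{k}$ cannot produce $p^{k-1}$. The element $f\circ p^{k}$ lies in the $\g$-submodule $L(k)$ generated by $p^{k}$, and the $\g$-action never leaves a $\g$-isotypic component; explicitly $e\circ(p^{k-1}q)=p^{k-1}[e,q]=p^{k}$, so $e\circ f\circ p^{k}=k\,p^{k}$, which is useless for lowering the $\g$-type. To move from $L(k)$ to $L(k-1)$ you must mix in the $\HH$-action by left multiplication, as in the paper's element $pf-kq$: one has $p\circ f\circ p^{k}=k\,p^{k}q-\tfrac{k(k-1)}{2}\chi\,p^{k-1}$ and $kq\circ p^{k}=k\,p^{k}q-k^{2}\chi\,p^{k-1}$, whence $(pf-kq)\circ p^{k}=\tfrac{k(k+1)}{2}\chi\,p^{k-1}$. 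Since you designate the nilradical argument as your main one, this slip does not affect the validity of the proposal, but the specific ``apply $e$ to kill the extra $q$'' suggestion would fail if carried out.
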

\begin{proof}
As before, the $\s$-action on $Q(0)$ and $Q(0,\chi)$ will be denoted by $\circ$.

Note that $[q,p^n] = -n p^{n-1}z$ and $[p,q^n] = n q^{n-1}z$. Using this and the equations (\ref{equation:g_pq}), one can check that
\[ (pf - nq) \circ p^n = \frac{n(n+1)}{2}\chi \cdot p^{n-1}. \]
So if $\chi \neq 0$, the module $Q(0,\chi)$ is simple.

Alternatively, one can use a nilradical argument analogous to the one in Remark \ref{remark:simpllicity_Q0}.

If $\chi=0$, then $p$ and $q$ commute in $Q(0,\chi)$, and $\g$ preserves the total degree of monomials $p^iq^j$.  The rest of the proof is obvious.
\end{proof}

\subsection{Verma modules}

Verma modules for the Schr\"{o}dinger Lie algebra are studied in detail in \cite{dubsky2014category}.

In the triangular decomposition (\ref{equation:general_triangular}) we have
\[ \tilde{\n}_- = \operatorname{span}\{f,q\}, \ \tilde{\h} = \operatorname{span}\{h,z\}, \ \text{and} \ \tilde{\n}_+ = \operatorname{span}\{e,p\}. \]
For an element $\lambda \in \tilde{\h}^\ast$, denote $\lambda_1 := \lambda(h)$ and $\lambda_2 := \lambda(z)$.
\begin{proposition}[{\cite[Proposition 5]{dubsky2014category}}]
If $\lambda_2 \neq 0$, then the Verma module $\Delta(\lambda)$ is simple for any $\lambda_1 \in \Z$.
\end{proposition}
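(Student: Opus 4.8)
The plan is to show that when $\lambda_2 = \lambda(z) \neq 0$ the Verma module $\Delta(\lambda)$ has no proper nonzero submodule, by using the weight-space structure from Proposition~\ref{proposition:Vermas_general} and an explicit computation of the $\g$-module structure of $\Delta(\lambda)$, exactly parallel to the Takiff $\mathfrak{sl}_2$ case treated in Lemma~\ref{lemma:structure_Verma} and Theorem~\ref{theorem:EA_Verma}. First I would record that, by the PBW theorem applied to $\tilde{\n}_- = \operatorname{span}\{f,q\}$, the module $\Delta(\lambda)$ has basis $\{f^i q^j v_\lambda : i,j \geq 0\}$, where $v_\lambda$ spans $\C_\lambda$, and that the $\h$-weight of $f^i q^j v_\lambda$ is $\lambda_1 - 2i - 2j$ (since both $f$ and $q$ lower the $h$-weight by $2$). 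Grouping by the power of $q$, the subspaces $\operatorname{span}\{f^i q^j v_\lambda : i \geq 0\}$ for fixed $j$ give a filtration of $\Delta(\lambda)$ as a $\g$-module with subquotients isomorphic to the $\g$-Verma modules $\Delta^\g(\lambda_1 - 2j)$, $j = 0,1,2,\dots$.

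Next I would compute the action of $e$ and $p$ on the basis. Using $[e,q^j] = j p q^{j-1} - \tfrac{j(j-1)}{2} q^{j-2} z$ (the Heisenberg analogue of \eqref{equation:g_pq}) together with $[e, f^i]$ from the $\mathfrak{sl}_2$ relations, one gets a formula for $e \cdot f^i q^j v_\lambda$ analogous to \eqref{equation:e_Verma}, namely a combination of $f^{i-1} q^j v_\lambda$ (with the classical $\mathfrak{sl}_2$ coefficient $i(\lambda_1 - i - 2j + 1)$) and lower terms obtained from $[e,q^j]$ which involve $p$. The crucial point — and this is where $\lambda_2 \neq 0$ enters — is that applying $p$ to a highest-weight-type vector introduces the factor $z$, which acts as $\lambda_2$; so $p$ acts \emph{injectively} enough that no new highest weight vector (a vector killed by both $e$ and $p$) can appear below $v_\lambda$. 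Concretely, suppose $V \subseteq \Delta(\lambda)$ is a nonzero submodule; pick a weight vector $w \in V$ of maximal $h$-weight, say weight $\lambda_1 - 2m$. Then $w$ is killed by $e$ and by $p$ (both raise the weight). Writing $w = \sum_{i+j=m} c_{ij} f^i q^j v_\lambda$ and imposing $e \cdot w = 0$ and $p \cdot w = 0$, one derives a triangular linear system whose only solution (given $\lambda_2 \neq 0$) forces $m = 0$, i.e. $w$ is a multiple of $v_\lambda$, hence $V = \Delta(\lambda)$ by Proposition~\ref{proposition:Vermas_general}.

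The main obstacle I anticipate is bookkeeping in the two simultaneous conditions $e\cdot w = 0$ and $p \cdot w = 0$: unlike the Takiff case, where $\tilde{\n}_+ \cap \bar{\g}$ is spanned by $\bar{e}$ alone and one condition sufficed once the $\g$-structure was pinned down, here $\tilde{\n}_+ = \operatorname{span}\{e,p\}$ is two-dimensional, so one must combine the constraint coming from $e$ (which is the classical $\mathfrak{sl}_2$ obstruction, vanishing only when $\lambda_1 - 2j$ is not a nonnegative integer in the relevant range) with the constraint coming from $p$ (which, thanks to $[p, q^n] = n q^{n-1} z$ and hence the nonvanishing of $\lambda_2$, kills the remaining freedom). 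A clean way to organize this is to first prove that $p$ acts injectively on $\Delta(\lambda)$ when $\lambda_2 \neq 0$ — this is immediate from the explicit basis, since $p \cdot f^i q^j v_\lambda$ has a nonzero $f^i q^{j+1}\,(\text{times }\lambda_2)$-type leading term — and then note that injectivity of $p$ together with the fact that $e$ has the classical $\mathfrak{sl}_2$ kernel structure forces any submodule to contain $v_\lambda$. Alternatively, and perhaps more in the spirit of the paper, one can invoke the simplicity criterion for Verma modules over triangular-decomposition Lie algebras from \cite{wilson2011highest} directly, but the hands-on weight-vector argument above is self-contained and mirrors the proofs already given for $\T$.
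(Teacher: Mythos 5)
The paper itself does not prove this proposition (it is quoted from \cite[Proposition 5]{dubsky2014category}), so your proposal stands or falls on its own. Your overall plan --- a nonzero submodule contains a nonzero vector $w$ of maximal $h$-weight, necessarily killed by $e$ and $p$, and one then shows that for $m\geq 1$ the joint kernel of $e$ and $p$ on the weight space $\Delta(\lambda)_{\lambda_1-m}$ is zero --- is the right kind of argument, but as written it has a genuine gap besides several bookkeeping errors. On bookkeeping: $[h,q]=-q$, so $f^iq^jv_\lambda$ has weight $\lambda_1-2i-j$, the $\g$-Verma subquotients are $\Delta^\g(\lambda_1-j)$ (cf.\ Lemma~\ref{lemma:structure_Verma_sch}), and a vector of weight $\lambda_1-m$ is $\sum_{2i+j=m}c_{ij}f^iq^jv_\lambda$, not $\sum_{i+j=m}$. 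The serious problem is your claim that the resulting linear system has only the trivial solution ``given $\lambda_2\neq0$''. Setting $d_a:=c_{a,m-2a}$, the equations from $p\cdot w=0$ read $(a+1)d_{a+1}=(m-2a)\lambda_2d_a$, together with the extra equation $\lambda_2d_{(m-1)/2}=0$ when $m$ is odd (which indeed kills $w$); but for $m$ even the $p$-equations merely determine all $d_a$ from $d_0$, and after substituting them the equations from $e\cdot w=0$ collapse to $\lambda_2(m-2a)\bigl(\lambda_1+\tfrac{3-m}{2}\bigr)d_a=0$. So a nonzero vector killed by both $e$ and $p$ exists precisely when $m$ is even and $\lambda_1=\tfrac{m-3}{2}$: for instance, when $\lambda_1=-\tfrac12$ the vector $q^2v_\lambda+2\lambda_2fv_\lambda$ is such a vector and $\Delta(\lambda)$ is not simple although $\lambda_2\neq0$. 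Hence the hypothesis $\lambda_1\in\Z$ is indispensable and must be invoked at exactly this point (for $\lambda_1\in\Z$ and $m$ even, $\lambda_1+\tfrac{3-m}{2}$ is a nonzero half-integer); your sketch never uses it, and as stated it would prove a false statement.

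Two supporting claims also fail. The element $p$ does not act injectively on $\Delta(\lambda)$: it raises the $h$-weight by $1$ while $\dim\Delta(\lambda)_{\lambda_1-m}=\lfloor m/2\rfloor+1$, so it has a kernel in every even-depth weight space; concretely $p\cdot f^iq^jv_\lambda=-if^{i-1}q^{j+1}v_\lambda+j\lambda_2f^iq^{j-1}v_\lambda$, so the $\lambda_2$-term is the one that \emph{lowers} the $q$-degree, not a ``leading'' $f^iq^{j+1}$ term, and indeed $p\cdot(q^2v_\lambda+2\lambda_2fv_\lambda)=0$ for every $\lambda_1$. Thus the proposed ``clean organization'' via injectivity of $p$ cannot work (it is $q$, not $p$, that acts freely on $\Delta(\lambda)$). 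Finally, the suggested fallback to the simplicity criterion of \cite{wilson2011highest} is not available: those results concern truncated current (Takiff-type) Lie algebras and do not apply to $\s$. With the corrected weight combinatorics, the two families of equations above, and the explicit use of $\lambda_1\in\Z$, your approach does yield a complete proof.
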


It is easy to see that the central element $C$ acts on the Verma module $\Delta(\lambda)$ as the scalar $(\lambda_1+1)(\lambda_1+2)\lambda_2$, and the central charge is $\chi := \lambda_2$, see (\ref{equation:center_sch}). We will be concerned mostly with non-zero central charge cases. 
\begin{lemma} \label{lemma:Vermas_sch_cen_ch}
Non-isomorphic Verma modules $\Delta(\lambda)$ and $\Delta(\lambda')$ with the same non-zero central charge have the same central character if and only if $\lambda'_1 = -\lambda_1-3$.
\end{lemma}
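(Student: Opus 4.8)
The plan is to reduce the comparison of central characters to a single polynomial identity in $\lambda_1$ and $\lambda_1'$ and then to factor that identity.

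First I would use that $Z(\s) = \C[C,z]$ with $C$ and $z$ algebraically independent, so that the character through which $Z(\s)$ acts on a module admitting a central character is completely determined by the two scalars by which $C$ and $z$ act. As recorded above, on $\Delta(\lambda)$ the element $z$ acts as $\lambda_2$ and $C$ acts as $(\lambda_1+1)(\lambda_1+2)\lambda_2$ (a direct PBW computation on the highest weight vector using the explicit expression (\ref{equation:center_sch}) and the bracket relations of $\s$). Writing $\chi := \lambda_2 = \lambda_2' \neq 0$ for the common central charge, it follows that $\Delta(\lambda)$ and $\Delta(\lambda')$ have the same central character if and only if $(\lambda_1+1)(\lambda_1+2)\chi = (\lambda_1'+1)(\lambda_1'+2)\chi$, and since $\chi \neq 0$ this is equivalent to $(\lambda_1+1)(\lambda_1+2) = (\lambda_1'+1)(\lambda_1'+2)$.

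Next I would factor the difference of the two sides,
\[ (\lambda_1+1)(\lambda_1+2) - (\lambda_1'+1)(\lambda_1'+2) = (\lambda_1 - \lambda_1')(\lambda_1 + \lambda_1' + 3), \]
so that the equality above holds precisely when $\lambda_1 = \lambda_1'$ or $\lambda_1 + \lambda_1' + 3 = 0$. Since $\Delta(\lambda)$ and $\Delta(\lambda')$ are assumed to be non-isomorphic and isomorphic Verma modules have the same highest weight, the alternative $\lambda_1 = \lambda_1'$ (together with $\lambda_2 = \lambda_2'$) is ruled out, leaving exactly $\lambda_1' = -\lambda_1 - 3$; conversely, $\lambda_1' = -\lambda_1 - 3$ makes the polynomial identity hold and hence produces equal central characters. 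There is essentially no obstacle here: the only points needing a little care are the verification of the scalars by which $C$ and $z$ act (already stated above) and the observation that the ``non-isomorphic'' hypothesis is precisely what eliminates the trivial root of the quadratic.
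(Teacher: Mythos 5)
Your proposal is correct and follows exactly the paper's route: the paper likewise reduces the statement to the equation $(\lambda_1+1)(\lambda_1+2)=(\lambda_1'+1)(\lambda_1'+2)$ (using the stated scalars by which $C$ and $z$ act and the non-vanishing of the central charge) and solves it, with the non-isomorphism hypothesis discarding the root $\lambda_1'=\lambda_1$. Your explicit factorization $(\lambda_1-\lambda_1')(\lambda_1+\lambda_1'+3)$ just spells out the step the paper leaves as "easily solved."
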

\begin{proof}
This reduces to solving the equation $(\lambda_1+1)(\lambda_1+2) = (\lambda'_1+1)(\lambda'_1+2)$.
\end{proof}

\begin{lemma} \label{lemma:structure_Verma_sch}
As a $\g$-module, $\Delta(\lambda)$ has a filtration with subquotients isomorphic to the $\g$-Verma modules $\Delta^\g(\lambda_1 - k)$, $k = 0, 1, 2, \ldots$

If $\lambda_2 = 0$ or $\lambda_1 \not\in \Z_{\geq 0}$, then as a $\g$-module we have $\Delta(\lambda) \cong \bigoplus_{k \geq 0} \Delta^\g(\lambda_1 - k)$.
Otherwise ($\lambda_2 \neq 0$ and $\lambda_1 \in \Z_{\geq 0}$) we have as $\g$-modules
\[ \Delta(\lambda) \cong \Delta^\g(-1)\oplus \bigoplus_{k =2}^{\lambda_1+2} P^\g(-k) \oplus \bigoplus_{k \geq 3} \Delta^\g(-\lambda_1 - k). \]
\end{lemma}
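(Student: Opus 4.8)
The plan is to mimic the proof of Lemma \ref{lemma:structure_Verma} (the Takiff analogue), working with the explicit basis of $\Delta(\lambda)$ coming from the PBW theorem. Since $\tilde{\n}_- = \operatorname{span}\{f,q\}$, the module $\Delta(\lambda)$ has basis $\{f^i q^j v_\lambda \colon i,j\ge 0\}$, where $v_\lambda$ is the highest weight vector. First I would filter $\Delta(\lambda)$ by the degree in $q$: set $\Delta^{(\le k)}$ to be the span of $\{f^i q^{j} v_\lambda : j \le k\}$. Using the relations $[e,q]=p$, $[h,q]=-q$, $[f,q]=0$, $[p,q]=z$ together with $[e,p]=0$, $[h,p]=p$, $[f,p]=q$, one computes the action of $e$ (the only generator of $\g$ that can lower the $q$-degree, via $[e,q^j] = j p q^{j-1} - \tfrac{j(j-1)}{2}q^{j-2}z$, cf.\ \eqref{equation:g_pq}) and sees that $\g$ maps $\Delta^{(\le k)}$ to itself; the subquotient $\Delta^{(\le k)}/\Delta^{(\le k-1)}$ has basis $\{f^i q^k v_\lambda : i\ge 0\}$ on which $q$ acts trivially and $f,h,e$ act exactly as on the $\g$-Verma module of highest weight $\lambda_1 - k$ (the highest weight vector $q^k v_\lambda$ has $h$-weight $\lambda_1 - k$ and is killed by $e$ modulo lower $q$-degree). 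This proves the first assertion.

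Next, in the case $\lambda_2 = 0$ the element $z$ acts as zero, so $[p,q]=0$ and the $q$-degree is preserved exactly (not just up to lower terms): each span $\{f^i q^k v_\lambda : i\ge 0\}$ is an honest $\g$-submodule, giving the claimed direct sum decomposition. The same conclusion holds when $\lambda_1 \notin \Z_{\ge 0}$: then no two of the $\g$-Verma modules $\Delta^\g(\lambda_1 - k)$, $k\ge 0$, lie in the same linkage class (the highest weights $\lambda_1 - k$ are pairwise non-dominant-linked), so $\Ext^1_{\mathcal O}(\Delta^\g(\lambda_1-j),\Delta^\g(\lambda_1-k)) = 0$ for all $j,k$, and the filtration splits.

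Finally, for $\lambda_2\neq 0$ and $\lambda_1\in\Z_{\ge 0}$ the only nontrivial linkages among the subquotients $\Delta^\g(\lambda_1-k)$ are the pairs $\{\lambda_1 - k,\ -\lambda_1 - 2 + k\}$ (i.e.\ $\{\Delta^\g(\mu),\Delta^\g(-\mu-2)\}$ for $\mu\in\{0,1,\dots,\lambda_1\}$, which for $\mu=\lambda_1$ involves $\Delta^\g(\lambda_1)$ and $\Delta^\g(-\lambda_1-2)$ at $q$-degrees $0$ and $\lambda_1+2$), together with the self-linked $\Delta^\g(-1)$. So I would argue, exactly as in the proof of Lemma \ref{lemma:structure_Verma}, that for each such pair the corresponding extension in $\Delta(\lambda)$ is the nonsplit one, i.e.\ it glues to $P^\g(-k)$ with $k = \mu+2 \in \{2,\dots,\lambda_1+2\}$; concretely one shows that $\Delta^\g(-k)$ is \emph{not} a $\g$-submodule of $\Delta(\lambda)$ by supposing its highest weight vector $v_{-k}$ is a linear combination of $f^i q^{k-2-i}v_\lambda$ (with nonzero top coefficient) and reading off from the explicit formula for the action of $e$ that $e\cdot v_{-k}$ cannot vanish — the matrix of $e$ between consecutive $q$-degree layers has the same near-triangular shape with a single forced zero on the diagonal block as in the Takiff computation. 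All $\g$-Verma modules $\Delta^\g(-\lambda_1-k)$ with $k\ge \lambda_1+3$ (equivalently highest weight $<-\lambda_1-2$, out of range) are unlinked to everything appearing and split off; $\Delta^\g(-1)$ is self-linked but $P^\g(-1)=\Delta^\g(-1)$, so it also splits off as a summand. Collecting the pieces gives precisely the stated formula. The main obstacle is the last step: verifying carefully that the extension realized inside $\Delta(\lambda)$ is the nonsplit one (equivalently, that $e$ genuinely acts nontrivially across $q$-degree layers with the one unavoidable zero), which requires writing out the analogue of \eqref{equation:e_Verma} for $\s$ and tracking the coefficients — routine but the one place where something could go wrong.
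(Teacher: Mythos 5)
Your proposal follows the paper's proof essentially verbatim: the same PBW basis $\{f^i q^j v_\lambda\}$, the filtration by $q$-degree with the explicit action $e \cdot f^i q^j v_\lambda = i(\lambda_1 - i - j +1) f^{i-1} q^j v_\lambda + \lambda_2 \tfrac{j(j-1)}{2} f^i q^{j-2} v_\lambda$ identifying the subquotients as $\Delta^\g(\lambda_1-k)$, and the same case analysis, with the non-splitting in the case $\lambda_2\neq 0$, $\lambda_1\in\Z_{\geq 0}$ handled exactly as in Lemma \ref{lemma:structure_Verma} (the paper literally says ``the rest can be proved in the same way''). One bookkeeping slip worth fixing: a vector of weight $-k$ is a combination of $f^i q^{\lambda_1+k-2i} v_\lambda$ (i.e.\ $2i+j=\lambda_1+k$), not of $f^i q^{k-2-i} v_\lambda$; with this correction the bidiagonal-matrix/recursion argument for $e$ goes through as you describe.
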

\begin{proof}
Denote by $v_\lambda$ a basis element of $\C_\lambda$. Then $\Delta(\lambda)$ has a basis of weight vectors $\{ f^i q^j v_\lambda \colon i,j \geq 0\}$. A direct computation shows that
\[ e \cdot f^i q^j v_\lambda = i(\lambda_1 - i -j +1) f^{i-1} q^j v_\lambda + \lambda_2 \frac{j(j-1)}{2} f^i q^{j-2} v_\lambda. \]
This implies that the required filtration is given by the degree of $q$. The subquotients are given by the span of $\{ f^i q^k v_\lambda \colon i \geq 0\}$, which is clearly isomorphic to $\Delta^\g(\lambda_1 - k)$. The rest can be proved in the same way as for Lemma \ref{lemma:structure_Verma}.
\end{proof}

\begin{lemma} \label{lemma:verma_otimes_fin_sch}
For $\lambda_1 \in \Z$, $\lambda_2 \neq 0$ and $\mu \in \Z_{\geq 0}$ there is an isomorphism of $\s$-modules 
\[ \Delta(\lambda) \tens{} L(\mu) \cong \Delta(\lambda_1+\mu,\lambda_2) \oplus \Delta(\lambda_1+\mu-2,\lambda_2) \oplus \ldots \oplus \Delta(\lambda_1-\mu,\lambda_2). \]
\end{lemma}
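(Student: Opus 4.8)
The plan is to mimic the proof of Lemma~\ref{lemma:verma_otimes_fin} for the Takiff $\mathfrak{sl}_2$ case, which in turn follows the classical semisimple argument from \cite[6.3]{humphreys2008representations}. First I would establish that $\Delta(\lambda) \tens{} L(\mu)$ admits a filtration by $\s$-submodules whose subquotients are exactly the Verma modules $\Delta(\lambda_1+\mu-2k,\lambda_2)$ for $k = 0,1,\dots,\mu$. This is the standard tensor-identity computation: since $\Delta(\lambda) \cong U(\tilde{\n}_-) \tens{\C} \C_\lambda$ as a $\tilde{\n}_-$-module (see \eqref{equation:Verma_def}), one has $\Delta(\lambda)\tens{}L(\mu)\cong U(\tilde{\n}_-)\tens{\C}(\C_\lambda\tens{}L(\mu))$ as $\tilde{\n}_-$-modules, and a $\tilde{\bb}$-stable filtration of the finite-dimensional module $\C_\lambda\tens{}L(\mu)$ by one-dimensional weight spaces (ordered by decreasing $\delta$-value of the weight), induced up to $\s$, gives the desired Verma filtration. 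The highest-weight line of each layer sits in weight $\lambda_1+\mu-2k$ for $h$ and $\lambda_2$ for $z$ (the central element $z$ acts by $\lambda_2$ on all of $L(\mu)$ since $\HH$, hence $z$, annihilates the finite-dimensional $\g$-module $L(\mu)$, and acts by $\lambda_2$ on $\C_\lambda$).

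Next I would argue that this filtration splits as a direct sum. The key input is Lemma~\ref{lemma:Vermas_sch_cen_ch}: two Verma modules with the same non-zero central charge $\lambda_2$ have the same central character only when their highest $h$-weights $\alpha,\beta$ satisfy $\beta = -\alpha - 3$. The weights appearing among the subquotients are $\lambda_1+\mu, \lambda_1+\mu-2, \dots, \lambda_1-\mu$, which are all congruent modulo $2$; the relation $\beta = -\alpha-3$ would force $\alpha+\beta = -3$, an odd number, which is impossible for two integers of the same parity whose sum is even. Hence all the subquotients have pairwise distinct central characters, so $Z(\s)$ acts by distinct scalars on each and the module decomposes as the direct sum of the generalized central-character eigenspaces; since each such eigenspace contains exactly one layer (a Verma module, which has a central character), the filtration splits.

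The main obstacle — really the only place requiring care — is the very last point: a priori a generalized eigenspace for $Z(\s)$ could fail to be a single Verma layer if some Verma module were not annihilated by a maximal ideal of $Z(\s)$ but only by a power of it. But each $\Delta(\mu',\lambda_2)$ genuinely \emph{has} a central character (it is generated by a highest weight vector on which $Z(\s)$ acts by scalars, by Proposition~\ref{proposition:Vermas_general} and the explicit formulas for $C$ and $z$ in \eqref{equation:center_sch}), so there is no nilpotent part, and the decomposition of $\Delta(\lambda)\tens{}L(\mu)$ into $Z(\s)$-eigenspaces exactly recovers the filtration layers. One should double-check that distinct $k$ in the range $0,\dots,\mu$ give distinct weights $\lambda_1+\mu-2k$ (clear) and hence, combined with the parity argument, distinct central characters — this is immediate. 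Assembling these three steps (Verma filtration, distinctness of central characters via Lemma~\ref{lemma:Vermas_sch_cen_ch} and a parity check, splitting) completes the proof; it is genuinely shorter than the Takiff analogue because here $\lambda_2\neq 0$ already forces every $\Delta(\cdot,\lambda_2)$ to be simple, so no subtlety about the internal structure of the layers arises.
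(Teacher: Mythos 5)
Your argument is correct and follows essentially the same route as the paper: build the standard Verma filtration of $\Delta(\lambda)\otimes L(\mu)$ and then split it using Lemma~\ref{lemma:Vermas_sch_cen_ch} together with the parity observation that $\lambda_1+\lambda_1'=-3$ is impossible for two integers of the same parity. Your extra care about generalized central-character eigenspaces is harmless but not needed beyond the standard block-decomposition argument the paper invokes implicitly.
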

\begin{proof}
The left-hand side has a filtration with subquotients equal to the summands on the right-hand side. But these subquotients have different central characters by Lemma \ref{lemma:Vermas_sch_cen_ch}, since the first components of their highest weights have the same parity, so they must split.
\end{proof}

\subsection{Enright-Arkhipov completion}

Fix an $\ad$-nilpotent element $x \in \s$ (for example $f$ or $p$, which we will use), and denote by $S_x := \quotient{U(\s)_{(x)}}{U(\s)}$ the localization of the algebra $U(\s)$ by $x$, modulo the canonical copy of $U(\s)$ inside it. This is a $U(\s)$-bimodule. For an $\s$-module $M$ write
\[ \EnAr(M) := \prescript{e}{}{\left( S_f \tens{U(\s)} M \right)}. \]
As before, one can check that this is a well defined functor on the category of $\s$-modules. Moreover, Proposition \ref{proposition:EA_tensor_L} is valid here, with the same proof.

\begin{theorem} \label{theorem:EA_Verma_sch}
Take $\lambda$ with $\lambda_1 \in \Z$ and $\lambda_2 \neq 0$. Then $\EnAr(\Delta(\lambda))$ is a simple $\g$-Harish-Chandra module, and decomposes as a $\g$-module as
\begin{equation} \label{equation:g_types_EA_sch}
\EnAr(\Delta(\lambda)) \cong \bigoplus_{k \geq 0} L\left(\left|\lambda_1 + \frac{3}{2}\right| - \frac{1}{2} +k \right).
\end{equation}
\end{theorem}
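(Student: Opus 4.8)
The plan is to mimic the proof of Theorem~\ref{theorem:EA_Verma} in the Takiff case, exploiting the parallel structure already emphasized in this section. First I would establish the $\g$-module decomposition \eqref{equation:g_types_EA_sch}. By Lemma~\ref{lemma:structure_Verma_sch}, $\Delta(\lambda)$ has, as a $\g$-module, a filtration by $\Delta^\g(\lambda_1-k)$, $k\geq 0$; since $\EnAr$ commutes with the forgetful functor to $\g$-modules, and (by the $\g$-analogue of Example~\ref{example:EA_Delta_g}) $\EnAr(\Delta^\g(\mu))\cong L(-\mu-2)$ when $\mu\in\Z_{\leq -2}$ and vanishes otherwise, while $\EnAr(P^\g(\mu))=0$ for $\mu\in\Z_{\leq -2}$, only the ``tail'' subquotients $\Delta^\g(-\lambda_1-k)$ with $-\lambda_1-k\leq -2$ survive. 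Collecting these gives $\bigoplus L(\lambda_1+k-2)$ over the appropriate range of $k$, which one rewrites in the symmetric form \eqref{equation:g_types_EA_sch} by setting the minimal $\g$-type to $|\lambda_1+\tfrac32|-\tfrac12$; care is needed to check this formula is correct both for $\lambda_1\geq 0$ and $\lambda_1<0$, and that it is consistent under the central-character symmetry $\lambda_1\leftrightarrow -\lambda_1-3$ of Lemma~\ref{lemma:Vermas_sch_cen_ch}.

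Next I would prove simplicity. Using Lemma~\ref{lemma:basis_localization} (valid verbatim for $\s$ with the basis $\{f,q\}$ replacing $\{f,\bar f\}$ among the ``other'' generators), $S_f\tens{U(\s)}\Delta(\lambda)$ has basis $f^{-i}q^j v_\lambda$ for $i\geq 1$, $j\geq 0$. The lowest-weight vector of each $\g$-type in $\EnAr(\Delta(\lambda))$ must be annihilated by $f$, hence is proportional to $f^{-1}q^{t}v_\lambda$ for the appropriate $t$. Given a nonzero submodule $V$ containing the $\g$-type $L(\mu)$ with $\mu$ strictly above the minimal type, I would apply $q$ (or another element of $\HH$) repeatedly to see $V$ contains all higher $\g$-types, and then exhibit an explicit element of $U(\s)$ carrying $f^{-1}q^{t}v_\lambda$ to $f^{-1}q^{t-1}v_\lambda$, forcing $V$ to be everything. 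As in the Takiff case this amounts to writing down a suitable combination of $e$ and products such as $pe$, $qe$ (the analogues of $\bar h e$, $\bar e$), computing their action via the commutation relations \eqref{equation:g_pq} together with $[q,p^n]=-np^{n-1}z$, $[p,q^n]=nq^{n-1}z$ and the relations $[e,f^{-k}]=-kf^{-k-1}(h+k+1)$, $[h,f^{-k}]=2kf^{-k}$ plus their $\HH$-analogues, and checking that a certain $2\times 2$ determinant is nonzero precisely because $\lambda_2\neq 0$ and $\mu$ lies strictly above the minimal type.

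Finally, I would note the alternative ``nilradical'' argument of Remark~\ref{remark:simpllicity_EA} applies equally well: any proper nonzero submodule would produce a finite-dimensional simple $\s$-quotient of $\EnAr(\Delta(\lambda))$, on which the nilradical $\HH$ — and in particular $z$ — must act as zero, contradicting the fact that $z$ acts by the nonzero scalar $\lambda_2$ on the localization. This gives simplicity with essentially no computation, so I would present the explicit-element computation only as far as needed for the rest of the paper and otherwise lean on the nilradical argument. The main obstacle, and the only genuinely new bookkeeping relative to the Takiff case, is getting the index arithmetic in \eqref{equation:g_types_EA_sch} exactly right across the sign cases of $\lambda_1$ — in particular verifying that the shifts by $\tfrac32$ and $\tfrac12$ reproduce precisely the set of surviving subquotients from Lemma~\ref{lemma:structure_Verma_sch} and that, as $\lambda_1$ ranges over all integers, the minimal $\g$-type realizes exactly the values $0,1,2,\dots$, which is what makes the later classification (parameter $n\in\Z_{\geq 0}$, irredundant) come out correctly.
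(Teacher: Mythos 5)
Your proposal is correct and follows essentially the same route as the paper: the decomposition via Lemma~\ref{lemma:structure_Verma_sch}, commutation of $\EnAr$ with the forgetful functor and the Schr\"{o}dinger analogue of Example~\ref{example:EA_Delta_g}, and simplicity by applying $q$ to climb up the $\g$-types and an explicit element of $U(\s)$ moving the lowest weight vector $f^{-1}q^t v_\lambda$ down to $f^{-1}q^{t-1}v_\lambda$, with the nilradical argument as an alternative. The only cosmetic difference is that the paper's down-step element is the single combination $p-\tfrac{1}{\mu}qe$, whose coefficient $\tfrac{\lambda_2 t}{2\mu}(\mu-\lambda_1-1)$ is visibly non-zero, so no $2\times 2$ determinant as in the Takiff case is needed.
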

\begin{proof}
Lemma \ref{lemma:structure_Verma_sch}, the fact that the functor $\EnAr$ commutes with the forgetful functor from $\s$-modules to $\g$-modules, and (the Schr\"{o}dinger analogue of) Example \ref{example:EA_Delta_g} imply the decomposition (\ref{equation:g_types_EA_sch}).

Note that the lowest weight vector of a $L(\mu)$ in (\ref{equation:g_types_EA_sch}) is $f^{-1} q^t v_\lambda$, where $t:=2+\lambda_1+\mu$. To prove simplicity, it is enough for $\mu \geq \left| \lambda_1 + \frac{3}{2}\right| + \frac{1}{2}$ to find an element in $U(\s)$ that maps $f^{-1} q^t v_\lambda$ to  $f^{-1} q^{t-1} v_\lambda$. One can check by direct calculation that
\[ \left( p - \frac{1}{\mu} qe \right) \cdot f^{-1} q^t v_\lambda =  \frac{\lambda_2 t}{2 \mu}(\mu-\lambda_1-1) f^{-1} q^{t-1} v_\lambda. \]
The scalar on the right-hand side is non-zero because of the assumption on $\mu$.

Alternatively, one can use a nilradical argument analogous to the one in Remark \ref{remark:simpllicity_EA}.
\end{proof}

Theorem \ref{theorem:EA_Verma_sch}, Proposition \ref{proposition:Q_sch}(\ref{item:uniqueQ0}) and Theorem \ref{theorem:Q0_sch} together give:
\begin{corollary} \label{corollary:Q0_V0_sch}
For $\lambda_2 \neq 0$ we have
\[\EnAr(\Delta(-1,\lambda_2)) \cong \EnAr(\Delta(-2,\lambda_2)) \cong Q(0,\lambda_2). \]
\end{corollary}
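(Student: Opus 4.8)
The plan is to chain together the three results cited just before the statement, each of which identifies a certain module as the unique simple $\s$-module with a prescribed minimal $\g$-type $L(0)$ and prescribed central charge $\lambda_2$. First I would apply Theorem~\ref{theorem:EA_Verma_sch} to the weights $\lambda=(-1,\lambda_2)$ and $\lambda=(-2,\lambda_2)$: in both cases $\lambda_1\in\Z$ and $\lambda_2\neq 0$, so each $\EnAr(\Delta(\lambda))$ is a simple $\g$-Harish-Chandra module, and the $\g$-type decomposition \eqref{equation:g_types_EA_sch} must be computed explicitly. For $\lambda_1=-1$ we get $\left|\lambda_1+\tfrac32\right|-\tfrac12=0$, and for $\lambda_1=-2$ we get $\left|-\tfrac12\right|-\tfrac12=0$ as well; in both cases the decomposition is $\bigoplus_{k\geq 0}L(k)$, so in particular the minimal $\g$-type is $L(0)$, occurring with multiplicity one.

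Next I would pin down the central charge. Since $\EnAr$ commutes with localization, which preserves the central character, the element $z$ acts on $\EnAr(\Delta(\lambda))$ by the same scalar $\lambda_2$ as on $\Delta(\lambda)$ (recall $\chi=\lambda_2$ from the discussion after the Verma simplicity proposition). Hence both $\EnAr(\Delta(-1,\lambda_2))$ and $\EnAr(\Delta(-2,\lambda_2))$ are simple $\s$-modules containing $L(0)$ as a $\g$-submodule and having central charge $\lambda_2$. By Proposition~\ref{proposition:Q_sch}(\ref{item:uniqueQ0}), there is a \emph{unique} such simple module; moreover $Q(0,\lambda_2)$ is simple by Theorem~\ref{theorem:Q0_sch} (here $\lambda_2\neq 0$), contains $L(0)$ as its bottom $\g$-type by Lemma~\ref{lemma:structure_Q0_sch}, and has central charge $\lambda_2$. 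Therefore all three modules coincide, which is exactly the claimed isomorphism.

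I do not anticipate a serious obstacle here — the statement is essentially a bookkeeping corollary. The only point requiring a little care is the arithmetic in \eqref{equation:g_types_EA_sch}: one must check that the two seemingly different inputs $\lambda_1=-1$ and $\lambda_1=-2$ both land on the value $0$ for the shift $\left|\lambda_1+\tfrac32\right|-\tfrac12$, so that the minimal $\g$-type is $L(0)$ in both cases and the uniqueness statement of Proposition~\ref{proposition:Q_sch}(\ref{item:uniqueQ0}) applies. Once that is observed, invoking uniqueness is immediate. (One could alternatively appeal directly to the general Takiff-style argument, but since Proposition~\ref{proposition:Q_sch}(\ref{item:uniqueQ0}) is already available for $\s$, the short route above suffices.)
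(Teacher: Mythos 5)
Your proposal is correct and follows exactly the route the paper takes: the corollary is deduced by combining Theorem~\ref{theorem:EA_Verma_sch} (simplicity and the $\g$-type computation, where indeed $\left|\lambda_1+\tfrac32\right|-\tfrac12=0$ for both $\lambda_1=-1$ and $\lambda_1=-2$), the uniqueness statement of Proposition~\ref{proposition:Q_sch}(\ref{item:uniqueQ0}), and the simplicity of $Q(0,\lambda_2)$ from Theorem~\ref{theorem:Q0_sch}. The paper cites these three results without spelling out the bookkeeping, which your write-up simply makes explicit.
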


\subsection{Classification}
The Enright-Arkhipov completion of Verma modules again gives us a family of $\g$-Harish-Chandra modules. This construction gives all infinite-dimensional $\g$-Harish-Chandra modules, as we will see in this subsection.

\begin{definition}
For $n \in \Z$ and $\lambda_2 \neq 0$ denote
\begin{equation*}
V(n,\lambda_2) := \begin{cases} \EnAr(\Delta(n-1, \lambda_2)) &\colon n \geq 0, \\
\EnAr(\Delta(n-2, \lambda_2)) &\colon n \leq 0. \end{cases}
\end{equation*} 
\end{definition}
From Corollary \ref{corollary:Q0_V0_sch} we have that $V(0,\lambda_2)$ is well-defined, and moreover isomorphic to $Q(0,\lambda_2)$. Note that the central element $C$ acts on $V(n,\lambda_2)$ as $n(n+1)\lambda_2$ if $n \geq0$, and as $n(n-1)\lambda_2$ if $n \leq0 $.

Theorem \ref{theorem:EA_Verma_sch} and Lemma \ref{lemma:Vermas_sch_cen_ch} easily give:

\begin{corollary} \label{corollary:g_types_sch}
The module $V(n,\lambda_2)$ is a simple $\g$-Harish-Chandra module, and has $\g$-types $L(|n|), L(|n|+1), L(|n|+2) \ldots$ with multiplicity one.

If $V(n,\lambda_2) \cong V(n',\lambda_2')$, then $\lambda_2'=\lambda_2$ and $n' \in \{n,-n\}$.
\end{corollary}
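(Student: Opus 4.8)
The plan is to follow the same strategy that was used in the Takiff $\mathfrak{sl}_2$ case, since the Schr\"{o}dinger situation is described in the excerpt as ``very similar, but easier''. The two things to prove are: (i) $V(n,\lambda_2)$ is a simple $\g$-Harish-Chandra module with the stated $\g$-types each of multiplicity one; and (ii) the isomorphism classification $V(n,\lambda_2)\cong V(n',\lambda_2')$ iff $\lambda_2'=\lambda_2$ and $n'\in\{n,-n\}$.

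First I would dispose of (i): simplicity and the $\g$-type decomposition are immediate from Theorem~\ref{theorem:EA_Verma_sch} applied with highest weight $(n-1,\lambda_2)$ when $n\geq 0$ and $(n-2,\lambda_2)$ when $n\leq 0$. Plugging $\lambda_1=n-1$ into \eqref{equation:g_types_EA_sch} gives $\left|\,n-1+\tfrac32\,\right|-\tfrac12+k=\left|n+\tfrac12\right|-\tfrac12+k=n+k$ for $n\geq 0$ (since $n+\tfrac12>0$), i.e.\ $\g$-types $L(n),L(n+1),\dots=L(|n|+k)$; plugging $\lambda_1=n-2$ for $n\leq 0$ gives $\left|n-\tfrac12\right|-\tfrac12+k=-n+k=|n|+k$. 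So in both cases the $\g$-types are $L(|n|),L(|n|+1),\dots$, each occurring once, as claimed. The $n=0$ overlap is consistent by Corollary~\ref{corollary:Q0_V0_sch}.

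For (ii), the key invariant is the central character. The functor $\EnAr$ preserves central characters (it is built from localization, which preserves $Z(\s)$-action, followed by a Zuckerman-type subfunctor), so on $V(n,\lambda_2)$ the generator $z$ acts by $\lambda_2$ and $C$ acts by the scalar it takes on the defining Verma module: $(n-1+1)(n-1+2)\lambda_2=n(n+1)\lambda_2$ for $n\geq 0$, and $(n-2+1)(n-2+2)\lambda_2=n(n-1)\lambda_2$ for $n\leq 0$. Thus if $V(n,\lambda_2)\cong V(n',\lambda_2')$ then first $\lambda_2'=\lambda_2$ (equality of central charges), and then the two modules have the same minimal $\g$-type, forcing $|n|=|n'|$ by the computation in (i), i.e.\ $n'\in\{n,-n\}$. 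Conversely one must check there is no further collapse—but for $n\neq 0$, $V(n,\lambda_2)$ and $V(-n,\lambda_2)$ are built from Verma modules $\Delta(n-1,\lambda_2)$ and $\Delta(-n-2,\lambda_2)$, whose highest weights $n-1$ and $-n-2=-( n-1)-3$ are precisely the pair identified in Lemma~\ref{lemma:Vermas_sch_cen_ch} as sharing a central character, so $C$ indeed acts by the same scalar $n(n+1)\lambda_2=(-n)(-n-1)\lambda_2$ on both, and the statement ``$n'\in\{n,-n\}$'' is stated as a necessary condition only, so nothing more needs to be shown here. (The genuine irredundancy—that $V(n,\lambda_2)\not\cong V(-n,\lambda_2)$ for $n\neq 0$—is a separate assertion made elsewhere in Section~\ref{section:schrodinger} and is not part of this corollary.)

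The main obstacle, such as it is, is bookkeeping with the two-case definition of $V(n,\lambda_2)$ and making sure the absolute-value formula in \eqref{equation:g_types_EA_sch} is evaluated correctly on each branch; the conceptual content (central character separates the modules, $\EnAr$ preserves it, minimal $\g$-type pins down $|n|$) is exactly parallel to Corollary~\ref{corollary:g_types} and requires no new ideas.
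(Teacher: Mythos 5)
Your argument is correct and takes essentially the same route as the paper: part (i) is read off from Theorem~\ref{theorem:EA_Verma_sch} exactly as you compute it on the two branches of the definition, and part (ii) follows from preservation of the central character under $\EnAr$ (the central charge gives $\lambda_2'=\lambda_2$, and the minimal $\g$-type, equivalently the scalar action of $C$ as governed by Lemma~\ref{lemma:Vermas_sch_cen_ch}, gives $|n|=|n'|$). One small aside: your closing parenthetical has the facts backwards --- in the Schr\"{o}dinger case the paper later proves $V(n,\lambda_2)\cong V(-n,\lambda_2)$ (Proposition~\ref{proposition:Q_n_sch}), so the family \emph{is} redundant under $n\mapsto -n$, but this does not affect your proof of the stated necessary condition.
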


On Figure \ref{figure:V_1_sch} to \ref{figure:V2_sch} we present several $V(n,\lambda_2)$'s, and how they are constructed. It is interesting to compare this to the Takiff $\mathfrak{sl}_2$ case (cf. Figure \ref{figure:V_1} to \ref{figure:V2}).
\begin{figure}[h]
\begin{tikzpicture}[scale=.5] % $EA(\Delta(-3,\lambda_2)) \cong V(-1,\lambda_2)$
\def\epsilon{.2}

\newcommand{\Verma}[6]{
	\draw [fill=gray!30] (#1-2*\epsilon,#2+2*\epsilon) --(#3+2*2*\epsilon,#4+2*\epsilon) -- (#5-2*\epsilon,#6-2*2*\epsilon);
	}

\newcommand{\arrow}[4]{
	\draw [->] (#1-\epsilon*#1+\epsilon*#3,#2-\epsilon*#2+\epsilon*#4) -- (\epsilon*#1+#3-\epsilon*#3,\epsilon*#2+#4-\epsilon*#4);
	}

\Verma{-5}{0}{-3}{0}{-5}{-2};

\path [fill=gray!15] (5+2*\epsilon,0+2*\epsilon) --(3-2*2*\epsilon,0+2*\epsilon) -- (5+2*\epsilon,-2-2*2*\epsilon);

\foreach \x in {-5,-3,-1,1,3,5}{
	\foreach \y in {0,-2}{ 
    	\node[draw,circle,inner sep=1pt,fill] at (\x,\y) {};
  		}
	}
\foreach \x in {-4,-2,0,2,4}{
	\node[draw,circle,inner sep=1pt,fill] at (\x,-1) {};
  		
	}
	
\foreach \y in {0,-2}{ 
	\node[] at (-6,\y) {$\ldots$};
	\node[] at (6,\y) {$\ldots$};
	}
\node[] at (-6,-1) {$\ldots$};
\node[] at (6,-1) {$\ldots$};

\node[] at (0,-2.75) {$\ldots$};

\foreach \x in {-5,-3,-1,1,3,5}{
	\node[above,yshift=5] at (\x,0) {{\scriptsize $\x$}};
	}

\arrow{-6}{0}{-5}{0};\arrow{-5}{0}{-3}{0};\arrow{-1}{0}{1}{0};\arrow{3}{0}{5}{0};\arrow{5}{0}{6}{0};

\arrow{-6}{-1}{-4}{-1};\arrow{-2}{-1}{0}{-1};\arrow{0}{-1}{2}{-1};\arrow{4}{-1}{6}{-1};

\arrow{-6}{-2}{-5}{-2};\arrow{-3}{-2}{-1}{-2};\arrow{-1}{-2}{1}{-2};\arrow{1}{-2}{3}{-2};\arrow{5}{-2}{6}{-2};
\end{tikzpicture}
\caption{$V(-1,\lambda_2) = \EnAr(\Delta(-3,\lambda_2))$}
\label{figure:V_1_sch}
\bigskip\bigskip
\begin{tikzpicture}[scale=.5] % $EA(\Delta(-2,\lambda_2)) \cong V(0,\lambda_2)$
\def\epsilon{.2}

\newcommand{\Verma}[6]{
	\draw [fill=gray!30] (#1-2*\epsilon,#2+2*\epsilon) --(#3+2*2*\epsilon,#4+2*\epsilon) -- (#5-2*\epsilon,#6-2*2*\epsilon);
	}

\newcommand{\arrow}[4]{
	\draw [->] (#1-\epsilon*#1+\epsilon*#3,#2-\epsilon*#2+\epsilon*#4) -- (\epsilon*#1+#3-\epsilon*#3,\epsilon*#2+#4-\epsilon*#4);
	}

\Verma{-4}{0}{-2}{0}{-4}{-2};

\path [fill=gray!15] (4+2*\epsilon,0+2*\epsilon) --(2-2*2*\epsilon,0+2*\epsilon) -- (4+2*\epsilon,-2-2*2*\epsilon);

\foreach \x in {-4,-2,0,2,4}{
	\foreach \y in {0,-2}{ 
    	\node[draw,circle,inner sep=1pt,fill] at (\x,\y) {};
  		}
	}
\foreach \x in {-3,-1,1,3}{
	\node[draw,circle,inner sep=1pt,fill] at (\x,-1) {};
  		
	}
	
\foreach \y in {0,-2}{ 
	\node[] at (-5,\y) {$\ldots$};
	\node[] at (5,\y) {$\ldots$};
	}
\node[] at (-5,-1) {$\ldots$};
\node[] at (5,-1) {$\ldots$};

\node[] at (0,-2.75) {$\ldots$};

\foreach \x in {-4,-2,0,2,4}{
	\node[above,yshift=5] at (\x,0) {{\scriptsize $\x$}};
	}

\arrow{-5}{0}{-4}{0};\arrow{-4}{0}{-2}{0};\arrow{2}{0}{4}{0};\arrow{4}{0}{5}{0};

\arrow{-5}{-1}{-3}{-1};\arrow{-1}{-1}{1}{-1};\arrow{3}{-1}{5}{-1};

\arrow{-5}{-2}{-4}{-2};\arrow{-2}{-2}{0}{-2};\arrow{0}{-2}{2}{-2};\arrow{4}{-2}{5}{-2};
\end{tikzpicture}
\caption{$V(0,\lambda_2) = \EnAr(\Delta(-2,\lambda_2))$}
\label{figure:V0_sch}
\bigskip\bigskip
\begin{tikzpicture}[scale=.5] % $EA(\Delta(-1,\lambda_2)) \cong V(0,\lambda_2)$
\def\epsilon{.2}

\newcommand{\Verma}[6]{
	\draw [fill=gray!30] (#1-2*\epsilon,#2+2*\epsilon) --(#3+2*2*\epsilon,#4+2*\epsilon) -- (#5-2*\epsilon,#6-2*2*\epsilon);
	}

\newcommand{\arrow}[4]{
	\draw [->] (#1-\epsilon*#1+\epsilon*#3,#2-\epsilon*#2+\epsilon*#4) -- (\epsilon*#1+#3-\epsilon*#3,\epsilon*#2+#4-\epsilon*#4);
	}

\Verma{-4}{0}{-1}{0}{-4}{-3};

\path [fill=gray!15] (4+2*\epsilon,0+2*\epsilon) --(1-2*2*\epsilon,0+2*\epsilon) -- (4+2*\epsilon,-3-2*2*\epsilon);

\foreach \x in {-3,-1,1,3}{
	\foreach \y in {0,-2}{ 
    	\node[draw,circle,inner sep=1pt,fill] at (\x,\y) {};
  		}
	}
\foreach \x in {-4,-2,0,2,4}{
	\foreach \y in {-1,-3}{
		\node[draw,circle,inner sep=1pt,fill] at (\x,\y) {};
  		}
	}
	
\foreach \y in {0,-1,-2,-3}{ 
	\node[] at (-5,\y) {$\ldots$};
	\node[] at (5,\y) {$\ldots$};
	}

\node[] at (0,-3.75) {$\ldots$};

\foreach \x in {-3,-1,1,3}{
	\node[above,yshift=5] at (\x,0) {{\scriptsize $\x$}};
	}

\arrow{-5}{0}{-3}{0};\arrow{-3}{0}{-1}{0};\arrow{1}{0}{3}{0};\arrow{3}{0}{5}{0};

\arrow{-5}{-1}{-4}{-1};\arrow{-4}{-1}{-2}{-1};\arrow{2}{-1}{4}{-1}\arrow{4}{-1}{5}{-1};

\arrow{-5}{-2}{-3}{-2};\arrow{-1}{-2}{1}{-2};\arrow{3}{-2}{5}{-2};

\arrow{-5}{-3}{-4}{-3};\arrow{-2}{-3}{0}{-3};\arrow{0}{-3}{2}{-3};\arrow{4}{-3}{5}{-3};
\end{tikzpicture}
\caption{$V(0,\lambda_2) = \EnAr(\Delta(-1,\lambda_2))$}
\label{figure:V1_sch}
\bigskip\bigskip
\begin{tikzpicture}[scale=.5] % $EA(\Delta(0,\lambda_2)) \cong V(1,\lambda_2)$
\def\epsilon{.2}

\newcommand{\Verma}[6]{
	\draw [fill=gray!30] (#1-2*\epsilon,#2+2*\epsilon) --(#3+2*2*\epsilon,#4+2*\epsilon) -- (#5-2*\epsilon,#6-2*2*\epsilon);
	}

\newcommand{\arrow}[4]{
	\draw [->] (#1-\epsilon*#1+\epsilon*#3,#2-\epsilon*#2+\epsilon*#4) -- (\epsilon*#1+#3-\epsilon*#3,\epsilon*#2+#4-\epsilon*#4);
	}

\newcommand{\roundarrow}[5]{
	\draw [->] (#1-.1-\epsilon/2*#1+\epsilon/2*#3,#2-\epsilon/2*#2+\epsilon/2*#4) to [out=45+15,in=-135-15] (\epsilon/2*#1+#3-\epsilon/2*#3,\epsilon/2*#2+#4-\epsilon/2*#4+.1);
	}

\Verma{-4}{0}{0}{0}{-4}{-4};

\path [fill=gray!15] (4+2*\epsilon,0+2*\epsilon) -- (2-2*\epsilon,0+2*\epsilon) -- (0-3*2*\epsilon,-2-2*\epsilon) -- (2-2*\epsilon + \epsilon,-2-2*\epsilon) -- (4+2*\epsilon,-4-3*2*\epsilon+\epsilon);

\foreach \x in {-4,-2,0,2,4}{
	\foreach \y in {0,-2,-4}{ 
    	\node[draw,circle,inner sep=1pt,fill] at (\x,\y) {};
  		}
	}
\foreach \x in {-3,-1,1,3}{
	\foreach \y in {-1,-3}{
		\node[draw,circle,inner sep=1pt,fill] at (\x,\y) {};
  		}
	}
	
\foreach \y in {0,-1,-2,-3,-4}{ 
	\node[] at (-5,\y) {$\ldots$};
	\node[] at (5,\y) {$\ldots$};
	}

\node[] at (0,-4.75) {$\ldots$};

\foreach \x in {-4,-2,0,2,4}{
	\node[above,yshift=5] at (\x,0) {{\scriptsize $\x$}};
	}

\arrow{-5}{0}{-4}{0};\arrow{-4}{0}{-2}{0};\arrow{2}{0}{4}{0};\arrow{4}{0}{5}{0};

\arrow{-5}{-1}{-3}{-1};\arrow{-3}{-1}{-1}{-1};\arrow{1}{-1}{3}{-1};\arrow{3}{-1}{5}{-1};

\arrow{-5}{-2}{-4}{-2};\arrow{-4}{-2}{-2}{-2};\arrow{2}{-2}{4}{-2};\arrow{4}{-2}{5}{-2};

\arrow{-5}{-3}{-3}{-3};\arrow{-1}{-3}{1}{-3};\arrow{3}{-3}{5}{-3};

\arrow{-5}{-4}{-4}{-4};\arrow{-2}{-4}{0}{-4};\arrow{0}{-4}{2}{-4};\arrow{4}{-4}{5}{-4};

\roundarrow{-2}{-2}{0}{0};
\roundarrow{0}{-2}{2}{0};
\roundarrow{2}{-2}{4}{0};
\end{tikzpicture}
\caption{$V(1,\lambda_2) = \EnAr(\Delta(0,\lambda_2))$}
\label{figure:V2_sch}
\end{figure}

\begin{proposition} \label{proposition:Q_n_sch}
For $n \in \Z_{\geq 0}$ and $\lambda_2 \neq 0$ we have $V(-n,\lambda_2) \cong V(n,\lambda_2)$. Moreover,
\[ Q(n,\lambda_2) \cong V(n,\lambda_2) \oplus V(n-1,\lambda_2) \oplus \ldots \oplus V(0,\lambda_2). \]
\end{proposition}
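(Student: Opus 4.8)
The plan is to realize the universal module $Q(n,\lambda_2)$ as an explicit direct sum of Enright--Arkhipov completions of Verma modules, and to read off both assertions from there. First I would combine Corollary~\ref{corollary:Q0_V0_sch} (which gives $Q(0,\lambda_2)\cong V(0,\lambda_2)=\EnAr(\Delta(-2,\lambda_2))$) with the isomorphism $Q(n,\lambda_2)\cong Q(0,\lambda_2)\otimes L(n)$, then apply Proposition~\ref{proposition:EA_tensor_L} to move the completion functor outside the tensor product, and finally Lemma~\ref{lemma:verma_otimes_fin_sch} to decompose $\Delta(-2,\lambda_2)\otimes L(n)$. This yields $Q(n,\lambda_2)\cong\bigoplus_{k=0}^{n}\EnAr(\Delta(n-2-2k,\lambda_2))$. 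By Theorem~\ref{theorem:EA_Verma_sch} each summand is a simple $\g$-Harish-Chandra module whose minimal $\g$-type is $L\bigl(|j+\tfrac32|-\tfrac12\bigr)$ with $j=n-2-2k$; a short elementary computation (keeping track of parities) shows that as $k$ runs from $0$ to $n$ these minimal types are exactly $L(0),L(1),\dots,L(n)$, each occurring once. In particular $Q(n,\lambda_2)$ is semisimple of length $n+1$, its summands are pairwise non-isomorphic, and — comparing with the definition of $V(\cdot,\lambda_2)$ — the summand of minimal $\g$-type $L(i)$ equals $V(i,\lambda_2)$ when it arises from a Verma module $\Delta(j,\lambda_2)$ with $j\ge -1$, and $V(-i,\lambda_2)$ when $j\le -2$; the term $k=n$ (with $j=-n-2$) is the unique one of minimal $\g$-type $L(n)$, and it equals $V(-n,\lambda_2)$.

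Next I would prove the isomorphism $V(-n,\lambda_2)\cong V(n,\lambda_2)$ (the case $n=0$ being contained in Corollary~\ref{corollary:Q0_V0_sch}). By Corollary~\ref{corollary:g_types_sch} the module $V(n,\lambda_2)$ is simple with minimal $\g$-type $L(n)$, so it contains $L(n)$ as a $\g$-submodule and has central charge $\lambda_2$; hence by Proposition~\ref{proposition:Q_sch}(b) it is a quotient of $Q(n,\lambda_2)$. Since $Q(n,\lambda_2)$ is a direct sum of the pairwise non-isomorphic simple modules found above, any simple quotient of it is isomorphic to one of these summands; as $V(n,\lambda_2)$ has minimal $\g$-type $L(n)$ it must be the unique such summand, namely $V(-n,\lambda_2)$. (As a sanity check, Lemma~\ref{lemma:Vermas_sch_cen_ch} confirms that $\Delta(n-1,\lambda_2)$ and $\Delta(-n-2,\lambda_2)$ indeed share a central character, since $-(n-1)-3=-n-2$.)

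Finally, substituting the identities $V(-i,\lambda_2)\cong V(i,\lambda_2)$ into the decomposition of $Q(n,\lambda_2)$ from the first step replaces every negatively indexed summand by its positively indexed partner, so the multiset of summands becomes exactly $\{V(0,\lambda_2),V(1,\lambda_2),\dots,V(n,\lambda_2)\}$; this gives $Q(n,\lambda_2)\cong V(n,\lambda_2)\oplus V(n-1,\lambda_2)\oplus\dots\oplus V(0,\lambda_2)$, completing the proof.

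I expect the only genuinely non-formal step to be the isomorphism $V(-n,\lambda_2)\cong V(n,\lambda_2)$. Unlike the Takiff case, where the analogous identity $V(n,\lambda_2)\cong V(-n,-\lambda_2)$ dropped out of comparing the two square roots of the purely Takiff part of the central character, here the central charge is a single unambiguous scalar and that trick is unavailable; one is forced to use the semisimplicity of $Q(n,\lambda_2)$ together with the universal property in Proposition~\ref{proposition:Q_sch}(b) and the fact that $Q(n,\lambda_2)$ has a unique simple constituent of minimal $\g$-type $L(n)$. The other point that needs care is the bookkeeping in the first paragraph: one must verify that the $n+1$ completions $\EnAr(\Delta(n-2-2k,\lambda_2))$ really do have minimal $\g$-types $L(0),\dots,L(n)$ without repetition, and correctly match each of them to $V(i,\lambda_2)$ or $V(-i,\lambda_2)$ according to the two-branch definition of $V(\cdot,\lambda_2)$, so that the final reindexing is legitimate.
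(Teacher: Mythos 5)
Your proposal is correct, and it reaches the conclusion by a slightly different route than the paper. The shared core is the same: $Q(n,\lambda_2)\cong Q(0,\lambda_2)\otimes L(n)$, Corollary~\ref{corollary:Q0_V0_sch}, the Schr\"odinger version of Proposition~\ref{proposition:EA_tensor_L}, Lemma~\ref{lemma:verma_otimes_fin_sch} and Theorem~\ref{theorem:EA_Verma_sch}. The paper, however, argues by induction on $n$ and uses \emph{both} realizations $Q(0,\lambda_2)\cong\EnAr(\Delta(-1,\lambda_2))\cong\EnAr(\Delta(-2,\lambda_2))$: tensoring each with $L(n)$ gives two decompositions, $Q(n,\lambda_2)\cong V(n,\lambda_2)\oplus Q(n-1,\lambda_2)$ and $Q(n,\lambda_2)\cong V(-n,\lambda_2)\oplus Q(n-1,\lambda_2)$, and $V(n,\lambda_2)\cong V(-n,\lambda_2)$ falls out by cancelling the common part (via the distinct eigenvalues of $C$ on the summands). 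So the ``two realizations'' trick you declared unavailable is in fact the paper's substitute for the two square roots in the Takiff case. Your argument instead uses only the expansion through $\Delta(-2,\lambda_2)$, checks (correctly, after the parity bookkeeping) that the $n+1$ summands $\EnAr(\Delta(n-2-2k,\lambda_2))$ are simple with pairwise distinct minimal $\g$-types $L(0),\dots,L(n)$, and then identifies $V(n,\lambda_2)$ with the summand $V(-n,\lambda_2)$ by exhibiting $V(n,\lambda_2)$ as a simple quotient of the semisimple module $Q(n,\lambda_2)$ via Proposition~\ref{proposition:Q_sch}(b) and matching minimal $\g$-types. This trades the paper's induction for the universal-property argument; the only implicit ingredient you should flag is that $\EnAr$ preserves the central charge (so that the unique $\chi$ in Proposition~\ref{proposition:Q_sch}(b) is indeed $\lambda_2$), which is exactly the observation the paper records right after the definition of $V(n,\lambda_2)$. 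Both proofs are sound, and yours has the small advantage of producing the full decomposition in closed form in one step.
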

\begin{proof}
We use induction over $n$. The basis is given in Corollary \ref{corollary:Q0_V0_sch}. Suppose the proposition is true for all $k=0,\ldots,n-1$ where $n\geq 1$ is fixed. Observe that, using (the Schr\"{o}dinger version of) Proposition \ref{proposition:EA_tensor_L} and Lemma \ref{lemma:verma_otimes_fin_sch}, we have
\begin{align}
\label{align:Q_n_sch} Q(n,\lambda_2) \cong & \, Q(0,\lambda_2) \otimes L(n) \cong \EnAr(\Delta(-1,\lambda_2)) \otimes L(n) \\
\nonumber \cong & \EnAr(\Delta(n-1,\lambda_2)) \oplus \EnAr(\Delta(n-3,\lambda_2)) \oplus \ldots \oplus \EnAr(\Delta(-n-1,\lambda_2)) \\
\nonumber \cong & \, V(n,\lambda_2) \oplus V(n-2,\lambda_2) \oplus \ldots \oplus V(\epsilon,\lambda_2) \oplus{} \\
\nonumber & \oplus V(\epsilon-1,\lambda_2) \oplus V(\epsilon-3,\lambda_2) \oplus \ldots \oplus V(-n+1,\lambda_2), \\
\intertext{where $\epsilon \in \{0,1\}$ is of the same parity as $n$. By inductive assumption it follows that}
\nonumber Q(n,\lambda_2) \cong & \, V(n,\lambda_2) \oplus V(n-1,\lambda_2) \oplus \ldots \oplus V(1,\lambda_2) \oplus V(0,\lambda_2) \\
\nonumber \cong & \, V(n,\lambda_2) \oplus Q(n-1,\lambda_2)
\end{align}

In the same way, but using $Q(0,\lambda_2) \cong \EnAr(\Delta(-2,\lambda_2))$ in the first line (\ref{align:Q_n_sch}) we can get that $Q(n,\lambda_2) \cong V(-n,\lambda_2) \oplus Q(n-1,\lambda_2)$. It follows that $V(n,\lambda_2) \cong V(-n,\lambda_2)$.
\end{proof}

Similarly, one can prove the following analogue of Proposition \ref{proposition:V_otimes_L}:
\begin{proposition}\label{prop12}
Let $n,k \in \Z_{\geq 0}$ and $\lambda_2 \neq 0$. If $k \leq n$, then 
\[ V(n,\lambda_2) \tens{} L(k) = V(n-k,\lambda_2) \oplus V(n-k+2,\lambda_2) \oplus \ldots \oplus V(n+k,\lambda_2). \]
If $k>n$, then
\begin{multline*}
V(n,\lambda_2) \tens{} L(k) = V(0,\lambda_2) \oplus V(1,\lambda_2) \oplus \ldots \oplus V(k-n-1,\lambda_2) \oplus {} \\ {} \oplus V(k-n,\lambda_2) \oplus V(k-n+2,\lambda_2) \oplus \ldots \oplus V(k+n,\lambda_2).
\end{multline*}
\end{proposition}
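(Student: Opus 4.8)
The plan is to mimic the proof of Proposition~\ref{proposition:Q_n_sch}, reducing everything to the tensor-identity $\EnAr(M \otimes L) \cong \EnAr(M) \otimes L$ together with the decomposition of a Verma module tensored with a finite-dimensional module. For $n \in \Z_{\geq 0}$ we have $V(n,\lambda_2) = \EnAr(\Delta(n-1,\lambda_2))$ by definition, hence
\begin{align*}
V(n,\lambda_2) \otimes L(k) &\cong \EnAr\bigl(\Delta(n-1,\lambda_2)\bigr) \otimes L(k) \\
&\cong \EnAr\bigl(\Delta(n-1,\lambda_2) \otimes L(k)\bigr) \\
&\cong \bigoplus_{j=0}^{k} \EnAr\bigl(\Delta(n-1+k-2j,\lambda_2)\bigr),
\end{align*}
where the second isomorphism is the Schr\"{o}dinger version of Proposition~\ref{proposition:EA_tensor_L} and the third is Lemma~\ref{lemma:verma_otimes_fin_sch}. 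Thus the problem becomes identifying each summand $\EnAr(\Delta(m,\lambda_2))$, for $m = n-1+k-2j \in \Z$, with some $V(\cdot,\lambda_2)$.

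The second step is to set up the dictionary $\EnAr(\Delta(m,\lambda_2)) \cong V(n',\lambda_2)$, where $n' = m+1$ if $m \geq -1$ and $n' = -m-2$ if $m \leq -2$. For $m \geq -1$ this is literally the definition of $V(m+1,\lambda_2)$. For $m \leq -2$ the second branch of the definition gives $\EnAr(\Delta(m,\lambda_2)) = V(m+2,\lambda_2)$ with $m+2 \leq 0$, and $V(m+2,\lambda_2) \cong V(-m-2,\lambda_2)$ by Proposition~\ref{proposition:Q_n_sch}. The two branches are consistent at $m \in \{-1,-2\}$, since both yield $V(0,\lambda_2)$ by Corollary~\ref{corollary:Q0_V0_sch}; moreover, within a single such direct sum all the indices $m$ have the same parity, so $m = -1$ and $m = -2$ never occur simultaneously.

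The third step is to feed the dictionary into the direct sum and read off the answer. If $k \leq n$, the smallest index $m = n-1-k$ is $\geq -1$, so every summand is $V(m+1,\lambda_2)$ with $m+1$ running through $n-k, n-k+2, \ldots, n+k$, which is exactly the claimed formula. If $k > n$, the list of $m$'s is an arithmetic progression of step $2$ from $n-1-k < -1$ up to $n-1+k$, which straddles the value $-1$ (when $n+k$ is even) or $-2$ (when $n+k$ is odd); the involution $m \mapsto -m-2$ coming from the dictionary folds the lower part of the progression onto small nonnegative indices, producing the block $V(0,\lambda_2) \oplus \cdots \oplus V(k-n-1,\lambda_2)$, while the upper part gives $V(k-n,\lambda_2) \oplus V(k-n+2,\lambda_2) \oplus \cdots \oplus V(k+n,\lambda_2)$. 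One then checks that all the resulting indices are pairwise distinct (so, by Corollary~\ref{corollary:g_types_sch}, there is no hidden coincidence or multiplicity) and that the total number of summands is $k+1$, as demanded by Lemma~\ref{lemma:verma_otimes_fin_sch}.

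I expect the only genuine work to be the bookkeeping in the case $k > n$: one splits into the two parity cases $n+k$ even / odd, pins down the index $j_0$ for which $m$ equals $-1$ (respectively $-2$), and verifies separately for $j < j_0$ and $j > j_0$ that the indices $m+1$ and $-m-2$ interleave to give precisely $\{0,1,\ldots,k-n-1\}$ followed by $\{k-n, k-n+2, \ldots, k+n\}$. This is elementary but must be carried out in both parities.
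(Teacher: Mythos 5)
Your proposal is correct and follows essentially the same route the paper intends: the paper proves Proposition~\ref{proposition:Q_n_sch} by exactly this combination of the Schr\"{o}dinger version of Proposition~\ref{proposition:EA_tensor_L}, Lemma~\ref{lemma:verma_otimes_fin_sch}, the definition of $V(n,\lambda_2)$ and the isomorphism $V(-n,\lambda_2)\cong V(n,\lambda_2)$, and then states that Proposition~\ref{prop12} is proved ``similarly.'' Your dictionary $\EnAr(\Delta(m,\lambda_2))\cong V(m+1,\lambda_2)$ for $m\geq -1$ and $\cong V(-m-2,\lambda_2)$ for $m\leq -2$ is exactly the needed translation, and the parity bookkeeping you outline (noting, as you do at the end, that the two parity classes interleave to give $\{0,1,\dots,k-n-1\}\cup\{k-n,k-n+2,\dots,k+n\}$) closes the argument.
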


Since any simple $\g$-Harish-Chandra module is a quotient of some $Q(n,\lambda_2)$, we have proved:

\begin{theorem} \label{theorem:classification_sch}
Let $V$ be a simple $\g$-Harish-Chandra module for $\s$. Denote by $\lambda_2$ its central charge, and suppose $L(n)$, $n \in \Z_{\geq 0}$, is the minimal $\g$-type of $V$.
\begin{itemize}
\item If $\lambda_2 \neq 0$, then $V \cong V(n,\lambda_2) \cong V(-n,\lambda_2)$.
\item If $\lambda_2 = 0$, then $V \cong L(n)$ with the trivial $\HH$-action.
\end{itemize}
In other words, $V(n,\lambda_2)$, $n \in \Z_{\geq 0}$, $\lambda_2 \in \C\setminus\{0\}$, together with the finite-dimensional simple $\g$-modules constitute a complete list of pairwise non-isomorphic simple $\g$-Harish-Chandra modules for $\s$.
\end{theorem}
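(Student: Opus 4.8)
The plan is to transcribe the argument used for Theorem~\ref{theorem:classification} in the Takiff case, since all of the structural input has already been prepared. The starting point is that a simple $\g$-Harish-Chandra module $V$ for $\s$ has a well-defined minimal $\g$-type: as a $\g$-module $V$ is a non-zero direct sum of the $L(m)$'s with finite multiplicities, so the set of $m\in\Z_{\geq 0}$ for which $L(m)$ occurs is a non-empty subset of $\Z_{\geq 0}$ and hence has a least element, say $n$. Since $V$ then contains $L(n)$ as a $\g$-submodule, Proposition~\ref{proposition:Q_sch}(b) tells us that $V$ is a quotient of $Q(n,\lambda_2)$ for a unique central charge $\lambda_2$; this $\lambda_2$ is precisely the scalar by which $z$ acts on $V$.

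First I would treat the case $\lambda_2\neq 0$. By Proposition~\ref{proposition:Q_n_sch} we have $Q(n,\lambda_2)\cong V(n,\lambda_2)\oplus V(n-1,\lambda_2)\oplus\cdots\oplus V(0,\lambda_2)$, a finite direct sum of simple modules which by Corollary~\ref{corollary:g_types_sch} are pairwise non-isomorphic (their minimal $\g$-types $L(0),\dots,L(n)$ are distinct). Therefore every submodule of $Q(n,\lambda_2)$ is a sub-sum, every simple quotient is isomorphic to one of the summands $V(k,\lambda_2)$ with $0\leq k\leq n$, and matching the minimal $\g$-type forces $k=n$. Hence $V\cong V(n,\lambda_2)$, and $V(n,\lambda_2)\cong V(-n,\lambda_2)$ is the isomorphism recorded in Proposition~\ref{proposition:Q_n_sch}.

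Next I would handle $\lambda_2=0$. Here $Q(n,0)\cong Q(0,0)\otimes L(n)$, and by Theorem~\ref{theorem:Q0_sch} the module $Q(0,0)$ carries an $\s$-filtration whose subquotients are $L(0),L(1),L(2),\dots$ with trivial $\HH$-action; tensoring with $L(n)$ and applying the Clebsch--Gordan rule, $Q(n,0)$ acquires an $\s$-filtration all of whose subquotients are finite-dimensional semisimple $\g$-modules on which $\HH$ acts trivially. Consequently every simple subquotient of $Q(n,0)$, in particular the simple quotient $V$, is a finite-dimensional simple $\g$-module with trivial $\HH$-action, and matching the minimal $\g$-type gives $V\cong L(n)$. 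To complete the ``complete list'' assertion, it remains to check pairwise non-isomorphism: by the nilradical remark $\HH=\Nrad(\s)$ annihilates every finite-dimensional simple $\s$-module, so the finite-dimensional simples are exactly those with $\lambda_2=0$ and are distinguished by their highest weight, whereas each $V(n,\lambda_2)$ is infinite-dimensional since it has infinitely many $\g$-types; and among the $V(n,\lambda_2)$ with $n\geq 0$, Corollary~\ref{corollary:g_types_sch} shows that the pair $(n,\lambda_2)$ is an isomorphism invariant.

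I do not expect a serious obstacle: the genuinely nontrivial facts (simplicity of $Q(0,\chi)$ for $\chi\neq 0$ and of the modules $V(n,\lambda_2)$, the direct-sum decomposition of $Q(n,\lambda_2)$, and the explicit lists of $\g$-types) are all already established. The only points needing a little care are the bookkeeping that a simple quotient of the multiplicity-free direct sum $\bigoplus_{k=0}^n V(k,\lambda_2)$ is one of the summands — which rests on those summands being pairwise non-isomorphic — and the verification that the $\lambda_2=0$ branch really yields only finite-dimensional modules.
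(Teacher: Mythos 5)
Your proposal is correct and is essentially the paper's own argument: the paper disposes of the theorem with the single remark that any simple $\g$-Harish-Chandra module is a quotient of some $Q(n,\lambda_2)$, and then invokes exactly the ingredients you use, namely Proposition~\ref{proposition:Q_sch}, the decomposition and the isomorphism $V(-n,\lambda_2)\cong V(n,\lambda_2)$ of Proposition~\ref{proposition:Q_n_sch}, Corollary~\ref{corollary:g_types_sch} for the $\g$-types, and Theorem~\ref{theorem:Q0_sch} for the $\lambda_2=0$ branch (mirroring the proof of Theorem~\ref{theorem:classification} in the Takiff case). Your extra bookkeeping (simple quotients of the multiplicity-free semisimple sum, and the trivial-$\HH$-action filtration argument when $\lambda_2=0$) is exactly the level of detail the paper leaves implicit.
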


\begin{remark} \label{remark:centerless}
For the centerless Schr\"{o}dinger Lie algebra $\bar{\s}$, infinite-dimensional simple $\g$-Harish-Chandra modules do not exist. All simple $\g$-Harish-Chandra modules are given by $L(n)$, $n \in \Z_{\geq 0}$, with the trivial action of $\bar{\HH} := \quotient{\HH}{\C z}$.

This follows from observing that the endomorphism ring of $\Ind_{\g}^{\bar{\s}}(L(0))$ is only $\C$ (similarly as in Proposition \ref{proposition:EndQ0_sch}), and so all the universal modules have $\bar{\s}$-filtrations by simple finite-dimensional modules (similarly as in Theorem \ref{theorem:Q0_sch} for $\chi=0$).
\end{remark}

\subsection{Extensions}

Self-extensions of infinite-dimensional simple $\g$-Harish-Chandra modules (i.e. the ones having non-zero central charge) can be calculated in the same way as for the Takiff $\mathfrak{sl}_2$ case (Subsection \ref{subsection:extensions}):

\begin{theorem} \label{theorem:extensions_sch}
Let $V$ be a simple infinite-dimensional $\g$-Harish-Chandra module for $\s$. Then $\Ext^1(V,V) \cong \C$.
\end{theorem}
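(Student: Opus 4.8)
The plan is to follow, almost verbatim, the proof of Theorem~\ref{theorem:extensions} in Subsection~\ref{subsection:extensions}, replacing the purely Takiff part of the center $\C[\bar C]$ by the purely Schr\"{o}dinger part $\C[z]$, the algebra $U(\bar\g)$ by $U(\HH)$, and the degree-two generator $\bar C$ by the degree-one generator $z$. As in the Takiff case, it suffices to compute self-extensions: by Corollary~\ref{corollary:g_types_sch} and the formula for the action of $C$, any two non-isomorphic infinite-dimensional simple $\g$-Harish-Chandra modules for $\s$ have distinct central characters, hence admit no non-trivial extensions.

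First I would treat $V = V(0,\lambda_2)$, which by Corollary~\ref{corollary:Q0_V0_sch} is isomorphic to $Q(0,\lambda_2)$. Given a non-split short exact sequence $0 \to V \stackrel{i}{\hookrightarrow} M \stackrel{p}{\tto} V \to 0$, let $1 \in V$ span the copy of $L(0)$ (of multiplicity one, by Proposition~\ref{proposition:Q_sch}), put $w = i(1)$, and pick $v \in M$ with $p(v)=1$. Since the sequence splits as $\g$-modules, $v$ generates a trivial $\g$-submodule of $M$, so the universal property of $Q(0) = \Ind_\g^\s L(0)$ yields an $\s$-homomorphism $f\colon Q(0)\to M$ with $f(1\otimes 1)=v$, fitting into a triangle over $M \tto V$. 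The map $f$ is surjective (otherwise its image would split the sequence), so some element of $Q(0)$ maps to $w$; this element generates a trivial $\g$-submodule, hence by $\End(Q(0))\cong\C[z]$ (Proposition~\ref{proposition:EndQ0_sch}) and Lemma~\ref{lemma:structure_Q0_sch} it equals $g(z)\cdot(1\otimes 1)$ for some polynomial $g$. Commutativity of the triangle forces $g(\lambda_2)=0$, and since $[M\colon L(0)]=2$ we may take $g(z)=z-\lambda_2$. One then checks that $\Ker f$ is generated by $(z-\lambda_2)^2$, so $M\cong U(\HH)/(z-\lambda_2)^2$; this determines $M$ up to isomorphism, and conversely this module is readily seen to be a non-split self-extension of $Q(0,\lambda_2)$. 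Hence $\Ext^1(V(0,\lambda_2),V(0,\lambda_2))\cong\C$.

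To pass to $V = V(n,\lambda_2)$ for $n\geq 1$, I would use translation functors. Tensoring a self-extension of $V(0,\lambda_2)$ with $L(n)$ and then projecting onto the direct summand with the central character of $V(n,\lambda_2)$ --- which, by Proposition~\ref{prop12}, occurs with multiplicity one in $V(0,\lambda_2)\otimes L(n)$ --- produces a homomorphism $\Ext^1(V(0,\lambda_2),V(0,\lambda_2))\to\Ext^1(V(n,\lambda_2),V(n,\lambda_2))$; tensoring back with $L(n)$ and projecting onto the $V(0,\lambda_2)$-summand (again of multiplicity one, by Proposition~\ref{prop12}) gives a homomorphism in the opposite direction. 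A routine five-lemma argument, as in the Takiff case, shows that these compose to the identity maps, so $\Ext^1(V(n,\lambda_2),V(n,\lambda_2))\cong\C$. Together with the classification in Theorem~\ref{theorem:classification_sch}, this covers all infinite-dimensional simple $\g$-Harish-Chandra modules and proves the theorem.

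I expect the main obstacle to be the explicit identification of $\Ker f$ in the first step --- verifying that the unique non-split self-extension of $Q(0,\lambda_2)$ is exactly $U(\HH)/(z-\lambda_2)^2$. The non-abelian structure of $\HH$ (in contrast to the abelian ideal $\bar\g$ of the Takiff algebra) means one must be slightly more careful here than in the Takiff argument; however, since the reasoning only uses that $z$ is central in $U(\s)$ and that $Q(0)\cong U(\HH)$ as $\s$-modules (Lemma~\ref{lemma:structure_Q0_sch}), I expect it to go through with only cosmetic changes, which is why the published proof merely says this is done ``in the same way as for the Takiff $\mathfrak{sl}_2$ case''.
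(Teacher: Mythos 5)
Your proposal is correct and is essentially the paper's own proof: the paper proves Theorem~\ref{theorem:extensions_sch} precisely by transporting the argument of Subsection~\ref{subsection:extensions} (universal property of $Q(0)$, $\End(Q(0))\cong\C[z]$, identification of the unique non-split self-extension of $Q(0,\lambda_2)$ with $U(\HH)/(z-\lambda_2)^2$, then translation functors plus the five-lemma), exactly as you describe. Your remark that only the centrality of $z$ (not commutativity of $\HH$) is needed in the first step is the right observation and matches why the paper treats this as a routine adaptation.
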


\subsection{Annihilators} We show that simple infinite-dimensional $\g$-Harish-Chandra modules again have the same annihilators as the corresponding Verma modules. The fact that annihilators of Verma modules for $\s$ are centrally generated is already known, see \cite[Theorem 21]{dubsky2014category}.

\begin{proposition} \label{proposition:V_localized_sch}
Suppose $n \in \Z_{\geq 0}$ and $\lambda_2 \neq 0$.
\begin{enumerate}[(a)] 
\item The element $p$ acts injectively on $V(n,\lambda_2)$.
\item The module $S_{p} \tens{U(\s)} V(n,\lambda_2)$ is isomorphic to the direct sum of Verma modules $\Delta(n-1,\lambda_2) \oplus \Delta(-n-2,\lambda_2)$.
\end{enumerate}
\end{proposition}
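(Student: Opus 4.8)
The plan is to mirror the strategy already used for the Takiff $\mathfrak{sl}_2$ case in Proposition~\ref{proposition:V_localized}, localizing now with respect to the $\ad$-nilpotent element $p \in \s$ rather than $\bar e$. For part~(a), I would first treat $n=0$: by Corollary~\ref{corollary:Q0_V0_sch} we have $V(0,\lambda_2) \cong Q(0,\lambda_2)$, which by Lemma~\ref{lemma:structure_Q0_sch} is $\quotient{U(\HH)}{(z-\lambda_2)}$ with $\HH$ acting by left multiplication and basis $\{p^iq^j\}$; since $\lambda_2 \neq 0$ and $[q,p^n] = -np^{n-1}z$, $[p,q^n]=nq^{n-1}z$, left multiplication by $p$ sends the basis monomial $p^iq^j$ to $p^{i+1}q^j$ plus lower terms, so $p$ acts injectively on $Q(0,\lambda_2)$. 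For general $n \in \Z_{\geq 0}$, Proposition~\ref{proposition:Q_n_sch} writes $Q(n,\lambda_2)$ as a direct sum of the $V(k,\lambda_2)$, $k=0,\dots,n$, so it suffices to observe that $p$ acts injectively on $Q(n,\lambda_2) \cong Q(0,\lambda_2) \otimes L(n)$: on a tensor product $M \otimes L$ with $L$ finite-dimensional, if $p$ acts injectively on $M$ then, choosing a basis of $L$ of $p$-weight/Jordan type and filtering by the $p$-nilpotency degree, injectivity passes to $M \otimes L$; restricting to the direct summand $V(n,\lambda_2)$ then gives the claim.

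For part~(b) I would again do $n=0$ first and translate. Starting from $V(0,\lambda_2) \cong Q(0,\lambda_2) \cong \quotient{U(\HH)}{(z-\lambda_2)}$, Lemma~\ref{lemma:basis_localization} gives a basis $p^{-k}q^l$ ($k>0$, $l\geq 0$) for $W := S_p \tens{U(\s)} Q(0,\lambda_2)$, with $\g$ acting adjointly and $\HH$ by multiplication. Following the Takiff model, I would exhibit an explicit lowest-weight-type generator, guessing an element of the form $w := p^{-1} + (\text{correction in } p^{-1}, q)$ (the analogue of $w_\pm = \bar e^{-1} \pm \frac1{\lambda_2}\bar e^{-1}\bar h$ from Proposition~\ref{proposition:V_localized}) and checking by the commutation relations \eqref{equation:g_pq} that $f \circ w = q \circ w = 0$ and that $h \circ w$, $z \circ w$ act by the expected scalars, so that by the universal property of Verma modules $w$ generates a copy of the appropriate $\Delta$. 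Counting weight-space dimensions then forces $W$ to be exactly $\Delta(-1,\lambda_2) \oplus \Delta(-2,\lambda_2)$ (these are the two Verma modules with central charge $\lambda_2$ sharing the central character of $Q(0,\lambda_2)$, by Lemma~\ref{lemma:Vermas_sch_cen_ch}, since $C$ acts as zero on $Q(0,\lambda_2)$). For general $n$, I would compute $S_p \tens{U(\s)} Q(n,\lambda_2)$ in two ways: on one hand via \eqref{equation:multiplicity_shr}-style isomorphism $Q(n,\lambda_2) \cong Q(0,\lambda_2) \otimes L(n)$ together with Lemma~\ref{lemma:localization_tensor_L} and Lemma~\ref{lemma:verma_otimes_fin_sch}, giving a direct sum of Verma modules $\bigoplus \Delta(n-1-2k,\lambda_2) \oplus \bigoplus \Delta(n-2-2k,\lambda_2)$; on the other hand via Proposition~\ref{proposition:Q_n_sch} as $\bigoplus_{k=0}^n S_p \tens{U(\s)} V(k,\lambda_2)$. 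Matching summands by central character (preserved under localization) pins down $S_p \tens{U(\s)} V(n,\lambda_2) \cong \Delta(n-1,\lambda_2) \oplus \Delta(-n-2,\lambda_2)$; note $-n-2 = -(n-1)-3$, consistent with Lemma~\ref{lemma:Vermas_sch_cen_ch}.

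The main obstacle I anticipate is the $n=0$ base case of part~(b): finding the correct explicit generators $w$ inside $W$ and verifying that no further composition factors appear. Unlike the Takiff case, $\HH$ is nonabelian, so the commutators $[q,p^{-k}]$, $[p,p^{-k}q^l]$ pick up $z$-factors, and one must be careful that the localization $S_p \tens{U(\s)} Q(0,\lambda_2)$ is computed with the honest (noncommutative) multiplication; the dimension count that rules out extra composition factors has to be done at the level of $\h$-weight spaces, using that each weight space of $W$ has the predicted finite dimension and that the two copies of Verma modules already account for it. Everything else is a routine transcription of the Takiff argument with $\bar e \rightsquigarrow p$, $\bar h \rightsquigarrow$ (appropriate element), $\bar f \rightsquigarrow q$, and degree $2 \rightsquigarrow$ degree $1$ for the central generator.
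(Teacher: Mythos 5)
Your proposal is correct and is essentially the paper's own argument: the paper omits this proof as ``analogous to Proposition~\ref{proposition:V_localized}, but easier'', and your plan is exactly that transcription (injectivity of $p$ from the basis $p^iq^j$ of $Q(0,\lambda_2)$ plus the tensor-product/direct-summand reduction, then computing $S_p\tens{U(\s)}Q(n,\lambda_2)$ in two ways and matching summands by central character). The only simplification worth noting is that in the base case $n=0$ no correction terms are needed: since $[e,p]=0$ and $p\cdot p^{-1}q=q\equiv 0$ in $S_p\tens{U(\s)}Q(0,\lambda_2)$, the vectors $p^{-1}$ and $p^{-1}q$ are already highest weight vectors of weights $(-1,\lambda_2)$ and $(-2,\lambda_2)$, generating the two Verma summands, after which your weight-space dimension count closes the argument.
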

\begin{proof}
The proof is analogous to the proof of Proposition \ref{proposition:V_localized}, but easier. So we will omit it.
\end{proof}

From (the Schr\"{o}dinger version of) Lemma \ref{lemma:ann_loc}, Proposition \ref{proposition:V_localized_sch}, the definition of $V(n,\lambda_2)$, and \cite[Theorem 21]{dubsky2014category} we have:

\begin{corollary} \label{corollary:annihilators_sch}
Suppose $n \in \Z_{\geq 0}$ and $\lambda_2 \neq 0$. Then 
\[ \Ann(V(n,\lambda_2)) = \Ann(\Delta(n-1,\lambda_2)) = \Ann(\Delta(-n-2,\lambda_2)), \]
and these annihilators are centrally generated.
\end{corollary}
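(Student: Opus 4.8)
The plan is to squeeze $\Ann(V(n,\lambda_2))$ between the two Verma annihilators from both sides using the two localization results established above, and then to invoke \cite[Theorem~21]{dubsky2014category} for central generation.

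First I would establish the inclusions $\Ann(\Delta(n-1,\lambda_2))\subseteq \Ann(V(n,\lambda_2))$ and $\Ann(\Delta(-n-2,\lambda_2))\subseteq \Ann(V(n,\lambda_2))$. By definition $V(n,\lambda_2)=\EnAr(\Delta(n-1,\lambda_2))=\prescript{e}{}\!\bigl(S_f\tens{U(\s)}\Delta(n-1,\lambda_2)\bigr)$, which is in particular a $\s$-submodule of $S_f\tens{U(\s)}\Delta(n-1,\lambda_2)$; hence $\Ann\bigl(S_f\tens{U(\s)}\Delta(n-1,\lambda_2)\bigr)\subseteq \Ann(V(n,\lambda_2))$. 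Since $f$ acts injectively on every Verma module, the Schr\"{o}dinger version of Lemma~\ref{lemma:ann_loc} applied with $x=f$ gives $\Ann(\Delta(n-1,\lambda_2))\subseteq \Ann\bigl(S_f\tens{U(\s)}\Delta(n-1,\lambda_2)\bigr)$, and chaining these two inclusions yields the first one. Applying the same argument to $\Delta(-n-2,\lambda_2)$ and using that $\EnAr(\Delta(-n-2,\lambda_2))=V(-n,\lambda_2)\cong V(n,\lambda_2)$ --- the first equality by the definition of $V$, the second isomorphism by Proposition~\ref{proposition:Q_n_sch} --- yields the second one.

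Next I would prove the reverse inclusion. By the first part of Proposition~\ref{proposition:V_localized_sch} the element $p$ acts injectively on $V(n,\lambda_2)$, so the Schr\"{o}dinger version of Lemma~\ref{lemma:ann_loc} applied with $x=p$ gives $\Ann(V(n,\lambda_2))=\Ann\bigl(U(\s)_{(p)}\tens{U(\s)}V(n,\lambda_2)\bigr)\subseteq \Ann\bigl(S_p\tens{U(\s)}V(n,\lambda_2)\bigr)$. By the second part of Proposition~\ref{proposition:V_localized_sch} the module $S_p\tens{U(\s)}V(n,\lambda_2)$ is $\Delta(n-1,\lambda_2)\oplus\Delta(-n-2,\lambda_2)$, whose annihilator is $\Ann(\Delta(n-1,\lambda_2))\cap\Ann(\Delta(-n-2,\lambda_2))$. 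Combined with the previous step, this forces the three ideals $\Ann(V(n,\lambda_2))$, $\Ann(\Delta(n-1,\lambda_2))$ and $\Ann(\Delta(-n-2,\lambda_2))$ to coincide. Central generation is then immediate: $\Ann(\Delta(n-1,\lambda_2))$ is centrally generated by \cite[Theorem~21]{dubsky2014category}, hence so is the common ideal.

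I do not expect a serious obstacle here; the two points needing care are keeping the direction of the annihilator inclusions straight (a larger module has a smaller annihilator, so both ``submodule'' and ``localization'' work in our favour) and checking the injectivity hypotheses in Lemma~\ref{lemma:ann_loc}, which are supplied precisely by Proposition~\ref{proposition:V_localized_sch} and the standard structure of Verma modules. One should also confirm that the Schr\"{o}dinger analogues of Lemma~\ref{lemma:ann_loc} and Proposition~\ref{proposition:V_localized} genuinely go through despite $\HH$ being non-abelian, but the arguments there rely only on $\ad$-nilpotency and the Ore property of the relevant localizations, both unaffected by the Heisenberg relations.
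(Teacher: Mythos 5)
Your argument is correct and is exactly the route the paper takes: it squeezes $\Ann(V(n,\lambda_2))$ between the Verma annihilators using the submodule inclusion $V(n,\lambda_2)\subseteq S_f\tens{U(\s)}\Delta$, the Schr\"{o}dinger version of Lemma~\ref{lemma:ann_loc} (for $x=f$ and $x=p$), Proposition~\ref{proposition:V_localized_sch}, the isomorphism $V(n,\lambda_2)\cong V(-n,\lambda_2)$, and then cites \cite[Theorem~21]{dubsky2014category} for central generation. You have merely written out in full the chain of inclusions that the paper leaves implicit, so there is nothing to add.
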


\subsection{The action of finite-dimensional $\mathfrak{sl}_2$-modules}

Denote by $\cF$ the monoidal category of finite-dimensional $\mathfrak{sl}_2$-modules.
For a fixed non-zero $\chi \in \C$, denote by $\mathcal{K}_{\chi}$ the category of 
semi-simple $\mathfrak{g}$-Harish-Chandra $\mathfrak{s}$-modules of central charge $\chi$.

\begin{proposition}\label{propact2}
For each non-zero $\chi$, the category $\mathcal{K}_{\chi}$ is a simple module
category over $\cF$.
\end{proposition}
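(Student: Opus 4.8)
The plan is to follow the proof of Proposition~\ref{propact1} essentially verbatim, since every ingredient used there has a Schr\"{o}dinger counterpart that is already available. First I would note that $\mathcal{K}_{\chi}$ is semi-simple by construction, so verifying the module-category axioms reduces to checking that the bifunctor $-\otimes-$ sends $\mathcal{K}_{\chi}\times\cF$ into $\mathcal{K}_{\chi}$; this is immediate from Proposition~\ref{prop12}, which moreover records the precise decomposition of $V(n,\chi)\otimes L(k)$ as a direct sum of modules of the form $V(m,\chi)$.

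Second, to prove that the module category is simple I would use Theorem~\ref{theorem:classification_sch}, which identifies the simple objects of $\mathcal{K}_{\chi}$ with the modules $V(n,\chi)$, $n\in\Z_{\geq 0}$. It then suffices to show that, for any $n$ and any $m$, the module $V(m,\chi)$ occurs as a direct summand of some $V(n,\chi)\otimes L(k)$. Taking $k=m+n+1>n$, the $k>n$ case of Proposition~\ref{prop12} exhibits $V(0,\chi),V(1,\chi),\dots,V(k-n-1,\chi)=V(m,\chi),\dots$ among the summands, so $V(m,\chi)$ indeed appears. Hence the $\cF$-submodule-category generated by any simple object of $\mathcal{K}_{\chi}$ is all of $\mathcal{K}_{\chi}$, which is exactly the definition of simplicity.

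I do not expect any genuine obstacle: semisimplicity, the module-category structure, and the classification of simples are all in hand, and the only care needed is in reading off the correct summands from the two cases of Proposition~\ref{prop12} and checking that the index $k-n-1$ can be made to equal or exceed any prescribed $m$. If one prefers, the same conclusion can be reached in two steps --- first landing on $V(0,\chi)$ as the leading summand of $V(n,\chi)\otimes L(k)$ for $k>n$, then producing every $V(m,\chi)$ from $V(0,\chi)\otimes L(m)$ via the $k>n$ case with $n=0$ --- but the one-step version above is cleaner. As in the Takiff case, it is also worth remarking that the resulting combinatorial structure of the $\cF$-module category $\mathcal{K}_{\chi}$ is independent of $\chi$.
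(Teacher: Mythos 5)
Your proof is correct and follows the same route as the paper's: the module-category structure comes from Proposition~\ref{prop12}, and simplicity follows by combining Proposition~\ref{prop12} with Theorem~\ref{theorem:classification_sch}. The only difference is that you spell out the explicit choice $k=m+n+1$ in the $k>n$ case, which the paper leaves implicit.
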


\begin{proof}
The fact that $\mathcal{K}_{\chi}$ is a module
category over $\cF$ follows directly from Proposition~\ref{prop12}.
Since $\mathcal{K}_{\chi}$ is semi-simple by definition, to show that it is a simple
module category over $\cF$ it is enough to show that, staring from any simple
object of $\mathcal{K}_{\chi}$ and tensoring it with finite-dimensional 
$\mathfrak{sl}_2$-modules, we can obtain any other simple object of
$\mathcal{K}_{\chi}$ as a direct summand, up to isomorphism. This claim follows by combining
Proposition~\ref{prop12} with Theorem~\ref{theorem:classification_sch}.
\end{proof}

We note that, by Proposition~\ref{prop12}, the combinatorics
of the $\cF$-module category $\mathcal{K}_{\chi}$ does not depend on $\chi$.
Furthermore, by comparing Propositions~\ref{proposition:V_otimes_L} and 
\ref{prop12}, we see that the combinatorics of 
the $\cF$-module category $\mathcal{H}_{\chi}$ is different from 
the combinatorics of the $\cF$-module category $\mathcal{K}_{\chi}$.

\section{Some general results on $\g$-Harish-Chandra modules}
\label{section:existence_general}

\subsection{A sufficient condition for existence of simple infinite-dimensional $\g$-Harish-Chandra modules}

Recall our general setup from Section \ref{s2}, where $\LL \cong \g \ltimes \rr$ was arbitrary finite-dimensional Lie algebra. Assume that a triangular decomposition (\ref{equation:general_triangular}) is fixed. Denote by $\rr_0$ the zero-weight space of $\rr$. Obviously, we have $\tilde{\h} = \h \oplus \rr_0$.

To prove the main theorem in this section, we need to use another variant of Enright's and Arkhipov's functors. It will be the same as $\EnAr$ from before, but without taking the locally finite for the positive root vector. Fix a simple reflection $s$, and the corresponding $\mathfrak{sl}_2$-triple $\{f,h,e\} \subseteq \g$. For an $\LL$-module $M$, set
\[ \Com_s(M) := S_f \tens{U(\LL)} M  , \]
where, as before, $S_f$ denotes the localized algebra $U(\LL)_{(f)}$ modulo $U(\LL)$. This functor commutes with the forgetful functor that forgets the $\rr$-action. Moreover, it preserves $\g$-central characters. 

\begin{remark} \label{remark:twisting}
Note that if we would twist the action on $\Com_s(M)$ by the inner automorphism of $\LL$ that corresponds to the simple reflection $s$, we would get exactly the twisting functor $T_s(M)$ from \cite{arkhipov2004algebraic, andersen2003twisting}. We conclude that if $M$ is from the category $\mathcal{O}$ for $\g$, then $\Com_s(M)$ is also from the category $\mathcal{O}$ for $\g$, but for another choice of Borel subalgebra.
\end{remark}

\begin{theorem} \label{theorem:exist_inf_dim_HC}
Suppose that $[\LL,\rr] \cap \rr_0 \neq 0$. Then there exists an infinite-dimensional simple $\g$-Harish-Chandra module for $\LL$.
\end{theorem}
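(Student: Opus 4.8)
The plan is to realize the desired module as an iterated Enright--Arkhipov completion of a generic Verma module for $\LL$. First fix a nonzero element $x\in[\LL,\rr]\cap\rr_0=\Nrad(\LL)\cap\rr_0$; note that the hypothesis already forces $\Nrad(\LL)\neq 0$, so $\LL$ is not reductive and $\tilde{\n}_-\supsetneq\n_-$. Since $\rr_0\subseteq\tilde{\h}$ and $\rr_0\cap\h=0$, we may choose $\lambda\in\tilde{\h}^\ast$ with $\lambda(x)\neq 0$, with $\lambda|_{\h}$ integral, antidominant, and deep enough that every weight $\lambda|_{\h}-\eta$ with $\eta\in\Gamma^-$ occurring in $\Delta(\lambda)$ is again antidominant and integral, and otherwise as generic as needed (in particular so that $\Delta(\lambda)$ is simple, via a Verma simplicity criterion of the expected shape generalising those quoted for $\T$ and $\s$). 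Record, for later, that $\ad(x)$ is nilpotent on $\LL$ and $x$ has $\h$-weight $0$, so $x-\lambda(x)$ acts locally nilpotently on each (finite-dimensional, by Proposition~\ref{proposition:Vermas_general}) $\h$-weight space of $\Delta(\lambda)$; thus $x$ acts invertibly on $\Delta(\lambda)$, while on any \emph{finite-dimensional} $\LL$-module every element of $\Nrad(\LL)$ acts nilpotently, because $\Nrad(\LL)$ kills finite-dimensional simple modules.

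Next I would apply to $\Delta(\lambda)$ the functor $\mathcal{C}_{w_0}$ obtained by iterating, along a reduced expression $s_{i_1}\cdots s_{i_k}=w_0$ for the longest Weyl element, the localization functors $\Com_{s}$ of the text followed by passage to the locally finite part for the positive root vector $e_{i}$ --- the evident many-root version of $\EnAr$ from Definition~\ref{definition:EA}, which commutes with restriction to $\g$ and preserves both the $\g$-central character and the $\LL$-central character. Using the filtration of $\Delta(\lambda)$ by $\tilde{\n}_-/\n_-$-degree, whose $\g$-module subquotients are the $\g$-Verma modules $\Delta^\g(\lambda|_{\h}-\eta)$ (the general shape of Lemma~\ref{lemma:structure_Verma}), together with the general shape of Example~\ref{example:EA_Delta_g} --- Enright completion sends an antidominant integral $\g$-Verma to the matching finite-dimensional simple $\g$-module and annihilates the corresponding projective covers --- one gets that $V:=\mathcal{C}_{w_0}(\Delta(\lambda))$ is a $\g$-Harish-Chandra module: as a $\g$-module it is patched together from the finite-dimensional pieces $L(\kappa_\eta)$, one family per layer, with $\kappa_\eta$ growing with $\eta$, so every $\g$-type occurs with finite multiplicity. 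Moreover $V\neq 0$ (the $\eta=0$ layer contributes), and since $\tilde{\n}_-\supsetneq\n_-$ there are infinitely many layers $\Delta^\g(\lambda|_{\h}-\eta)$ with pairwise distinct antidominant highest weights, hence infinitely many pairwise non-isomorphic $\g$-types $L(\kappa_\eta)$ in $V$; in particular $V$ is \emph{infinite-dimensional}.

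It remains to prove that $V$ is simple --- the analogue of Theorem~\ref{theorem:EA_Verma}, argued in the style of Remark~\ref{remark:simpllicity_EA}. Given a nonzero submodule $V'\subseteq V$, one first shows that $V'$ contains, together with any one of its $\g$-types, all $\g$-types of $V$ lying ``above'' it --- this is where the explicit $\mathfrak{sl}_2$-computations of Sections~\ref{section:takiff_sl2} and \ref{section:schrodinger} must be replaced by a structural argument about raising inside $\mathcal{C}_{w_0}(\Delta(\lambda))$ via suitable elements of $U(\Nrad(\LL))$. Consequently $V/V'$ has only finitely many $\g$-types, so it is finite-dimensional, so each of its composition factors is a finite-dimensional simple $\LL$-module and is therefore annihilated by the two-sided ideal $U(\LL)\,\Nrad(\LL)$; hence any $z\in Z(U(\LL))\cap U(\LL)\,\Nrad(\LL)$ acts nilpotently on $V/V'$. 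But $z$, being central, acts on $V$ --- and hence on $V/V'$ --- by the scalar $\theta_\lambda(z)$, where $\theta_\lambda$ is the $\LL$-central character of $\Delta(\lambda)$, inherited by $V$ and unchanged under the localizations since it is given by central elements. So it is enough to choose $\lambda$, with $\lambda(x)\neq 0$, for which $\theta_\lambda(z)\neq 0$ for some such $z$; for the Takiff $\mathfrak{sl}_2$ this element is $\bar{C}=x^2+4\bar{f}\bar{e}$ with $x=\bar{h}$, and in general it is precisely the hypothesis $\Nrad(\LL)\cap\rr_0\neq 0$ that should make such a $z$ available. This yields the contradiction $\theta_\lambda(z)=0$, so $V'=V$.

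The two steps I expect to carry the real weight are both inside the simplicity argument: (i) the ``raising'' statement, i.e.\ that a submodule of $V$ containing one $\g$-type contains all the higher ones, which needs an honest analysis of how $\Nrad(\LL)$ moves between the layers $L(\kappa_\eta)$ (and is further complicated when some layer occurs with multiplicity greater than one); and (ii) producing, from $x\in\Nrad(\LL)\cap\rr_0$, a central element of $U(\LL)$ lying in $U(\LL)\,\Nrad(\LL)$ on which $\theta_\lambda$ does not vanish --- the invertibility/nilpotency dichotomy for $x$ noted in the first paragraph is a shadow of this, but $x$ itself need not survive the localizations defining $\mathcal{C}_{w_0}$ once $[x,f_{i}]\neq 0$, so one genuinely needs the central refinement. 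The remaining ingredients --- the general versions of Lemma~\ref{lemma:structure_Verma}, Example~\ref{example:EA_Delta_g}, the behaviour of $\mathcal{C}_{w_0}$, and the simplicity criterion for $\Delta(\lambda)$ --- are routine adaptations of the rank-one material, requiring some care but no new ideas.
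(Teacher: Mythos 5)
Your plan postpones exactly the two points that carry all the weight, and neither is a routine adaptation of the rank-one material, so as it stands the argument has genuine gaps. (i) The simplicity of $\mathcal{C}_{w_0}(\Delta(\lambda))$ via a ``raising'' argument is only ever established in this paper for the Takiff $\mathfrak{sl}_2$ and the Schr\"odinger algebra, by explicit computations (Theorems \ref{theorem:EA_Verma} and \ref{theorem:EA_Verma_sch}); no structural substitute is known for general $\LL$. Even the preliminary claim that $\mathcal{C}_{w_0}(\Delta(\lambda))$ is a $\g$-Harish-Chandra module glued from finite-dimensional layers is problematic: the $\g$-Verma subquotients of the filtration of $\Delta(\lambda)$ have highest weights $\lambda|_{\h}+\nu_1+\dots+\nu_m$ with $\nu_i$ weights of $\tilde{\n}_-\cap\rr$, and such $\nu_i$ can pair strictly positively with some simple coroots (already for Takiff $\mathfrak{sl}_3$, adding many copies of $-\alpha_1$ pushes the weight arbitrarily far from antidominance in the $\alpha_2$-direction), so your ``deep enough'' condition cannot be met for all infinitely many layers and the analogue of Example \ref{example:EA_Delta_g} does not apply layerwise. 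There is also no general simplicity criterion for $\Delta(\lambda)$ to quote. (ii) Your endgame needs a central element $z\in Z(U(\LL))\cap U(\LL)\Nrad(\LL)$ with $\theta_\lambda(z)\neq 0$; you do not construct one, and the hypothesis $[\LL,\rr]\cap\rr_0\neq 0$ is not known to produce one for general (possibly non-abelian) $\rr$ --- this is essentially the Harish-Chandra-type realizability issue the paper itself flags at the end of Section \ref{section:conf_galilei}.

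The paper's proof sidesteps both obstacles. It never needs a simple Verma module nor a simple completion: it starts from the simple highest weight module $\mathbf{L}(\tilde{\lambda})$ with $\lambda$ antidominant, regular, integral and $\tilde{\lambda}(c)\neq 0$, observes that $\Delta^\g(\lambda)$ is a $\g$-direct summand of $\mathbf{L}(\tilde{\lambda})$ (the unique $\g$-composition factor with that $\g$-central character), and applies only the localization functors $\Com_{w_0}$ (no locally finite part), so that by Remark \ref{remark:twisting} and the known behaviour of twisting functors the resulting module $M$ contains the finite-dimensional $\g$-type $L(w_0\cdot\lambda)$ exactly once, with all $\g$-multiplicities finite. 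Mapping $Q(w_0\cdot\lambda)$ into $M$ and taking the image, one gets a module generated by its unique occurrence of this $\g$-type; its unique simple quotient $V$ is then automatically a simple $\g$-Harish-Chandra module --- no raising argument is needed. Infinite-dimensionality is obtained not from a central element but from the non-central element $c$ itself: $c\cdot f_1^{-1}\cdots f_k^{-1}\mathbf{v}=\tilde{\lambda}(c)\,f_1^{-1}\cdots f_k^{-1}\mathbf{v}+\big(\text{terms containing factors from }\Nrad(\LL)\big)$, which cannot vanish, whereas $c\in\Nrad(\LL)$ would kill any finite-dimensional simple module. If you wish to keep your route, you would have to prove (i) and (ii) from scratch; in this generality both are open, which is precisely why the paper argues differently.
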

\begin{proof}
Fix an element $c \in [\LL,\rr] \cap \rr_0$. Then $c \in \Nrad(\LL)$, so $c$ must annihilate any simple finite-dimensional $\LL$-module.

The idea is to start with the simple quotient of a Verma module for $\LL$, which has finite multiplicities of its $\g$-submodules (but possibly infinite-dimensional), and then use the functors $\Com_s$ to obtain an $\LL$-module that should have a finite-dimensional $\g$-submodule. Then the image of the universal $\LL$-module in the constructed module should have only finite-dimensional $\g$-submodules with finite multiplicities. The element $c$ will insure infinite-dimensionality.

Fix an antidominant, regular and integral $\lambda \in \h^\ast$ and extend it to $\tilde{\lambda} \in \tilde{\h}^\ast$ such that $\tilde{\lambda}(c) \neq 0$. Consider the Verma module $\Delta(\tilde{\lambda})$ as in (\ref{equation:Verma_def}), and its simple quotient $\mathbf{L}(\tilde{\lambda})$.

By considering the $\h$-weight spaces of $\Delta(\tilde{\lambda})$ (Proposition \ref{proposition:Vermas_general}), it follows that as a $\g$-module, $\mathbf{L}(\tilde{\lambda})$ has a $\g$-direct summand $\Delta^\g(\lambda)$ generated by the highest weight vector $\mathbf{v} \in \mathbf{L}(\tilde{\lambda})$, and this is the only $\g$-composition factor of $\mathbf{L}(\tilde{\lambda})$ of this $\g$-central character.

From the fact that $\Delta(\tilde{\lambda})$ and $\mathbf{L}(\tilde{\lambda})$ have finite-dimensional weight spaces, it follows that, when considered as $\g$-modules, they contain any simple $\g$-composition factor with at most finite multiplicity. Moreover, from this it follows that their components in any fixed $\g$-central character lie in the category $\mathcal{O}$ for $\g$.

The element $c$ acts on $\mathbf{v} \in \Delta^\g(\lambda) \subseteq \mathbf{L}(\tilde{\lambda})$ by the scalar $\tilde{\lambda}(c) \neq 0$.

Choose a reduced expression $w_0 = s_1 s_2 \ldots s_k$ of the longest element in the Weyl group for $\g$, and set $\Com_{w_0} = \Com_{s_1} \circ  \Com_{s_2} \circ \ldots \circ \Com_{s_k}$. From Remark \ref{remark:twisting} and \cite[Theorem 2.3]{andersen2003twisting} it follows that $\Com_{w_0}(\Delta^\g(\lambda))$ is isomorphic to the dual dominant $\g$-Verma module for the opposite Borel subalgebra. From this, we can conclude $\Com_{w_0}(\Delta^\g(\lambda))$ contains the finite-dimensional $\g$-submodule $L(\mu)$, where $\mu := w_0 \cdot \lambda$  is dominant, integral and regular. Moreover, by observing what is happening on the $\mathfrak{sl}_2$-subalgebras of $\g$, one can conclude that the lowest weight vector in $L(\mu)$ is given by
\begin{equation} \label{equation:Com_v}
f_1^{-1} f_2^{-1} \ldots f_k^{-1} \mathbf{v},
\end{equation}
where $f_i$ is the negative root vector corresponding to $s_i$.

It follows that $M:=\Com_{w_0}(\mathbf{L}(\tilde{\lambda}))$ as a $\g$-module has also $L(\mu)$ as a $\g$-direct summand. Moreover, from Remark \ref{remark:twisting} it follows that $L(\mu)$ appears in $M$ precisely once, and that each simple $\g$-module appears with at most finite multiplicity.

Consider $Q(\mu) := \Ind_{\g}^{\LL}(L(\mu)) = U(\LL) \tens{U(\g)} L(\mu)$. It has only finite-dimensional $\g$-composition factors, but possibly with infinite multiplicities.

By the universal property of the induction functor, we get a non-zero $\LL$-homomorphism $\varphi \colon Q(\mu) \to M$, hitting the $\g$-submodule $L(\mu)$ in $M$. Denote by $N$ the image of this map. It follows from the construction $N$ is a $\g$-Harish-Chandra module, generated by its unique occurence of the $\g$-type $L(\mu)$.

Furthermore, $N$ has a unique simple quotient $V$, which contains this $\g$-type $L(\mu)$. Clearly, $V$ is a simple $\g$-Harish-Chandra $\LL$-module.

But also, $V$ is infinite-dimensional. To see this, it is enough to check that $c$ does not annihilate the vector (\ref{equation:Com_v}). Observe that
\begin{align*}
c \cdot f_1^{-1} f_2^{-1} \ldots f_k^{-1} \mathbf{v} &= \tilde{\lambda}(c) \cdot f_1^{-1} f_2^{-1} \ldots f_k^{-1} \mathbf{v} + \sum_{i=1}^k f_1^{-1} \ldots [c,f_i^{-1}] \ldots f_k^{-1} \mathbf{v}.
\end{align*}
Since $[c,f_i^{-1}] = f_i^{-1}[f_i,c]f_i^{-1}$, each summand with $[c,f_i^{-1}]\neq 0$ will contain a factor from $\Nrad(\LL)$. Therefore, these terms cannot cancel with $\tilde{\lambda}(c) \cdot f_1^{-1} f_2^{-1} \ldots f_k^{-1} \mathbf{v}$, and so the total result is non-zero.
\end{proof}

We believe that the connection to the highest weight theory could be established in a more general setup, at least for the Takiff Lie algebras. Using the notation from Section \ref{section:Takiff}, we formulate:

\begin{conjecture} \label{conjecture:EA_Q0}
Let $\T$ be a Takiff Lie algebra attached to a semi-simple Lie algebra $\g$. For a ``generic'' $\chi \colon \overline{Z(\g)} \to \C$, there is $\lambda \in \bar{\h}^\ast$ such that $\EnAr_{w_0}(\Delta(-2\rho,\lambda)) \cong Q(0,\chi)$.
\end{conjecture}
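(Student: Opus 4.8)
The plan is to imitate, for a general Takiff Lie algebra $\T=\g\ltimes\bar\g$, the argument used for $\T(\mathfrak{sl}_2)$ in Theorem~\ref{theorem:EA_Verma}, with the completion functor $\EnAr_{w_0}$ in place of the $\mathfrak{sl}_2$-completion $\EnAr$. Fix a generic homomorphism $\chi\colon\overline{Z(\g)}\to\C$; genericity should mean that $\chi$ lies outside a certain proper closed subvariety of $\Spec\overline{Z(\g)}$ (to be determined in the proof, and which in the $\mathfrak{sl}_2$ case is just $\{\chi=0\}$). Since $\overline{Z(\g)}\cong Z(\g)$ via the ``bar'' map, $\chi$ corresponds to a generic (hence regular, by genericity) central character of $\g$, which in turn corresponds to a Weyl-orbit of weights $\lambda\in\bar\h^\ast$ with $\form{\lambda}{\alpha}\notin\Z$ for all roots $\alpha$; I would pick $\lambda$ to be the unique antidominant representative and then form $\tilde\lambda=(-2\rho,\lambda)\in\tilde\h^\ast$. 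The Verma module $\Delta(-2\rho,\lambda)$ is then simple, by the simplicity criterion of \cite{wilson2011highest} (the analogue of the Proposition quoted before Lemma~\ref{lemma:Verma_central_ch}), because the ``$\bar\h$-part'' $\lambda$ is regular and non-zero.

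The key steps, in order, would be: (1) Establish the branching rule for $\Delta(-2\rho,\lambda)$ as a $\g$-module — by the PBW filtration by $\bar{\n}_-$-degree one gets a $\g$-filtration with subquotients $\Delta^\g(-2\rho)\otimes S^k(\bar{\n}_-)$-type pieces; more precisely, since $\bar\g\cong\g$ as $\g$-modules, $\Delta(\tilde\lambda)\cong U(\bar\g)\otimes\C_{-2\rho}$, so as a $\g$-module it is $\operatorname{Sym}(\g)\otimes\Delta^\g(-2\rho)$, and genericity of $\lambda$ forces this to decompose (no non-split $\g$-extensions survive) into a direct sum of $\Delta^\g(w(-2\rho)-\text{stuff})$'s — here I would invoke Kostant's theorem exactly as in Lemma~\ref{lemma:structure_Q0}(c) and Proposition~\ref{proposition:multiplicity}(a) to control which $\g$-Vermas appear with which multiplicities. (2) Apply $\EnAr_{w_0}=\prescript{\n_+}{}{(\cdot)}\circ(S_{w_0}\otimes_U-)$, where $S_{w_0}$ is the iterated localization along a reduced word $w_0=s_1\cdots s_k$, as in Remark~\ref{remark:twisting} and Definition~\ref{definition:EA}; since $\EnAr_{w_0}$ commutes with the forgetful functor to $\g$-modules (being built from twisting functors, cf.\ \cite{andersen2003twisting, khomenko2005on}), Step~(1) plus the computation $\EnAr_{w_0}(\Delta^\g(\mu))\cong L(-w_0\mu-2\rho)$ or $0$ (the higher-rank analogue of Example~\ref{example:EA_Delta_g}, which is standard for twisting functors on category $\mathcal O$) pins down the $\g$-module structure of $\EnAr_{w_0}(\Delta(\tilde\lambda))$: it is a direct sum of finite-dimensional $L(\mu)$'s with the same multiplicities as $Q(0,\chi)$ by Proposition~\ref{proposition:multiplicity}(a), with unique occurrence of $L(0)$. (3) Use $\EnAr_{w_0}$ preserves central characters, so the purely Takiff part acts via $\chi$ on $\EnAr_{w_0}(\Delta(\tilde\lambda))$; by Corollary~\ref{corollary:uniqueness_with_0} there is a unique simple $\T$-module with $\g$-type $L(0)$ and Takiff central character $\chi$, namely the simple quotient of $Q(0,\chi)$, and $\EnAr_{w_0}(\Delta(\tilde\lambda))$ maps onto it; it remains to show $\EnAr_{w_0}(\Delta(\tilde\lambda))$ is already simple and that $Q(0,\chi)$ is simple (the latter being Conjecture~\ref{conjecture:Takiff_simple}), whence both equal that unique simple module. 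For simplicity of $\EnAr_{w_0}(\Delta(\tilde\lambda))$ I would run the nilradical argument of Remark~\ref{remark:simpllicity_EA}: any proper submodule would yield a finite-dimensional simple $\T$-quotient on which $\bar\g=\Nrad(\T)$ acts by zero, hence on which $\overline{Z(\g)}\subseteq U(\bar\g)$ acts by $\chi(\text{augmentation})$, contradicting that the central character on the localization is still $\chi$ — provided $\chi$ is not the trivial (augmentation) character, which genericity guarantees.

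The main obstacle is Step~(3): in higher rank one must simultaneously prove that $Q(0,\chi)$ is simple for generic $\chi$ (Conjecture~\ref{conjecture:Takiff_simple}) and that $\EnAr_{w_0}$ does not annihilate $\Delta(\tilde\lambda)$ — equivalently that the finitely many localizations $S_{s_i}\otimes_U-$ followed by $\prescript{\n_+}{}{(\cdot)}$ behave as expected on a module with infinite-dimensional $\h$-weight spaces (unlike category $\mathcal O$). The nilradical trick handles simplicity \emph{once} we know the $\g$-branching has finite multiplicities and contains $L(0)$ exactly once, but matching multiplicities of $\EnAr_{w_0}(\Delta(\tilde\lambda))$ with those of $Q(0,\chi)$ requires the higher-rank analogue of Example~\ref{example:EA_Delta_g} together with precise control of the $\g$-filtration of $\Delta(\tilde\lambda)$ — this is where the genericity hypothesis on $\chi$ is really used, to kill $\g$-extensions and to ensure every relevant $\g$-Verma $\Delta^\g(\mu)$ has $-w_0\mu-2\rho$ dominant integral (so that $\EnAr_{w_0}$ sends it to a non-zero finite-dimensional module). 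I expect the identification of the precise ``generic'' locus, and a clean proof that $Q(0,\chi)$ is simple there, to be the genuinely hard part, which is why the statement is left as a conjecture rather than a theorem.
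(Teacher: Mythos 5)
The statement you were asked to prove is Conjecture~\ref{conjecture:EA_Q0} of the paper: it is not proved there in general (only the rank-one instances, Takiff $\mathfrak{sl}_2$ and the Schr\"{o}dinger algebra, are established in Sections~\ref{section:takiff_sl2} and \ref{section:schrodinger}), so your text has to stand on its own as a proof, and it does not: it is a strategy outline that defers exactly the points which make the statement a conjecture. You yourself flag them: simplicity of $Q(0,\chi)$ for generic $\chi$ is precisely Conjecture~\ref{conjecture:Takiff_simple}; the higher-rank analogue of Example~\ref{example:EA_Delta_g} for the composed functor $\EnAr_{w_0}$ is not established (nor is independence of the reduced expression, which the paper explicitly says is unclear); and non-vanishing of the iterated localizations on a module with infinite-dimensional $\h$-weight spaces is asserted, not proved. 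None of these can be "invoked"; they are the content of the conjecture.

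Beyond the admitted gaps, Step~(1) contains a concrete error. You write $\Delta(-2\rho,\lambda)\cong U(\bar{\g})\otimes\C_{-2\rho}$, hence as a $\g$-module $\operatorname{Sym}(\g)\otimes\Delta^\g(-2\rho)$, and then invoke Kostant's theorem. That describes the induced module $Q(0)$ (tensored with a character), not the Verma module: by \eqref{equation:Verma_def}, $\Delta(-2\rho,\lambda)\cong U(\tilde{\n}_-)\otimes\C_{(-2\rho,\lambda)}$ with $\tilde{\n}_-=\n_-\oplus\bar{\n}_-$, so the $\g$-filtration by $\bar{\n}_-$-degree has subquotients of the form $\Delta^\g(-2\rho-\nu)$, where $\nu$ runs over the $\h$-weights of $\operatorname{Sym}(\bar{\n}_-)$ with the corresponding weight multiplicities; Kostant's theorem on $\operatorname{Sym}(\g)$ plays no role here, and already in rank one the correct statement is Lemma~\ref{lemma:structure_Verma}, whose proof (deciding which extensions by $P^\g$'s occur) needs explicit computation rather than a genericity slogan. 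Consequently the claim in Step~(2) that $\EnAr_{w_0}(\Delta(-2\rho,\lambda))$ has $\g$-multiplicities $\dim L(\mu)_0$, i.e.\ those of $Q(0,\chi)$, is not a formal consequence of anything you cite: comparing the weights of $\operatorname{Sym}(\bar{\n}_-)$ pushed through $\EnAr_{w_0}$ with the zero-weight multiplicities $\dim L(\mu)_0$ is where the real work lies, and in rank one this is exactly Theorem~\ref{theorem:EA_Verma}, proved by explicit computations with localized elements. Finally, your genericity condition is off target: $\lambda$ is a character of $\bar{\h}$, the $\h$-part of the highest weight is the fixed integral weight $-2\rho$, and in the rank-one case the generic locus is simply $\lambda_2\neq 0$ (equivalently $\chi\neq 0$), not a non-integrality condition obtained by transporting $\chi$ to a central character of $\g$; identifying what "generic" must mean in higher rank (simplicity of the Verma, simplicity of $Q(0,\chi)$, non-vanishing of the completions) is again part of the open problem. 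In short, the roadmap is consistent with how the rank-one cases were handled, but it closes none of the gaps, so the conjecture remains unproved by your argument.
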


Here $\rho$ is the half-sum of all elements in $\Delta^+(\g,\h)$, and $\EnAr_{w_0}$ should be defined as the composition $\EnAr_{s_1} \circ \EnAr_{s_2} \circ \ldots \circ \EnAr_{s_k}$, where $w_0 = s_1 s_2 \ldots s_k$ is a fixed a reduced expression of the longest element in the Weyl group for $\g$. Each $\EnAr_s$ should be defined as in Definition \ref{definition:EA}, i.e., as $\EnAr_s := \prescript{e}{}{\left(\Com_s(-)\right)}$ for the $\mathfrak{sl}_2$-triple $\{f,h,e\}$ corresponding to $s$. It is not a priori clear that such $\EnAr_{w_0}$ does not depend on the choice of a reduced expression.

Conjecture \ref{conjecture:EA_Q0} was already proved for the Takiff $\mathfrak{sl}_2$ case in Section \ref{section:takiff_sl2}. Analogous statement is proved also for the Schr\"{o}dinger Lie algebra in Section \ref{section:schrodinger}.

\subsection{On classification of simple $\g$-Harish-Chandra modules for generalized Takiff Lie algebras}

In this subsection we consider
a finite-dimensional Lie algebra $\LL$ with a fixed Levi decomposition 
$\LL \cong \g \ltimes \rr$ and assume that:
\[ \rr \text{ is abelian}. \]
It is reasonable to call such algebras \textit{generalized Takiff Lie algebras}.
In analogy to Subsection~\ref{spureT},
we consider the {\em purely radical} part $\overline{Z(\LL)}:=Z(\LL)\cap U(\rr)$
of the center $Z(\LL)$ of $U(\LL)$. Since $\rr$ is assumed to be abelian, it is obvious that $\overline{Z(\LL)} = U(\rr)^\g$, the $\g$-invariants in $U(\rr) \cong \Sym(\rr)$ with respect to the adjoint action. For brevity, algebra homomorphisms 
$\chi:\overline{Z(\LL)}\to \mathbb{C}$ will be loosely called {\em radical 
central characters}.

For $\lambda \in \h^\ast$ integral, dominant and regular, we have the  \emph{universal module} 
\begin{equation*}
Q(\lambda) := \Ind_{\g}^{\LL} L(\lambda) = U(\LL) \tens{U(\g)} L(\lambda) \cong U(\rr) \tens{\C} L(\lambda) \cong \Sym(\rr) \tens{\C} L(\lambda).
\end{equation*}
Clearly, it is $\g$-locally finite. Completely analogously to Proposition \ref{proposition:end_Q0}, one can show that $\End Q(0) \cong \overline{Z(\LL)}$. Given also a radical central character $\chi \colon \overline{Z(\LL)} \to \C$, we define
\begin{equation*}
Q(\lambda,\chi):=\quotient{Q(\lambda)}{\mathbf{m}_\chi Q(\lambda)},
\end{equation*}
where $\mathbf{m}_\chi :=  \Ker \chi$ is the maximal ideal in $\overline{Z(\LL)}$ corresponding to $\chi$.

From \cite[Proposition 6.5]{knapp1988lie} we have $Q(\lambda) \cong Q(0) \otimes L(\lambda)$, and from this and right-exactness of tensor product we conclude $Q(\lambda,\chi) \cong Q(0,\chi) \otimes L(\lambda)$. In these formulas $L(\lambda)$ is considered to be an $\LL$-module with the trivial action of $\rr$.

By construction, the action of $\g$ on $Q(0,\chi)$ is locally finite, with the trivial
module $L(0)$ having multiplicity exactly $1$ in $Q(0,\chi)$. Since $Q(0,\chi)$ is generated
by this unique copy of $L(0)$, it follows that $Q(0,\chi)$ has a unique simple quotient, which we denote by $V(0,\chi)$.

From Theorem \ref{theorem:Q_simple} we know that $Q(0,\chi)=V(0,\chi)$ when $\LL$ is the Takiff $\mathfrak{sl}_2$ and $\chi$ is not the radical central character of the trivial module, but generally this fails, for example when $\rr \cong L(4)$ as a $\g$-module, as we will see in Subsection \ref{subsec7.3}.

We will use the notion of Gelfand-Kirillov dimension, for the definition and basic properties see \cite[2.3]{CCM} or \cite{krause2000growth}. The Gelfand-Kirillov dimension of a finitely generated module $M$ will be denoted by $\GK(M)$ and  the Bernstein number by $\e(M)$.

\begin{proposition} \label{propnew5}
Fix a radical central character $\chi$.
\begin{itemize}
\item $V(0,\chi)$ is the unique simple $\LL$-module having both radical central character $\chi$ and the trivial $\g$-module as one of $\g$-types.

\item If $[\g,\rr]=\rr$, then $V(0,\chi)$ is either the trivial $\LL$-module (precisely when $\chi$ is the radical central character of the trivial $\LL$-module), or infinite-dimensional.

\item $V(0,\chi)$ is a $\g$-Harish-Chandra module.
\end{itemize} 
\end{proposition}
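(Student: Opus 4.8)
The plan is to prove the three bullets in order, since each builds on the previous one. For the first bullet, I would argue exactly as in the proof of Corollary~\ref{corollary:uniqueness_with_0}. Suppose $W$ is a simple $\LL$-module with radical central character $\chi$ and having $L(0)$ among its $\g$-types. Since $L(0)$ is a $\g$-submodule of $W$, by the universal property of induction (adjunction) there is a nonzero $\LL$-homomorphism $Q(0) \to W$; because $\rr$ acts on this copy of $L(0)$ via $\chi$ (it is a $\g$-invariant line, so $\overline{Z(\LL)} = U(\rr)^{\g}$ acts on it by a character, which must be $\chi$), this map factors through $Q(0,\chi)$. Since $W$ is simple, the induced map $Q(0,\chi) \to W$ is surjective, so $W$ is a simple quotient of $Q(0,\chi)$; but $Q(0,\chi)$ has a \emph{unique} simple quotient $V(0,\chi)$ (as noted just before the proposition, since $L(0)$ has multiplicity one and generates), hence $W \cong V(0,\chi)$.

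For the third bullet I would observe that $Q(0,\chi) \cong \Sym(\rr) \tens{} L(0)$ as a $\g$-module is $\g$-locally finite by construction, but a priori the multiplicities of $\g$-types could be infinite; this is where I need an argument. The cleanest route is to note that $\Sym(\rr)$, as a $\g$-module under the adjoint action, decomposes with finite multiplicities in each graded piece $\Sym^d(\rr)$ (each $\Sym^d(\rr)$ is finite-dimensional), and passing to the quotient by $\mathbf{m}_\chi Q(\lambda)$ can only decrease multiplicities. More carefully: $Q(0,\chi) = \Sym(\rr)/\mathbf{m}_\chi \Sym(\rr)$ where $\mathbf{m}_\chi \subseteq U(\rr)^\g = \overline{Z(\LL)}$; since $\overline{Z(\LL)} \subseteq \Sym(\rr)$ and $\Sym(\rr)$ is a finitely generated module over $\overline{Z(\LL)}$ when $\LL$ is nice enough — but in general this finiteness is not automatic. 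The safer and fully general argument is: $V(0,\chi)$ is a quotient of $Q(0,\chi)$, and it is enough to bound the multiplicities in $V(0,\chi)$ itself; since $V(0,\chi)$ is simple with $\g$-locally finite action, Kostant-type reasoning or the fact that $V(0,\chi)$ is a quotient of $\Sym(\rr) \otimes L(0)$ whose $\g$-type $L(\mu)$-multiplicity in graded degree $d$ equals $\dim \Hom_\g(L(\mu), \Sym^d(\rr))$, summed over $d$; this sum may be infinite in $\Sym(\rr)$ but the central element constraint $\mathbf{m}_\chi = 0$ cuts it down. The key point I would actually invoke: $\Sym(\rr)$ is a free (hence finitely generated) module over the polynomial subalgebra generated by a homogeneous system of parameters inside $U(\rr)^\g$ — by Hilbert's theorem on invariants $U(\rr)^\g$ is finitely generated and $\Sym(\rr)$ is a finite module over it — so modulo $\mathbf{m}_\chi$ (a maximal ideal, for which the quotient ring is Artinian after the system-of-parameters part) the $\g$-type multiplicities become finite. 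This shows $Q(0,\chi)$, and a fortiori $V(0,\chi)$, is a $\g$-Harish-Chandra module.

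For the second bullet, assume $[\g,\rr] = \rr$, equivalently $\Nrad(\LL) = \rr$, so $\rr$ annihilates every finite-dimensional simple $\LL$-module. Suppose $V(0,\chi)$ is finite-dimensional. Then, being simple, $\rr$ acts on it as zero, so in particular $\overline{Z(\LL)} = U(\rr)^\g$ acts as the augmentation character $\chi_0$ (the radical central character of the trivial module); hence $\chi = \chi_0$. Conversely, if $\chi = \chi_0$, then $Q(0,\chi_0) = \Sym(\rr)/(\mathbf{m}_{\chi_0})\cdot Q(0)$, and one checks that the trivial $\LL$-module is a quotient, so $V(0,\chi_0)$ is the trivial module. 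This gives the dichotomy.

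The main obstacle is the finite-multiplicity claim in the third bullet: proving that $Q(0,\chi)$ really is a $\g$-Harish-Chandra module, not merely $\g$-locally finite, requires knowing that cutting $\Sym(\rr)$ by one radical central character collapses the infinitely many graded copies of each $\g$-type down to finitely many. I expect to handle this by the finite-generation of $\Sym(\rr)$ as a module over the (finitely generated, by Hilbert) invariant algebra $\overline{Z(\LL)} = U(\rr)^\g$, together with the fact that quotienting by a maximal ideal of a finitely-generated-module's base ring leaves a finite-dimensional-over-residue-field remainder in each $\g$-isotypic bounded-degree range; this is exactly the mechanism seen concretely in Lemma~\ref{lemma:structure_Q0} and Proposition~\ref{proposition:multiplicity} for the Takiff case, and the general argument parallels Kostant's theorem invoked there.
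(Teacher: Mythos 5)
Your handling of the first two bullets is correct and essentially the paper's argument: the uniqueness comes from the adjunction defining $Q(0)$ (the purely radical part of the center acts on any simple module by its radical central character, so the nonzero map $Q(0)\to W$ factors through $Q(0,\chi)$, whose unique simple quotient is $V(0,\chi)$), and the dichotomy in the second bullet follows because $[\g,\rr]=\Nrad(\LL)$ annihilates every simple finite-dimensional $\LL$-module, forcing the radical central character of any finite-dimensional simple to be that of the trivial module.

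The third bullet is where your proof has a genuine gap. You base the finite-multiplicity claim on the assertion that $\Sym(\rr)$ is a finitely generated module over $\overline{Z(\LL)}=U(\rr)^{\g}$, ``by Hilbert's theorem.'' Hilbert's theorem only gives finite generation of the invariant ring as an \emph{algebra}; module-finiteness of $\Sym(\rr)$ over the invariants is the Noether property for \emph{finite} groups and fails whenever $\g$ acts non-trivially on $\rr$. Already for Takiff $\mathfrak{sl}_2$ one has $\Sym(\rr)=\C[\bar{f},\bar{h},\bar{e}]$ of Krull dimension $3$ over $\overline{Z(\g)}=\C[\bar{C}]$ of Krull dimension $1$, so it cannot be a finite module; in general $Q(0,\chi)=\Sym(\rr)/\mathbf{m}_\chi\Sym(\rr)$ is the coordinate ring of a positive-dimensional fiber of the quotient map, not an Artinian ring, and finiteness of its $\g$-multiplicities is a delicate cofreeness/Kostant-type question (this is precisely why the Takiff case uses Kostant's theorem, and why the $\LL^3$ case in Subsection~\ref{subscounter} needs the Futorny--Ovsienko freeness result plus nontrivial partition combinatorics; no such statement about $Q(0,\chi)$ is claimed for general abelian $\rr$). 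The paper sidesteps $Q(0,\chi)$ entirely and proves finiteness of multiplicities only for the simple quotient $V(0,\chi)$, by a Gelfand--Kirillov dimension argument: if $[V(0,\chi):L]=\infty$ for some finite-dimensional simple $L$, then by adjunction $[V(0,\chi)\otimes L^{\ast}:L(0)]=[V(0,\chi):L]=\infty$, so by the uniqueness in the first bullet $V(0,\chi)$ occurs infinitely often as a subquotient of $V(0,\chi)\otimes L^{\ast}$; but $\GK(V(0,\chi)\otimes L^{\ast})=\GK(V(0,\chi))$ and $\e(V(0,\chi)\otimes L^{\ast})=\dim L\cdot\e(V(0,\chi))$ by \cite[Lemma~8.8]{jantzen11983einhullende}, which contradicts the multiplicity bound of \cite[Lemma~8]{CCM}. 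Some argument of this kind (working with the simple quotient and growth invariants, rather than with the commutative-algebra structure of $\Sym(\rr)$ over its invariants) is needed to close your gap; the finiteness you invoke is simply false.
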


\begin{proof}
The first claim follows from universal property of the induction functor.

For the second claim, recall that $[\g,\rr]=\Nrad \LL$ is the intersection of all kernels of simple finite-dimensional $\LL$-modules (see \ref{s2}). Therefore any simple finite-dimensional $\LL$-module necessarily has the same radical central character as the trivial module $\LL$-module. From the uniqueness in the first part it follows that $V(0,\chi)$ is either trivial or infinite-dimensional.

It remains to prove that $V(0,\chi)$ is a $\g$-Harish-Chandra module. It is $\g$-locally finite by construction. Assume that $[V(0,\chi) \colon L] = \infty$ for some simple finite-dimensional $\g$-module $L$. The module $V(0,\chi) \otimes L^\ast$ is also $\g$-locally finite and finitely generated. By \cite[Lemma 8.8]{jantzen11983einhullende} we have
\begin{equation} \label{equation:GK_tensor}
\GK(V(0,\chi) \otimes L^\ast) = \GK(V(0,\chi)) \ \text{ and } \ \e(V(0,\chi) \otimes L^\ast) = \dim L \cdot \e(V(0,\chi)).
\end{equation}
Moreover,
\begin{multline*}
[ V(0,\chi) \otimes L^\ast \colon L(0) ] = \Hom_\g \left(L(0),V(0,\chi) \otimes L^\ast\right)) = \\
= \Hom_\g \left(L(0)\otimes L ,V(0,\chi)\right) = [V(0,\chi) \colon L] = \infty. \end{multline*}
By uniqueness in the first part, it follows that $V(0,\chi)$ appears in $V(0,\chi) \otimes L^\ast$ infinitely many times. This is a contradiction with (\ref{equation:GK_tensor}) and \cite[Lemma 8]{CCM}.
\end{proof}

\begin{remark}
One can check that Gelfand-Kirillov dimensions of all infinite-dimensional simple $\g$-Harish-Chandra modules for the Takiff $\mathfrak{sl}_2$ (Section \ref{section:takiff_sl2}) as well for the Schr\"{o}dinger Lie algebra (Section \ref{section:schrodinger}) are $2$.
\end{remark}

\begin{proposition}\label{propnew6}
Let $\chi$ be a radical central character and $L$ a simple finite-dimensional $\g$-module. Then $\LL$ has a simple $\g$-Harish-Chandra module with the radical
central character $\chi$ and having  $L$ as one of $\g$-types.

Moreover, the number of (isomorphism classes of) simple $\g$-Harish-Chandra modules with the radical
central character $\chi$ and having $L$ as one of its $\g$-types is at most
\begin{equation}\label{eqnew7}
\sum_{M} l_{M,L}^L [V(0,\chi):M] < \infty,
\end{equation}
where $M$ runs through the set of isomorphism classes of simple finite-dimensional
$\g$-modules and $l_{M,L}^L$ is the Littlewood-Richardson coefficient, i.e. the multiplicity
of $L$ in $M\otimes L$.
\end{proposition}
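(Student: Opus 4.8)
The plan is to relate every simple $\g$-Harish-Chandra $\LL$-module with radical central character $\chi$ and having $L$ as one of its $\g$-types to the distinguished module $V(0,\chi)$, by tensoring with $L$ and with its dual $L^{\ast}$; here a finite-dimensional $\g$-module is always viewed as an $\LL$-module with the trivial $\rr$-action. Two general observations will be used throughout. First, for a $\g$-Harish-Chandra $\LL$-module $X$ and a finite-dimensional $\g$-module $F$, the module $X \otimes F$ is again $\g$-Harish-Chandra: it is $\g$-locally finite, and its $\g$-multiplicities remain finite by Clebsch--Gordan, being finite sums of those of $X$; moreover, since $\rr$ acts trivially on $F$, the algebra $U(\rr)$—and hence $\overline{Z(\LL)}$—acts on $X \otimes F$ through the first tensor factor only, so $X \otimes F$ inherits the radical central character $\chi$ of $X$. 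Second, every simple subquotient of a $\g$-Harish-Chandra module is again $\g$-Harish-Chandra and carries the same radical central character; combined with the uniqueness part of Proposition~\ref{propnew5}, this shows that every composition factor of such an $X \otimes F$ which has the trivial $\g$-module among its $\g$-types is isomorphic to $V(0,\chi)$.

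For the existence statement I would consider $V(0,\chi) \otimes L$, which by the above is a $\g$-Harish-Chandra $\LL$-module of radical central character $\chi$, and compute, by $\g$-module adjunction,
\[
[\, V(0,\chi) \otimes L : L \,] \;=\; \dim \Hom_{\g}\!\big(L \otimes L^{\ast},\, V(0,\chi)\big) \;=\; \sum_{M} l_{M,L}^{L}\,[V(0,\chi):M],
\]
where $M$ runs over the isomorphism classes of simple finite-dimensional $\g$-modules. This number, call it $m$, is finite (only finitely many $M$ contribute) and is at least $1$, the summand $M = L(0)$ contributing $1$ since $[V(0,\chi):L(0)] = 1$. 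As $V(0,\chi)$ is cyclic, $V(0,\chi) \otimes L$ is a finitely generated $U(\LL)$-module, hence so is its submodule $W_{0}$ generated by the (finite-dimensional) $L$-isotypic component of $V(0,\chi)\otimes L$, and $[W_{0}:L] = m$. By Noetherianity $W_{0}$ has a maximal submodule $W_{0}'$, and $[W_{0}':L] < m$, for otherwise $W_{0}'$ would contain that $L$-isotypic component, hence all of $W_{0}$. Thus $W_{0}/W_{0}'$ is a simple subquotient of $V(0,\chi)\otimes L$ with $[W_{0}/W_{0}':L]\geq 1$; by the first paragraph it is a simple $\g$-Harish-Chandra $\LL$-module with radical central character $\chi$ having $L$ as a $\g$-type, which proves the first assertion.

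For the bound, let $W$ be any simple $\g$-Harish-Chandra $\LL$-module with radical central character $\chi$ and $[W:L]\geq 1$. Since $[\, W \otimes L^{\ast} : L(0)\,] = \dim\Hom_{\g}(L,W) = [W:L]\geq 1$, the second observation shows that $V(0,\chi)$ occurs as a composition factor of the $\g$-Harish-Chandra module $W \otimes L^{\ast}$. I would then invoke that the exact functors $-\otimes L$ and $-\otimes L^{\ast}$ are biadjoint, which on $\g$-Harish-Chandra $\LL$-modules of radical central character $\chi$—where the relevant composition multiplicities are finite—should yield the reciprocity $[\, V(0,\chi)\otimes L : W\,] = [\, W \otimes L^{\ast} : V(0,\chi)\,]\geq 1$, so that every such $W$ is, up to isomorphism, a simple subquotient of the one fixed module $V(0,\chi)\otimes L$ having $L$ as a $\g$-type. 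It then remains to prove a counting lemma: a $\g$-Harish-Chandra module $N$ with $[N:L] = m < \infty$ has at most $m$ pairwise non-isomorphic simple subquotients having $L$ as a $\g$-type. This follows by induction on $m$ from the elementary fact that a simple subquotient of a module with a chosen submodule $N'$ is a subquotient of $N'$ or of $N/N'$: choosing one such subquotient $W_{1} = B/A$ of $N$ (with $A \subseteq B \subseteq N$), additivity of $\g$-multiplicities gives $[A:L] + [N/B:L] = m - [W_{1}:L]\leq m-1$, while every simple subquotient of $N$ with $L$ among its $\g$-types is $\cong W_{1}$ or a subquotient of $A$ or of $N/B$; the inductive hypothesis then bounds their number by $1 + [A:L] + [N/B:L]\leq m$. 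Applying the lemma to $N = V(0,\chi)\otimes L$ bounds the number of isomorphism classes of modules $W$ as in the statement by $[\, V(0,\chi)\otimes L : L\,] = \sum_{M} l_{M,L}^{L}[V(0,\chi):M]$, which is finite.

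The step I expect to be the main obstacle is the reciprocity $[V(0,\chi)\otimes L : W] = [W\otimes L^{\ast}:V(0,\chi)]$: since the $\g$-Harish-Chandra modules involved need not have finite length, composition multiplicities must first be given a precise meaning (they are finite, since a simple constituent with minimal $\g$-type $L(\mu)$ can occur at most $[\,\cdot : L(\mu)\,]$ times) and then the identity justified. I would handle this either by truncating to the finite-dimensional subquotients spanned by the $\g$-types of bounded height with respect to $\delta$, where biadjointness of $-\otimes L$ and $-\otimes L^{\ast}$ applies directly, or by strengthening ``$V(0,\chi)$ is a composition factor of $W\otimes L^{\ast}$'' to ``$V(0,\chi)$ is a submodule or a quotient of $W\otimes L^{\ast}$'', using its characterization as the unique simple constituent carrying the trivial $\g$-type; in the latter case $W$ is, respectively, a quotient or a submodule of $V(0,\chi)\otimes L$ by adjunction, and hence in any case a subquotient.
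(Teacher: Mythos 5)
Your proposal follows essentially the same route as the paper's proof: existence comes from the $\g$-Harish-Chandra module $L\otimes V(0,\chi)$, the uniqueness statement of Proposition~\ref{propnew5} is used to relate an arbitrary simple module $W$ as in the statement to $V(0,\chi)$ via $W\otimes L^{*}$, and the bound is the multiplicity $[L\otimes V(0,\chi):L]=\sum_{M}l^{L}_{M,L}[V(0,\chi):M]$. The two points you work out in detail --- extracting a simple subquotient having $L$ as a $\g$-type by a Noetherian argument, and the counting lemma bounding the number of pairwise non-isomorphic simple subquotients with $L$ as a $\g$-type by $[\,\cdot\,:L]$ --- are exactly what the paper treats as immediate, and your arguments for them are correct. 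The step you single out as the main obstacle is precisely the step the paper settles in one sentence by invoking the biadjunction $(L\otimes{-},L^{*}\otimes{-})$, and your second proposed repair is the natural implementation of that sentence: if $V(0,\chi)$ is a quotient (respectively, a submodule) of $W\otimes L^{*}$, then adjunction together with simplicity of $W$ exhibits $W$ as a submodule (respectively, a quotient) of $L\otimes V(0,\chi)$. Note, however, that the strengthening you need there is not justified as written: the universal property of $Q(0,\chi)$ only produces $V(0,\chi)$ as a quotient of the submodule of $W\otimes L^{*}$ generated by a $\g$-invariant vector, i.e.\ as a subquotient, and passing from this to ``submodule or quotient of $W\otimes L^{*}$ itself'' (equivalently, to $W$ being a subquotient of $L\otimes V(0,\chi)$) still requires an argument; your first repair, truncation to bounded $\g$-types, is unlikely to provide it, since tensoring with $L$ does not preserve such truncations and multiplicity reciprocity for modules of infinite length is not a formal consequence of biadjointness. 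So, up to the level of detail at which the paper itself argues, your proof is the paper's proof; the residual work sits at the same biadjunction step in both texts.
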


We note that only finitely many summands in \eqref{eqnew7} are non-zero,
as $l_{M,V}^V\neq 0$ if and only if $M$ appears as a summand of $V\otimes V^{*}$.
\begin{proof}
Since we know from Proposition \ref{propnew5} that $V(0,\chi)$ is a $\g$-Harish-Chandra module having $L(0)$ as a $\g$-type, it follows that  $L\otimes V(0,\chi)$ is also a $\g$-Harish-Chandra module, having $L$ as one of $\g$-types. This implies the first part of the proposition.

Conversely, let $N$ be a simple  $\g$-Harish-Chandra module with the radical
central character $\chi$ and having  $L$ as one of $\g$-types.
Then $L^{*} \otimes N$ is a $\g$-Harish-Chandra module with the radical
central character $\chi$ and having  $L(0)$ as one of $\g$-types, and therefore contains $V(0,\chi)$ as a subquotient.

The above arguments, combined with the biadjunction $(L\otimes{}-,L^{*}\otimes{}-)$, 
imply that any simple  $\g$-Harish-Chandra module with the radical
central character $\chi$ and having  $L$ as one of $\g$-types is a subquotient of
$L \otimes V(0,\chi)$. The latter is a $\g$-Harish-Chandra module in which the
multiplicity of $L$ is bounded by the expression in \eqref{eqnew7} (see the proof of Proposition \ref{proposition:multiplicity}).
This implies the second part of the proposition.
\end{proof}

\section{On $\mathfrak{sl}_2$-Harish-Chandra modules for other conformal Galilei algebras} \label{section:conf_galilei}

\subsection{Conformal Galilei algebras}

By a \emph{conformal Galilei algebra} we mean a semi-direct product 
$\LL^n := \mathfrak{sl}_2 \ltimes L(n)$,  where $n \in \Z_{\geq 0}$. 
Here $L(n)$ is an abelian ideal on which $\g := \mathfrak{sl}_2$ acts in the obvious way. 
For a more general definition and various central extensions, 
see \cite{lu2014simple, alshammari2019on, gomis2012schrodinger}.

Note that we have $\LL^0 \cong \mathfrak{gl}_2$, $\LL^1$ is the centerless Schr\"{o}dinger Lie algebra, and $\LL^2$ is the Takiff $\mathfrak{sl}_2$.

We have $\Nrad(\LL^n) = \rr = L(n)$, and so $[\LL^n,\rr] \cap \rr_0 \cong \C$ if and only if $n$ is even (i.e., $L(n)$ is odd-dimensional), otherwise $[\LL^n,\rr] \cap \rr_0 = 0$.

Denote by $v_n, v_{n-2}, \ldots, v_{-n}$ a basis of $L(n)$ such that each $v_i$ is a weight vector of weight $i$, and $[e,v_{n-2i}] = (n-i+1) v_{n-2i+2}$, for $i \in \{1,2,\ldots,n\}$.

\subsection{The Lie algebra $\LL^3 = \mathfrak{sl}_2 \ltimes L(3)$}\label{subscounter}

Since $\rr = L(3)$ has trivial zero-weight space, the assumption in Theorem \ref{theorem:exist_inf_dim_HC} is not satisfied. Nevertheless, we will show that simple infinite-dimensional $\g$-Harish-Chandra modules exist. So the converse of Theorem \ref{theorem:exist_inf_dim_HC} is not true. This suggest that the highest weight theory is not enough to obtain and classify $\g$-Harish-Chandra modules for any Lie algebra.

From the classical invariant theory it is well known that $\Sym(\rr)^\g$ is generated by only one element $C$, homogeneous of degree $4$, the so called cubic discriminant (see e.g. \cite[Lecture XVII]{hilbert1993theory}): 
\[ C= v_{-1}^2 v_{1}^2 - 27 v_{-3}^2 v_{3}^2 - 4 v_{-1}^3v_3 - 4 v_{-1}v_3^3 + 18v_{-3}v_{-1}v_1 v_3. \]
This expression of $C$ is just for the record, we will not use it in the computations. We identify radical central characters with their value on $C$.

For $\chi \in \C$ we have the universal $\LL^3$-module $Q(0,\chi) = \quotient{Q(0)}{(C-\chi)}$.

\begin{proposition}
\label{proposition:Q0_for_L3}
For $\chi \neq 0$, the module $Q(0,\chi)$ is $\g$-Harish-Chandra. The multiplicities of its $\g$-types are given by $[ Q(0,\chi) \colon L(k)] = k - 2 \left\lfloor{\frac{k+2}{3}}\right\rfloor+1$.
\end{proposition}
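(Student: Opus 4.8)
The plan is to compute the multiplicity $[Q(0,\chi):L(k)]=\dim_{\C}\Hom_{\g}(L(k),Q(0,\chi))$ by recognizing it as the dimension of a quotient of a graded module over the invariant ring. Recall that $\rr\cong L(3)$ is abelian, so as $\g$-modules $Q(0)\cong U(\rr)\cong\Sym(L(3))$ with $\g$ acting adjointly, and $Q(0,\chi)\cong\Sym(L(3))/(C-\chi)\Sym(L(3))$, where $C$ acts by multiplication. Since $\g$ is semisimple, $\Hom_{\g}(L(k),-)$ is exact on $\g$-locally finite modules; applied to $0\to\Sym(L(3))\xrightarrow{C-\chi}\Sym(L(3))\to Q(0,\chi)\to 0$ it gives
\[
[Q(0,\chi):L(k)]=\dim_{\C}\mathcal{C}_k/(C-\chi)\mathcal{C}_k,\qquad \mathcal{C}_k:=\Hom_{\g}(L(k),\Sym(L(3))),
\]
where $\mathcal{C}_k$ is the $\C[C]$-module of covariants of order $k$ of the binary cubic, $C$ acting by multiplication by the discriminant. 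As $\Sym(L(3))$ is an integral domain, $\mathcal{C}_k$ is torsion-free over $\C[C]$.

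Next I would invoke the classical description of the covariant ring $\mathcal{C}:=\bigoplus_{k\geq0}\mathcal{C}_k$ of the binary cubic (see e.g.\ \cite{hilbert1993theory}): it is generated by the form $f$ (order $3$), its Hessian $H$ (order $2$), the Jacobian $T$ of $f$ and $H$ (order $3$), and the discriminant $C$ (order $0$), with $f,H,C$ algebraically independent and a single relation of the form $T^2=\alpha H^3+\beta C f^2$ with $\alpha,\beta\neq0$; hence $\mathcal{C}$ is a free $\C[f,H,C]$-module with basis $\{1,T\}$. Grading by order, $\mathcal{C}_k$ is therefore a free $\C[C]$-module with basis $\{f^aH^b:3a+2b=k\}\cup\{Tf^aH^b:3a+2b=k-3\}$. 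Consequently
\[
[Q(0,\chi):L(k)]=\dim_{\C}\mathcal{C}_k/(C-\chi)\mathcal{C}_k=\operatorname{rank}_{\C[C]}\mathcal{C}_k=p(k)+p(k-3),
\]
where $p(m):=\#\{(a,b)\in\Z_{\geq0}^2:3a+2b=m\}$ and $p(m)=0$ for $m<0$. In particular this is finite, so $Q(0,\chi)$ is a $\g$-Harish-Chandra module, and—being a rank—the value does not in fact depend on $\chi$.

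It then remains to put $p(k)+p(k-3)$ in the claimed closed form, a routine generating-function computation: since $\sum_{m\geq0}p(m)q^m=\frac{1}{(1-q^2)(1-q^3)}$, one has
\[
\sum_{k\geq0}\big(p(k)+p(k-3)\big)q^k=\frac{1+q^3}{(1-q^2)(1-q^3)}=\frac{1-q+q^2}{(1-q)(1-q^3)}=\frac{1}{(1-q)^2}-\frac{2q}{(1-q)(1-q^3)},
\]
and comparing coefficients with $\sum_k(k+1)q^k=(1-q)^{-2}$ and $\sum_k\lfloor(k+2)/3\rfloor q^k=\frac{q}{(1-q)(1-q^3)}$ yields $[Q(0,\chi):L(k)]=k-2\lfloor(k+2)/3\rfloor+1$.

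The main obstacle is the middle step: one needs the precise structure of the covariant ring of the binary cubic (or, equivalently, a careful treatment of the plethysm $\Sym^d(\Sym^3\C^2)$ via the Cayley--Sylvester formula, together with the observation that, the module of order-$k$ covariants being finitely generated over $\C[C]$, the multiplicities $[\Sym^d(L(3)):L(k)]$ are eventually periodic in $d$ with period $4$, so that summing over $d$ and passing to the quotient by $(C-\chi)$ is legitimate). A secondary point to watch is that exactness of $\Hom_{\g}(L(k),-)$ is what makes the multiplicity in the quotient computable, while torsion-freeness of $\mathcal{C}_k$ over $\C[C]$ is what makes the answer independent of $\chi$.
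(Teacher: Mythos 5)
Your proposal is correct, but it takes a genuinely different route than the paper. The paper reduces to $\chi=0$ by quoting freeness of $\Sym(\rr)$ over $\Sym(\rr)^{\g}=\C[C]$ from \cite{futorny2005kostant}, computes the graded multiplicities $[\Sym^d(\rr):L(k)]-[\Sym^{d-4}(\rr):L(k)]$ via the partition-counting (Cayley--Sylvester type) formulas of \cite{hahn2018from}, notes that these vanish for $d>k+4$ (whence the Harish-Chandra property), and sums them to obtain the closed formula. You instead apply the exact functor $\Hom_{\g}(L(k),-)$ to the sequence $0\to\Sym(L(3))\xrightarrow{\,C-\chi\,}\Sym(L(3))\to Q(0,\chi)\to 0$, identify the multiplicity space with the module of order-$k$ covariants of the binary cubic, and invoke the classical description of the covariant ring ($f$, the Hessian $H$, the Jacobian $T$, the discriminant $C$, with the single syzygy $T^2=\alpha H^3+\beta Cf^2$, $\alpha\beta\neq 0$), which exhibits each order-$k$ piece as a free $\C[C]$-module of rank $p(k)+p(k-3)$, where $p(m)$ counts solutions of $3a+2b=m$ in non-negative integers; your generating-function manipulation then gives the stated closed form (the partial-fraction identity is correct, the coefficient of $q^k$ in $q/((1-q)(1-q^3))$ is indeed $\lfloor(k+2)/3\rfloor$, and the values agree with the covariant count for small $k$). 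What your route buys is the full $\C[C]$-module structure of each multiplicity space, hence freeness, $\chi$-independence, and finiteness of multiplicities in one stroke, with no appeal to \cite{futorny2005kostant} or \cite{hahn2018from}; the price is reliance on the complete system of covariants of the binary cubic, a classical but nontrivial input (available in the Hilbert lectures the paper already cites for the generator $C$). The paper's route needs only the single invariant $C$ together with Kostant-type freeness and general plethysm formulas, which is why the same technique is reused for $\LL^4$ later in the paper; your parenthetical fallback via eventual periodicity of $[\Sym^d(L(3)):L(k)]$ in $d$ is unnecessary once the covariant-ring structure is granted.
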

\begin{proof}
From the main result of \cite{futorny2005kostant} it follows easily that $\Sym(\rr)$ is free as a module over $\Sym(\rr)^\g = \C[C]$. This implies that the $\g$-structure of $Q(0,\chi)$ does not depend on the choice of $\chi$. So in particular, it is enough to prove the finite-multiplicity statement for $Q(0,0)$, which is as a $\g$-module isomorphic to $\quotient{\Sym(\rr)}{C \cdot \Sym(\rr)}$.

But these are now graded modules, so we can subtract their graded characters. More precisely, for $d \geq 4$ and $k \geq 0$ we have
\begin{equation} \label{equation:symm_character}
[Q(0,0)^d \colon L(k)] = [\Sym^d(\rr) \colon L(k)] - [\Sym^{d-4}(\rr) \colon L(k)], \end{equation}
where $(-)^d$ denotes the homogeneous part of degree $d$.

Using \cite{hahn2018from}, one can calculate the right-hand side of (\ref{equation:symm_character}). For non-negative integers $a,b,c$ let $p(a,b,c)$ denote the number of partitions of $c$ into at most $b$ parts, and each part bounded above by $a$. Denote $N(a,b,c) := p(a,b,c) - p(a,b,c-1)$ if $c \geq 1$, and set $N(a,b,0) := 1$. By \cite[Theorem 3.1]{hahn2018from}, the multiplicity (\ref{equation:symm_character}) is equal to $0$ if $k \not\in \{3d,3d-2,3d-4,\ldots\}$, and to
\[ N \left(d,3,\frac{3d-k}{2}\right) - N\left(d-4,3,\frac{3d-k}{2}-6\right) \]
otherwise. But the latter is also equal to $0$ whenever $k<d-4$, by using the formulas in \cite[Corollary 3.2]{hahn2018from}. It follows that any $L(k)$ can appear in $Q(0,0)$ in at most degree $k+4$, hence only finitely many times.

Using the same formulas in \cite[Corollary 3.2]{hahn2018from}, one can derive the multiplicity formula from the statement of the proposition. We omit the details.
\end{proof}

Either from Proposition \ref{propnew5}, or from Proposition \ref{proposition:Q0_for_L3}, we have:

\begin{corollary}
The unique simple quotient $V(0,\chi)$ of $Q(0,\chi)$ is $\g$-Harish-Chandra. It is infinite-dimensional if $\chi \neq 0$.
\end{corollary}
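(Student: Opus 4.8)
The corollary follows almost immediately from the two results it cites, so the plan is essentially to assemble those pieces. First I would invoke Proposition~\ref{propnew5}: since $\LL^3 = \mathfrak{sl}_2 \ltimes L(3)$ is a generalized Takiff Lie algebra (its radical $\rr = L(3)$ is abelian), that proposition directly tells us that the unique simple quotient $V(0,\chi)$ of $Q(0,\chi)$ is a $\g$-Harish-Chandra module, and moreover that it is either the trivial module or infinite-dimensional, because $[\g,\rr] = [\mathfrak{sl}_2, L(3)] = L(3) = \rr$ (the adjoint action of $\mathfrak{sl}_2$ on its simple module $L(3)$ has no trivial summand, so $[\g,\rr]$ is all of $\rr$). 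Alternatively, one reads off the $\g$-Harish-Chandra property from Proposition~\ref{proposition:Q0_for_L3}, which gives finite multiplicities $[Q(0,\chi):L(k)] = k - 2\lfloor (k+2)/3 \rfloor + 1 < \infty$ for $\chi \neq 0$, and $V(0,\chi)$ being a quotient of $Q(0,\chi)$ inherits finite multiplicities.

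It then remains to verify the infinite-dimensionality claim for $\chi \neq 0$. By Proposition~\ref{propnew5}, $V(0,\chi)$ is trivial exactly when $\chi$ is the radical central character of the trivial $\LL^3$-module, i.e.\ when $\chi = 0$ (the trivial module has $\rr$ acting by zero, so $C$ acts by $0$). Hence for $\chi \neq 0$ the module $V(0,\chi)$ cannot be the trivial module, and the dichotomy of Proposition~\ref{propnew5} forces it to be infinite-dimensional. One should double-check that $C$ indeed takes the value $0$ and no other value on the trivial module: since $C \in \Sym(\rr)^\g \subseteq \Sym(\rr)$ is a polynomial of positive degree ($4$) in the elements of $\rr$, and all of $\rr$ annihilates the trivial module, $C$ acts as $0$; thus $\chi = 0$ is the only radical central character realized by a trivial (more generally, any finite-dimensional) $\LL^3$-module, consistent with $\Nrad(\LL^3) = \rr$.

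There is no real obstacle here — the content was already carried out in Propositions~\ref{propnew5} and \ref{proposition:Q0_for_L3}; the only point requiring a line of care is the identification of which $\chi$ corresponds to the trivial module, and the observation $[\g,\rr] = \rr$ so that the second bullet of Proposition~\ref{propnew5} applies in its strong form. I would phrase the proof as: ``By Proposition~\ref{propnew5} (applicable since $\rr = L(3)$ is abelian and $[\g,\rr] = \rr$), $V(0,\chi)$ is a $\g$-Harish-Chandra module which is either trivial or infinite-dimensional, and it is trivial only when $\chi = 0$; the $\g$-Harish-Chandra property also follows from Proposition~\ref{proposition:Q0_for_L3}. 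Hence for $\chi \neq 0$ it is infinite-dimensional.'' That is the whole argument.
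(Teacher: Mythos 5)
Your proposal is correct and follows exactly the route the paper intends: the paper states the corollary as an immediate consequence of Proposition~\ref{propnew5} (applicable since $\rr=L(3)$ is abelian and $[\g,\rr]=\rr$) or of Proposition~\ref{proposition:Q0_for_L3}, with the trivial module corresponding to $\chi=0$ because $C$ is a positive-degree polynomial in elements of the nilradical. Your extra check of these hypotheses is precisely the (implicit) content of the paper's one-line justification, so there is nothing to add.
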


We do not know whether $Q(0,\chi)=V(0,\chi)$, i.e., whether $Q(0,\chi)$ is already simple (for $\chi \neq 0$), as was in the Takiff $\mathfrak{sl}_2$ and the Schr\"{o}dinger cases.

\subsection{The Lie algebra $\LL^4 = \mathfrak{sl}_2 \ltimes L(4)$}\label{subsec7.3}

Consider now the algebra $\LL^4$. In this subsection we classify simple 
$\g$-Harish-Chandra modules for $\LL^4$ which appear in Enright-Arkhipov
completions of simple highest weight modules.

The algebra $\Sym(\rr)^\g$ is generated by two algebraically independent elements, homogeneous of degrees $2$ and $3$ (see e.g. \cite[Lecture XVIII]{hilbert1993theory}):
\begin{align*}
C_2 &= v_0^2 -3 v_{-2} v_2 + 12 v_{-4} v_4, \\
C_3 &= v_0^3 - \frac{9}{2} v_{-2} v_0 v_2 + \frac{27}{2} v_{-2}^2 v_4  + \frac{27}{2} v_{-4} v_2^2  - 36 v_{-4} v_0 v_4.
\end{align*}

Recall also, from \cite[Theorem~4]{lu2014simple}, the structure of simple
highest weight $\LL^4$-modules. Let $\tilde{\h}$ denote the (generalized) Cartan subalgebra 
of $\LL^4$ spanned by $h$ and $v_0$. If $\lambda\in\tilde{\h}^\ast$ is such that 
$\lambda(v_0)=0$, then $\mathfrak{r}$ annihilates the corresponding simple
highest weight module $\mathbf{L}(\lambda)$. If $\lambda\in\tilde{\h}^\ast$ is such that 
$\lambda(v_0)\neq0$, then the restriction of $\mathbf{L}(\lambda)$ to $\g$ has a 
multiplicity free Verma filtration with subquotients of the form 
$\Delta^{\g}(\lambda-n\alpha)$, where $n\in\mathbb{Z}_{\geq 0}$ and $\alpha$
is the root corresponding to $e\in\g$. Note that the elements
$C_2$ and $C_3$ act on $\mathbf{L}(\lambda)$ as the scalars $\lambda(v_0)^2$
and $\lambda(v_0)^3$, respectively.

Denote by $\mathcal{F}$ the semi-simple additive category generated
by simple subquotients of Enright-Arkhipov completions of simple
highest weight $\LL^4$-modules. Note that all modules in $\mathcal{F}$
are $\g$-Harish-Chandra modules for $\LL^4$.
Our main result of this subsection is the following theorem.

\begin{theorem}\label{thmconf4new}
\begin{enumerate}[$($a$)$]
\item\label{thmconf4new.1} For each $\tilde{\lambda}\in\mathbb{C}\setminus\{0\}$
and for each $i\in\Z_{>0}$, there is a unique, up to isomorphism,
simple object $V(i,\tilde{\lambda})$ in $\mathcal{F}$ on which $C_j$, where $j=2,3$,
acts via $\tilde{\lambda}^j$ and which has $\g$-types $L(i)$, $L(i+2)$, $L(i+4),\dots$,
all multiplicity free. 
\item\label{thmconf4new.2} For each $\tilde{\lambda}\in\mathbb{C}\setminus\{0\}$, 
there is a unique, up to isomorphism, simple object $V'(0,\tilde{\lambda})$ in 
$\mathcal{F}$ on which $C_j$, where $j=2,3$, acts via $\tilde{\lambda}^j$ 
and which has $\g$-types $L(0)$, $L(4)$, $L(8),\dots$,
all multiplicity free. 
\item\label{thmconf4new.3} For each $\tilde{\lambda}\in\mathbb{C}\setminus\{0\}$, 
there is a unique, up to isomorphism, simple object $V'(2,\tilde{\lambda})$ in 
$\mathcal{F}$ on which $C_j$, where $j=2,3$, acts via $\tilde{\lambda}^j$ 
and which has $\g$-types $L(2)$, $L(6)$, $L(10),\dots$,
all multiplicity free. 
\item\label{thmconf4new.5} The modules above provide a complete and irredundant
list of representatives of isomorphism classes of simple objects in $\mathcal{F}$.

\item\label{thmconf4new.6} Let $V$ be a simple $\g$-Harish-Chandra module on which $C_j$, where $j=2,3$, acts via $\tilde{\lambda}^j$, for some $\tilde{\lambda}\in\mathbb{C}\setminus\{0\}$. Then $V$ belongs to $\mathcal{F}$.
\end{enumerate}
\end{theorem}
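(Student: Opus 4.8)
The plan is to establish Theorem~\ref{thmconf4new} by following the same strategy that worked for the Takiff $\mathfrak{sl}_2$ and the Schr\"{o}dinger Lie algebra, namely: (i) understand the $\g$-module structure of the Verma modules $\Delta(\tilde{\lambda})$ for $\LL^4$ and the effect of applying $\EnAr$ (or rather $\EnAr_{w_0}$, which for $\g=\mathfrak{sl}_2$ is just a single $\EnAr$); (ii) compute the $\g$-types of the resulting completed modules and verify simplicity; (iii) use tensoring with finite-dimensional $\mathfrak{sl}_2$-modules together with the universal modules $Q(0,\chi)$ to show every simple $\g$-Harish-Chandra module with the given radical central character arises this way. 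Concretely, I would first record the $\g$-structure of $\Delta(\tilde{\lambda})$ from the cited result of \cite[Theorem 4]{lu2014simple}: when $\lambda(v_0)\neq 0$, the restriction of $\mathbf{L}(\lambda)$ to $\g$ has a multiplicity-free Verma filtration with subquotients $\Delta^\g(\lambda-n\alpha)$, $n\in\Z_{\geq 0}$. Because of the shift by $\rho$ (antidominance of $\lambda$, cf.\ the setup in Conjecture~\ref{conjecture:EA_Q0} and the proof of Theorem~\ref{theorem:exist_inf_dim_HC}), applying $\EnAr$ to $\mathbf{L}(\lambda)$ will kill the antidominant Verma summands and leave, as a $\g$-module, a direct sum of finite-dimensional simples $L(\mu)$ for a suitable arithmetic progression of $\mu$'s; the parity of $\mu$ and whether the progression starts at an odd number, at $0$, or at $2$ is exactly what produces the three families $V(i,\tilde\lambda)$, $V'(0,\tilde\lambda)$, $V'(2,\tilde\lambda)$ in the statement.

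The next step is simplicity of these completed modules. I would argue exactly as in Remark~\ref{remark:simpllicity_EA} and the proof of Theorem~\ref{theorem:EA_Verma}: given a nonzero submodule $V$, pick its minimal $\g$-type $L(\mu_0)$; using the action of $\rr=L(4)$ one sees that $V$ contains $L(\mu_0+2k)$ for all $k\geq 0$ (applying a lowest-weight-increasing element of $\rr$), so the quotient would be a finite-dimensional simple $\LL^4$-module; but $C_2$ (equivalently $C_3$) lies in $\Sym(\rr)^\g \subseteq \Nrad(\LL^4)$-generated part and must act by $0$ on a finite-dimensional simple module, contradicting the nonzero value $\tilde\lambda^2$. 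Here one must be slightly careful: $C_2, C_3$ are not themselves in $\Nrad(\LL^4)$, but every monomial appearing in them is, so they annihilate any finite-dimensional simple module while still acting by the nonzero scalar on the localization — this is the same subtlety already handled in Remark~\ref{remark:simpllicity_EA}. The uniqueness claims in (a)--(c) follow because a simple object of $\mathcal{F}$ is determined by its minimal $\g$-type together with the radical central character, via the universal property of $Q(\text{minimal }\g\text{-type},\chi)$ as in Proposition~\ref{proposition:multiplicity}(\ref{item:universal_property}).

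For irredundancy (d) and exhaustiveness (e), I would combine two ingredients. First, the radical central character: $C_2\mapsto\tilde\lambda^2$, $C_3\mapsto\tilde\lambda^3$ determines $\tilde\lambda$ uniquely (taking the ratio $C_3^2/C_2^3$ or $C_3/C_2$ when $C_2\neq 0$), so modules with different $\tilde\lambda$ are non-isomorphic and the decomposition of $\mathcal{F}$ into blocks by $\tilde\lambda$ is clean. Second, within a fixed $\tilde\lambda$, the minimal $\g$-type plus the list of $\g$-types distinguishes the three families — crucially, as advertised in Remark~\ref{remark:comb_struct_conf4}, $V'(0,\tilde\lambda)$ has $\g$-types $L(0),L(4),L(8),\dots$ while the would-be $V(0,\tilde\lambda)$ coming from a different completion has types $L(0),L(2),L(4),\dots$, so the "same minimal $\g$-type, different $\g$-type sets" phenomenon means one genuinely needs all of $V(i,\cdot)$, $V'(0,\cdot)$, $V'(2,\cdot)$ and they are pairwise non-isomorphic. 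For (f), given an arbitrary simple $\g$-Harish-Chandra module $V$ with $C_j$ acting by $\tilde\lambda^j\neq 0$ and minimal $\g$-type $L(m)$, it is a quotient of $Q(m,\chi)$; tensoring the known decomposition of $Q(0,\chi)\cong V(0,\chi)\oplus(\text{stuff})$ — here I would need the $\g$-structure of $Q(0,\chi)$, which Proposition~\ref{propnew5} and an analogue of the computations in Subsection~\ref{subsec7.3}'s spirit give — with $L(m)$ and decomposing via Clebsch--Gordan exhibits every simple subquotient of $Q(m,\chi)$ as lying in $\mathcal{F}$, forcing $V\in\mathcal{F}$.

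The main obstacle I anticipate is part (f) together with the precise determination of the $\g$-module structure of $Q(0,\chi)$ and the verification that $Q(0,\chi)$ (equivalently $V(0,\chi)$) decomposes — after tensoring with finite-dimensional modules — into exactly the modules listed, with the correct multiplicities. Unlike the Takiff and Schr\"{o}dinger cases where $\rr$ is two- or three-generated in low degree, here $\rr=L(4)$ is five-dimensional with invariants in degrees $2$ and $3$, so $\Sym(\rr)$ is free over $\C[C_2,C_3]$ of rank $\dim$ of the coinvariant algebra, and one must carefully track which $L(\mu)$ occur and with what multiplicity in $Q(0,\chi)$ — this is where the combinatorial input of \cite{hahn2018from} (as used in Proposition~\ref{proposition:Q0_for_L3} for $\LL^3$) would be needed, and where the argument is genuinely more involved than in the earlier sections. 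A secondary subtlety is confirming that $\EnAr_{w_0}$ applied to simple highest weight modules (rather than Verma modules) behaves as expected: since $\mathbf{L}(\lambda)$ need not be a Verma module, one has to know $\EnAr$ is exact enough on the relevant category, or argue directly on $\g$-structure using that the completion commutes with the forgetful functor to $\g$-modules and that $\EnAr$ kills antidominant $\g$-Vermas and $\g$-projectives (Example~\ref{example:EA_Delta_g}).
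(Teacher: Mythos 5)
Your overall strategy (apply $\EnAr$ to simple highest weight modules, then classify via tensoring with finite-dimensional modules and the universal modules) points in the right direction, but your argument breaks precisely where the $\LL^4$ case genuinely differs from the Takiff and Schr\"{o}dinger cases. You claim that a nonzero submodule of a completed module with minimal $\g$-type $L(\mu_0)$ must contain $L(\mu_0+2k)$ for all $k\geq 0$ by ``applying a lowest-weight-increasing element of $\rr$''. This is false here: since $\rr\cong L(4)$, the radical connects $\g$-types in steps of (at most) $4$; indeed $v_4$ sends highest weight vectors of $L(i)$ to highest weight vectors of $L(i+4)$. As a consequence, $\EnAr(\mathbf{L}(\lambda))$ for $\lambda(h)=-2$, $\lambda(v_0)=\tilde{\lambda}\neq 0$ --- which has multiplicity-free $\g$-types $L(0),L(2),L(4),\dots$ --- is \emph{not} simple: it splits as $V'(0,\tilde{\lambda})\oplus V'(2,\tilde{\lambda})$ with $\g$-type sets $L(0),L(4),L(8),\dots$ and $L(2),L(6),L(10),\dots$ respectively. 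Your proposal does not produce these two families (you attribute the ``starting at $0$ or at $2$'' phenomenon to different choices of highest weight rather than to the splitting of a single completion), and your simplicity argument would wrongly prove $\EnAr(\mathbf{L}(\lambda))$ simple; note that the nilradical trick only excludes finite-dimensional quotients, whereas here the complementary constituent is infinite-dimensional, so no contradiction arises.

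Relatedly, your uniqueness argument (``a simple object of $\mathcal{F}$ is determined by its minimal $\g$-type together with the radical central character, via the universal property of $Q(\cdot,\chi)$'') also fails for $\LL^4$: $V'(2,\tilde{\lambda})$ and $V(2,\tilde{\lambda})$ have the same minimal $\g$-type $L(2)$ and the same radical central character but different $\g$-type sets; the universal property only bounds the number of such simples (Proposition~\ref{propnew6}), it does not single one out. The paper's proof requires several ingredients your sketch does not supply: the combinatorial identity of Lemma~\ref{lemlemnew754} (via \cite{hahn2018from}) showing that $L(2)$ does not occur in $Q(0)$, which is what forces the image of $Q(0)\to\EnAr(\mathbf{L}(\lambda))$ to have $\g$-types $L(0),L(4),L(8),\dots$; Lemma~\ref{lemlemnew9423w}, that every $v_i$ acts injectively on a simple infinite-dimensional $\g$-Harish-Chandra module; a self-duality argument with $L(2)\otimes V'(0,\tilde{\lambda})$ to upgrade the submodule--quotient picture to the direct sum $\EnAr(\mathbf{L}(\lambda))\cong V'(0,\tilde{\lambda})\oplus V'(2,\tilde{\lambda})$; and the Ext-vanishing of Lemma~\ref{lemnew1403985y}, used to split $L(1)\otimes\mathbf{L}(\lambda)$ and thereby to construct $V(i,\tilde{\lambda})\cong\EnAr(\mathbf{L}(\lambda+\tfrac{i}{2}\alpha))$ inductively, with simplicity of $L(1)\otimes V'(0,\tilde{\lambda})$ established by adjunction rather than by any radical-action argument. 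Parts (d)--(f) then follow from this construction together with adjunction and Proposition~\ref{propnew6}, not from a direct decomposition of $Q(m,\chi)$ as you propose.
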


To prove this result, we will need some preparation.
The following lemma extends \cite[Corollary~3.5]{hahn2018from}
(note that the case treated in the lemma below is referred to as 
``complicated'' in \cite{hahn2018from}).

\begin{lemma}\label{lemlemnew754}
For every non-negative integer $k$, we have
\begin{displaymath}
\sum_{s=0}^{\lfloor\frac{2k-1}{4}\rfloor}
\left(
\left\lfloor\frac{2k-4s-1}{2}\right\rfloor-
\left\lfloor\frac{2k-4s-2}{3}\right\rfloor
\right)
-\sum_{s=0}^{k-2}
\left(
\left\lfloor\frac{s}{2}\right\rfloor-
\left\lfloor\frac{s-1}{3}\right\rfloor
\right)
=0. 
\end{displaymath}
\end{lemma}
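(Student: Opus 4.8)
The identity is a finite combinatorial identity, so the plan is to evaluate both sums in closed form (as quasi-polynomials in $k$, with period dividing $6$ or $12$) and check that they agree. First I would simplify the summand of the first sum: write $a(m):=\lfloor m/2\rfloor-\lfloor (m-1)/3\rfloor$ for the relevant shape, noting that the first sum is $\sum_{s\ge 0} a(2k-4s-1)$ over $0\le s\le\lfloor(2k-1)/4\rfloor$, while the second is $\sum_{s=0}^{k-2} a(s+2)$ after shifting, since $\lfloor s/2\rfloor-\lfloor(s-1)/3\rfloor$ with the index $s$ replaced by $s+2$ reproduces the same floor-difference up to a constant that I would track carefully. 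The key observation is that $\lfloor m/2\rfloor-\lfloor(m-1)/3\rfloor$ (or the analogous expression) is periodic-plus-linear: $\lfloor m/2\rfloor-\lfloor m/3\rfloor=\lfloor m/6\rfloor+\varepsilon(m)$ for an explicit $6$-periodic correction $\varepsilon$, and summing a linear-plus-periodic function over an arithmetic progression yields a quadratic-plus-periodic closed form. I would compute both closed forms and subtract.

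Concretely, the steps in order are: (1) reduce both sums to the normal form $\sum a(\cdot)$ with $a$ the floor-difference function, resolving the index shifts and the $-1$ vs $-2$ in the numerators by a short case analysis mod $6$; (2) record the values $a(0),a(1),\dots,a(5)$ and the identity $a(m+6)=a(m)+1$, so that $\sum_{m=0}^{M}a(m)$ is an explicit quadratic in $\lfloor M/6\rfloor$ plus a bounded periodic term; (3) express the upper limit $\lfloor(2k-1)/4\rfloor$ of the first sum — this is the genuinely annoying point, since the step in $s$ is $4$ while the natural period is $6$, so the effective period in $k$ is $12$ — and likewise handle $k-2$ in the second sum; (4) substitute and verify the difference is identically $0$, which after the closed forms are in hand is a routine polynomial identity checked on each residue class of $k$ modulo $12$ (or whatever the common period turns out to be), plus matching the two quadratic leading terms.

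Alternatively, and perhaps more cleanly, I would prove it by strong induction on $k$ by establishing the one-step recursion: both sides are seen to change by the same amount when $k$ increases by one. For the right-hand (negative) sum this change is simply $-a(k)$ (the new top term, with the appropriate shift); for the left-hand sum the change is subtler because both the summand values $a(2k-4s-1)$ and the range of $s$ shift, but telescoping $a(2(k{+}1)-4s-1)-a(2k-4s-1)$ using $a(m+2)-a(m)\in\{0,1\}$ with an explicit period-$6$ pattern should collapse the difference to a single boundary contribution. Matching the two boundary contributions against the fixed base cases $k=0,1$ (which one checks by hand: all four sums are empty or trivially zero) then closes the induction. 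I expect the main obstacle either way to be bookkeeping the interaction of the step-$4$ summation in $s$ against the period-$6$ behaviour of the floor-difference function; once the right common period is identified, everything is a finite verification. Since the statement is a lemma extending a corollary of \cite{hahn2018from}, I would also check whether the cited formulas there already give the two sides in closed form, in which case the proof reduces to citing \cite[Corollary~3.5]{hahn2018from} and \cite[Corollary~3.2]{hahn2018from} and subtracting.
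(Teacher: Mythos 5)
Your plan is sound and is essentially the paper's own argument: the paper likewise exploits that the difference of the two sums is a quasi-polynomial in $k$ of degree at most two with period $12$ (it writes $k=12a+r$, notes that $S(k)-S(k-12)$ is a polynomial of degree at most $2$ in $a$ for each fixed $r$, and uses a computer verification of small values of $k$ to supply enough vanishing points), so both routes reduce the identity to a finite check over residues modulo $12$. One small correction to your step (1): with your $a(m)=\lfloor m/2\rfloor-\lfloor (m-1)/3\rfloor$, the second sum is exactly $\sum_{s=0}^{k-2}a(s)$, so no index shift or constant needs to be tracked.
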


\begin{proof}
Using computer, it is easy to check that the claim of the lemma  
is true for small values of $k$ (we checked this independently 
and by different methods using Scilab and SageMath up to $k=200$).
After that, one can do induction on $k$ with induction step $12$.
So, we write $k=12a+r$ and consider each $r$ separately. Let
$S(k)$ denote the left hand side of the formula. For $k>12$,
the value $S(k)-S(k-12)$ can be written as a polynomial in $a$
(the polynomial itself depends on $r$) of degree at most two.

From the original computation it follows that
$S(k)-S(k-12)$ vanishes for enough values of $a$ to conclude that
$S(k)-S(k-12)$ is identically $0$. The claim follows.
\end{proof}

\begin{remark}
The results of \cite{hahn2018from} say that 
Lemma~\ref{lemlemnew754} is equivalent to the fact that, for each $k\geq 0$, the set $\Lambda_1$ of all vectors $(a,b,c,d,e)$ with non-negative integer coefficients satisfying $a+b+c+d+e=k$ and
$2a+b-d-2e=1$ has the same cardinality as the set $\Lambda_2$ of all vectors $(a,b,c,d,e)$ with non-negative integer coefficients satisfying $a+b+c+d+e=k$ and $2a+b-d-2e=2$.
We give here explicitely a bijection between these sets.
First, note that $\{ (a,b,c,d,e) \in \Lambda_1 \colon e \neq 0 \}$ maps bijectively to $\{ (a',b',c',d',e') \in \Lambda_2 \colon d' \neq 0 \}$ by \[(a,b,c,d,e)\mapsto (a,b,c,d+1,e-1). \]
%Next, fix a positive integer $s\leq k$, denote by $\Lambda_1^s$ the subset of $\Lambda_1$ satisfying $a+b+d=s$, and by $\Lambda_2^s$ the subset of $\Lambda_2$ satisfying $a'+b'+e'=s$. We can parametrize these sets as follows:
%
%\begin{align*}
%\Lambda_1^s &= \left\{ \left( a,\frac{s+1-3a}{2}, k-s, \frac{s-1+a}{2},0  \right) \colon a \leq \left\lfloor { \frac{s+1}{3} }\right\rfloor, \ a \equiv s+1 \ (\text{mod }2)\right\}, \\[1em]
%
%\Lambda_2^s &= \left\{ \left( \frac{2s+2-3b'}{4}, b', k-s, 0, \frac{2s-2-b'}{4} \right) \colon b' \leq \left\lfloor { \frac{2s+2}{3} }\right\rfloor, \ b' \equiv 2s+2 \ (\text{mod }4)\right\}.
%\end{align*}
%  
%One can check that $a \mapsto b':= 2a$ is a bijection between the sets of parameters, and therefore it gives a bijection $\Lambda_1^s \to \Lambda_2^s$ by the formula:
%
%\[ \left( a,b, c, \frac{s-1+a}{2},0  \right) \mapsto \left( b, 2a, c, 0, \frac{s-1-a}{2} \right).  \]
The remainder $\{ (a,b,c,d,0) \in \Lambda_1\}$ maps bijectively to $\{ (a',b',c',0,e') \in \Lambda_2\}$ by the formula
\[ \left( a,b, c, d,0  \right) \mapsto \left( b, 2a, c, 0, \frac{b+d-1}{2} \right).  \]
\end{remark}

\begin{lemma}\label{lemlemnew9423w}
Let $V$ be a simple infinite-dimensional $\g$-Harish-Chandra module
for $\LL^4$. Then each $v_i$ acts injectively on $V$.
\end{lemma}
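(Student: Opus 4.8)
The plan is to analyse, for each weight $i$, the kernel of $v_i$ acting on $V$, and to control all five elements $v_i$ simultaneously through the fact that $\rr = L(4)$ is a \emph{simple} $\mathfrak{sl}_2$-module. First fix $i$ and put $P_i:=\bigcup_{n\geq 1}\ker\left(v_i^n\colon V\to V\right)$. Since $\rr$ is abelian, $v_i$ commutes with all of $\rr$, so each $\ker(v_i^n)$ is $\rr$-stable; and since $[g,v_i]\in\rr$ commutes with $v_i$ for $g\in\g$, one has $[g,v_i^n]=n\,v_i^{n-1}[g,v_i]$, whence $g\cdot\ker(v_i^n)\subseteq\ker(v_i^{n+1})$. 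Thus $P_i$ is an $\LL^4$-submodule of $V$, so by simplicity $P_i$ is $0$ or $V$; equivalently, $v_i$ acts on $V$ either injectively or locally nilpotently.

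Next I would collect the locally nilpotent directions: let $\mathfrak{n}_V\subseteq\rr$ be the set of $v\in\rr$ acting locally nilpotently on $V$. It is a linear subspace, since for commuting $v,v'\in\mathfrak{n}_V$ and any $w\in V$ the subspace $\C[v,v']w\subseteq V$ is finite-dimensional (as $v,v'$ act locally nilpotently and commute) and stable under $v$ and $v'$, on which they are commuting nilpotent operators, so $v+v'$ is nilpotent there. The key point is that $\mathfrak{n}_V$ is also a $\g$-submodule of $\rr$. Because $V$ is locally finite over $\g$, the elements $e$ and $f$ act locally nilpotently on $V$, so $\exp(te)$ and $\exp(tf)$ are well-defined invertible operators on $V$ for every $t\in\C$; conjugation gives $\exp(te)\,v\,\exp(-te)=\exp\bigl(t\,\ad e\bigr)(v)=\sum_{k\geq 0}\tfrac{t^k}{k!}(\ad e)^k(v)$ as operators on $V$, and the left-hand side is locally nilpotent whenever $v$ is. Hence the polynomial $t\mapsto\sum_{k}\tfrac{t^k}{k!}(\ad e)^k(v)$ takes values in the subspace $\mathfrak{n}_V$ for all $t$, so every coefficient $(\ad e)^k(v)$, in particular $[e,v]$, lies in $\mathfrak{n}_V$; likewise $[f,v]\in\mathfrak{n}_V$. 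As $\g$ is generated by $e$ and $f$, $\mathfrak{n}_V$ is $\ad\g$-stable, hence a $\g$-submodule of the simple $\g$-module $\rr=L(4)$, so $\mathfrak{n}_V=0$ or $\mathfrak{n}_V=\rr$.

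Finally I would exclude $\mathfrak{n}_V=\rr$. If every $v_i$ acted locally nilpotently, then for each $w\in V$ the cyclic submodule $U(\rr)w$ would be finite-dimensional (iterating the commuting-nilpotents argument through $v_4,v_2,v_0,v_{-2},v_{-4}$), with all $v_i$ nilpotent on it; since $C_2$ and $C_3$ lie in the augmentation ideal of $U(\rr)$, they would act nilpotently on $U(\rr)w$, hence locally nilpotently on $V$. But $V$ is simple of countable dimension over the uncountable algebraically closed field $\C$, so by Schur's lemma (Dixmier's version) the central elements $C_2,C_3$ act by scalars, necessarily $0$; thus the radical central character of $V$ is trivial. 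Since $[\g,\rr]=\rr$ for $\LL^4$ (as $\rr=L(4)$ is a nontrivial simple $\g$-module), Proposition~\ref{propnew5} identifies the module $V(0,\cdot)$ attached to the trivial radical central character with the trivial $\LL^4$-module, and Proposition~\ref{propnew6} then bounds the number of simple $\g$-Harish-Chandra modules with trivial radical central character and a fixed $\g$-type $L$ by $l_{L(0),L}^{L}=1$, a bound already attained by the finite-dimensional module $L$ with trivial $\rr$-action. Hence $V$ would be finite-dimensional, contrary to hypothesis. Therefore $\mathfrak{n}_V=0$, so no $v_i$ (being a nonzero element of $\rr$) acts locally nilpotently, and by the dichotomy of the first step each $v_i$ acts injectively.

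The step I expect to be the main obstacle is the $\g$-invariance of $\mathfrak{n}_V$: showing that local nilpotence is preserved under the adjoint action of $\g$ forces one to exponentiate $e$ and $f$, which is precisely where the local finiteness of $V$ over $\g$ is used. The remaining ingredients — the submodule dichotomy, finite-dimensionality of the cyclic $U(\rr)$-modules, and the reduction to the trivial radical central character via Propositions~\ref{propnew5} and~\ref{propnew6} — are routine once the earlier structural results are in hand.
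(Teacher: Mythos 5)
Your proof is correct, and non-circular (Propositions~\ref{propnew5} and \ref{propnew6} precede this lemma and do not depend on it), but it diverges from the paper's argument in two of its three steps, so a comparison is in order. The skeleton is the same: an injective-or-locally-nilpotent dichotomy, propagation of local nilpotency from one $v_i$ to all of $\rr$, and a contradiction with infinite-dimensionality. For the dichotomy the paper just cites \cite[Section~3]{DMP}, whereas you verify directly that $\bigcup_n\ker(v_i^n)$ is an $\LL^4$-submodule; both are fine. For the propagation, the paper uses the elementary identity $\ad(v_i)^m(e^m)=m!\,[v_i,e]^m$, a nonzero multiple of $v_{i+2}^m$, evaluated on a vector killed by $v_i$ and by $e^m$ (and the analogous step with $f$), while you show that the set $\mathfrak{n}_V$ of locally nilpotently acting elements of $\rr$ is a $\g$-submodule of the simple module $L(4)$ by conjugating with $\exp(te)$ and $\exp(tf)$; this is legitimate because $V$ is $\g$-locally finite, and your identity $\exp(te)\,v\,\exp(-te)=\exp(t\,\ad e)(v)$ follows purely algebraically from $e^nv=\sum_k\binom{n}{k}(\ad e)^k(v)\,e^{n-k}$, so the step you feared would be the obstacle is sound. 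The genuine difference is the endgame: the paper takes a common kernel vector of all the $v_j$ (the joint kernel is stable under $e$), obtains a highest weight vector, and uses the structure of simple highest weight $\LL^4$-modules from \cite{lu2014simple} to force $V$ finite-dimensional; you instead observe that $C_2,C_3$, being central and locally nilpotent, act by zero, so $V$ has the radical central character of the trivial module, and then Propositions~\ref{propnew5} and \ref{propnew6} give the bound $l_{L(0),L}^{L}\cdot[V(0,\chi_0):L(0)]=1$, already attained by the finite-dimensional module $L$ with trivial $\rr$-action, forcing $V\cong L$. Your route trades the paper's self-contained highest weight argument for the heavier counting machinery of Section~\ref{section:existence_general}, which it what it buys you: no appeal to the highest weight theory for $\LL^4$. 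A small simplification: instead of Dixmier's version of Schur's lemma, note that the kernel of the central, locally nilpotently acting element $C_2$ is a nonzero submodule, hence all of $V$.
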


\begin{proof}
Since the adjoint action of $v_i$ on $\LL^4$ is locally nilpotent, 
the action of $v_i$ on each simple $\LL^4$-module is either injective
or locally nilpotent, cf. \cite[Section~3]{DMP}. 
Note that the action of both $e$ and $f$ on
$V$ is locally nilpotent by definition. Let $x\in V$ be such that
$v_i\cdot x=0$, for some $i\neq 4$, and $e^m\cdot v=0$. 
Then $\mathrm{ad}^m(v_i)(e^m)\cdot x=0$ and it is easy to check that 
this implies that $v_{i+2}^m\cdot x=0$. That is, the action of
$v_{i+2}$ is locally nilpotent. Applying similar arguments
using $e$ and $f$, we get that the action of all $v_{j}$'s is
locally nilpotent.

As all $v_{j}$'s commute, $V$ must contain some non-zero $x$
which is killed by all the $v_{j}$'s. Since the adjoint action of
$e$ preserves the set of the $v_{j}$'s, we can even assume that
$e$ kills $x$. But then this means that $V$ is a highest weight
module. Being also a $\g$-Harish-Chandra module, this implies that
$V$ must be finite-dimensional, a contradiction.
\end{proof}

For $\chi_2, \chi_3 \in \C$, we recall the universal $\LL^4$-module
\[ Q(0,\chi_2, \chi_3) = \quotient{Q(0)}{(C_2-\chi_2,C_3-\chi_3)}, \]
and its unique simple quotient $V(0,\chi_2, \chi_3)$ containing $L(0)$.

\begin{proposition}\label{propnew82734y2}
Fix $\tilde{\lambda} \in \C\setminus\{0\}$. The module $V(0,\tilde{\lambda}^2, \tilde{\lambda}^3)$ is a simple infinite-dimensional $\g$-Harish-Chandra module. Its $\g$-types are $L(0), L(4), L(8) \ldots$, each occurring with multiplicity one.
\end{proposition}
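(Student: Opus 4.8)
The plan is to realize $V:=V(0,\tilde\lambda^2,\tilde\lambda^3)$ as the unique simple quotient of a concrete $\LL^4$-direct summand of the universal module $Q(0,\tilde\lambda^2,\tilde\lambda^3)$, and then to squeeze its $\g$-types between a lower and an upper bound. That $V$ is a $\g$-Harish-Chandra module which is infinite-dimensional is immediate from Proposition~\ref{propnew5}: indeed $\rr=L(4)$ is a non-trivial simple $\g$-module, so $[\g,\rr]=\rr$, while the radical central character $(\tilde\lambda^2,\tilde\lambda^3)$ is not the one of the trivial module since $\tilde\lambda\neq 0$. It thus remains to compute the $\g$-types.

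The first, and I expect hardest, step is to describe $Q(0,\chi_2,\chi_3)$, $\chi_j=\tilde\lambda^j$, by invoking the classical covariant theory of the binary quartic. By \cite{futorny2005kostant} the algebra $\Sym(\rr)\cong Q(0)$ is free over $\Sym(\rr)^\g=\C[C_2,C_3]$, so the $\g$-module structure of $Q(0,\chi_2,\chi_3)$ is independent of $(\chi_2,\chi_3)$; evaluating the graded $\g$-multiplicities of $\Sym(\rr)/(C_2,C_3)$ by means of the Cayley--Sylvester formulas of \cite{hahn2018from} — with Lemma~\ref{lemlemnew754} supplying exactly the vanishing in the one ``complicated'' residue class — one finds that $[Q(0,\chi_2,\chi_3):L(k)]$ equals $k/4+1$ for $k\equiv 0\pmod 4$, equals $(k-2)/4$ for $k\equiv 2\pmod 4$ with $k\ge 6$, and is $0$ for $k$ odd and for $k=2$. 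More importantly, classical invariant theory presents $\Sym(\rr)$ as generated over $\C[C_2,C_3]$ by three covariants — the quartic itself and its Hessian (both of order $4$) and the order-$6$ Jacobian covariant $T$ — subject to a single syzygy which expresses $T^2$ as a polynomial in the other four generators and is therefore even in $T$; since $\rr$ acts on $\Sym(\rr)$ by multiplication by the unique degree-one covariant (which is $T$-even) and the relations cutting out $Q(0,\chi_2,\chi_3)$ are $T$-even, one obtains a splitting of $\LL^4$-modules $Q(0,\chi_2,\chi_3)=Q_0\oplus Q_1$, in which the $\g$-types of $Q_0$ are exactly the $L(4j)$ and those of $Q_1$ exactly the $L(4j+2)$. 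As the generating copy of $L(0)$ lies in $Q_0$ and has multiplicity one overall, $V$ is the unique simple quotient of $Q_0$; in particular its $\g$-types lie among $L(0),L(4),L(8),\dots$.

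Next I would bound these multiplicities by $1$. Choose $\tilde\mu\in\tilde\h^\ast$ with $\tilde\mu(h)=-2$ and $\tilde\mu(v_0)=\tilde\lambda$; by \cite[Theorem~4]{lu2014simple} the simple highest weight module $\mathbf L(\tilde\mu)$ has $C_2,C_3$ acting by $\tilde\lambda^2,\tilde\lambda^3$ and restricts to $\g$ as $\bigoplus_{n\ge 0}\Delta^\g(-2-2n)$, the summands lying in pairwise distinct blocks of category $\mathcal O$ so that the Verma filtration from \cite{lu2014simple} splits. Applying the Enright--Arkhipov completion $\EnAr$ — which commutes with restriction to $\g$, preserves the action of the central elements $C_2,C_3$, and (by the analogue of Example~\ref{example:EA_Delta_g}) sends $\Delta^\g(-2-2n)$ to $L(2n)$ — produces a $\g$-Harish-Chandra module whose $\g$-types are $L(0),L(2),L(4),\dots$, each of multiplicity one, and which contains a copy of $L(0)$; by the uniqueness in Proposition~\ref{propnew5} its unique simple subquotient containing $L(0)$ is $V$, whence $[V:L(k)]\le 1$ for all $k$. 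Finally, that every $L(4j)$ actually occurs follows from Lemma~\ref{lemlemnew9423w}: $v_4$ acts injectively on $V$, and since $v_4$ is the highest weight vector of the $\g$-submodule $\rr=L(4)$ one has $[e,v_4]=0$, so $v_4$ carries a highest weight vector of a copy of $L(4j)$ in $V$ to a non-zero vector of weight $4j+4$ annihilated by $e$, i.e.\ to a highest weight vector generating a copy of $L(4j+4)$; starting from the generating $L(0)$ and iterating yields $L(4j)\subseteq V$ for all $j\ge 0$. Combining the three steps, $V$ has $\g$-types precisely $L(0),L(4),L(8),\dots$, each with multiplicity one. The main obstacle is the invariant-theoretic input of the second paragraph, namely the $\LL^4$-module splitting $Q(0,\chi_2,\chi_3)=Q_0\oplus Q_1$ coming from the $T$-parity in the covariant algebra (and the Cayley--Sylvester count); if one prefers to avoid the explicit syzygy, the splitting can alternatively be recovered a posteriori from the multiplicity formula together with the list of composition factors of $Q(0,\chi_2,\chi_3)$ assembled in this subsection, at the cost of a less transparent argument.
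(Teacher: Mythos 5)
Your first, third and fourth steps are fine (Proposition~\ref{propnew5} for infinite-dimensionality, the multiplicity bound via $\EnAr$ of the highest weight module with $\tilde\mu(h)=-2$, $\tilde\mu(v_0)=\tilde\lambda$, and Lemma~\ref{lemlemnew9423w} to produce all $L(4j)$), and your multiplicity count for $Q(0,\chi_2,\chi_3)$, including the vanishing of $L(2)$, agrees with what Lemma~\ref{lemlemnew754} gives. The genuine gap is the claimed $\LL^4$-module splitting $Q(0,\chi_2,\chi_3)=Q_0\oplus Q_1$ according to the residue of the $\g$-type modulo $4$. The action of $\rr$ on $\Sym(\rr)$ is multiplication by the five-dimensional space $L(4)\subseteq\Sym^1(\rr)$, and on an isotypic component of type $L(k)$ this produces, by Clebsch--Gordan, components in \emph{all} types $L(k+4),L(k+2),\dots$; only the top component is recorded by multiplication by the covariant $f$ in the covariant ring, so ``$T$-evenness'' of that multiplication says nothing about the lower components, which correspond to the higher transvectants $(f,c)^r$. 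Concretely, the first transvectant $(f,H)^1$ of the quartic with its Hessian is (up to a non-zero scalar) the sextic covariant $T\neq 0$, so $\rr\cdot\bigl(\text{Hessian copy of }L(4)\bigr)$ has a non-zero component in the $L(6)$-isotypic part of $Q(0,\chi_2,\chi_3)$ (which survives in the quotient, being part of the free basis $f^aH^bT^{\epsilon}$ over $\C[C_2,C_3]$). Hence $Q_0$ is not $\rr$-stable and the splitting is false.

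Without that splitting, your remaining arguments only show that the $\g$-types of $V$ are even, multiplicity-free, contain every $L(4j)$, and exclude $L(2)$; they do not exclude $L(6),L(10),\dots$, which is exactly the hard point. The fallback you suggest (recovering the splitting ``a posteriori'' from the composition factors of $Q(0,\chi_2,\chi_3)$) is not available: those composition factors are not determined in this subsection, and using Theorem~\ref{thmconf4new} here would be circular, since its proof rests on this proposition. The paper closes the gap differently: by the universal property one gets a non-zero map $Q(0)\to\EnAr(\mathbf{L}(\lambda))$ hitting the copy of $L(0)$; its image $V$ contains no $L(2)$ (as $Q(0)$ has none), and since all simple subquotients of $\EnAr(\mathbf{L}(\lambda))$ are infinite-dimensional (central character), $v_4$ acts injectively on $\EnAr(\mathbf{L}(\lambda))$ and on its quotient by $V$; applied to the quotient this forces every $L(4j+2)$ to lie outside $V$, while applied to $V$ it forces every $L(4j)$ to lie inside, and the same argument yields simplicity of both $V$ and the quotient, identifying $V$ with $V(0,\tilde\lambda^2,\tilde\lambda^3)$. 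Some such realization of $V$ inside $\EnAr(\mathbf{L}(\lambda))$, with an argument on the complementary piece, is what your proof is missing.
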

\begin{proof}
By \cite[Theorem  3.1 and Corollary~3.4]{hahn2018from}, the multiplicity of $L(2)$
in $Q(0)$ is given by the left hand side of the formula from Lemma~\ref{lemlemnew754}.
Therefore, by Lemma~\ref{lemlemnew754}, $L(2)$ does not appear in $Q(0)$.

Now, take $\lambda$ such that $\lambda(h)=-2$ and $\lambda(v_0)=\tilde{\lambda}$.
Then $\mathbf{EA}(\mathbf{L}(\lambda))$ is a $\g$-Harish-Chandra module having multiplicity-free 
$\g$-types $L(0)$, $L(2)$, $L(4),\dots$. As $v_4$ commutes with $e$, from
Lemma~\ref{lemlemnew9423w} it follows that $v_4$ sends each non-zero highest weight 
vector of $L(i)$ inside $\mathbf{EA}(\mathbf{L}(\lambda))$ to a non-zero highest weight vector of
$L(i+4)$ inside $\mathbf{EA}(\mathbf{L}(\lambda))$. Note that all simple subquotients of
$\mathbf{EA}(\mathbf{L}(\lambda))$ must be infinite-dimensional as the central characters of
$\mathbf{EA}(\mathbf{L}(\lambda))$ is different, by construction, from the central characters
of simple finite-dimensional $\LL^4$-modules.

By the universal property of $Q(0)$, the inclusion of $L(0)$ in $\mathbf{EA}(\mathbf{L}(\lambda))$
gives rise to a non-zero homomorphism from $Q(0)$ to $\mathbf{EA}(\mathbf{L}(\lambda))$.
The image $V$ of this homomorphism does not contain $L(2)$, as was established in the
first paragraph of the proof. Therefore, from Lemma~\ref{lemlemnew9423w} it follows
that $V$ has $\g$-types $L(0)$, $L(4)$, $L(8),\dots$ and the quotient
$\mathbf{EA}(\mathbf{L}(\lambda))/V$ has $\g$-types $L(2)$, $L(6)$, $L(10),\dots$.
In fact, from Lemma~\ref{lemlemnew9423w} and the above remark that  
all simple subquotients of
$\mathbf{EA}(\mathbf{L}(\lambda))$ must be infinite-dimensional, it follows that both
$V$ and $\mathbf{EA}(\mathbf{L}(\lambda))/V$ are simple modules.

This implies that $V\cong V(0,\tilde{\lambda}^2, \tilde{\lambda}^3)$ and the
claim of the lemma follows.
\end{proof}

\begin{lemma}\label{lemnew1403985y}
If $\lambda(v_0)\neq 0$, then, in the category of $h$-weight $\LL^4$-modules,
we have the vanishing $\mathrm{Ext}^1(\mathbf{L}(\lambda-\alpha),\mathbf{L}(\lambda))=0$.
\end{lemma}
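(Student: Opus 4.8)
The plan is to show that every short exact sequence $0\to\mathbf{L}(\lambda)\to E\xrightarrow{\pi}\mathbf{L}(\lambda-\alpha)\to 0$ of $h$-weight $\LL^{4}$-modules splits. Put $m:=\lambda(h)$ and let $w\in\mathbf{L}(\lambda)\subseteq E$ be the image of the generator of $\mathbf{L}(\lambda)$. By \cite[Theorem~4]{lu2014simple} the $\g$-Verma filtration of $\mathbf{L}(\lambda)$ has the multiplicity-free subquotients $\Delta^{\g}(\lambda-n\alpha)$, $n\geq 0$, with top $h$-weights $m-2n$; hence $\mathbf{L}(\lambda)_{m}=\C w$, $\mathbf{L}(\lambda)_{m+2}=0$ and $\dim\mathbf{L}(\lambda)_{m-2}=2$, and the exact sequence gives $E_{m+2}=0$ and $\dim E_{m}=1$.

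First I would reduce to proving $\Ext^{1}_{\LL^{4}}(\Delta(\lambda-\alpha),\mathbf{L}(\lambda))=0$, where $\Delta(\lambda-\alpha)$ is the Verma module \eqref{equation:Verma_def} and we regard $\lambda-\alpha$ as an element of $\tilde{\h}^{\ast}$ via $(\lambda-\alpha)(v_{0})=\lambda(v_{0})$ (this $\Ext^{1}$ may be computed in all modules, since an extension of $h$-weight modules with integral $h$-weights is again such). Indeed, if this group vanishes, applying $\Hom(\Delta(\lambda-\alpha),-)$ to the sequence above shows the canonical surjection $\Delta(\lambda-\alpha)\tto\mathbf{L}(\lambda-\alpha)$ lifts to $\widetilde{p}\colon\Delta(\lambda-\alpha)\to E$. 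Then $\operatorname{im}\widetilde{p}$ surjects onto $\mathbf{L}(\lambda-\alpha)$, and it cannot contain $\mathbf{L}(\lambda)$, since otherwise $\operatorname{im}\widetilde{p}=E$ would be a quotient of $\Delta(\lambda-\alpha)$ and hence have zero $h$-weight-$m$ space, contradicting $w\in E_{m}$; thus $\operatorname{im}\widetilde{p}$ is a complement to $\mathbf{L}(\lambda)$ and $\pi$ splits.

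By Frobenius reciprocity (Shapiro's lemma, using that $U(\LL^{4})$ is $U(\tilde{\bb})$-free), $\Ext^{1}_{\LL^{4}}(\Delta(\lambda-\alpha),\mathbf{L}(\lambda))\cong\Ext^{1}_{\tilde{\bb}}(\C_{\lambda-\alpha},\mathbf{L}(\lambda))$, which I would compute by hand. For a $\tilde{\bb}$-extension $0\to\mathbf{L}(\lambda)\to N\to\C_{\lambda-\alpha}\to 0$, a lift $\xi\in N_{m-2}$ of a generator satisfies $e\xi=aw$, $v_{2}\xi=bw$, $v_{4}\xi=0$ and $v_{0}\xi=\lambda(v_{0})\xi+c$ with $a,b\in\C$ and $c\in\mathbf{L}(\lambda)_{m-2}$ (using $\dim\mathbf{L}(\lambda)_{m}=1$ and $\mathbf{L}(\lambda)_{m+2}=0$), and the relations $[v_{0},v_{2}]=0$ and $[e,v_{0}]=3v_{2}$ force $v_{2}c=0$ and determine $b$ in terms of $c$. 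So the class is the pair $(a,c)\in\C\oplus\Ker(v_{2}|_{\mathbf{L}(\lambda)_{m-2}})$ modulo the coboundaries $u\mapsto(eu,(v_{0}-\lambda(v_{0}))u)$ for $u\in\mathbf{L}(\lambda)_{m-2}$. Both spaces are two-dimensional: $\Ker(v_{2}|_{\mathbf{L}(\lambda)_{m-2}})$ is one-dimensional because $v_{2}(fw)=[v_{2},f]w$ is a non-zero multiple of $v_{0}w=\lambda(v_{0})w$, where $\lambda(v_{0})\neq 0$ is used. It thus suffices to show the coboundary map is an isomorphism. In the basis $\{fw,v_{-2}w\}$ of $\mathbf{L}(\lambda)_{m-2}$ (independent as $v_{2}(v_{-2}w)=0\neq v_{2}(fw)$, with $v_{-2}w$ spanning $\Ker(v_{2}|_{\mathbf{L}(\lambda)_{m-2}})$), a short computation using $[e,f]=h$, $[e,v_{-2}]=2v_{0}$, $[v_{0},v_{-2}]=0$, $ew=0$ and $v_{0}w=\lambda(v_{0})w$ shows the coboundary map sends $fw\mapsto(m,c')$ and $v_{-2}w\mapsto(2\lambda(v_{0}),0)$ in the coordinates dual to $\{w\}$ and $\{v_{-2}w\}$, where $c'$ is the coefficient of $v_{-2}w$ in $(v_{0}-\lambda(v_{0}))(fw)=[v_{0},f]w$. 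Its determinant is $-2\lambda(v_{0})c'$, so it remains only to see that $c'\neq 0$, i.e.\ that $v_{-2}w\neq 0$.

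This last non-vanishing is the main point to check, and it runs as in the proof of Lemma~\ref{lemlemnew9423w}: since $v_{0}\in\rr$ acts on $w$ by $\lambda(v_{0})\neq 0$, the radical acts non-trivially on the simple module $\mathbf{L}(\lambda)$; as each $v_{j}$ is $\ad$-nilpotent, it acts on $\mathbf{L}(\lambda)$ either injectively or locally nilpotently, and if one $v_{j}$ acted locally nilpotently so would all of them, producing a non-zero vector killed by all $v_{j}$ and by $e$, hence a highest weight vector generating a submodule on which $\rr$ acts trivially — impossible. So $v_{-2}$ acts injectively, $v_{-2}w\neq 0$, $c'\neq 0$, and $\Ext^{1}_{\tilde{\bb}}(\C_{\lambda-\alpha},\mathbf{L}(\lambda))=0$; the lemma follows from the reduction. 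The only genuinely non-formal inputs are the structure of the $\g$-Verma filtration of $\mathbf{L}(\lambda)$ and this injectivity; everything else is bookkeeping.
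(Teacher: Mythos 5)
Your argument is correct in outline but takes a genuinely different route from the paper. The paper's proof avoids homological machinery: it regards the two top $h$-weight spaces $M_{\lambda}\oplus M_{\lambda-\alpha}$ of an extension $M$ as a module over the polynomial algebra $\C[e,v_2]$ (possible because $v_4$ kills them), identifies $\mathbf{L}(\lambda)_{\lambda}\oplus\mathbf{L}(\lambda)_{\lambda-\alpha}$ with the universal self-extension of the trivial module over that algebra, deduces a vector of weight $\lambda-\alpha$ in $M$ killed by $e$ and $v_2$, upgrades it to a highest weight vector using that $\mathrm{ad}(v_0)$ preserves the span of $e$ and $v_2$, and splits the sequence. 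You instead reduce to $\Ext^1(\Delta(\lambda-\alpha),\mathbf{L}(\lambda))=0$, pass to $\tilde{\bb}$ via Shapiro's lemma, and compute $1$-cocycles explicitly; your reduction step and the cocycle/coboundary bookkeeping (the relations forcing $v_2c=0$ and determining $b$, and the nonvanishing of the determinant $-2\lambda(v_0)c'$) are correct, and the decisive input is essentially the same linear algebra at weight $\lambda-\alpha$ of $\mathbf{L}(\lambda)$ that powers the paper's proof.

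Two points need repair. First, after Shapiro you must compute $\Ext^1_{\tilde{\bb}}$ in the category of \emph{all} $\tilde{\bb}$-modules, and your parenthetical claim that an extension of $h$-weight modules is again an $h$-weight module is false in general (already a Jordan block on a self-extension of a one-dimensional module is a counterexample). Hence the existence of a weight-vector lift $\xi\in N_{m-2}$ with $h\xi=(m-2)\xi$ is not automatic and is the one genuinely missing step. It is restorable: correct an arbitrary lift by components of $\mathbf{L}(\lambda)$ of weight $\neq m-2$ so that $d:=(h-(m-2))\xi\in\mathbf{L}(\lambda)_{m-2}$; then the relations $[h,e]=2e$ and $[h,v_2]=2v_2$ force $ed=v_2d=0$, and $\Ker(e)\cap\Ker(v_2)\cap\mathbf{L}(\lambda)_{m-2}=0$ (this space is $v_0$-stable, so a nonzero element would yield a highest weight vector of weight $\lambda-\alpha$ inside the simple module $\mathbf{L}(\lambda)$; alternatively it is visible from your own matrix), so $d=0$. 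Second, your justification of $v_{-2}w\neq0$ by running the proof of Lemma~\ref{lemlemnew9423w} does not apply verbatim to $\mathbf{L}(\lambda)$: that argument propagates local nilpotency using both $e$ and $f$, and $f$ is not locally nilpotent on a highest weight module (it acts injectively), while the concluding contradiction also has to be rephrased since $\mathbf{L}(\lambda)$ does have a highest weight vector. Fortunately the fact you need is immediate and already contained in your computation: $e\cdot(v_{-2}w)=[e,v_{-2}]w=2\lambda(v_0)w\neq0$, hence $v_{-2}w\neq0$ (and similarly $v_2(fw)=[v_2,f]w\neq 0$ gives $fw\neq0$). With these two repairs your proof is complete.
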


\begin{proof}
Let $\mathbf{L}(\lambda)\hookrightarrow M\twoheadrightarrow \mathbf{L}(\lambda-\alpha)$ be a short
exact sequence in the category of $h$-weight $\LL^4$-modules. Consider the vector space 
$X:=M_{\lambda}\oplus M_{\lambda-\alpha}$ and note that it is killed by $v_4$.
Therefore this vector space is a module over the polynomial algebra $A$ in
$e$ and $v_2$. The space of first self-extensions for each simple $A$-module
is two-dimensional. Since $\mathbf{L}(\lambda)$ is simple, the submodule 
$Y:=\mathbf{L}(\lambda)_{\lambda}\oplus \mathbf{L}(\lambda)_{\lambda-\alpha}$ of $X$ is indecomposable.
Since $\mathbf{L}(\lambda)_{\lambda-\alpha}$ has dimension $2$, $Y$ is the universal self-extension
of the trivial $A$-module. This implies that, in the category of $A$-modules, 
the first extension from  $\mathbf{L}(\lambda-\alpha)_{\lambda-\alpha}$ to $Y$ coming 
from the socle of $Y$ vanishes. Consequently,
$M$ must have a non-zero vector of weight $\lambda-\alpha$ which is killed by both
$e$ and $v_2$. As the adjoint action of $v_0$ leaves the span of $e$ and $v_2$
invariant, it follows that $M$ contains a highest weight vector of weight
$\lambda-\alpha$. Consequently, $M$ splits, proving the claim.
\end{proof}

Now we are ready to prove Theorem~\ref{thmconf4new}.

\begin{proof}[Proof of Theorem~\ref{thmconf4new}]
We take $\lambda$ such that $\lambda(h)=-2$ and $\lambda(v_0)=\tilde{\lambda}$.

The $\LL^4$-module $V'(0,\tilde{\lambda}):=V(0,\tilde{\lambda}^2, \tilde{\lambda}^3)$
and the module $V'(2,\tilde{\lambda}):=\mathbf{EA}(L(\lambda))/V$, cf. the proof of
Proposition~\ref{propnew82734y2}, are already constructed. Note that the proof of
Proposition~\ref{propnew82734y2} implies 
\begin{displaymath}
\mathrm{Ext}^1(V'(0,\tilde{\lambda}),V'(2,\tilde{\lambda})) =0
\end{displaymath}
in the category of $\g$-Harish-Chandra modules. As usual, on the category of 
$\g$-Harish-Chandra modules we have the restricted duality, which we denote by 
$\star$, that maps $\displaystyle \bigoplus_i L(i)^{\oplus m_i}$ to 
$\displaystyle\bigoplus_i (L(i)^*)^{\oplus m_i}$.
The fact that $V'(0,\tilde{\lambda})$ is self-dual follows directly from 
its uniqueness given by construction. Applying $\star$, we obtain
\begin{displaymath}
\mathrm{Ext}^1(V'(2,\tilde{\lambda})^{\star},V'(0,\tilde{\lambda})) =0,
\end{displaymath}
where the modules $V'(2,\tilde{\lambda})$ and $V'(2,\tilde{\lambda})^{\star}$
have the same $\g$-types. If we assume that 
$V'(2,\tilde{\lambda})\not\cong V'(2,\tilde{\lambda})^{\star}$,
then, from Proposition~\ref{propnew6}, it follows that these are the only 
simple $\g$-Harish-Chandra modules having $L(2)$ as a $\g$-type. 

Consider now the module $L(2)\otimes V'(0,\tilde{\lambda})$. By adjunction, this
must have both $V'(2,\tilde{\lambda})$ and $V'(2,\tilde{\lambda})^{\star}$ as simple
subquotients, both with multiplicity one. 
The remaining $\g$-types are $L(4)$, $L(8)$, $L(10).\dots$.
If $V$ is a subquotient of $L(2)\otimes V'(0,\tilde{\lambda})$ whose $\g$-types
form a subset of these remaining $\g$-types, then $V$, 
by adjunction, cannot be in the top or socle of $L(2)\otimes V'(0,\tilde{\lambda})$
as $L(2)\otimes V$ does not have $L(0)$ as a simple subquotient.
This implies that $L(2)\otimes V'(0,\tilde{\lambda})$ must have socle
$V'(2,\tilde{\lambda})$ and top $V'(2,\tilde{\lambda})^{\star}$ or vice-versa.
However, since both $V'(0,\tilde{\lambda})$ and $L(2)$ are self-dual, 
so is $L(2)\otimes V'(0,\tilde{\lambda})$, a contradiction.
Therefore $V'(2,\tilde{\lambda})\cong V'(2,\tilde{\lambda})^{\star}$ and thus
\begin{equation} \label{equation:EA=sum}
\mathbf{EA}(\mathbf{L}(\lambda))\cong
V'(0,\tilde{\lambda})\oplus V'(2,\tilde{\lambda}).
\end{equation}

Consider now the module $L(1)\otimes V'(0,\tilde{\lambda})$. It has
$\g$-types $L(1)$, $L(3)$, $L(5),\dots$, all multiplicity-free.
We claim that that $L(1)\otimes V'(0,\tilde{\lambda})$ is a simple
$\g$-module which we can declare to be $V(1,\tilde{\lambda})$.
Assume that $L(1)\otimes V'(0,\tilde{\lambda})$ is not simple and let $V$
be a submodule or a quotient of $L(1)\otimes V'(0,\tilde{\lambda})$ which does not have
$L(1)$ as its $\g$-type. Using the self-adjointness of 
$L(1)\otimes -$, by adjunction we have a non-zero homomorphism between
$V'(0,\tilde{\lambda})$ and $L(1)\otimes V$. However, the latter is
not possible as $L(0)$ is not a $\g$-type of $L(1)\otimes V$ due to
our definition of $V$. This shows that $L(1)\otimes V'(0,\tilde{\lambda})$ is simple.

From Lemma~\ref{lemnew1403985y}, it follows that 
$L(1)\otimes \mathbf{L}(\lambda)\cong \mathbf{L}(\lambda-\frac{1}{2}\alpha)\oplus 
\mathbf{L}(\lambda+\frac{1}{2}\alpha)$.
As $\mathbf{EA}$ commutes with $L(1)\otimes -$, it follows that
\begin{displaymath}
L(1)\otimes \mathbf{EA}(\mathbf{L}(\lambda))\cong
\mathbf{EA}(\mathbf{L}(\lambda-\frac{1}{2}\alpha))\oplus 
\mathbf{EA}(\mathbf{L}(\lambda+\frac{1}{2}\alpha)).
\end{displaymath}
By comparing the $\g$-types, we see that 
\begin{displaymath}
\mathbf{EA}(\mathbf{L}(\lambda-\frac{1}{2}\alpha))\cong V(1,\tilde{\lambda})\quad\text{ or }\quad
\mathbf{EA}(\mathbf{L}(\lambda+\frac{1}{2}\alpha))\cong V(1,\tilde{\lambda}).
\end{displaymath}
Consider the first case, the second one is similar. By adjunction, we have
\begin{displaymath}
\mathrm{Hom}(V(1,\tilde{\lambda}),L(1)\otimes \mathbf{EA}(\mathbf{L}(\lambda)))\cong
\mathrm{Hom}(L(1)\otimes V(1,\tilde{\lambda}),\mathbf{EA}(\mathbf{L}(\lambda))).
\end{displaymath}
By the above, $L(1)\otimes V(1,\tilde{\lambda})$ has 
$\mathbf{EA}(\mathbf{L}(\lambda))$ as a direct summand and the endomorphism of
$\mathbf{EA}(\mathbf{L}(\lambda))$ has dimension two. Consequently,
$\mathrm{Hom}(V(1,\tilde{\lambda}),\mathbf{L}(\lambda+\frac{1}{2}\alpha))$
must be non-zero, which implies 
\begin{displaymath}
\EnAr (\mathbf{L}(\lambda+\frac{1}{2}\alpha)) \cong V(1,\tilde{\lambda})
\end{displaymath}
by comparing the $\g$-types of these modules.

Using, inductively, arguments similar to the ones used above, we construct modules
\begin{displaymath}
V(i,\tilde{\lambda})\cong \EnAr(\mathbf{L}(\lambda+\frac{i}{2}\alpha))
\end{displaymath}
for $i>1$. The proof of  Theorem~\ref{thmconf4new} is now completed easily using
construction and adjunction.
\end{proof}

\begin{remark}[A sketch of an altenative proof of the splitting (\ref{equation:EA=sum})]
Denote by $\bb$ the span of $h$ and $e$, and consider the polynomials $\C[x]$ as $(\bb \ltimes L(4))$-module by declaring: $h \cdot x^k = (-2-2k)x^k$, $e$ acts as $\frac{\partial}{\partial x}$, $v_4$ and $v_2$ annihilate everything, $v_0$ multiplies by $3 \tilde{\lambda}$, $v_{-2}$ multiplies by $6 \tilde{\lambda} x$, and $v_{-2}$ multiplies by $3 \tilde{\lambda} x^2$ (this is known as the Fock module). From \cite[Theorem 4(ii)]{lu2014simple} it follows that $\mathbf{L}(\lambda) \cong \Ind_{\bb \ltimes L(4)}^{\LL^4} \C[x]$ (recall that $\lambda(h)=-2$ and $\lambda(v_0)=\tilde{\lambda}$). From this, we have that $\EnAr(\mathbf{L}(\lambda))$ has $\g$-types $L(0), L(2), L(4), \dots$, and the lowest weight vector in each $L(2k)$ is $f^{-1} \otimes x^k$. By a long and tedious, but straightforward computation, one can see that
\begin{align*}
v_4 f^{-1} \otimes x^k = \frac{3 \tilde{\lambda}}{4(k+1)(k+2)(2k+1)(2k+3)} {}\cdot{} & e^4 f^{-1} \otimes x^{k+2} \\
- \frac{3 \tilde{\lambda}}{2(2k+3)(2k-1)} {}\cdot{} & e^2 f^{-1} \otimes x^k \\
+ \frac{3 \tilde{\lambda} k (k-1)}{4(2k+1)(2k-1)} {}\cdot{} & f^{-1} \otimes x^{k-2} .\end{align*}
From this formula it follows easily that $\LL^4$ maps $L(k)$ to $L(k+4) \oplus L(k) \oplus L(k-4)$ if $k \geq 2$, and to $L(k+2) \oplus L(k)$ for $k=0,1$, with non-zero projections to each summand. This proves the claim.
\end{remark}

\begin{remark}\label{remark:comb_struct_conf4}
From the proof of Theorem~\ref{thmconf4new},
the combinatorics of tensoring with $L(1)$ can be recorded as follows:
\begin{displaymath}
\begin{array}{ccll}
L(1)\otimes  V'(0,\tilde{\lambda})&\cong& V(1,\tilde{\lambda});\\
L(1)\otimes  V'(2,\tilde{\lambda})&\cong& V(1,\tilde{\lambda});\\
L(1)\otimes  V(1,\tilde{\lambda})&\cong& V'(0,\tilde{\lambda})\oplus 
V'(2,\tilde{\lambda})\oplus V(2,\tilde{\lambda});\\
L(1)\otimes  V(i,\tilde{\lambda})&\cong& V(i-1,\tilde{\lambda})\oplus V(i+1,\tilde{\lambda}),
&i>1.\\
\end{array}
\end{displaymath}
In particular, the additive closure of $V'(0,\tilde{\lambda})$,
$V'(2,\tilde{\lambda})$ and all $V(i,\tilde{\lambda})$ forms a
simple $\cF$-module category, cf. Propositions~\ref{propact1} and \ref{propact2}.
From the above formulae, we see that the combinatorics of this 
$\cF$-module category is different from the combinatorics
of the $\cF$-module categories described in 
Propositions~\ref{propact1} and \ref{propact2}.
\end{remark}

We note that there might exist, potentially, other simple
$\g$-Harish-Chandra modules for $\LL^4$ which correspond to 
characters of $\Sym(\rr)^\g$ that do not occur in simple 
highest weight modules. The problem here seems to be the
absence, in case of $\LL^4$, of an analogue of the classical 
theorem by Harish-Chandra that, in case of reductive Lie algebras,
any central character is realizable on some highest weight module.

\subsection{Some speculations in case of the Lie algebras $\LL^n = \mathfrak{sl}_2\ltimes L(n)$ with $n$ even}

For an even non-negative integer $n$, consider the Lie algebra 
$\LL^n = \mathfrak{sl}_2 \ltimes L(n)$. The algebra $\LL^0$ is reductive
and hence all its $\mathfrak{sl}_2$-Harish-Chandra modules are 
finite-dimensional. Classification of all $\mathfrak{sl}_2$-Harish-Chandra modules
for the Takiff Lie algebra $\LL^2$ is given in Section~\ref{section:takiff_sl2}.
Note that all simple $\mathfrak{sl}_2$-Harish-Chandra modules for $\LL^2$
are connected to highest weight $\LL^2$-modules. For the Lie algebra $\LL^4$,
all $\mathfrak{sl}_2$-Harish-Chandra modules that are connected to 
highest weight $\LL^4$-modules are classified in the previous subsection.
Potentially, these are not all $\mathfrak{sl}_2$-Harish-Chandra modules for $\LL^4$.
As we see, the level of difficulty of the problem increases drastically with $n$.

The highest weight theory for $\LL^n$ is described in \cite[Theorem~4]{lu2014simple}.
As vector space, simple highest weight $\LL^n$-modules look the same, independently
of $n$. Therefore we expect that the problem of classification of simple 
$\mathfrak{sl}_2$-Harish-Chandra modules for $\LL^n$ that are connected to 
highest weight $\LL^n$-modules via Enright-Arkhipov functor should be solvable.
One of the crucial missing ingredients, at the moment, seems to be an analogue of 
\cite[Corollary~3.4]{hahn2018from} for general $n$ (i.e., for general $k$ in the notation of \cite{hahn2018from}).

Of special interest is the question of what kind of a monoidal representation of
the monoidal category of finite-dimensional $\mathfrak{sl}_2$-modules do the
$\LL^n$-modules from the previous paragraph form.

\noindent
V.~M.: Department of Mathematics, Uppsala University, Box. 480,
SE-75106, Uppsala, SWEDEN, email: {\tt mazor\symbol{64}math.uu.se}

\noindent
R.~M.: Department of Mathematics, Uppsala University, Box. 480,
SE-75106, Uppsala, SWEDEN, email: {\tt rafael.mrden\symbol{64}math.uu.se}\\
On leave from: Faculty of Civil Engineering, University of Zagreb, Fra 
Andrije Ka\v{c}i\'{c}a-Mio\v{s}i\'{c}a 26, 10000 Zagreb, CROATIA


\begin{thebibliography}{DLMZ14}

\bibitem[AIM19]{alshammari2019on}
F.~Alshammari, P.~S. Isaac, and I.~Marquette.
\newblock On {C}asimir operators of conformal {G}alilei algebras.
\newblock {\em J. Math. Phys.}, 60(1):013509, 14, 2019.

\bibitem[Ark04]{arkhipov2004algebraic}
S.~Arkhipov.
\newblock Algebraic construction of contragradient quasi-{V}erma modules in
  positive characteristic.
\newblock In {\em Representation theory of algebraic groups and quantum
  groups}, volume~40 of {\em Adv. Stud. Pure Math.}, pages 27--68. Math. Soc.
  Japan, Tokyo, 2004.

\bibitem[AS03]{andersen2003twisting}
H.~H. Andersen and C.~Stroppel.
\newblock Twisting functors on {$\mathcal{O}$}.
\newblock {\em Represent. Theory}, 7:681--699, 2003.

\bibitem[BM]{BM}
P.~Batra, V.~Mazorchuk.
\newblock Blocks and modules for Whittaker pairs. 
\newblock {\em J. Pure Appl. Algebra}, 215(7):1552--1568, 2011.

\bibitem[BG09]{bagchi2009galilean}
A.~Bagchi and R.~Gopakumar.
\newblock Galilean conformal algebras and {AdS}/{CFT}.
\newblock {\em J. High Energy Phys.}, 2009(07):037--037, 2009.

\bibitem[BL17]{bavula2017prime}
V.~V. Bavula and T.~Lu.
\newblock Prime ideals of the enveloping algebra of the {E}uclidean algebra and
  a classification of its simple weight modules.
\newblock {\em J. Math. Phys.}, 58(1):011701, 33, 2017.

\bibitem[BL18]{bavula2018classification}
V.~V. Bavula and T.~Lu.
\newblock Classification of simple weight modules over the {S}chr\"{o}dinger
  algebra.
\newblock {\em Canad. Math. Bull.}, 61(1):16--39, 2018.

\bibitem[Blo]{Bl}
R.~E.~Block. 
\newblock The irreducible representations of the Lie algebra  $sl(2)$ and of the Weyl algebra. 
\newblock {\em Adv. in Math.}, 39(1):69--110, 1981.


\bibitem[Bou89]{bourbaki1989lie}
N.~Bourbaki.
\newblock {\em Lie groups and {L}ie algebras. {C}hapters 1--3}.
\newblock Elements of Mathematics (Berlin). Springer-Verlag, Berlin, 1989.
\newblock Translated from the French, Reprint of the 1975 edition.

\bibitem[CC17]{cai2017quasi}
Y.~Cai and Q.~Chen.
\newblock Quasi-{W}hittaker modules over the conformal {G}alilei algebras.
\newblock {\em Linear Multilinear Algebra}, 65(2):313--324, 2017.

\bibitem[CCS14]{cai2014quasi}
Y.~Cai, Y.~Cheng, and R.~Shen.
\newblock Quasi-{W}hittaker modules for the {S}chr\"{o}dinger algebra.
\newblock {\em Linear Algebra Appl.}, 463:16--32, 2014.

\bibitem[CSZ16]{cai2016whittaker}
Y.~Cai, R.~Shen, and J.~Zhang.
\newblock {W}hittaker modules and quasi-{W}hittaker modules for the {E}uclidean
  {L}ie algebra $\mathfrak{e}(3)$.
\newblock {\em J. Pure Appl. Algebra}, 220(4):1419--1433, 2016.

\bibitem[CCM]{CCM}
C.-W.~Chen, K.~Coulembier, V.~Mazorchuk.
\newblock Translated simple modules for Lie algebras and simple supermodules for Lie superalgebras.
\newblock Preprint arXiv:1807.03834.

\bibitem[CM]{CM}
C.-W.~Chen, V.~Mazorchuk.
\newblock Simple supermodules over Lie superalgebras.
\newblock Preprint arXiv:1801.00654.

\bibitem[Deo80]{deodhar1980on}
V.~V. Deodhar.
\newblock On a construction of representations and a problem of {E}nright.
\newblock {\em Invent. Math.}, 57(2):101--118, 1980.

\bibitem[DMP]{DMP}
I.~Dimitrov, O.~Mathieu, I.~Penkov.
\newblock On the structure of weight modules. 
\newblock {\em Trans. Amer. Math. Soc.}, 352(6):2857--2869, 2000. 

\bibitem[DDM97]{dobrev1997lowest}
V.~K. Dobrev, H.-D. Doebner, and C.~Mrugalla.
\newblock Lowest weight representations of the {S}chr\"{o}dinger algebra and
  generalized heat {S}chr\"{o}dinger equations.
\newblock {\em Rep. Math. Phys.}, 39(2):201--218, 1997.

\bibitem[DOF]{DOF}
Yu.~A.~Drozd, S.~A.~Ovsienko, V.~M.~Futorny. 
\newblock On Gelfand-Zetlin modules. 
\newblock Proceedings of the Winter School on Geometry and Physics (Srn{\'\i}, 1990). 
Rend. Circ. Mat. Palermo (2) Suppl. No. 26, 143--147, 1991


\bibitem[DLMZ14]{dubsky2014category}
B.~Dubsky, R.~L\"{u}, V.~Mazorchuk, and K.~Zhao.
\newblock Category {$\mathcal{O}$} for the {S}chr\"{o}dinger algebra.
\newblock {\em Linear Algebra Appl.}, 460:17--50, 2014.

\bibitem[Dub14]{dubsky2014classification}
B.~Dubsky.
\newblock Classification of simple weight modules with finite-dimensional
  weight spaces over the {S}chr\"{o}dinger algebra.
\newblock {\em Linear Algebra Appl.}, 443:204--214, 2014.

\bibitem[EMV]{EMV}
N.~Early, V.~Mazorchuk, E.~Vishnyakova.
\newblock Canonical Gelfand-Zeitlin modules over orthogonal Gelfand-Zeitlin algebras.
\newblock Preprint arXiv:1709.01553. To appear in IMRN.


\bibitem[Enr79]{enright1979on}
T.~J. Enright.
\newblock On the fundamental series of a real semi-simple {L}ie algebra: their
  irreducibility, resolutions and multiplicity formulae.
\newblock {\em Ann. of Math. (2)}, 110(1):1--82, 1979.

\bibitem[FO05]{futorny2005kostant}
V.~Futorny and S.~Ovsienko.
\newblock Kostant's theorem for special filtered algebras.
\newblock {\em Bull. London Math. Soc.}, 37(2):187--199, 2005.

\bibitem[Geo94]{geoffriau1994centre}
F.~Geoffriau.
\newblock Sur le centre de l'alg\`ebre enveloppante d'une alg\`ebre de
  {T}akiff.
\newblock {\em Ann. Math. Blaise Pascal}, 1(2):15--31 (1995), 1994.

\bibitem[Geo95]{geoffriau1995homomorphisme}
F.~Geoffriau.
\newblock Homomorphisme de {H}arish-{C}handra pour les alg\`ebres de {T}akiff
  g\'{e}n\'{e}ralis\'{e}es.
\newblock {\em J. Algebra}, 171(2):444--456, 1995.

\bibitem[GK12]{gomis2012schrodinger}
J.~Gomis and K.~Kamimura.
\newblock Schr\"{o}dinger equations for higher order nonrelativistic particles
  and $n$-{G}alilean conformal symmetry.
\newblock {\em Phys. Rev. D}, 85:045023, Feb 2012.

\bibitem[HHLS18]{hahn2018from}
H.~Hahn, J.~Huh, E.~Lim, and J.~Sohn.
\newblock From partition identities to a combinatorial approach to explicit
  {S}atake inversion.
\newblock {\em Ann. Comb.}, 22(3):543--562, 2018.

\bibitem[Han19]{han2019higher}
B.~Han.
\newblock {\em Higher Spin Algebras and Universal Enveloping Algebras}.
\newblock Australian National University, 2019.
\newblock Bachelor thesis.

\bibitem[Hil93]{hilbert1993theory}
D.~Hilbert.
\newblock {\em Theory of algebraic invariants}.
\newblock Cambridge University Press, Cambridge, 1993.
\newblock Translated from the German and with a preface by Reinhard C.
  Laubenbacher, Edited and with an introduction by Bernd Sturmfels.

\bibitem[Hum72]{humphreys1978introduction}
J.~E. Humphreys.
\newblock {\em Introduction to {L}ie algebras and representation theory}.
\newblock Springer-Verlag, New York-Berlin, 1972.
\newblock Graduate Texts in Mathematics, Vol. 9.

\bibitem[Hum08]{humphreys2008representations}
J.~E. Humphreys.
\newblock {\em Representations of semi-simple {L}ie algebras in the {BGG}
  category {$\mathcal{O}$}}, volume~94 of {\em Graduate Studies in
  Mathematics}.
\newblock American Mathematical Society, Providence, RI, 2008.

\bibitem[Jan83]{jantzen11983einhullende}
J.~C. Jantzen.
\newblock {\em Einh\"{u}llende {A}lgebren halbeinfacher {L}ie-{A}lgebren},
  volume~3 of {\em Ergebnisse der Mathematik und ihrer Grenzgebiete (3)
  [Results in Mathematics and Related Areas (3)]}.
\newblock Springer-Verlag, Berlin, 1983.

\bibitem[Kac90]{kac1990infinite}
V.~G. Kac.
\newblock {\em Infinite-dimensional {L}ie algebras}.
\newblock Cambridge University Press, Cambridge, third edition, 1990.

\bibitem[KM02]{konig2002enrights}
S.~K\"{o}nig and V.~Mazorchuk.
\newblock {E}nright's completions and injectively copresented modules.
\newblock {\em Trans. Amer. Math. Soc.}, 354(7):2725--2743, 2002.

\bibitem[KM05]{khomenko2005on}
O.~Khomenko and V.~Mazorchuk.
\newblock On {A}rkhipov's and {E}nright's functors.
\newblock {\em Math. Z.}, 249(2):357--386, 2005.

\bibitem[Kna88]{knapp1988lie}
A.~W. Knapp.
\newblock {\em Lie groups, {L}ie algebras, and cohomology}, volume~34 of {\em
  Mathematical Notes}.
\newblock Princeton University Press, Princeton, NJ, 1988.

\bibitem[Kos78]{kostant1978on}
B.~Kostant.
\newblock On {W}hittaker vectors and representation theory.
\newblock {\em Invent. Math.}, 48(2):101--184, 1978.

\bibitem[KL00]{krause2000growth}
G.~R. Krause and T.~H. Lenagan.
\newblock {\em Growth of algebras and {G}elfand-{K}irillov dimension},
  volume~22 of {\em Graduate Studies in Mathematics}.
\newblock American Mathematical Society, Providence, RI, revised edition, 2000.

\bibitem[Lau18]{lau2018classification}
M.~Lau.
\newblock Classification of {H}arish-{C}handra modules for current algebras.
\newblock {\em Proc. Amer. Math. Soc.}, 146(3):1015--1029, 2018.

\bibitem[LMZ14]{lu2014simple}
R.~L\"{u}, V.~Mazorchuk, and K.~Zhao.
\newblock On simple modules over conformal {G}alilei algebras.
\newblock {\em J. Pure Appl. Algebra}, 218(10):1885--1899, 2014.

\bibitem[Mat00]{mathieu2000classification}
O.~Mathieu.
\newblock Classification of irreducible weight modules.
\newblock {\em Ann. Inst. Fourier (Grenoble)}, 50(2):537--592, 2000.

\bibitem[Maz10]{mazorchuk2010lectures}
V.~Mazorchuk.
\newblock {\em Lectures on {$\mathfrak{sl}_2(\mathbb{C})$}-modules}.
\newblock Imperial College Press, London, 2010.

\bibitem[MS19]{mazorchuk2019category}
V.~Mazorchuk and C.~S\"{o}derberg.
\newblock Category $\mathcal{O}$ for {T}akiff $\mathfrak{sl}_2$.
\newblock {\em J. Math. Phys.}, 60(11):111702, 2019.

\bibitem[MSt07]{MS}
V.~Mazorchuk, C.~Stroppel. 
\newblock On functors associated to a simple root. 
\newblock {\em J. Algebra}, 314(1):97--128, 2007. 

\bibitem[Mol]{molev1996casimir}
A.~Molev.
\newblock Casimir elements for certain polynomial current lie algebras.
\newblock In H.-D. Doebner, P.~Nattermann, and W.~Scherer, editors, {\em Group
  21: Physical Applications and Mathematical Aspects of Geometry, Groups, and
  Algebras}, pages 172--176. World Scientific.


\bibitem[Per77]{perroud1977projective}
M.~Perroud.
\newblock Projective representations of the {S}chr\"{o}dinger group.
\newblock {\em Helv. Phys. Acta}, 50(2):233--252, 1977.

\bibitem[PRS90]{pope1990W_infinity}
C.~N. Pope, L.~J. Romans, and X.~Shen.
\newblock {$W_\infty$ and the {Racah-Wigner} algebra}.
\newblock {\em Nucl. Phys.}, B339:191--221, 1990.

\bibitem[Tak71]{takiff1971rings}
S.~J. Takiff.
\newblock Rings of invariant polynomials for a class of {L}ie algebras.
\newblock {\em Trans. Amer. Math. Soc.}, 160:249--262, 1971.

\bibitem[Ve]{Ve}
D.-N.~Verma.
\newblock Structure of certain induced representations of complex semi-simple Lie algebras. 
\newblock {\em Bull. Amer. Math. Soc.}, 74:160--166, 1968.

\bibitem[Vog81]{vogan1981repersentations}
D.~A. Vogan.
\newblock {\em Representations of real reductive {L}ie groups}, volume~15 of
  {\em Progress in Mathematics}.
\newblock Birkh\"{a}user, Boston, Mass., 1981.


\bibitem[Web]{We}
B.~Webster.
\newblock Gelfand-Tsetlin modules in the Coulomb context.
\newblock Preprint arXiv:1904.05415. 

\bibitem[Wil11]{wilson2011highest}
B.~J. Wilson.
\newblock Highest-weight theory for truncated current {L}ie algebras.
\newblock {\em J. Algebra}, 336:1--27, 2011.

\end{thebibliography}
\end{document}